\newtheorem{theorem}{Theorem}[section]
\newtheorem{corollary}[theorem]{Corollary}
\newtheorem{lemma}[theorem]{Lemma}
\newtheorem{proposition}[theorem]{Proposition}
\newtheorem{definition-proposition}[theorem]{Definition-Proposition}
\newtheorem{question}[theorem]{Question}
\newtheorem{conjecture}[theorem]{Conjecture}
\theoremstyle{definition}
\newtheorem{definition}[theorem]{Definition}
\newtheorem{remark}[theorem]{Remark}
\newtheorem{example}[theorem]{Example}
\newtheorem{observation}[theorem]{Observation}
\newtheorem{construction}[theorem]{Construction}
\newcommand{\mm}{{\mathfrak{m}}}
\newcommand{\nn}{{\mathfrak{n}}}
\newcommand{\pp}{{\mathfrak{p}}}
\newcommand{\qq}{{\mathfrak{q}}}
\renewcommand{\AA}{\mathscr{A}}
\newcommand{\CC}{\mathscr{C}}
\newcommand{\CCC}{\mathsf{C}}
\newcommand{\DDD}{\mathsf{D}}
\newcommand{\KKK}{\mathsf{K}}
\newcommand{\OO}{{\mathcal O}}
\newcommand{\PP}{\mathscr{P}}
\newcommand{\II}{\mathscr{I}}
\newcommand{\RR}{\mathscr{R}}
\newcommand{\TT}{\mathscr{T}}
\newcommand{\UU}{\mathscr{U}}
\newcommand{\XX}{\mathscr{X}}
\newcommand{\Z}{\mathbb{Z}}
\newcommand{\R}{\mathbb{R}}
\newcommand{\bo}{\operatorname{b}\nolimits}
\newcommand{\depth}{\operatorname{depth}\nolimits}
\newcommand{\pd}{\operatorname{proj.dim}\nolimits}
\newcommand{\Ext}{\operatorname{Ext}\nolimits}
\newcommand{\Tor}{\operatorname{Tor}\nolimits}
\newcommand{\Hom}{\operatorname{Hom}\nolimits}
\newcommand{\End}{\operatorname{End}\nolimits}
\newcommand{\gl}{\operatorname{gl.\!dim}\nolimits}
\newcommand{\op}{\operatorname{op}\nolimits}
\newcommand{\RHom}{\mathbf{R}\strut\kern-.2em\operatorname{Hom}\nolimits}
\newcommand{\Lotimes}{\mathop{\stackrel{\mathbf{L}}{\otimes}}\nolimits}
\newcommand{\Kernel}{\operatorname{Ker}\nolimits}
\newcommand{\Cokernel}{\operatorname{Cok}\nolimits}
\newcommand{\Spec}{\operatorname{Spec}\nolimits}
\newcommand{\Supp}{\operatorname{Supp}\nolimits}
\newcommand{\MaxSpec}{\operatorname{MaxSpec}\nolimits}
\newcommand{\Ass}{\operatorname{Ass}\nolimits}
\newcommand{\ann}{\operatorname{ann}\nolimits}
\newcommand{\Proj}{\operatorname{Proj}\nolimits}
\newcommand{\MaxProj}{\operatorname{MaxProj}\nolimits}
\newcommand{\sign}{\operatorname{sgn}\nolimits}
\newcommand{\GL}{\operatorname{GL}\nolimits}
\newcommand{\SL}{\operatorname{SL}\nolimits}
\newcommand{\Aff}{\operatorname{Aff}\nolimits}
\newcommand{\Ho}{{\rm H}}
\DeclareMathOperator{\moduleCategory}{\mathsf{mod}} \renewcommand{\mod}{\moduleCategory}
\DeclareMathOperator{\Mod}{\mathsf{Mod}}
\DeclareMathOperator{\proj}{\mathsf{proj}}
\DeclareMathOperator{\inj}{\mathsf{inj}}
\DeclareMathOperator{\ind}{\mathsf{ind}}
\DeclareMathOperator{\Sub}{\mathsf{Sub}}
\DeclareMathOperator{\coh}{\mathsf{coh}}
\DeclareMathOperator{\Gr}{\mathsf{Gr}}
\DeclareMathOperator{\gr}{\mathsf{gr}}
\DeclareMathOperator{\qGr}{\mathsf{qGr}}
\DeclareMathOperator{\qgr}{\mathsf{qgr}}
\DeclareMathOperator{\add}{\mathsf{add}}
\DeclareMathOperator{\fd}{\mathsf{fd}}
\DeclareMathOperator{\repdim}{repdim}
\newcommand{\cut}{\ar@{-}@[|(5)]}
\newcommand{\iso}{\cong}
\newcommand{\equi}{\approx}
\numberwithin{equation}{section}
\begin{document}
\title{$n$-representation infinite algebras}
\author[Herschend]{Martin Herschend}
\address{M. Herschend: Graduate School of Mathematics, Nagoya University, Chikusa-ku, Nagoya, 464-8602 Japan}
\email{martinh@math.nagoya-u.ac.jp}
\thanks{The first author was partially supported by JSPS Grant-in-Aid for Young Scientists 24740010.}

\author[Iyama]{Osamu Iyama}
\address{O. Iyama: Graduate School of Mathematics, Nagoya University, Chikusa-ku, Nagoya, 464-8602 Japan}
\email{iyama@math.nagoya-u.ac.jp}
\urladdr{http://www.math.nagoya-u.ac.jp/~iyama/}
\thanks{The second author was partially supported by JSPS Grant-in-Aid for Scientific Research 24340004, 23540045, 20244001 and 22224001.}

\author[Oppermann]{Steffen Oppermann}
\address{S. Oppermann: Institutt for matematiske fag, NTNU, 7491 Trondheim, Norway}
\email{steffen.oppermann@math.ntnu.no}

\thanks{2010 {\em Mathematics Subject Classification.} 16G20, 16G70, 18G20, 18E30, 14F05}
\thanks{{\em Key words and phrases.} Auslander-Reiten theory, preprojective algebra, Fano algebra, $n$-representation finite algebra, representation dimension}

\begin{abstract}
From the viewpoint of higher dimensional Auslander-Reiten theory, we introduce a new class of finite dimensional algebras of global dimension $n$, which we call $n$-representation infinite.
They are a certain analog of representation infinite hereditary algebras, and we study three important classes of modules: $n$-preprojective, $n$-preinjective and $n$-regular modules.
We observe that their homological behaviour is quite interesting. For instance they provide first examples of algebras having infinite $\Ext^1$-orthogonal families of modules. Moreover we give general constructions of $n$-representation infinite algebras.

Applying Minamoto's theory on Fano algebras in non-commutative algebraic geometry, we describe the category of $n$-regular modules in terms of the corresponding preprojective algebra.
Then we introduce $n$-representation tame algebras, and show that the category of $n$-regular modules decomposes into the categories of finite dimensional modules over localizations of the preprojective algebra.
This generalizes the classical description of regular modules over tame hereditary algebras.
As an application, we show that the representation dimension of an $n$-representation tame algebra is at least $n+2$.
\end{abstract}

\maketitle
\tableofcontents

\section{Introduction}

The notion of global dimension gives an important measure in representation theory of algebras: Algebras of global dimension zero are semisimple, and their representation theory is trivial in the sense that any module is a direct sum of simple modules.
Algebras of global dimension one are path algebras of quivers, and their representation theory has been one of the central subjects in modern representation theory.
Unfortunately it seems to be quite hard to develop general theory for algebras of higher global dimension though there are a number of important classes for which more is known.
This means that we need to restrict our consideration to some special classes to get a fruitful theory.

From the viewpoint of higher dimensional Auslander-Reiten theory, a distinguished class of finite dimensional algebras of global dimension $n$,
called \emph{$n$-representation finite algebras}, has been studied \cite{HI1,HI2,I1,I4,IO1,IO2}.
They have an $n$-cluster tilting subcategory with an additive generator, whose structure is controled by the higher Auslander-Reiten translations $\tau_n\colon\mod\Lambda\to\mod\Lambda$ and $\nu_n \colon \DDD^{\rm b}(\mod \Lambda) \to \DDD^{\rm b}(\mod \Lambda)$.
They are characterized as follows:
An algebra of global dimension $n$ is $n$-representation finite if and only if for any indecomposable projective module $P$, there exists $\ell_P \ge 0$ such that $\nu_n^{-\ell_P}(P)$ is indecomposable injective \cite[Theorem 3.1]{IO2}.
Thus for $n=1$ the notion of $1$-representation finite algebras coincides with the classical notion of representation finite hereditary algebras.

Since in classical theory one has representation infinite algebras as natural counterpart to representation finite ones, it is natural to ask ``What are $n$-representation infinite algebras?''

The aim of this paper is to introduce $n$-representation infinite algebras and study their properties.
Our definition is a simpler analogue of the above property of $n$-representation finite algebras and given in terms of a higher Auslander-Reiten translation:
A finite dimensional algebra of global dimension $n$ is called
\emph{$n$-representation infinite} if and only if $\nu_n^{-i}(\Lambda)$ is a module (i.e.\ concentrated in degree $0$) for any $i \ge 0$.
Thus for $n=1$ our notion of $1$-representation infinite algebras coincides with the classical notion of representation infinite hereditary algebras.
We show (in Theorem~\ref{dichotomy}) that $n$-representation finite algebras and $n$-representation infinite algebras give two disjoint classes of \emph{$n$-hereditary algebras}, which are certain homologically nice algebras.

As a first example of an $n$-representation infinite algebra consider the Beilinson algebra, given by the quiver
\[ \xymatrix{
1  \ar@/^1pc/[r]^{a^1_0}_{\scalebox{0.7}{\vdots}}\ar@/_1pc/[r]_{a^1_{n}} & 2 \ar@/^1pc/[r]^{a^2_0}_{\scalebox{0.7}{\vdots}}\ar@/_1pc/[r]_{a^2_{n}} & 3} 
\cdots\xymatrix{
n \ar@/^1pc/[r]^{a^{n}_0}_{\scalebox{0.7}{\vdots}}\ar@/_1pc/[r]_{a^{n}_{n}}& n+1}
\text{, subject to the relations }
a^k_ia^{k+1}_j = a^k_ja^{k+1}_i.
\]
By Beilinson \cite{Be} this is the endomorphism algebra of the tilting bundle $\bigoplus_{\ell=0}^n \mathcal{O}(\ell)$ on $\mathbb{P}^n$. Using knowledge on the category $\coh \mathbb{P}^n$, in particular Serre duality and sheaf cohomology, it is easy to see that the Beilinson algebra is $n$-representation infinite.
For details, see Example~\ref{ex.beilinson}. We see (in Section~\ref{sect.Atilde}) that the Beilinson algebra is actually only one member of a large class of $n$-representation infinite algebras which we call ``type $\widetilde{A}$''.

For $n$-representation infinite algebras, we introduce three distinguished classes of modules:
The first one is $\PP=\add \{ \nu_n^{-i}(\Lambda) \mid i \ge0\}$, which we call the \emph{$n$-preprojective modules}, and the second one is
$\II = \add \{ \nu_n^i(D\Lambda) \mid i \ge0\}$
which we call the \emph{$n$-preinjective modules}.
The third class which we call \emph{$n$-regular modules} is described as $\RR = \{X \in \mod \Lambda \mid \forall i \in\Z \colon \nu_n^i(X) \in \mod \Lambda \}$ (see Proposition~\ref{nakayama R}).
In the context of higher dimensional Auslander-Reiten theory, the category
\[\CC^{0}:=\{X\in\mod\Lambda\mid\forall i \in \{1, \ldots, n-1\}\colon\Ext^i_\Lambda(\PP\vee\II,X)=0\}\]
gives a higher analogue of module categories. We give the following result, showing that the properties of these subcategories are very similar to the classical hereditary situation.
\begin{theorem} [see Theorems~\ref{trichotomy} and \ref{n-almost split}]
Let $\Lambda$ be an $n$-representation infinite algebra.
\begin{itemize}
\item[(a)] We have $\CC^{0}=\PP \vee \RR \vee \II$.
\item[(b)] We have
\begin{eqnarray*}
&\Hom_{\Lambda}(\RR, \PP) = 0, \quad \Hom_{\Lambda}(\II, \PP) = 0 \text{, and } \Hom_{\Lambda}(\II, \RR) = 0,& \\
&\Ext_{\Lambda}^n(\PP, \RR) = 0, \quad \Ext_{\Lambda}^n(\PP, \II) = 0 \text{, and } \Ext_{\Lambda}^n(\RR, \II) = 0.&
\end{eqnarray*}
\item[(c)] For any indecomposable non-projective $X\in\PP\vee\II$ (respectively, non-injective $Y\in\PP\vee\II$), there exists an $n$-almost split sequence
\[0\to Y\xrightarrow{}C_{n-1}\xrightarrow{}C_{n-2}\xrightarrow{}\cdots\xrightarrow{}C_1\xrightarrow{}C_0\xrightarrow{}X\to 0\]
such that $Y\iso\tau_n(X)$ and $X\iso\tau_n^-(Y)$.
\end{itemize}
\end{theorem}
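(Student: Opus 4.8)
The strategy is to establish (a), (b), (c) in that order, since each builds on the previous, and to work throughout in the bounded derived category $\DDD^{\rm b}(\mod\Lambda)$ where the autoequivalence $\nu_n$ and its quasi-inverse act; here $\nu_n = \nu[-n]$ with $\nu = D\Lambda \Lotimes_\Lambda -$ the usual Nakayama functor. The key external inputs are: the definition of $n$-representation infinite (all $\nu_n^{-i}(\Lambda)$ are modules, equivalently all $\nu_n^i(D\Lambda)$ are modules by a dual argument), the characterization of $\RR$ via $\nu_n^i(X)\in\mod\Lambda$ for all $i\in\Z$, and the $n$-hereditary formalism (Theorem~\ref{dichotomy}) which gives vanishing of $\Ext^j_\Lambda$ in intermediate degrees between objects of $\PP\vee\RR\vee\II$ and controls how $\Hom$ in the derived category relates to $\Ext^n_\Lambda$. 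I would also use that $\nu_n$ restricts to an autoequivalence of the exact category $\CC^0$ (or of $\mod\Lambda$ on the relevant pieces), which is essentially the content of Proposition~\ref{nakayama R} together with the module condition in the definition.

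\textbf{Step 1: part (a).} I would prove $\PP\vee\RR\vee\II\subseteq\CC^0$ first: for $X\in\PP$, $\Ext^i_\Lambda(\PP,X)=0$ for $0<i<n$ follows because $X=\nu_n^{-j}(\Lambda)$ and applying the autoequivalence $\nu_n^j$ turns this into $\Ext^i_\Lambda(\nu_n^{j}\PP,\Lambda)$-type vanishing, which holds since $\Lambda$ is projective and $\nu_n^j\PP$ consists of modules; $\Ext^i_\Lambda(\II,X)=0$ is Serre-duality-dual. The reverse inclusion $\CC^0\subseteq\PP\vee\RR\vee\II$ is the substantial direction: given an indecomposable $X\in\CC^0$, I would look at the orbit $\{\nu_n^i(X)\}_{i\in\Z}$ in the derived category and show that either it eventually lands in (shifts of) projectives — forcing $X\in\PP$ — or eventually lands in injectives — forcing $X\in\II$ — or stays in $\mod\Lambda$ for all $i$, i.e. $X\in\RR$. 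The mechanism is a ``boundedness/cohomological amplitude'' argument: the condition $\Ext^i_\Lambda(\PP\vee\II,X)=0$ for $0<i<n$ is exactly what is needed to control the cohomology of $\nu_n^i(X)$ so that it cannot spread out, mimicking the hereditary case where $\tau^{-i}$ of a regular module stays a module. This is where I expect the main obstacle: making precise the dichotomy ``an indecomposable either has a projective/injective in its $\nu_n$-orbit, or is $n$-regular'', which presumably is done by combining the $n$-hereditary structure with a pigeonhole/finiteness argument on composition factors or on $\Hom$-spaces $\Hom_\Lambda(\Lambda,\nu_n^i(X))$.

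\textbf{Step 2: part (b).} Each of the six vanishing statements reduces, via the autoequivalence $\nu_n$, to a statement about $\Lambda$ or $D\Lambda$. For instance $\Hom_\Lambda(\II,\PP)=0$: take $Y=\nu_n^i(D\Lambda)\in\II$ and $X=\nu_n^{-j}(\Lambda)\in\PP$; applying $\nu_n^{j}$ gives $\Hom_\Lambda(\nu_n^{i+j}(D\Lambda),\Lambda)$, and since $i+j\ge 0$ this is $\Hom$ from an $n$-preinjective module into the projective $\Lambda$; one then argues by induction on $i+j$ using the $n$-almost split sequences (or directly, that $\nu_n^{k}(D\Lambda)$ for $k>0$ has no projective summands and no maps to $\Lambda$ because such a map would, after applying $\nu_n^{-1}$, produce a map to $\nu_n^{-1}(\Lambda)\in\mod\Lambda$, and iterate until reaching $D\Lambda$, which has no nonzero map to an injective-free object... ). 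Similarly $\Hom_\Lambda(\RR,\PP)=0$ follows since a nonzero map $R\to\nu_n^{-j}(\Lambda)$ gives, after $\nu_n^{j+1}$, a map $\nu_n^{j+1}(R)\to\nu_n(\Lambda)=D\Lambda[-?]$, but $\nu_n^{j+1}(R)\in\mod\Lambda$ while $\nu_n(\Lambda)$ lives in negative degree (it is not a module, since $\Lambda$ is not injective as $\Lambda$ is representation infinite), a contradiction on cohomological degrees. The $\Ext^n$-vanishing statements are Serre-dual to the $\Hom$-vanishing ones: $\Ext^n_\Lambda(X,Y)\cong D\Hom_\Lambda(Y,\nu_n X)$ in the relevant situations, and one reads off e.g. $\Ext^n_\Lambda(\PP,\RR)=0$ from $\Hom_\Lambda(\RR,\nu_n\PP)=\Hom_\Lambda(\RR,\PP)=0$ since $\nu_n\PP\subseteq\PP$.

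\textbf{Step 3: part (c).} For an indecomposable non-projective $X\in\PP$, by definition $X=\nu_n^{-\ell}(\Lambda')$ for some indecomposable projective $\Lambda'=e\Lambda$ and $\ell\ge 1$, so $\tau_n(X)=\nu_n(X)=\nu_n^{-(\ell-1)}(\Lambda')\in\PP$ is again a module and indecomposable. The $n$-almost split sequence is then obtained by splicing together the defining exact sequence: take a minimal projective resolution $0\to P_n\to\cdots\to P_0\to \Lambda'$... actually, the cleanest route is to apply $\nu_n^{-\ell}$ (equivalently $\nu^{-\ell}[n\ell]$) to the obvious complex and then take an appropriate truncation; one uses that $\nu_n^{-j}(\Lambda')$ is a module for all $0\le j\le\ell$ to see that the resulting complex, which a priori lives in degrees $0,\dots,-n$, collapses to an honest long exact sequence $0\to\tau_n(X)\to C_{n-1}\to\cdots\to C_0\to X\to 0$. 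That this sequence is $n$-almost split (i.e. satisfies the appropriate lifting property in $\CC^0$) follows from the standard characterization of $n$-almost split sequences via the functor $\Ext^n_\Lambda(-,\tau_n X)$ being a ``socle'' functor, combined with the vanishing established in (b) which guarantees no maps escape to the wrong component; the isomorphisms $Y\cong\tau_n(X)$, $X\cong\tau_n^-(Y)$ are then automatic. The non-injective case in $\II$ is dual (apply $\nu_n$ instead of $\nu_n^{-1}$, start from $D\Lambda$). I expect (c) to be mostly formal once (a) and (b) are in place, with the only care needed being the verification that the spliced complex has no higher cohomology, which again rests on the $n$-representation infinite hypothesis.
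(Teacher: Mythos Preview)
Your plan for (a) and (b) is on the right track but misses the cleanest arguments. For (b), four of the six vanishings---all those involving $\RR$---are essentially the definition of $\RR$ combined with Serre duality (Observation~\ref{applied n-shifted serre}): by definition $\Ext^i_\Lambda(\PP,\RR)=0=\Ext^i_\Lambda(\RR,\II)$ for all $i>0$, and Serre duality converts the $\Ext^n$-vanishings into the $\Hom$-vanishings. For $\Hom_\Lambda(\II,\PP)=0$ the paper gives the one-line computation $\Hom_\Lambda(\nu_n^i(D\Lambda),\nu_n^{-j}(\Lambda))\cong\Hom_{\DDD}(\Lambda[n],\nu_n^{-i-j-1}(\Lambda))=0$; your suggestion to use the not-yet-proved $n$-almost split sequences would be circular. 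Also, your claim ``$\nu_n\PP\subseteq\PP$'' is false: $\nu_n(\Lambda)=D\Lambda[-n]\notin\PP$. For the hard direction of (a), your intuition about the $\nu_n$-orbit is correct, but the mechanism is not ``pigeonhole on composition factors'': it is the dichotomy Lemma~\ref{about DnZ}, which says that for indecomposable $X$ with $\nu_n^{-1}(X)\in\DDD^{n\Z}(\mod\Lambda)$, either $\nu_n^{-1}(X)\in\mod\Lambda$ or $X$ is injective. Applied inside $\CC^0$ this yields Lemma~\ref{P and I in C^0}: an indecomposable of $\CC^0$ lies in $\II$ iff some $\nu_n^{-j}(X)$ leaves $\CC^0$, and dually for $\PP$; what remains is exactly $\RR$.

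Part (c) has a genuine gap: you have no construction of the sequence. ``Apply $\nu_n^{-\ell}$ to the obvious complex and truncate'' produces nothing---there is no obvious complex to start from (your $\Lambda'$ is already projective), and the middle terms $C_i$ and their membership in $\PP$ have to come from somewhere. The paper's construction is via iterated right $\PP$-approximations. First (Lemma~\ref{approximation0}), $\PP$ is contravariantly finite in $\Sub\PP$: for any submodule $N$ of $M\in\PP$, only finitely many objects of $\ind\PP$ map nonzero to $M$ (Proposition~\ref{modules under nakayama}(b)), hence to $N$. Then for $X\in\ind\PP$ non-projective, a sink map $f_0\colon C_0\to X$ in $\CC^0$ exists by the same finiteness; one applies approximations iteratively to $\Kernel f_0\in\Sub\PP$ to build the sequence with all $C_i\in\PP$ (Proposition~\ref{approximation}), using the intermediate $\Ext$-vanishing on $\CC^0$ to check that after $n-1$ steps the final kernel lands back in $\CC^0$ and hence in $\PP$. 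Finally, a self-duality argument (Lemma~\ref{self duality}) shows that such a sequence in $\PP$ starting from a sink map is automatically $n$-almost split and has left end $\tau_n(X)$, via the functorial isomorphism between the simple cokernel functors at the two ends induced by $\tau_n$. Your claim that (c) is ``mostly formal once (a) and (b) are in place'' understates the work: the approximation argument is the substantive content.
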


It is worth noting that $n$-representation infinite algebras form an answer to the following question in homological algebra, which was asked in \cite{I2} and at ICRA XI:

\begin{question} \label{ICRA XI}
Is there a finite dimensional algebra $\Lambda$ with an infinite set $\{X_i\}_{i\in I}$ of isomorphism classes of indecomposable $\Lambda$-modules
such that $\Ext^1_\Lambda(X_i,X_j)=0$ for any $i,j\in I$?
\end{question}

We see (in Proposition~\ref{P and I}), that for any $n$-representation infinite algebra with $n > 1$, both the categories $\mathscr{P}$ of $n$-preprojective and the category $\mathscr{I}$ of $n$-preinjective modules provide examples of the form asked for by Question~\ref{ICRA XI}.

As in the classical case $n=1$ and the case of $n$-representation finite algebras, one main method for studying an $n$-representation infinite algebra $\Lambda$ is to look at the associated \emph{preprojective algebra} $\Pi$ . 
The name is explained by the fact that $\Pi$ is the direct sum of all $n$-preprojective modules as a $\Lambda$-modules.
By a result of Keller \cite{K2} it is a bimodule $(n+1)$-Calabi-Yau algebra \cite{G}. Using results in \cite{AIR,MM}, we have a bijection between $n$-representation infinite algebras and bimodule $(n+1)$-Calabi-Yau algebras of Gorenstein parameter $1$ (Theorem~\ref{preprojective correspondence}).
Moreover we apply methods in non-commutative algebraic geometry to preprojective algebras to study the category of $n$-regular modules.
In particular, we use Minamoto's theory \cite{M} on Fano algebras to give the following description of $n$-regular modules:

\begin{theorem}[see Theorem~\ref{description of R}]
Let $\Lambda$ be $n$-representation infinite algebra and $\Pi$ the associated preprojective algebra. If $\Pi$ is left graded coherent, then we have
\[ \mathscr{R} = \qgr_0 \Pi\]
where $\qgr\Pi$ is the quotient category of graded $\Pi$-modules modulo finite dimensional modules and $\qgr_0 \Pi$ is the full subcategories consisting of graded $\Pi$-modules of dimension one.
\end{theorem}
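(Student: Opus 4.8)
The plan is to transport the claim across Minamoto's derived equivalence and then recognise $\mathscr R$ by a cohomological vanishing condition inside $\qgr\Pi$.

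\emph{Set-up.}
By the results on preprojective algebras established above, $\Pi$ is a graded bimodule $(n+1)$-Calabi--Yau algebra of Gorenstein parameter $1$, and since $\Lambda$ has global dimension $n$ it is moreover AS-regular of dimension $n+1$; being in addition left graded coherent, $\qgr\Pi$ is a $\Hom$-finite abelian category of cohomological dimension $n$ on which the degree twist $(1)$ is an autoequivalence and which admits a Serre functor $\mathbb S$ with $\mathbb S(\mathcal M)\cong\mathcal M(-1)[n]$ (the twist by $-1$ being forced by the Gorenstein parameter being $1$). As the Gorenstein parameter is $1$, Minamoto's theory provides a triangle equivalence
\[ F=\RHom_{\qgr\Pi}(\mathcal{O},-)\colon\DDD^{\rm b}(\qgr\Pi)\xrightarrow{\ \sim\ }\DDD^{\rm b}(\mod\Lambda),\qquad\mathcal{O}:=\pi\Pi,\quad\End_{\qgr\Pi}(\mathcal{O})=\Pi_0=\Lambda. \]
Comparing Serre functors through $F$, and using that the Serre functor of $\DDD^{\rm b}(\mod\Lambda)$ is $\nu_n[n]$, we obtain $F\circ(1)\cong\nu_n^{-1}\circ F$; writing $G=F^{-1}$, this reads $G(\nu_n^iX)\cong(GX)(-i)$ for every $X\in\mod\Lambda$ and every $i\in\Z$.

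\emph{Reduction.}
For $\mathcal{M}\in\DDD^{\rm b}(\qgr\Pi)$ put $\underline H^p(\mathcal{M}):=\bigoplus_{m\in\Z}\Ext^p_{\qgr\Pi}(\mathcal{O},\mathcal{M}(m))$, so that $\underline H^p=H^p\circ\mathbf{R}\Gamma_*$; this graded $\Pi$-module vanishes for $p<0$ and for $p>n$, equals $\Gamma_*(\mathcal{M})$ when $p=0$, and one has $\pi\circ\mathbf{R}\Gamma_*\cong\mathrm{id}$. Since $G(\mod\Lambda)=\{\mathcal{M}\mid\RHom_{\qgr\Pi}(\mathcal{O},\mathcal{M})\text{ is concentrated in degree }0\}$ and $\Ext^p_{\qgr\Pi}(\mathcal{O},\mathcal{M}(m))=\underline H^p(\mathcal{M})_m$, the set-up yields, for $X\in\mod\Lambda$,
\[ X\in\mathscr R\iff (GX)(j)\in G(\mod\Lambda)\ \ \forall\,j\in\Z\iff\underline H^p(GX)=0\ \ \forall\,p\ge 1, \]
and the last condition forces $\mathbf{R}\Gamma_*(GX)=\Gamma_*(GX)$ to be an honest module, whence $GX=\pi\Gamma_*(GX)\in\qgr\Pi$. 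Thus $\mathscr R=\qgr_0\Pi$, understood as the equality of the corresponding full subcategories under $F$, will follow once we show: an object $\mathcal{M}\in\qgr\Pi$ satisfies $\underline H^p(\mathcal{M})=0$ for all $p\ge 1$ if and only if $\dim\mathcal{M}\le 1$.

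\emph{The key lemma, and the main obstacle.}
Write $\mathcal{M}=\pi M$ with $M$ a finitely generated graded $\Pi$-module, and let $H^\bullet_{\mm}$ be graded local cohomology at the augmentation ideal $\mm=\Pi_{\ge 1}$. Over the AS-regular, left graded coherent algebra $\Pi$ one has the Serre-type comparison $\underline H^p(\mathcal{M})\cong H^{p+1}_{\mm}(M)$ for all $p\ge 1$. Since any nonzero finitely generated graded module $M$ satisfies $H^q_{\mm}(M)=0$ for $q>\dim M$ while $H^{\dim M}_{\mm}(M)\neq 0$, the condition ``$\underline H^p(\mathcal{M})=0$ for all $p\ge 1$'', i.e.\ ``$H^q_{\mm}(M)=0$ for all $q\ge 2$'', is equivalent to $\dim M\le 1$, that is, to $\mathcal{M}\in\qgr_0\Pi$. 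This proves the lemma and hence the theorem. The substance is imported rather than computed here: Minamoto's derived equivalence for Fano algebras of Gorenstein parameter $1$ and the computation of the Serre functor of $\qgr\Pi$, together with the non-commutative projective geometry of Artin--Zhang (finite cohomological dimension, the $\chi$-conditions, local duality) and the dimension theory of graded $\Pi$-modules underlying the comparison $\underline H^p\cong H^{p+1}_{\mm}$. In the Noetherian setting these are classical; the hypothesis that $\Pi$ be left graded coherent is exactly what makes this machinery --- and therefore the whole argument --- available in the generality required.
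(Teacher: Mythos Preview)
Your overall strategy coincides with the paper's: transport the question through Minamoto's equivalence (Theorem~\ref{Minamoto}), identify $\nu_n^{-1}$ with the degree shift $(1)$, and then invoke the comparison $\underline{\Ext}^p(\OO,M)\cong H^{p+1}_{\mm}(M)$ for $p\ge 1$ (Proposition~\ref{Ext and Hm}) to pass from the $\Ext$-vanishing to the dimension condition. Your ``key lemma'' is exactly the paper's Lemma~\ref{description of qgr_l}.

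There is, however, a genuine gap in the Reduction step. You assert that $\underline{H}^p(\mathcal M)=0$ for $p<0$ whenever $\mathcal M\in\DDD^{\rm b}(\qgr\Pi)$, and use this to justify the second equivalence in
\[
X\in\RR\ \Longleftrightarrow\ (GX)(j)\in G(\mod\Lambda)\ \forall j\ \Longleftrightarrow\ \underline{H}^p(GX)=0\ \forall p\ge 1.
\]
But the $p<0$ vanishing holds only for $\mathcal M$ in the heart $\qgr\Pi$, not for arbitrary complexes, and you do not yet know $GX\in\qgr\Pi$. In fact the second equivalence fails backward: translated to $\mod\Lambda$, the condition $\underline{H}^p(GX)=0$ for $p\ge 1$ reads $H^p(\nu_n^{-j}(X))=0$ for $p\ge 1$ and all $j$. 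For $j\ge 0$ this is automatic (Proposition~\ref{t-structure}(b)), and for $j\le 0$ it says exactly $\nu_n^{k}(X)\in\mod\Lambda$ for all $k\ge 0$ --- only half of the characterization of $\RR$ in Proposition~\ref{nakayama R}. Any non-injective $X\in\II$ satisfies this half while $X\notin\RR$, so your deduction ``whence $GX\in\qgr\Pi$'' is circular.

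The repair is to argue the two inclusions separately. For $X\in\RR$ one has $\nu_n^{-i}(X)\in\mod\Lambda$ for all $i$, so $X\in\TT^{\le0}\cap\TT^{\ge0}$ and hence $GX\in\qgr\Pi$; then $\underline{H}^p(GX)=0$ for $p\ge1$ and your key lemma give $GX\in\qgr_0\Pi$. Conversely, for $\mathcal M\in\qgr_0\Pi$ the key lemma yields $\underline{H}^p(\mathcal M)=0$ for $p\ge1$, while $\mathcal M(j)\in\qgr\Pi$ gives $\Ext^p(\OO,\mathcal M(j))=0$ for $p<0$; hence $F(\mathcal M(j))=\nu_n^{-j}(F\mathcal M)\in\mod\Lambda$ for every $j$, i.e.\ $F\mathcal M\in\RR$. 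This is precisely the bookkeeping the paper carries out via the intermediate description of $\mod\Lambda\cap\qgr\Pi$ in Lemma~\ref{intersection}.
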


One aspect in this paper that heavily relies on the interplay of $n$-representation infinite algebras and their preprojective algebras is our study of \emph{$n$-representation tame} algebras.
We call an $n$-representation infinite algebra $n$-representation tame if its preprojective algebra is a Noetherian algebra. For $n=1$ our notion of $1$-representation tame algebras coincides with the classical notion of representation tame hereditary algebras.
In this situation, we obtain a nice decomposition of the category of $n$-regular modules, which generalizes the classical result for tame hereditary algebras.

\begin{theorem}[see Theorem~\ref{theorem.R_decomp_tame}]
Let $\Lambda$ be $n$-representation tame and $\Pi$ the associated preprojective algebra. Then the category of $n$-regular modules decomposes as
\[ \mathscr{R} = \coprod_{\pp \in \MaxProj R} \fd \Pi_{(\pp)}\]
where $\Pi_{(\pp)}$ is given by a localization of $\Pi$ with respect to $\pp \in \MaxProj R$.
\end{theorem}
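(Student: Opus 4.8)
The plan is to combine the description of $\mathscr{R}$ in terms of $\qgr\Pi$ (Theorem~\ref{description of R}) with a local-to-global analysis of the finite-length part of $\qgr\Pi$.

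Since $\Lambda$ is $n$-representation tame, $\Pi$ is Noetherian, hence in particular left graded coherent, so Theorem~\ref{description of R} applies and gives $\mathscr{R}=\qgr_0\Pi$. It therefore suffices to establish the decomposition for $\qgr_0\Pi$. Here I would invoke the structure theory of $n$-representation tame algebras developed above: $\Pi$ is module-finite over the commutative Noetherian $\mathbb{N}$-graded ring $R$ appearing in the statement, with $\Proj R=\MaxProj R$ the relevant projective scheme of closed points. In particular $\qgr\Pi$ is a Noetherian abelian category with finite-dimensional homomorphism spaces.

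Next I would identify $\qgr_0\Pi$ with the subcategory of finite-length objects of $\qgr\Pi$ and give its support decomposition. Via the finite ring map $R\to\Pi$, a finitely generated graded $\Pi$-module $M$ of dimension $\le 1$ is a finitely generated graded $R$-module of dimension $\le 1$, so $\ann_R M$ cuts out a finite set $\{\pp_1,\dots,\pp_r\}\subseteq\MaxProj R$; standard commutative algebra (primary decomposition for the zero-dimensional scheme $\Proj(R/\ann_R M)$, transported along $R\to\Pi$) then gives a decomposition $\overline{M}=\bigoplus_{j=1}^{r}\overline{M}_{\pp_j}$ in $\qgr\Pi$ with $\overline{M}_{\pp_j}$ of finite length and supported only at $\pp_j$, and conversely every finite-length object arises so. This yields $\qgr_0\Pi=\coprod_{\pp\in\MaxProj R}\mathscr{R}_\pp$, where $\mathscr{R}_\pp$ is the full subcategory of objects of $\qgr_0\Pi$ supported only at $\pp$.

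The heart of the argument is then the local equivalence $\mathscr{R}_\pp\simeq\fd\Pi_{(\pp)}$. Here $\Pi_{(\pp)}$ is the degree-zero part of the homogeneous localization $\Pi[S^{-1}]$, with $S$ the set of homogeneous elements of $R$ not in $\pp$; being the localization of a Noetherian algebra, it is Noetherian and module-finite over the commutative Noetherian local ring $R_{(\pp)}$. The functor $(-)_{(\pp)}\colon\Gr\Pi\to\Mod\Pi_{(\pp)}$, $M\mapsto(M[S^{-1}])_0$, is exact, and it annihilates finite-dimensional modules because $S$ contains an element of positive degree (as $\pp\not\supseteq R_+$), which acts nilpotently on any bounded graded module; hence it factors through $\qgr\Pi$. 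It kills $\mathscr{R}_{\pp'}$ for $\pp'\ne\pp$ and carries $\mathscr{R}_\pp$ into $\fd\Pi_{(\pp)}$. A quasi-inverse is obtained by equipping a finite-dimensional $\Pi_{(\pp)}$-module with a graded $\Pi$-module structure via a Rees-type construction (using a homogeneous system of parameters of $R$ at $\pp$, and that such a module is supported at the maximal ideal of $R_{(\pp)}$); one then checks the two composites are naturally isomorphic on $\mathscr{R}_\pp$ and on $\fd\Pi_{(\pp)}$. This is a noncommutative counterpart of the identification of finite-length sheaves on $\Proj R$ with finite-length $\mathcal{O}_{\Proj R,\pp}$-modules, and may be deduced from the Artin--Zhang formalism or, since we work entirely inside the torsion/finite-length part, by a direct computation using the finiteness of $\Pi$ over $R$. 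Combining with the support decomposition gives $\qgr_0\Pi=\coprod_{\pp\in\MaxProj R}\fd\Pi_{(\pp)}$, as required.

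The main obstacle I expect is precisely this local step: proving that $(-)_{(\pp)}$ restricts to an \emph{equivalence} $\mathscr{R}_\pp\simeq\fd\Pi_{(\pp)}$, above all its essential surjectivity --- that every finite-dimensional $\Pi_{(\pp)}$-module lifts to a genuine one-dimensional graded $\Pi$-module, rather than only to a module over the completion $\widehat{\Pi_{(\pp)}}$. This is where the Noetherian hypothesis is essential (it forces $\Pi$ to be a finite $R$-module and keeps $\Pi_{(\pp)}$ module-finite over the Noetherian local ring $R_{(\pp)}$), together with a careful comparison of graded localization with the $\qgr$-construction; once it is in place, the support decomposition is routine. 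One should also confirm that indexing by $\MaxProj R$ rather than $\Proj R$ is correct, which holds since finite-length objects have zero-dimensional support.
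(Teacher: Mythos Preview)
Your proposal is correct and follows essentially the same three-step strategy as the paper: $\mathscr{R}=\qgr_0\Pi$ via Theorem~\ref{description of R}, then a support decomposition $\qgr_0\Pi=\coprod_{\pp}\qgr_{\pp}\Pi$, then a local equivalence $\qgr_{\pp}\Pi\simeq\fd\Pi_{(\pp)}$. The paper resolves the ``main obstacle'' you flag by factoring the local equivalence through an intermediate category: it shows $(-)_{[\pp]}\colon\qgr_{\pp}\Pi\to\gr\fd\Pi_{[\pp]}$ is an equivalence with quasi-inverse $(-)_{\ge 0}$, and separately that $(-)_0\colon\gr\fd\Pi_{[\pp]}\to\fd\Pi_{(\pp)}$ is an equivalence with quasi-inverse $\Pi_{[\pp]}\otimes_{\Pi_{(\pp)}}-$, the latter relying crucially on the fact that $\Pi$ is generated in degrees $0$ and $1$ (so that $\Pi_{[\pp]}(i)\in\add\Pi_{[\pp]}$ for all $i$); this replaces your vaguer ``Rees-type construction'' with an explicit and clean quasi-inverse.
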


For instance in the case of the Beilinson algebra we obtain the decomposition $\mathscr{R} \equi \coprod \fd K[[t_1, \ldots, t_n]]$, where the coproduct runs over all closed points in $\mathbb{P}^n$ (Example~\ref{example of description of R}(b)).

\medskip
It is somewhat surprising that Auslander's representation dimension \cite{A2} may also be applied to our higher dimensional representation theory:
By a classical result, an algebra has representation dimension at most two if and only if it is representation finite.
In particular any representation infinite algebra has representation dimension at least three.
The first example of algebras with representation dimension at least four was given by Rouquier~\cite{Ro}.
Here we show that, at least if we restrict to the finite/tame situation, representation dimension also determines if an algebra is $n$-representation finite. More precisely, we show the following:
\begin{theorem}(Propositon~\ref{repdim of finite}, Theorem~\ref{main})
\[ \repdim \Lambda \begin{cases} \leq n+1 & \text{ if $\Lambda$ is $n$-representation finite,} \\ \geq n+2 & \text{ if $\Lambda$ is $n$-representation tame.} \end{cases} \]
\end{theorem}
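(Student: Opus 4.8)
The plan is to prove the two bounds separately; the first is short, and the second carries all the difficulty.

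\emph{Upper bound for $n$-representation finite $\Lambda$.} Let $\mathcal{M}=\add M$ be the $n$-cluster tilting subcategory of $\mod\Lambda$ with additive generator $M$. Since $\mathcal{M}$ contains all indecomposable projective and injective modules, $\Lambda\oplus D\Lambda\in\add M$, so $M$ is a generator--cogenerator and therefore $\repdim\Lambda\le\gl\End_\Lambda(M)$ by \cite{A2}. Now $\Gamma=\End_\Lambda(M)$ is the $(n+1)$-Auslander algebra of $\Lambda$, and $\gl\Gamma\le n+1$ by Iyama's higher Auslander correspondence \cite{I1}: using that $\mathcal{M}$ is $n$-rigid, i.e.\ $\Ext^i_\Lambda(\mathcal{M},\mathcal{M})=0$ for $0<i<n$, an $\add M$-resolution $0\to M_n\to\cdots\to M_0\to X\to0$ of $X\in\mod\Lambda$ yields an exact sequence $0\to\Hom_\Lambda(-,M_n)\to\cdots\to\Hom_\Lambda(-,M_0)\to\Hom_\Lambda(-,X)\to0$ of $\Gamma$-modules, which bounds the projective dimension of every $\Gamma$-module by $n+1$. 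Hence $\repdim\Lambda\le n+1$.

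\emph{Lower bound for $n$-representation tame $\Lambda$: reduction.} The starting point is Rouquier's inequality $\repdim\Lambda\ge\dim\DDD^{\rm b}(\mod\Lambda)+2$, valid for any finite dimensional algebra \cite{Ro} (a generator--cogenerator $M$ with $\gl\End_\Lambda(M)=r$ generates $\DDD^{\rm b}(\mod\Lambda)$ as a triangulated category in $r$ steps). So it suffices to prove $\dim\DDD^{\rm b}(\mod\Lambda)\ge n$.

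\emph{Lower bound: the dimension estimate.} For this I would exploit the tame structure theorem (Theorem~\ref{theorem.R_decomp_tame}). Since $\Lambda$ is $n$-representation tame, $\Pi$ is Noetherian, and being bimodule $(n+1)$-Calabi--Yau it has Gelfand--Kirillov dimension $n+1$ and is module finite over a central commutative Noetherian graded ring $R$ with $\dim R=n+1$; thus $\MaxProj R$ has dimension $n$ and $\RR=\coprod_{\pp\in\MaxProj R}\fd\Pi_{(\pp)}$, where $\Pi_{(\pp)}$ is module finite over the complete local ring $\widehat{R_{\pp}}$, of Krull dimension $n$. From the description $\RR=\{X\in\mod\Lambda\mid\nu_n^iX\in\mod\Lambda\ \forall i\}$ and the long exact cohomology sequence one sees that $\RR$ is a full abelian subcategory of $\mod\Lambda$ closed under extensions, so the inclusion induces a triangle functor $\DDD^{\rm b}(\RR)\to\DDD^{\rm b}(\mod\Lambda)$. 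Fixing a closed point $\pp$ at which $\widehat{R_{\pp}}$ is well behaved (e.g.\ regular of dimension $n$, available at a smooth point of $\MaxProj R$), the block $\fd\Pi_{(\pp)}$ of $\RR$ carries the central action of $\widehat{R_{\pp}}$, which is thereby transported into $\DDD^{\rm b}(\mod\Lambda)$. Picking a regular system of parameters $a_1,\dots,a_n$ of $\widehat{R_{\pp}}$ and a finite length module $T$ on which they act as a regular sequence, the Koszul construction on $T$ produces a chain of $n$ nonzero $\Lambda$-ghost morphisms in $\DDD^{\rm b}(\mod\Lambda)$ with nonzero composite; combining the ghost lemma with Rouquier's reduction from an arbitrary triangulated generator to the distinguished one \cite{Ro} --- which applies because any generator of $\DDD^{\rm b}(\mod\Lambda)$ restricts to a generator at $\pp$ --- gives $\dim\DDD^{\rm b}(\mod\Lambda)\ge n$, hence $\repdim\Lambda\ge n+2$. (Alternatively one can use the derived equivalence $\DDD^{\rm b}(\mod\Lambda)\simeq\DDD^{\rm b}(\qgr\Pi)$ from Minamoto's theory together with the fact that $\qgr\Pi$ is finite over $\coh\MaxProj R$ and Rouquier's bound $\dim\DDD^{\rm b}(\coh X)\ge\dim X$ \cite{Ro}.)

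\emph{Main obstacle.} Everything hard is in the last step. First one must be sure $R$ really has Krull dimension $n+1$ (equivalently $\widehat{R_{\pp}}$ dimension $n$): this should follow from the fact that $\Pi$ has Gelfand--Kirillov dimension $n+1$, part of the $(n+1)$-Calabi--Yau property, but one has to compare this with the Krull dimension of the central subring $R$, and to know that a point $\pp$ with $\widehat{R_{\pp}}$ regular of dimension $n$ exists. Second, and more delicate, the Rouquier dimension is an infimum over \emph{all} triangulated generators, so exhibiting one long chain of $\Lambda$-ghosts is not enough; one needs the Koszul-type self-extension pattern of $T$ to survive after localising at $\pp$ and to yield ghosts relative to an arbitrary generator. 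This is precisely where the decomposition $\RR=\coprod_{\pp}\fd\Pi_{(\pp)}$ is indispensable: it lets one work at a single point and there reduce to the well-understood case of a commutative regular local ring of dimension $n$, where $\dim\DDD^{\rm b}(\mod\widehat{R_{\pp}})\ge n$ is classical \cite{Ro}.
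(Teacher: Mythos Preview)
Your upper bound argument is correct and matches the paper's Proposition~\ref{repdim of finite} exactly.

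For the lower bound, there are two issues, one minor and one substantial.

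\emph{Minor.} The inequality you invoke, $\repdim\Lambda\ge\dim\DDD^{\rm b}(\mod\Lambda)+2$, is not what Rouquier proved, and your own justification (a generator--cogenerator with $\gl\End_\Lambda(M)=r$ builds $\DDD^{\rm b}$ in $r$ steps) yields only $\dim\DDD^{\rm b}\le r-1$, hence $+1$ rather than $+2$. The inequality the paper uses is the one from \cite{O}: $\repdim\Lambda\ge\dim_{\DDD^{\rm b}(\mod\Lambda)}\mod\Lambda+2$. Since $\RR\subset\mod\Lambda$, it then suffices to show $\dim_{\DDD^{\rm b}(\mod\Lambda)}\RR\ge n$; this is Theorem~\ref{theorem.dim_regular}.

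\emph{Substantial.} You correctly identify the crux: the dimension is an infimum over \emph{all} generators $T$, so producing Koszul ghosts for one particular object is not enough. But your proposed resolution via the block decomposition $\RR=\coprod_{\pp}\fd\Pi_{(\pp)}$ does not close the gap. That decomposition is internal to $\RR$; it does not produce a triangle functor out of $\DDD^{\rm b}(\mod\Lambda)$ along which one could push an arbitrary $T$ and compare $\langle T\rangle_n$ with $\langle R_{(\pp)}\rangle_n$. Knowing $\dim\DDD^{\rm b}(\mod\widehat{R_{\pp}})\ge n$ gives you nothing about $\dim\DDD^{\rm b}(\mod\Lambda)$ without such a functor.

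The paper's argument---which is in fact your parenthetical ``alternative''---supplies exactly this missing functor. One passes via Minamoto's equivalence to $\DDD^{\rm b}(\qgr\Pi)$, chooses (by Noether normalization) a graded \emph{polynomial} subring $R$ of the center over which $\Pi$ is finite, and then uses the honest restriction-of-scalars functor $\qgr\Pi\to\coh\Proj R$ and its localization at $\pp$. Restriction preserves the filtration $\langle-\rangle_\ell$, and for any fixed $T$ there is a dense open set of $\pp\in\Proj R$ on which $(T|_{\Proj R})_{(\pp)}\in\langle R_{(\pp)}\rangle_1$ (generic freeness, since $R$ is regular). Taking $\pp\in\MaxProj R$ in this open set and any non-zero $X\in\qgr_{\pp}\Pi$, Auslander--Buchsbaum and Corollary~\ref{consequence of tame}(a) give $\pd_{R_{(\pp)}}X_{(\pp)}=\dim R_{(\pp)}\ge n$, whence the Ghost Lemma yields $X_{(\pp)}\notin\langle R_{(\pp)}\rangle_n$ and therefore $X\notin\langle T\rangle_n$. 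Note that the block decomposition of $\RR$ plays no role; what matters is the existence of the restriction functor to a commutative regular base on which every generator is generically free.
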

We conjecture that any $n$-representation infinite algebra $\Lambda$ satisfies $\repdim\Lambda\ge n+2$ (Conjecture~\ref{conj.repdim}). We prove that this is true for $n=2$ (Theorem~\ref{at least 4}). One reason to believe this is that $n$-representation tame algebras are simplest among $n$-representation infinite algebras.

Results in this paper were presented in Oberwolfach (February 2011) \cite{I5}, Balestrand (June 2011) and Shanghai (October 2011).

\medskip\noindent
{\bf Acknowledgements.}
The authors thank Hiroyuki Minamoto for a valuable suggestion which improved our original Theorem~\ref{theorem.R_decomp_tame} by using Lemma~\ref{nice grading} and Proposition~\ref{new proof 1}.
We are grateful to him and Izuru Mori for stimulating discussion about $n$-representation infinite algebras and Fano algebras. The second author thanks Osamu Fujino, Susan Sierra and Hokuto Uehara for stimulating discussions.

\subsection{Notation}\label{notation}
For general background in representation theory, we refer to \cite{ARS,ASS}.

Throughout this paper $K$ denotes a field. We denote by $D$ the $K$-dual, that is $D(-) = \Hom_K(-,K)$.
By the composition $fg$ of morphisms means first $f$, then $g$.

Let $\Lambda$ be a $K$-algebra. All modules in this paper are left modules. 
We denote by $\Mod\Lambda$ the category of $\Lambda$-modules, by $\mod\Lambda$ the category of finitely presented $\Lambda$-modules, 
by $\proj\Lambda$ the category of finitely generated projective $\Lambda$-modules, by $\inj\Lambda$ the category of finitely generated injective $\Lambda$-modules, and by $\fd\Lambda$ the category of finite dimensional $\Lambda$-modules.
If $\Lambda$ is a graded $K$-algebra we write $\Gr \Lambda$ for the category of graded $\Lambda$-modules, $\gr \Lambda$ for the category of finitely presented graded $\Lambda$-modules, and $\gr \proj \Lambda$ for the category of finitely generated graded projective $\Lambda$-modules.

We denote by $\CCC(-)$, $\KKK(-)$, and $\DDD(-)$ the category of complexes, the homotopy category, and the derived category, respectively. By $\CCC^{\rm b}(-)$, $\KKK^{\rm b}(-)$ and $\DDD^{\rm b}(-)$ we mean the bounded version. 

For a class $\XX$ of objects in an additive category $\CC$, we denote by $\add_{\CC}\XX$ or $\add\XX$ the full subcategory of $\CC$ consisting of direct summands of finite direct sums of objects in $\XX$.
For additive categories $\CC_i$ ($i\in I$) we write $\coprod_{i \in I} \CC_i$ for the coproduct of the categories.
For a Krull-Schmidt category $\CC$ we denote by $\ind \CC$ the class of isomorphism classes of indecomposable objects in $\CC$.
For full subcategories $\CC_i$ ($i\in I$) of $\CC$, we denote by $\bigvee_{i \in I}\CC_i$ the full subcategory of $\CC$ satisfying $\bigvee_{i \in I}\CC_i=\add(\bigcup_{i\in I}\ind\CC_i)$.
Note that a decomposition $\CC=\bigvee_{i \in I} \CC_i$ is a coproduct if and only if $\Hom_{\CC}(\CC_i, \CC_j) = 0 \; \forall i \neq j$.

\section{$n$-representation infinite algebras}

Let $n$ be a positive integer and $\Lambda$ a ring-indecomposable finite dimensional $K$-algebra. Throughout this section we assume that $\Lambda$ has global dimension at most $n$.

Let us recall the Nakayama functor on $\DDD^{\rm b}(\mod \Lambda)$ following Happel \cite{Ha}.
Define the functors
\[
\nu:= D\Hom_\Lambda(-, \Lambda) \colon \mod\Lambda \to \mod\Lambda \ \mbox{ and }\ 
\nu^{-}:= \Hom_{\Lambda^{\op}}(D-, \Lambda) \colon \mod\Lambda \to \mod\Lambda.
\]
Since $\proj\Lambda = \add \Lambda$ and $\inj\Lambda = \add D \Lambda$, they induce quasi-inverse equivalences
\[
\xymatrix{\proj\Lambda \ar@<.5ex>[r]^{\nu}& \inj\Lambda,\ar@<.5ex>[l]^{\nu^{-}}}
\]
which in turn induce triangle equivalences between homotopy categories
\[
\xymatrix{\KKK^{\rm b}(\proj\Lambda) \ar@<.5ex>[r]^{\nu}& \KKK^{\rm b}(\inj\Lambda).\ar@<.5ex>[l]^{\nu^{-}}}
\]
Since $\Lambda$ has finite global dimension, the inclusions $\KKK^{\rm b}(\proj\Lambda) \to \DDD^{\rm b}(\mod \Lambda)$ and
$\KKK^{\rm b}(\inj\Lambda)\to \DDD^{\rm b}(\mod \Lambda)$ are triangle equivalences. Thus we obtain the Nakayama functors
\begin{eqnarray*}
\nu&:=&D\RHom_\Lambda(-,\Lambda) \colon \DDD^{\rm b}(\mod \Lambda) \to \DDD^{\rm b}(\mod \Lambda),\\
\nu^{-1}&:=&\RHom_{\Lambda^{\op}}(D-, \Lambda) \colon \DDD^{\rm b}(\mod \Lambda) \to \DDD^{\rm b}(\mod \Lambda).
\end{eqnarray*}
They are quasi-inverse each other. It is easy to check
$\nu \iso (D\Lambda)\Lotimes_\Lambda-$ and $\nu^{-1}\iso\RHom_\Lambda(D\Lambda,-)$.
Moreover $\nu$ gives a Serre functor \cite{BK} of $\DDD^{\rm b}(\mod \Lambda)$ in the sense that there exists a functorial isomorphism
\[\Hom_{\DDD^{\rm b}(\mod \Lambda)}(X,Y)\iso D\Hom_{\DDD^{\rm b}(\mod \Lambda)}(Y,\nu(X)).\]

In this paper an important role is played by the autoequivalence
\[\nu_n:=\nu\circ[-n] \colon \DDD^{\rm b}(\mod \Lambda)\to\DDD^{\rm b}(\mod \Lambda).\]
We collect some immediate properties of $\nu_n$.

\begin{observation}\label{easy properties of nu_n}
\begin{itemize}
\item[(a)] For any $i\in\Z$, we have the following functorial isomorphism:
\[
\Hom_{\DDD^{\rm b}(\mod \Lambda)}(X,Y[i])\iso D\Hom_{\DDD^{\rm b}(\mod \Lambda)}(Y,\nu_n(X)[n-i]).
\]
\item[(b)] We have the following commutative diagram:
\[ \xymatrix@C=5em{
\DDD^{\rm b}(\mod \Lambda)\ar^{D}[d]\ar^{\nu_n}[r]&\DDD^{\rm b}(\mod \Lambda) \ar^{D}[d]\\
\DDD^{\rm b}(\mod \Lambda^{\rm op})\ar^{\nu_n^{-1}}[r]&\DDD^{\rm b}(\mod \Lambda^{\rm op})
}\]
\item[(c)] For $i, j \in \Z$ we have
\begin{align*} \Ho^i(\nu_n^j(D \Lambda)) & = 
\Hom_{\DDD^{\rm b}(\mod \Lambda)}(\Lambda, \nu_n^j(D \Lambda)[i]) = \Hom_{\DDD^{\rm b}(\mod \Lambda)}(\nu_n^{-j}(\Lambda), D \Lambda [i]) \\
& = D \Ho^{-i}(\nu_n^{-j}(\Lambda)).
\end{align*}
\end{itemize}
\end{observation}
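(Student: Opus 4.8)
The final statement is Observation~\ref{easy properties of nu_n}, which collects three basic functorial identities about $\nu_n = \nu \circ [-n]$. Here is how I would prove each part.

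\medskip

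The plan is to reduce everything to the defining property of the Nakayama functor $\nu$, namely that it is a Serre functor: $\Hom_{\DDD^{\rm b}(\mod \Lambda)}(X,Y) \iso D\Hom_{\DDD^{\rm b}(\mod \Lambda)}(Y,\nu(X))$ functorially, together with the fact that $\nu$ is a triangle autoequivalence (hence commutes with the shift $[1]$).

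\medskip

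For part (a): I would start from the Serre duality isomorphism applied to $X$ and $Y[i]$, giving $\Hom(X, Y[i]) \iso D\Hom(Y[i], \nu(X))$. Since shift is an equivalence, $\Hom(Y[i], \nu(X)) \iso \Hom(Y, \nu(X)[-i])$. Now $\nu(X)[-i] = \nu(X)[-n][n-i] = \nu_n(X)[n-i]$ because $\nu_n = \nu\circ[-n]$ and shifts commute with $\nu$ (as $\nu$ is triangulated). Assembling these gives $\Hom(X,Y[i]) \iso D\Hom(Y,\nu_n(X)[n-i])$, and functoriality is inherited from the functoriality of Serre duality and of the shift isomorphisms. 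This is essentially a one-line manipulation.

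\medskip

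For part (b): I would use the two descriptions $\nu \iso (D\Lambda)\Lotimes_\Lambda -$ and $\nu^{-1} \iso \RHom_\Lambda(D\Lambda, -)$, or more directly the formulas $\nu = D\RHom_\Lambda(-,\Lambda)$ and $\nu^{-1} = \RHom_{\Lambda^{\op}}(D-,\Lambda)$. Applying the $K$-dual $D$ exchanges left and right modules and swaps $\RHom_\Lambda(-,\Lambda)$ with $\RHom_{\Lambda^{\op}}(-,\Lambda)$ after one more dualization; tracking this carefully shows $D\circ\nu \iso \nu^{-1}\circ D$ as functors $\DDD^{\rm b}(\mod\Lambda) \to \DDD^{\rm b}(\mod\Lambda^{\op})$. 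Tensoring on both sides with the shift by $\pm n$ — noting that $D$ intertwines $[-n]$ on one side with $[n]$ on the other (since $D$ is contravariant, $D([-n]) = [n]\circ D$) — upgrades this to the claimed commutative square $D\circ\nu_n \iso \nu_n^{-1}\circ D$. The only care needed is bookkeeping of signs of shifts under the contravariant functor $D$.

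\medskip

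For part (c): I would expand $\Ho^i(M) = \Hom_{\DDD^{\rm b}(\mod\Lambda)}(\Lambda, M[i])$ for $M = \nu_n^j(D\Lambda)$, since $\Lambda$ is projective so $\Hom_{\DDD^{\rm b}}(\Lambda, M[i])$ computes the $i$-th cohomology. Then apply part (a) with $X = \Lambda$, $Y = D\Lambda$, $i \rightsquigarrow$ appropriate shift, together with the fact that $\nu_n^j$ is an equivalence so $\Hom(\Lambda, \nu_n^j(D\Lambda)[i]) = \Hom(\nu_n^{-j}(\Lambda), D\Lambda[i])$. Finally, $\Hom_{\DDD^{\rm b}}(N, D\Lambda[i]) = \Hom_{\DDD^{\rm b}}(N[-i], D\Lambda) = D\Ho^{-i}(N)$ because $D\Lambda$ is injective and $\Hom$ into an injective cogenerator computes the dual of homology. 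Taking $N = \nu_n^{-j}(\Lambda)$ yields $D\Ho^{-i}(\nu_n^{-j}(\Lambda))$, as claimed.

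\medskip

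None of these steps presents a real obstacle; the statement is a formal consequence of Serre duality plus standard adjunctions. The only point requiring genuine attention is the sign/degree bookkeeping in part (b), where the contravariant dual $D$ flips the direction of all shifts, so one must verify that $[-n]$ on the source genuinely corresponds to $[-n]$ (not $[n]$) on the target after all identifications — but this follows by composing the two halves of the diagram and checking on $\Lambda$ directly.
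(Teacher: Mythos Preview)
The paper does not actually prove this statement: it is labelled as an ``Observation'' and simply listed as an immediate consequence of the Serre functor property of $\nu$ and the definition $\nu_n=\nu\circ[-n]$. Your verification of each part is correct and is exactly the unpacking one would do to justify the observation; in particular your shift bookkeeping in (b) (using $D\circ[-n]=[n]\circ D$ for the contravariant duality, together with $D\circ\nu\iso\nu^{-1}\circ D$ via $\RHom_\Lambda(-,\Lambda)$) is the right computation, and your argument for (c) via $\Hom_{\DDD^{\rm b}}(\Lambda,-[i])=\Ho^i(-)$ and $\Hom_{\DDD^{\rm b}}(-,D\Lambda)=D\Ho^0(-)$ is the intended one.
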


The following basic property of $\nu_n$ follows from the fact that we assume $\gl\Lambda \leq n$.

\begin{proposition}\label{t-structure}(e.g. \cite[Proposition 5.4]{I4})
The following inclusions hold.
\begin{itemize}
\item[(a)] $\nu_n(\DDD^{\ge0}(\mod \Lambda)) \subset \DDD^{\ge0}(\mod \Lambda)$.
\item[(b)] $\nu_n^{-1}(\DDD^{\le0}(\mod \Lambda)) \subset \DDD^{\le0}(\mod \Lambda)$.
\end{itemize}
\end{proposition}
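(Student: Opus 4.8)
The plan is to verify the two inclusions directly from the standard $t$-structure on $\DDD^{\rm b}(\mod\Lambda)$, using that $\nu_n = \nu\circ[-n]$ together with the boundedness of the cohomological amplitude of $\nu$ imposed by $\gl\Lambda\le n$. Recall that $\DDD^{\ge 0}(\mod\Lambda)$ consists of complexes $X$ with $\Ho^i(X)=0$ for $i<0$, and dually for $\DDD^{\le 0}(\mod\Lambda)$. First I would reduce to the case of a module concentrated in degree $0$: since $\DDD^{\ge 0}(\mod\Lambda)$ is generated under extensions and positive shifts by such modules, and $\nu_n$ is a triangle functor, it suffices to check that $\nu_n(M)\in\DDD^{\ge 0}(\mod\Lambda)$ for every $M\in\mod\Lambda$ placed in degree $0$.

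For such an $M$, I would take a projective resolution $0\to P^{-n}\to\cdots\to P^{-1}\to P^0\to M\to 0$, which exists and has length at most $n$ because $\gl\Lambda\le n$; thus $M\iso P^\bullet$ with $P^\bullet$ concentrated in degrees $-n,\dots,0$. Then $\nu(M)\iso\nu(P^\bullet)$ is the complex obtained by applying the exact functor $\nu\colon\proj\Lambda\to\inj\Lambda$ termwise, hence is concentrated in degrees $-n,\dots,0$ as well, so $\Ho^i(\nu(M))=0$ for $i>0$ and for $i<-n$. Therefore $\nu_n(M)=\nu(M)[-n]$ is concentrated in degrees $0,\dots,n$, and in particular has no cohomology in negative degrees, giving $\nu_n(M)\in\DDD^{\ge 0}(\mod\Lambda)$. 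This proves (a); part (b) follows by the dual argument, using injective coresolutions of length at most $n$ and the functor $\nu^{-1}\colon\inj\Lambda\to\proj\Lambda$, or alternatively by applying the duality $D$ together with Observation~\ref{easy properties of nu_n}(b), which interchanges $\nu_n$ on $\DDD^{\rm b}(\mod\Lambda)$ with $\nu_n^{-1}$ on $\DDD^{\rm b}(\mod\Lambda^{\rm op})$ and $\DDD^{\ge 0}$ with $\DDD^{\le 0}$.

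The only mildly delicate point is the reduction step: one must make sure the "generated under extensions and positive shifts" argument is legitimate, i.e.\ that membership in the aisle $\DDD^{\ge 0}(\mod\Lambda)$ is preserved under the relevant operations and can be detected on cohomology; this is routine but worth stating cleanly. Alternatively one can avoid the reduction entirely and argue on a general bounded complex $X\in\DDD^{\ge 0}(\mod\Lambda)$ by replacing it with a complex of projectives whose terms vanish below some degree $-n + (\text{lowest nonzero cohomological degree})$ — more precisely, a complex $X$ with $\Ho^{<0}(X)=0$ admits a quasi-isomorphic bounded complex of projectives concentrated in degrees $\ge -n$, and then applying $\nu[-n]$ termwise keeps it in degrees $\ge 0$. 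Either way, the substantive input is simply $\gl\Lambda\le n$; no higher Auslander–Reiten machinery is needed, which is why this is recorded as a basic property.
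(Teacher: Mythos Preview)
The paper does not give its own proof of this proposition; it merely cites \cite[Proposition~5.4]{I4}. Your argument is correct and is exactly the standard one: for a module $M$ in degree $0$, a projective resolution of length $\le n$ shows $\nu(M)$ lives in degrees $[-n,0]$, hence $\nu_n(M)=\nu(M)[-n]$ lives in degrees $[0,n]\subset\DDD^{\ge0}$, and the general case follows by building objects of $\DDD^{\ge0}$ from their cohomology via triangles. One small wording quibble: ``positive shifts'' is ambiguous; what you mean is that $\DDD^{\ge0}$ is generated under extensions by $M[-i]$ for $M\in\mod\Lambda$ and $i\ge0$, i.e.\ shifts $[-i]$ into nonnegative cohomological degree. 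Your alternative at the end (replace $X\in\DDD^{\ge0}$ directly by a projective complex in degrees $\ge -n$ and apply $\nu[-n]$ termwise) sidesteps this and is the cleanest formulation.
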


As an easy consequences we get the following results that we will use often.

\begin{proposition}\label{modules under nakayama}
\begin{itemize}
\item[(a)] Let $M\in \mod\Lambda$. If $\nu_n^i(M) \in \mod\Lambda $ for some $i >0$, then $\nu_n^j(M) \in \mod\Lambda$ for all $0 \leq j \leq i$.
\item[(b)] $\Hom_{\DDD^{\bo}(\mod\Lambda)}(\nu_n^i(\Lambda),\nu_n^j(\Lambda))=0$ for any integers $i<j$.
\end{itemize}
\end{proposition}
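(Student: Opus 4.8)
The plan is to deduce both parts from the $t$-structure inclusions in Proposition~\ref{t-structure}, using the standard fact that an object of $\DDD^{\rm b}(\mod\Lambda)$ lying in $\DDD^{\ge 0}\cap\DDD^{\le 0}$ is concentrated in degree $0$, i.e.\ is a module.

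For part (a), the key observation is that $\Lambda$ itself, being a module, lies in $\DDD^{\ge 0}(\mod\Lambda)\cap\DDD^{\le 0}(\mod\Lambda)$; more relevantly, any $M\in\mod\Lambda$ lies in $\DDD^{\le 0}(\mod\Lambda)$ (it is concentrated in degree $0$, so certainly has no cohomology in positive degrees). By Proposition~\ref{t-structure}(a), applying $\nu_n$ repeatedly preserves $\DDD^{\ge 0}$, so if we knew $M\in\DDD^{\ge 0}$ we could conclude $\nu_n^j(M)\in\DDD^{\ge 0}$ for all $j\ge 0$. Combined with the dual argument we get the claim. Concretely: suppose $\nu_n^i(M)\in\mod\Lambda$ for some $i>0$, and fix $0\le j\le i$. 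Since $M\in\mod\Lambda\subset\DDD^{\ge 0}(\mod\Lambda)$, Proposition~\ref{t-structure}(a) gives $\nu_n^j(M)\in\DDD^{\ge 0}(\mod\Lambda)$. On the other hand $\nu_n^i(M)\in\mod\Lambda\subset\DDD^{\le 0}(\mod\Lambda)$, and applying Proposition~\ref{t-structure}(b) a total of $i-j\ge 0$ times yields $\nu_n^j(M)=\nu_n^{-(i-j)}(\nu_n^i(M))\in\DDD^{\le 0}(\mod\Lambda)$. Hence $\nu_n^j(M)\in\DDD^{\ge 0}\cap\DDD^{\le 0}$, so it is a module. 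This is the whole argument; there is no real obstacle here, just the bookkeeping of which inclusion to apply.

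For part (b), it suffices by applying $\nu_n^{-i}$ (an autoequivalence) to treat the case $i=0<j$, i.e.\ to show $\Hom_{\DDD^{\bo}(\mod\Lambda)}(\Lambda,\nu_n^j(\Lambda))=0$ for $j>0$. Using Observation~\ref{easy properties of nu_n}(a) with the shift $i=0$, we have
\[
\Hom_{\DDD^{\bo}(\mod\Lambda)}(\Lambda,\nu_n^j(\Lambda))\iso D\Hom_{\DDD^{\bo}(\mod\Lambda)}(\nu_n^{j-1}(\Lambda),\nu_n(\Lambda)[n]),
\]
but a cleaner route is: $\Hom(\Lambda,\nu_n^j(\Lambda))=\Ho^0(\nu_n^j(\Lambda))$, so we just need that $\nu_n^j(\Lambda)$ has no cohomology in degree $0$ for $j>0$. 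Now $\Lambda$ is projective, so $\Lambda\in\DDD^{\le 0}$ but in fact more: $\nu_n(\Lambda)=\nu(\Lambda)[-n]=D\Lambda[-n]$ is concentrated in degree $n$, so $\nu_n(\Lambda)\in\DDD^{\ge n}(\mod\Lambda)$. Iterating, $\nu_n^j(\Lambda)=\nu_n^{j-1}(D\Lambda[-n])\in\DDD^{\ge n}(\mod\Lambda)$ for all $j\ge 1$ by Proposition~\ref{t-structure}(a) (applied $j-1$ times, noting $D\Lambda[-n]\in\DDD^{\ge 0}$, and then shifting). Hence $\Ho^i(\nu_n^j(\Lambda))=0$ for all $i<n$; in particular $\Ho^0(\nu_n^j(\Lambda))=0$ since $n\ge 1$, which is exactly $\Hom_{\DDD^{\bo}(\mod\Lambda)}(\Lambda,\nu_n^j(\Lambda))=0$. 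Translating back, $\Hom_{\DDD^{\bo}(\mod\Lambda)}(\nu_n^i(\Lambda),\nu_n^j(\Lambda))=\Hom_{\DDD^{\bo}(\mod\Lambda)}(\Lambda,\nu_n^{j-i}(\Lambda))=0$ for $i<j$.

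The main subtlety to get right is the precise degree bookkeeping in part (b) — verifying that $\nu_n(\Lambda)$ sits in degree exactly $n$ and that Proposition~\ref{t-structure}(a) then pushes the connectivity bound up to $\DDD^{\ge n}$ after the shift. Everything else is formal manipulation with the two inclusions of Proposition~\ref{t-structure} and the description of $\nu_n$ on projectives.
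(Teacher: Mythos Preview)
Your proof is correct and follows essentially the same approach as the paper: part (a) spells out in detail what the paper dismisses as ``clear from Proposition~\ref{t-structure}'', and part (b) is the paper's argument almost verbatim (reduce to $i=0<j$, observe $\nu_n^j(\Lambda)=\nu_n^{j-1}(D\Lambda[-n])\in\DDD^{\ge n}(\mod\Lambda)$ via Proposition~\ref{t-structure}(a), conclude $\Ho^0=0$). The parenthetical in (b) is slightly garbled---you should either apply Proposition~\ref{t-structure}(a) to $D\Lambda\in\DDD^{\ge 0}$ and then shift by $[-n]$, or note that the inclusion $\nu_n(\DDD^{\ge 0})\subset\DDD^{\ge 0}$ shifts to $\nu_n(\DDD^{\ge n})\subset\DDD^{\ge n}$---but the conclusion is unaffected.
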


\begin{proof}
(a) This is clear from Proposition~\ref{t-structure}.

(b) Without loss of generality, we can assume $i=0<j$.
Then $\nu_n^j(\Lambda)=\nu_n^{j-1}(D\Lambda[-n])\in\DDD^{\ge n}(\mod \Lambda)$ by Proposition~\ref{t-structure}.
Thus $\Hom_{\DDD^{\bo}(\mod\Lambda)}(\Lambda,\nu_n^{j}(\Lambda))=
\Ho^0(\nu_n^{j-1}(D\Lambda[-n]))=0$.
\end{proof}

We define the \emph{$n$-Auslander-Reiten translations} by
\begin{eqnarray*}
\tau_n:=D\Ext^n_\Lambda(-,\Lambda) \colon \mod\Lambda\to\mod\Lambda\ \mbox{ and }\ 
\tau_n^-:=\Ext^n_{\Lambda^{\rm op}}(D-,\Lambda) \colon \mod\Lambda\to\mod\Lambda.
\end{eqnarray*}
Then we have
\begin{equation}\label{description of taun}
\tau_n\iso\Tor^\Lambda_n(D\Lambda,-)\ \mbox{ and }\ \tau_n^-\iso\Ext^n_\Lambda(D\Lambda,-)\iso\Ext^n_\Lambda(D\Lambda,\Lambda)\otimes_\Lambda-
\end{equation}
They are closely related to the functors $\nu_n$ by the following formulas. 
\[\tau_n=\Ho^0(\nu_n-) \colon \mod\Lambda\to\mod\Lambda\ \mbox{ and }\ \tau_n^-=\Ho^0(\nu_n^{-1}-) \colon \mod\Lambda\to\mod\Lambda.\]
The functors $\tau_n$ and $\tau_n^-$ turn out to play an important role in higher dimensional Auslander-Reiten theory of $n$-representation finite algebras (see \cite{HI1,HI2,I1,I4,IO1,IO2}) defined as follows:

\begin{definition}
We say that a $\Lambda$-module $M$ is \emph{$n$-cluster tilting} if
\begin{eqnarray*}
\add M&=&\{X\in\mod\Lambda \mid \Ext^i_\Lambda(M,X)=0\ \mbox{ for any }\ 0<i<n\}\\
&=&\{X\in\mod\Lambda \mid \Ext^i_\Lambda(X,M)=0\ \mbox{ for any }\ 0<i<n\}.
\end{eqnarray*}
We say that $\Lambda$ is \emph{$n$-representation finite} if
there exists an $n$-cluster tilting $\Lambda$-module and $\gl\Lambda\le n$.
\end{definition}
Notice that $M\in\mod\Lambda$ is $n$-cluster tilting if and only if $DM\in\mod\Lambda^{\op}$ is $n$-cluster tilting. Thus $\Lambda$ is $n$-representation finite if and only if $\Lambda^{\op}$ is $n$-representation finite.

\begin{example}
\begin{itemize}
\item[(a)] Let $n=1$. Then $1$-cluster tilting $\Lambda$-modules are nothing but additive generators of $\mod\Lambda$.
Thus $1$-representation finite algebras are precisely representation finite hereditary algebras.
If $K$ is algebraically closed, then they are path algebras $KQ$ of Dynkin quivers $Q$ up to Morita equivalences.
\item[(b)] It is shown in \cite{HI2} that $2$-representation finite algebras over algebraically closed fields
are precisely truncated Jacobian algebras of selfinjective quivers with potentials.
\end{itemize}
See \cite{HI1,HI2,IO1} for more examples of $n$-representation finite algebras.
\end{example}

Now recall the following description of $n$-representation finite algebras.

\begin{proposition}\label{finite description}
The following conditions are equivalent.
\begin{itemize}
\item[(a)] $\Lambda$ is $n$-representation finite.
\item[(b)] Any indecomposable projective $\Lambda$-module $P$ satisfies that $\nu_n^{-i}(P)$ is an indecomposable injective $\Lambda$-module for some $i\ge0$.
\end{itemize}
In this case $\Lambda$ has an $n$-cluster tilting module $\bigoplus_{i\ge0}\tau_n^{-i}(\Lambda)$.
\end{proposition}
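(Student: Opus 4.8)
The plan is to prove the equivalence (a) $\Leftrightarrow$ (b) by working with the $n$-cluster tilting module $M = \bigoplus_{i \ge 0} \tau_n^{-i}(\Lambda)$ and exploiting the formula $\tau_n^- = \Ho^0(\nu_n^{-1}-)$ together with the $t$-structure estimates in Proposition~\ref{t-structure}.

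\textbf{(a) $\Rightarrow$ (b).} Assume $\Lambda$ is $n$-representation finite, so there exists an $n$-cluster tilting module $M$. First I would recall the standard fact from higher Auslander--Reiten theory that on the $n$-cluster tilting subcategory $\add M$, the functor $\tau_n^-$ ``is'' $\nu_n^{-1}$ up to shift: precisely, if $X \in \add M$ is indecomposable and not injective, then $\nu_n^{-1}(X)$ is concentrated in degree $0$ and equals $\tau_n^-(X) \in \add M$, while if $X$ is indecomposable injective then $\nu_n^{-1}(X)$ need not be a module. The key input here is that $\add M$ is closed under $\nu_n^{-1}$ in the sense that $\nu_n^{-i}(M) \in \DDD^{\le 0} \cap \DDD^{\ge 0}$ after removing injective summands at each stage; this is where Proposition~\ref{t-structure}(b) does the work. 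Concretely, I would argue by induction that for an indecomposable projective $P$, the sequence $P, \nu_n^{-1}(P), \nu_n^{-2}(P), \ldots$ stays in $\mod\Lambda$ and inside $\add M$ as long as no injective has appeared, each step being $\tau_n^-$ applied to the previous one; since $\add M$ has only finitely many indecomposables and $\tau_n$ is the honest inverse of $\tau_n^-$ on the non-injective/non-projective indecomposables of $\add M$, the orbit cannot cycle, so it must terminate, i.e. some $\nu_n^{-i}(P)$ is an indecomposable injective. This uses that $P$ is not itself injective unless $\Lambda$ has a simple projective-injective block, which is ruled out since $\Lambda$ is ring-indecomposable and $\gl\Lambda \le n$ (or handled as a trivial base case).

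\textbf{(b) $\Rightarrow$ (a) and the last assertion.} Conversely, assume each indecomposable projective $P$ satisfies $\nu_n^{-\ell_P}(P) \in \inj\Lambda$ indecomposable for some minimal $\ell_P \ge 0$. By Proposition~\ref{modules under nakayama}(a), all intermediate terms $\nu_n^{-j}(P)$ for $0 \le j \le \ell_P$ lie in $\mod\Lambda$, and by minimality and Proposition~\ref{t-structure} one checks they lie in degree $0$; combined with $\tau_n^- = \Ho^0(\nu_n^{-1}-)$ this identifies $\nu_n^{-j}(P) = \tau_n^{-j}(P)$ for $j \le \ell_P$. Hence $M := \bigoplus_{i \ge 0} \tau_n^{-i}(\Lambda)$ is a finite direct sum (the orbits all terminate at injectives). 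The remaining task is to verify that this $M$ is $n$-cluster tilting, i.e. satisfies the vanishing conditions $\add M = \{X \mid \Ext^i_\Lambda(M,X) = 0,\ 0<i<n\} = \{X \mid \Ext^i_\Lambda(X,M)=0,\ 0<i<n\}$. I would derive this from Observation~\ref{easy properties of nu_n}(a): the higher self-extensions $\Ext^i_\Lambda(\tau_n^{-a}(\Lambda), \tau_n^{-b}(\Lambda))$ for $0 < i < n$ translate, via $\nu_n$-equivariance and the Serre duality formula, into $\Hom$-spaces between shifted $\nu_n$-orbits of $\Lambda$ and $D\Lambda$, which vanish by the same $t$-structure argument as in Proposition~\ref{modules under nakayama}(b). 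This part is essentially \cite[Theorem 3.1]{IO2} and I would cite it rather than reprove it in full; the point of the proposition is the clean characterization, so I would keep this direction brief and refer to the literature for the cluster-tilting verification.

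\textbf{Main obstacle.} The delicate point is not either implication in isolation but the precise bookkeeping that along a $\nu_n^{-1}$-orbit starting at a projective, every term before the terminal injective is genuinely a module concentrated in degree $0$ (so that replacing $\nu_n^{-1}$ by $\tau_n^-$ loses nothing), and symmetrically that once an injective is reached the orbit ``stops'' in the relevant sense. Proposition~\ref{t-structure} gives the one-sided containments $\nu_n^{-1}(\DDD^{\le 0}) \subset \DDD^{\le 0}$ and $\nu_n(\DDD^{\ge 0}) \subset \DDD^{\ge 0}$, so to pin a term into $\DDD^{\le 0} \cap \DDD^{\ge 0} = \mod\Lambda$ one must feed in information from \emph{both} ends of the orbit simultaneously — i.e. knowing the orbit terminates at an injective lets one run Proposition~\ref{t-structure}(a) backwards from that end. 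Managing this two-sided squeeze, and making sure the induction hypotheses at the two ends are compatible, is where the real care is needed; everything else is a formal consequence of the displayed identities $\tau_n^- = \Ho^0(\nu_n^{-1}-)$ and $\tau_n = \Ho^0(\nu_n -)$ and the Serre duality in Observation~\ref{easy properties of nu_n}.
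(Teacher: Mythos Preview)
The paper's own proof of this proposition is simply two citations: (a)$\Leftrightarrow$(b) is \cite[Theorem 3.1(1)$\Leftrightarrow$(2)]{IO2}, and the final statement is \cite[Theorem 1.6]{I4}. Your proposal goes well beyond this by attempting to sketch the actual arguments behind those cited results, which is reasonable but not what the paper does.

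Two points on the sketch itself. First, in your (b)$\Rightarrow$(a) direction you only outline the vanishing $\Ext^i_\Lambda(M,M)=0$ for $0<i<n$, i.e.\ that $M$ is $n$-rigid. This is far from the $n$-cluster tilting condition, which also requires that $\add M$ contains every $X$ with $\Ext^i_\Lambda(M,X)=0$ for $0<i<n$. You acknowledge this gap and defer to \cite{IO2}, but then your proof collapses back to essentially the paper's citation. Second, in your (a)$\Rightarrow$(b) direction the assertion that $\nu_n^{-1}(X)$ is concentrated in degree $0$ for non-injective indecomposable $X\in\add M$ is not a consequence of Proposition~\ref{t-structure} alone: that proposition gives $\nu_n^{-1}(X)\in\DDD^{\le0}$ but not $\DDD^{\ge0}$, and your ``two-sided squeeze'' from the main obstacle paragraph cannot be run here since you do not yet know the orbit terminates at an injective---that is precisely what you are trying to prove. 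The correct input is the structural fact (from \cite{I4}) that an $n$-cluster tilting subcategory is stable under $\tau_n^-$ and that $\nu_n^{-1}=\tau_n^-$ on its non-injective objects; this uses the cluster tilting property itself, not merely the $t$-structure estimates.
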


\begin{proof}
(a)$\Leftrightarrow$(b) is shown in \cite[Theorem 3.1(1)$\Leftrightarrow$(2)]{IO2}.

The statement for $n$-cluster tilting module follows from \cite[Theorem 1.6]{I4}.
\end{proof}

Motivated by the characterization of $n$-representation finite algebras in Proposition~\ref{finite description}(b), we define $n$-representation infinite algebras as follows.

\begin{definition}
Let $n$ be a positive integer. We say that a finite dimensional algebra $\Lambda$ is
\emph{$n$-representation infinite} if any indecomposable $\Lambda$-module $P$ satisfies that $\nu_n^{-i}(P)\in\mod\Lambda$ for any $i\ge0$, and $\gl\Lambda\le n$.

In this case, $\Lambda$ has global dimension precisely $n$ since $\Ext^n_\Lambda(D\Lambda,\Lambda)=\nu_n^{-1}(\Lambda)\neq0$.
\end{definition}

The name is explained by the following observation.

\begin{example}\label{path algebra}
Let $n=1$. Then $1$-representation infinite algebras are precisely representation infinite hereditary algebras:
For $\Lambda$ hereditary we have $\nu_1^{-1}(X) = \tau^-(X)$ for any non-injective indecomposable $\Lambda$-module $X$. The claim now follows from the descriptions of AR-quivers of hereditary algebras \cite{ARS}.
\end{example}

Let us prove left-right symmetry of $n$-representation infinite algebras.

\begin{proposition} \label{prop.infinite_leftrightsymmetric}
The following conditions are equivalent.
\begin{itemize}
\item[(a)] $\Lambda$ is $n$-representation infinite.
\item[(b)] $\nu_n^{-i}(\Lambda)\in\mod\Lambda$ for any $i\ge0$.
\item[(c)] $\Lambda^{\rm op}$ is $n$-representation infinite.
\item[(d)] $\nu_n^i(D\Lambda)\in\mod\Lambda$ for any $i\ge0$.
\end{itemize}
\end{proposition}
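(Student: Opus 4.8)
The plan is to route the four conditions through the two reformulations collected in Observation~\ref{easy properties of nu_n}: conditions (b) and (d) will serve as symmetric, ``computational'' versions of (a) and (c) respectively, and the left--right symmetry will then drop out of the compatibility of $\nu_n$ with the $K$-dual $D$. Concretely, I would prove the cycle (a) $\Leftrightarrow$ (b) $\Leftrightarrow$ (d) $\Leftrightarrow$ (c).

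The implication (a) $\Leftrightarrow$ (b) is immediate: $\Lambda$ is a finite direct sum of indecomposable projective $\Lambda$-modules and $\nu_n$ is an additive triangle functor, so for each $i \ge 0$ the complex $\nu_n^{-i}(\Lambda)$ is concentrated in degree $0$ if and only if $\nu_n^{-i}(P)$ is for every indecomposable projective summand $P$ of $\Lambda$; this is exactly the defining condition, and both (a) and (b) include the hypothesis $\gl\Lambda \le n$. The same observation applied to $\Lambda^{\op}$ shows that (c) is equivalent to ``$\nu_n^{-i}(\Lambda^{\op}) \in \mod\Lambda^{\op}$ for all $i \ge 0$'', which I will match with (d) in the last step. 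For (b) $\Leftrightarrow$ (d), fix $j$ and invoke Observation~\ref{easy properties of nu_n}(c), which gives $\Ho^i(\nu_n^j(D\Lambda)) \cong D\,\Ho^{-i}(\nu_n^{-j}(\Lambda))$ for all $i \in \Z$; hence $\nu_n^j(D\Lambda)$ is concentrated in degree $0$ precisely when $\nu_n^{-j}(\Lambda)$ is, and letting $j$ range over the non-negative integers yields the equivalence.

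For the final step (d) $\Leftrightarrow$ (c), I would use Observation~\ref{easy properties of nu_n}(b): the commuting square there says $D \circ \nu_n \cong \nu_n^{-1} \circ D$ as functors $\DDD^{\bo}(\mod\Lambda) \to \DDD^{\bo}(\mod\Lambda^{\op})$, hence $D \circ \nu_n^{i} \cong \nu_n^{-i} \circ D$ for every $i$. Applying this to the injective cogenerator $D\Lambda \in \mod\Lambda$, and using that $D$ carries $D\Lambda$ to the regular $\Lambda^{\op}$-module and that, being an exact contravariant duality, it satisfies $\Ho^i(DC) \cong D\,\Ho^{-i}(C)$ and so reflects the property of being concentrated in degree $0$, one obtains that $\nu_n^i(D\Lambda) \in \mod\Lambda$ if and only if $\nu_n^{-i}(\Lambda^{\op}) \in \mod\Lambda^{\op}$, for each $i$. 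Restricting to $i \ge 0$ identifies (d) for $\Lambda$ with ``$\nu_n^{-i}(\Lambda^{\op}) \in \mod\Lambda^{\op}$ for all $i \ge 0$'', which by the opposite-algebra version of (a) $\Leftrightarrow$ (b) is precisely (c). Chaining the three equivalences proves the proposition.

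There is no substantial obstacle: the argument is bookkeeping with the two identities of Observation~\ref{easy properties of nu_n}. The only point that needs attention is tracking the effect of $D$ on the distinguished modules---that the $K$-dual interchanges the regular module of an algebra with the injective cogenerator of its opposite---together with the elementary fact that an exact duality detects when a complex is a module (i.e.\ is concentrated in degree $0$); once these are pinned down the four conditions are forced to agree.
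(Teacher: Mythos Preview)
Your proof is correct and follows essentially the same approach as the paper: both arguments reduce (a)$\Leftrightarrow$(b) and (c)$\Leftrightarrow$(d) to the definitions together with Observation~\ref{easy properties of nu_n}(b), and use Observation~\ref{easy properties of nu_n}(c) for (b)$\Leftrightarrow$(d). The only difference is organizational---you chain the equivalences as a cycle and spell out the role of $D$ in more detail, whereas the paper records three separate pairwise equivalences---but the mathematical content is identical.
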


\begin{proof}
Clearly (a) is equivalent to (b), and (c) is equivalent to (d) by Observation~\ref{easy properties of nu_n}(b).

Now we prove that (b) is equivalent to (d).
By Observation~\ref{easy properties of nu_n}(c), we have that $\nu_n^j(D \Lambda)\in \mod \Lambda$ if and only if $\nu_n^{-j}(\Lambda)\in \mod \Lambda$.
\end{proof}

\subsection{First examples}

In the rest of this section, we give a few methods to construct $n$-representation infinite algebras. 

The first one is to use tensor products.
In what follows $\otimes$ denotes the tensor product over $K$. Let $\Lambda_1$ and $\Lambda_2$ be two finite dimensional $K$-algebras with Jacobson radicals $J_1$ and $J_2$ respectively. If $\Lambda_1/J_1 \otimes \Lambda_2 /J_2$ is semisimple then 
\[
\gl (\Lambda_1 \otimes \Lambda_2) = \gl \Lambda_1 + \gl \Lambda_2
\]
by \cite{A1}. Notice that $\Lambda_1/J_1 \otimes \Lambda_2 /J_2$ is always semisimple if $K$ is perfect or if $\Lambda_1$ and $\Lambda_2$ are path algebras of quivers factored by admissible ideals. Because of this compatibility of the tensor product with global dimension it is natural to consider tensor products of $n$-representation finite and $n$-representation infinite algebras.

Recall that an $n$-representation finite algebra is called \emph{$\ell$-homogeneous} if for any indecomposable $\Lambda$-module $P$, we have that $\tau_n^{-(\ell-1)}(P)$ is an indecomposable injective $\Lambda$-module. It is shown in \cite[Corollary 1.5]{HI1} that if $\Lambda_i$ is an $\ell$-homogeneous $n_i$-representation finite algebra for $i \in \{1,2\}$ such that $\Lambda_1/J_1 \otimes \Lambda_2 /J_2$ is semisimple then $\Lambda_1\otimes\Lambda_2$ is an $\ell$-homogeneous $(n_1+n_2)$-representation finite algebra.

For $n$-representation infinite algebras, we have the following simpler result, where we do not have to care $\ell$-homogeneity.

\begin{theorem}\label{thm.tensorInf}
Let $\Lambda_i$ be an $n_i$-representation infinite algebra for $i \in \{1,2\}$.
If $\Lambda_1/J_1 \otimes \Lambda_2 /J_2$ is semisimple then
$\Lambda_1\otimes\Lambda_2$ is an $(n_1+n_2)$-representation infinite algebra.
\end{theorem}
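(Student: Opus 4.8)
The plan is to reduce the statement to the criterion in Proposition~\ref{prop.infinite_leftrightsymmetric}(b), namely that $\nu_n^{-i}(\Lambda_1\otimes\Lambda_2)\in\mod(\Lambda_1\otimes\Lambda_2)$ for all $i\ge 0$, where $n=n_1+n_2$. The first step is to record that global dimension is additive, i.e.\ $\gl(\Lambda_1\otimes\Lambda_2)=n_1+n_2$ by \cite{A1}, using the hypothesis that $\Lambda_1/J_1\otimes\Lambda_2/J_2$ is semisimple; this is exactly the setup explained before the theorem, so we may cite it directly. Hence the derived category $\DDD^{\bo}(\mod(\Lambda_1\otimes\Lambda_2))$ carries the Nakayama functor $\nu$ and the autoequivalence $\nu_n=\nu\circ[-n]$, and the whole discussion of the section applies.

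Next I would establish a K\"unneth-type compatibility of the Nakayama functors with $\otimes$. Concretely, for bounded complexes $X_i\in\DDD^{\bo}(\mod\Lambda_i)$ one has a natural isomorphism
\[ \nu_{\Lambda_1\otimes\Lambda_2}(X_1\otimes X_2)\iso \nu_{\Lambda_1}(X_1)\otimes\nu_{\Lambda_2}(X_2) \]
in $\DDD^{\bo}(\mod(\Lambda_1\otimes\Lambda_2))$. This follows from $\nu\iso(D\Lambda)\Lotimes_\Lambda-$, the identity $D(\Lambda_1\otimes\Lambda_2)\iso D\Lambda_1\otimes D\Lambda_2$, and the fact that the derived tensor product over $\Lambda_1\otimes\Lambda_2$ of external products is the external product of the derived tensor products (which is where one uses projective resolutions over each factor and the hypothesis ensuring their external product is again a projective resolution). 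Iterating and inserting the shift, one gets for all $i\in\Z$
\[ \nu_n^{\,i}(X_1\otimes X_2)\iso \nu_{n_1}^{\,i}(X_1)\otimes\nu_{n_2}^{\,i}(X_2), \]
since $[-n]=[-n_1]\circ[-n_2]$ and the shift commutes past $\otimes$ on each factor appropriately; it is cleanest to verify this on $\Lambda_1\otimes\Lambda_2$ itself, which is all we need.

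Now specialize to $X_1=\Lambda_1$, $X_2=\Lambda_2$, so $X_1\otimes X_2=\Lambda_1\otimes\Lambda_2$. For each $i\ge 0$ we have $\nu_n^{-i}(\Lambda_1\otimes\Lambda_2)\iso\nu_{n_1}^{-i}(\Lambda_1)\otimes\nu_{n_2}^{-i}(\Lambda_2)$. By Proposition~\ref{prop.infinite_leftrightsymmetric}(b) applied to each $\Lambda_j$, the complex $\nu_{n_j}^{-i}(\Lambda_j)$ is concentrated in degree $0$, i.e.\ a module $M_j^{(i)}\in\mod\Lambda_j$. Hence $\nu_{n_1}^{-i}(\Lambda_1)\otimes\nu_{n_2}^{-i}(\Lambda_2)\iso M_1^{(i)}\otimes M_2^{(i)}$ is again concentrated in degree $0$ (the external product of two modules is a module), so $\nu_n^{-i}(\Lambda_1\otimes\Lambda_2)\in\mod(\Lambda_1\otimes\Lambda_2)$. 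Since this holds for all $i\ge 0$, Proposition~\ref{prop.infinite_leftrightsymmetric}(b) gives that $\Lambda_1\otimes\Lambda_2$ is $(n_1+n_2)$-representation infinite, completing the proof.

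The main obstacle is the K\"unneth isomorphism at the derived level: one must be careful that $P_1\Lotimes_{\Lambda_1}-$ and $P_2\Lotimes_{\Lambda_2}-$ assemble into $(P_1\otimes P_2)\Lotimes_{\Lambda_1\otimes\Lambda_2}-$, which requires that the external product of a projective $\Lambda_1$-resolution and a projective $\Lambda_2$-resolution is a projective $\Lambda_1\otimes\Lambda_2$-resolution. Projectivity of the external product of projectives is immediate, but acyclicity of the tensor product of the two resolutions is precisely the point where the semisimplicity hypothesis on $\Lambda_1/J_1\otimes\Lambda_2/J_2$ (equivalently, the hypothesis behind additivity of global dimension) enters, via the vanishing of higher $\Tor$ between the factors; everything else is formal bookkeeping with shifts and the identity $D(\Lambda_1\otimes\Lambda_2)\iso D\Lambda_1\otimes D\Lambda_2$.
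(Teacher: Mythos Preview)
Your proof is correct and follows essentially the same approach as the paper: establish the K\"unneth compatibility $\nu_{n_1+n_2}(X_1\otimes X_2)\iso\nu_{n_1}(X_1)\otimes\nu_{n_2}(X_2)$ (this is stated separately as Lemma~\ref{lem.tensorNak}), then apply it iteratively to $\Lambda_1\otimes\Lambda_2$ and conclude via Proposition~\ref{prop.infinite_leftrightsymmetric}(b). One small remark: your final paragraph overstates where the semisimplicity hypothesis enters. Since $\otimes=\otimes_K$ is over a field, the K\"unneth formula already ensures that the external product of projective resolutions is a projective resolution of the external product, so the derived-level identity $(D\Lambda_1\otimes D\Lambda_2)\Lotimes_{\Lambda_1\otimes\Lambda_2}(X_1\otimes X_2)\iso(D\Lambda_1\Lotimes_{\Lambda_1}X_1)\otimes(D\Lambda_2\Lotimes_{\Lambda_2}X_2)$ holds without any extra assumption; the semisimplicity of $\Lambda_1/J_1\otimes\Lambda_2/J_2$ is used only to guarantee $\gl(\Lambda_1\otimes\Lambda_2)\le n_1+n_2$.
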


For the proof we need to investigate the relationship between the Nakayama functors and tensor products.

\begin{lemma}\label{lem.tensorNak}
Let $\Lambda_i$ be finite dimensional algebras with $\gl \Lambda_i \leq n_i$ for $i \in \{1,2\}$. For any $X \in \DDD^{\rm b}(\mod \Lambda_1)$ and $Y \in \DDD^{\rm b}(\mod \Lambda_2)$ the following statements hold.
\begin{itemize}
\item[(a)] $\nu(X \otimes Y) \iso \nu(X) \otimes \nu(Y)$,
\item[(b)] $\nu_{n_1+n_2}(X \otimes Y) \iso \nu_{n_1} (X) \otimes \nu_{n_2} (Y)$.
\end{itemize}
\end{lemma}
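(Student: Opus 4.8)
The plan is to prove (a) first and then deduce (b) as an immediate corollary. For (a), recall that $\nu \iso (D\Lambda) \Lotimes_\Lambda -$ on $\DDD^{\rm b}(\mod\Lambda)$, so for $\Lambda = \Lambda_1 \otimes \Lambda_2$ we want to identify $(D\Lambda) \Lotimes_\Lambda (X \otimes Y)$ with $((D\Lambda_1)\Lotimes_{\Lambda_1} X) \otimes ((D\Lambda_2)\Lotimes_{\Lambda_2} Y)$. The first step is the elementary observation that $D(\Lambda_1 \otimes \Lambda_2) \iso D\Lambda_1 \otimes D\Lambda_2$ as $\Lambda\text{-}\Lambda$-bimodules, which holds because everything is finite dimensional over $K$. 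The second step is a K\"unneth-type formula for derived tensor products: for complexes of projectives, $(P_1 \otimes P_2) \Lotimes_{\Lambda_1 \otimes \Lambda_2} (Q_1 \otimes Q_2) \iso (P_1 \Lotimes_{\Lambda_1} Q_1) \otimes (P_2 \Lotimes_{\Lambda_2} Q_2)$. Concretely, one takes a bounded projective resolution $P_i \to X$ over $\Lambda_i$ (possible since $\gl\Lambda_i < \infty$), notes that $P_1 \otimes P_2$ is a bounded complex of projective $\Lambda_1\otimes\Lambda_2$-modules (projectivity of the tensor product over the tensor algebra is standard), and computes $(D\Lambda_1 \otimes D\Lambda_2) \otimes_{\Lambda_1 \otimes \Lambda_2} (P_1 \otimes P_2)$ termwise; the canonical map to $(D\Lambda_1 \otimes_{\Lambda_1} P_1) \otimes (D\Lambda_2 \otimes_{\Lambda_2} P_2)$ is an isomorphism on each term because all tensor factors are finitely generated and one can reduce to the case $P_i = \Lambda_i$, where it is obvious. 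Passing to total complexes then gives the desired functorial isomorphism in $\DDD^{\rm b}(\mod\Lambda_1\otimes\Lambda_2)$.

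Having established (a), part (b) follows by combining it with the compatibility of the shift functor with the tensor product: $\nu_{n_1+n_2}(X \otimes Y) = \nu(X \otimes Y)[-n_1-n_2] \iso (\nu(X) \otimes \nu(Y))[-n_1-n_2] \iso \nu(X)[-n_1] \otimes \nu(Y)[-n_2] = \nu_{n_1}(X) \otimes \nu_{n_2}(Y)$, where the penultimate isomorphism uses that $(A[a]) \otimes (B[b]) \iso (A\otimes B)[a+b]$ for complexes of $K$-modules, which is just the usual sign-convention identity for the total complex of a double complex.

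The main obstacle is purely bookkeeping: making the K\"unneth isomorphism in step two genuinely functorial and independent of the choice of resolutions, and keeping track of the sign conventions in the total-complex identifications. One clean way to avoid choosing resolutions by hand is to work with the derived tensor product as a bifunctor on $\DDD^{\rm b}$ and invoke that the outer functor $- \otimes_K -$ sends quasi-isomorphisms to quasi-isomorphisms (again because $K$ is a field, so $\otimes_K$ is exact), so that $(X \otimes Y)$ already has the correct derived meaning and one only needs the isomorphism $(D\Lambda_1 \otimes D\Lambda_2) \otimes_{\Lambda_1\otimes\Lambda_2} (P_1 \otimes P_2) \iso (D\Lambda_1 \otimes_{\Lambda_1} P_1) \otimes_K (D\Lambda_2 \otimes_{\Lambda_2} P_2)$ at the level of complexes, which is a termwise statement about ordinary modules. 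None of this is deep, but writing it carefully is where the work lies; I would state the termwise isomorphism as a small sublemma and then simply say that it is compatible with differentials and hence yields the claimed isomorphism of complexes.
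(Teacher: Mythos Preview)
Your proposal is correct and follows essentially the same approach as the paper: use $\nu\iso(D\Lambda)\Lotimes_\Lambda-$, the bimodule isomorphism $D(\Lambda_1\otimes\Lambda_2)\iso D\Lambda_1\otimes D\Lambda_2$, and the K\"unneth-type compatibility of $\Lotimes$ with $\otimes_K$ to obtain (a), then deduce (b) from the shift identity $(A[a])\otimes(B[b])\iso(A\otimes B)[a+b]$. The paper simply writes the chain of isomorphisms in a single displayed line without further justification, so your expanded discussion of projective resolutions and termwise checking is more detailed than what the authors provide but not a different argument.
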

\begin{proof}
First we prove part (a).
\[
\nu(X \otimes Y) 
\iso D(\Lambda_1\otimes \Lambda_2) \Lotimes_{\Lambda_1\otimes \Lambda_2} (X \otimes Y)
\iso (D \Lambda_1 \Lotimes_{\Lambda_1} X) \otimes  (D\Lambda_2  \Lotimes_{\Lambda_2} Y)
\iso \nu (X) \otimes \nu (Y).
\]
Now (b) follows from:
\[
\nu_{n_1+n_2}(X \otimes Y) = \nu (X \otimes Y)[-n_1-n_2] \iso \nu (X)[-n_1] \otimes \nu (Y)[-n_2] = \nu_{n_1} (X) \otimes \nu_{n_2} (Y).
\]
\end{proof}

\begin{proof}[Proof of Theorem~\ref{thm.tensorInf}]
Since $\Lambda_1/J_1 \otimes \Lambda_2 /J_2$ is semisimple,
$\gl (\Lambda_1\otimes\Lambda_2) = n_1+n_2$. By applying Lemma~\ref{lem.tensorNak} repeatedly we get
\[
\nu_{n_1+n_2}^{-i}(\Lambda_1\otimes\Lambda_2)
\iso \nu_{n_1}^{-i}(\Lambda_1)\otimes \nu_{n_2}^{-i}(\Lambda_2)
\in\mod (\Lambda_1\otimes\Lambda_2)
\]
for any $i \ge 0$.
\end{proof}

\medskip
Combining Example~\ref{path algebra} and Theorem~\ref{thm.tensorInf} we get the following example (see \cite[Section 3.1]{HI1} for similar examples of $n$-representation finite algebras).

\begin{example}\label{tensor of Q}
Let $Q_i$ be a connected non-Dynkin quiver for $i=1,\ldots,m$.
Then the tensor product $KQ_1\otimes\cdots\otimes KQ_m$ is an $m$-representation infinite algebra.
\end{example}

\medskip
Next we give another construction using `$n$-APR tilting modules'.
Let $\Lambda$ be a basic finite dimensional algebra with global dimension at most $n$ and $P$ be a simple projective non-injective $\Lambda$-module.
When $\Ext^i_\Lambda(D\Lambda,P)=0$ holds for any $i$ with $0<i<n$, we define a $\Lambda$-module by
\[T:=\tau_n^-(P)\oplus (\Lambda/P).\]
Then $T$ is a tilting $\Lambda$-module with projective dimension $n$ by \cite[Theorem 3.2]{IO1}. We call $T$ the \emph{$n$-APR tilting $\Lambda$-module} with respect to $P$.
It is shown in \cite[Theorem 4.7]{IO1} that if $\Lambda$ is $n$-representation finite then $\End_\Lambda(T)$ is again $n$-representation finite.

By the following result, $n$-APR tilting modules also preserve $n$-representation infiniteness.

\begin{theorem}\label{n-APR preserves n-RI}
Let $\Lambda$ be an $n$-representation infinite algebra and $T$ an $n$-APR tilting $\Lambda$-module.
Then $\End_\Lambda(T)$ is again $n$-representation infinite.
\end{theorem}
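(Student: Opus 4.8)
The plan is to work with the derived picture and use the fact that an $n$-APR tilting module induces a derived equivalence $\DDD^{\rm b}(\mod\Lambda)\simeq\DDD^{\rm b}(\mod\Gamma)$, where $\Gamma:=\End_\Lambda(T)$. The key point is that the Nakayama functor $\nu$, hence $\nu_n=\nu\circ[-n]$, is intrinsic to the triangulated category together with its bounded $t$-structure of finite global dimension: $\nu$ is the Serre functor of $\DDD^{\rm b}(\mod\Lambda)$, and a Serre functor is unique up to isomorphism. So under any triangle equivalence $F\colon\DDD^{\rm b}(\mod\Lambda)\xrightarrow{\sim}\DDD^{\rm b}(\mod\Gamma)$ we have $F\circ\nu_n\iso\nu_n\circ F$, and therefore $F(\nu_n^{-i}(\Lambda))\iso\nu_n^{-i}(F(\Lambda))$ for all $i\ge0$. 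Since $\Gamma$ also has $\gl\Gamma\le n$ (the $n$-APR tilting module has projective dimension $n$, so the derived equivalence forces $\gl\Gamma\le n$; this is \cite[Theorem 3.2]{IO1} together with standard tilting theory), the only thing left to check is that $\nu_n^{-i}(\Gamma)$ is concentrated in degree $0$, using Proposition~\ref{prop.infinite_leftrightsymmetric}(b).

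Concretely, I would proceed as follows. First, recall from \cite[Theorem 3.2]{IO1} that $T=\tau_n^-(P)\oplus(\Lambda/P)$ is a tilting module of projective dimension $n$, giving a derived equivalence $F=\RHom_\Lambda(T,-)\colon\DDD^{\rm b}(\mod\Lambda)\xrightarrow{\sim}\DDD^{\rm b}(\mod\Gamma)$ with $F(T)\iso\Gamma$. Second, compute $F(\Lambda)$: since $\Lambda\iso\Hom_\Lambda(T,T)$-part... more precisely, $F(\Lambda)=\RHom_\Lambda(T,\Lambda)$, and I want to identify this explicitly as a $\Gamma$-module (it will be the direct sum, over indecomposable summands, of $\RHom_\Lambda(T,P')$ for the projective summands $P'$ of $\Lambda$, and $\RHom_\Lambda(T,P)$ which by the definition of $\tau_n^-$ as $\Ext^n_\Lambda(D\Lambda,-)$ sits in degree related to $n$). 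The cleanest route is instead to avoid computing $F(\Lambda)$ and argue on $D\Lambda$: by Proposition~\ref{prop.infinite_leftrightsymmetric} it suffices to show $\nu_n^i(D\Gamma)\in\mod\Gamma$ for all $i\ge0$, and $D\Gamma=F(\nu_n(T))$ is a natural candidate — one checks $F(\nu T)\iso D\Gamma$ since $\nu$ transports along $F$ and $F(T)=\Gamma$, so $F(\nu_n T)=\nu_n(\Gamma)=D\Gamma[\,?\,]$; one then shows $\nu_n^i(D\Gamma)\iso F(\nu_n^{i+1}(T))$ and reduces to showing $\nu_n^{j}(T)\in\mod\Lambda$ for all $j\ge1$. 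This last statement follows because $T$ is a summand of $\nu_n^{-1}(\Lambda)\oplus(\Lambda/P)$-type data: indeed $\tau_n^-(P)=\Ho^0(\nu_n^{-1}(P))$, and since $\Lambda$ is $n$-representation infinite, $\nu_n^{-1}(P)$ is already a module, so $\tau_n^-(P)=\nu_n^{-1}(P)$; hence $\nu_n^{j}(T)=\nu_n^{j-1}(P)\oplus\nu_n^{j}(\Lambda/P)$, and both summands are modules for $j\ge1$ by the defining property of $\Lambda$ (applied to the indecomposable summands of $P$ and of $\Lambda/P$, noting $\nu_n^{j}(\Lambda/P)$ is a summand of $\nu_n^{j}(\Lambda)$... wait, it is a summand of $\nu_n^{j-1}$ of something only after one step — more carefully, $\Lambda/P$ and $P$ are both modules, so all their $\nu_n$-positive shifts need the full $n$-representation infinite hypothesis in the form $\nu_n^k(M)\in\mod\Lambda$ for $M$ a module and $k\ge0$, which holds by Observation~\ref{easy properties of nu_n}(c) and Proposition~\ref{prop.infinite_leftrightsymmetric}).

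The main obstacle I anticipate is bookkeeping the degree shifts correctly — in particular pinning down $F(\Lambda)$ or $F(\nu_n T)$ precisely enough to start the induction, and making sure that the identification $F\circ\nu_n\iso\nu_n\circ F$ is used only where $F$ is genuinely a triangle equivalence of the two bounded derived categories (which it is, by tilting theory, once $\gl\Gamma<\infty$). A subtle point requiring care: one must verify $\gl\Gamma\le n$ a priori; this is exactly the content of $T$ being a tilting module of projective dimension $\le n$ over an algebra of global dimension $\le n$ (so $\Gamma$ has finite global dimension, and the derived equivalence plus the $t$-structure comparison of Proposition~\ref{t-structure}-type arguments bounds it by $n$). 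Once these compatibilities are in place, the conclusion $\nu_n^{-i}(\Gamma)\in\mod\Gamma$ for all $i\ge0$ is immediate, and Proposition~\ref{prop.infinite_leftrightsymmetric} finishes the proof.
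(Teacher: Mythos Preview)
Your overall framework is exactly right and matches the paper: use the derived equivalence $F=\RHom_\Lambda(T,-)$ and the fact that $F\circ\nu_n\iso\nu_n\circ F$ (uniqueness of the Serre functor), so that checking $\nu_n^{-i}(\Gamma)\in\mod\Gamma$ reduces to a $\Hom$-vanishing in $\DDD^{\rm b}(\mod\Lambda)$. But the execution breaks down at the step where you ``reduce to showing $\nu_n^{j}(T)\in\mod\Lambda$ for all $j\ge1$''.

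There are two independent errors. First, the reduction itself is invalid: knowing $\nu_n^{j}(T)\in\mod\Lambda$ would \emph{not} tell you that $F(\nu_n^{j}(T))$ lies in any particular degree, since $F$ does not send $\mod\Lambda$ into $\mod\Gamma$; and in addition the identification $\nu_n^i(D\Gamma)\iso F(\nu_n^{i+1}(T))$ misses a shift, since $D\Gamma=\nu(\Gamma)=\nu_n(\Gamma)[n]$. Second, the claim $\nu_n^{j}(T)\in\mod\Lambda$ for $j\ge1$ is simply false. Writing $T=\nu_n^{-1}(P)\oplus(\Lambda/P)$ one has $\nu_n(T)=P\oplus\nu_n(\Lambda/P)$, and since $\Lambda/P$ is projective, $\nu_n(\Lambda/P)=\nu(\Lambda/P)[-n]$ is an injective module placed in degree $n$, not a module. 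The general principle you invoke at the end, ``$\nu_n^k(M)\in\mod\Lambda$ for $M$ a module and $k\ge0$'', is not true: Proposition~\ref{prop.infinite_leftrightsymmetric} gives this only for $M\in\add D\Lambda$, and Proposition~\ref{t-structure}(a) goes in the wrong direction to help here.

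The fix is to work with \emph{negative} powers of $\nu_n$, as the paper does. One must show $\Hom_{\DDD^{\rm b}(\mod\Lambda)}(T,\nu_n^{-i}(T)[j])=0$ for all $i>0$ and $j\neq0$. For $i\ge0$ the object $\nu_n^{-i}(T)=\nu_n^{-i}(\Lambda/P)\oplus\nu_n^{-i-1}(P)$ \emph{is} a module, because now only negative powers of $\nu_n$ applied to summands of $\Lambda$ appear. The summand $\Lambda/P$ of $T$ is projective, so $\Hom(\Lambda/P,\nu_n^{-i}(T)[j])=0$ for $j\neq0$ immediately. For the summand $\nu_n^{-1}(P)$, apply $\nu_n$ to rewrite
\[
\Hom(\nu_n^{-1}(P),\nu_n^{-i}(T)[j])\iso\Hom(P,\nu_n^{1-i}(\Lambda/P)[j])\oplus\Hom(P,\nu_n^{-i}(P)[j]),
\]
and for $i\ge1$ both targets are again modules (since $1-i\le0$), so these vanish for $j\neq0$ as well. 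This is the whole argument; your concern about $\gl\Gamma\le n$ is legitimate but is handled by the tilting setup of \cite[Theorem~3.2]{IO1}.
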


\begin{proof}
Let $\Gamma:=\End_\Lambda(T)$. By uniqueness of the Serre functor, we have a commutative diagram
\[\xymatrix{
\DDD^{\rm b}(\mod\Lambda)\ar[d]^{\nu_n^{-1}}\ar[rr]^{\RHom_\Lambda(T,-)}&&
\DDD^{\rm b}(\mod\Gamma)\ar[d]^{\nu_n^{-1}}\\
\DDD^{\rm b}(\mod\Lambda)\ar[rr]^{\RHom_\Lambda(T,-)}&&\DDD^{\rm b}(\mod\Gamma).
}\]
We only have to show that $\Hom_{\DDD^{\rm b}(\mod\Gamma)}(\Gamma,\nu_n^{-i}(\Gamma)[j])$ is zero for any $i >0$ and $j\neq0$. Since $\tau_n^-(P)=\nu_n^{-1}(P)$, we have
\begin{align*}
\Hom_{\DDD^{\rm b}(\mod\Gamma)}(\Gamma,\nu_n^{-i}(\Gamma)[j])&\iso\Hom_{\DDD^{\rm b}(\mod\Lambda)}(T,\nu_n^{-i}(T)[j])\\
&=\Hom_{\DDD^{\rm b}(\mod\Lambda)}(\Lambda/P,\nu_n^{-i}(T)[j])\oplus\Hom_{\DDD^{\rm b}(\mod\Lambda)}(\nu_n^{-1}(P),\nu_n^{-i}(T)[j]).
\end{align*}
Since $\Lambda$ is $n$-representation infinite, we have for any $i>0$ and $j\neq0$
\begin{align*}
\Hom_{\DDD^{\rm b}(\mod\Lambda)}(\Lambda/P,\nu_n^{-i}(T)[j])&=0,\\
\Hom_{\DDD^{\rm b}(\mod\Lambda)}(\nu_n^{-1}(P),\nu_n^{-i}(T)[j])
&=\Hom_{\DDD^{\rm b}(\mod\Lambda)}(P,\nu_n^{1-i}(\Lambda/P)[j])\oplus\Hom_{\DDD^{\rm b}(\mod\Lambda)}(P,\nu_n^{-i}(P)[j])\\
&=0.
\end{align*}
Thus the assertion follows.
\end{proof}

The change of the quiver with relations via 2-APR tilting was described in \cite[Theorem~3.11]{IO1}. Here we give an example.

\begin{example}
Let $Q$ be an extended Dynkin quiver of type $\widetilde{A}_1$.
Then $KQ\otimes KQ$ is $2$-representation infinite by Example~\ref{tensor of Q}, and given by the following quiver with relations.
\[ \xymatrix@R=.7em{
1\ar@2{->}[rr]^{x_1}_{y_1}\ar@2{->}[dd]^{x_3}_{y_3}&&2\ar@2{->}[dd]^{x_2}_{y_2}\\
&&&\ \ \ x_1x_2=x_3x_4,\ x_1y_2=y_3x_4,\ y_1x_2=x_3y_4,\ y_1y_2=y_3y_4.\\
3\ar@2{->}[rr]^{x_4}_{y_4}&&4
}
\]
Let $T$ be an $2$-APR tilting module corresponding to the vertex $1$.
Then $\End_\Lambda(T)$ is $2$-representation infinite by Theorem~\ref{n-APR preserves n-RI}, and given by the following quiver with relations.
\[\xymatrix@R=.7em{
1&&2\ar@2{->}[dd]^(.3){x_2}_(.3){y_2}\\
&&&\ \ \ x_2r_1+y_2r_2=0,\ x_2r_3+y_2r_4=0,\ x_4r_1+y_4r_3=0,\ x_4r_2+y_4r_4=0.\\
3\ar@2{->}[rr]_(.3){x_4}^(.3){y_4}&&4\ar@<1.5mm>[lluu]^(.6){r_1}\ar@<.5mm>[lluu]|(.7){r_2}\ar@<-.5mm>[lluu]|(.5){r_3}\ar@<-1.5mm>[lluu]_(.6){r_4}
}
\]
\end{example}

We end this section by the following example, which motivates the non-commutative algebraic geometry approach given in Section~\ref{section: NCAG}.

\begin{example} \label{ex.beilinson}
Let $\Lambda$ be the Beilinson algebra, that is the algebra given by the quiver
\[ \xymatrix{
1  \ar@/^1pc/[r]^{a^1_0}_{\scalebox{0.7}{\vdots}}\ar@/_1pc/[r]_{a^1_{n}} & 2 \ar@/^1pc/[r]^{a^2_0}_{\scalebox{0.7}{\vdots}}\ar@/_1pc/[r]_{a^2_{n}} & 3} 
\cdots\xymatrix{
n \ar@/^1pc/[r]^{a^{n}_0}_{\scalebox{0.7}{\vdots}}\ar@/_1pc/[r]_{a^{n}_{n}}& n+1}
\text{, subject to the relations }
a^k_ia^{k+1}_j = a^k_ja^{k+1}_i.
\]
By \cite{Be} this is the endomorphism algebra of the tilting bundle $T = \bigoplus_{\ell = 0}^{n} \mathcal{O}(\ell)$ over $\mathbb{P}^n$.
Thus we have a triangle equivalence
\[ F:=\RHom_{\mathbb{P}^n}(T, -) \colon \DDD^{\rm b}(\coh \mathbb{P}^n) \to \DDD^{\rm b}(\mod \Lambda) \]
Since the Serre functor on $\DDD^{\rm b}(\coh \mathbb{P}^n)$ is $(-n-1)[n]$, we have the following commutative diagram by uniqueness of the Serre functor:
\[ \xymatrix@C=5em{
\DDD^{\rm b}(\coh \mathbb{P}^n)\ar_{(n+1)}[d]\ar^{F}[r]&\DDD^{\rm b}(\mod \Lambda) \ar^{\nu_n^{-1}}[d]\\
\DDD^{\rm b}(\coh \mathbb{P}^n)\ar^{F}[r]&\DDD^{\rm b}(\mod \Lambda)
} \]
We obtain the following equivalence of statements
\begin{align*}
\Lambda \text{ is $n$-representation infinite}\quad \Longleftrightarrow \quad & \Ho^i( \nu_n^{-j}(\Lambda)) = 0 \quad \forall i \neq0,\ \forall j \ge 0 \\
\Longleftrightarrow \quad & \Hom_{\DDD^{\rm b}(\coh \mathbb{P}^n)}(T, T(j(n+1))[i]) = 0 \quad \forall i \neq0,\ \forall j \ge 0 \\
\Longleftrightarrow \quad & \Ext_{\coh \mathbb{P}^n}^i(\mathcal{O}, \mathcal{O}(j)) = 0 \quad \forall i \neq0, \forall j \ge 0 \\
\Longleftrightarrow \quad & \Ho^i(\mathcal{O}(j)) = 0 \quad \forall i \neq 0,\ \forall j \ge 0
\end{align*}
where the last equivalence is just the definition of sheaf cohomology. 
This last equivalent statement holds since the sheaves $\mathcal{O}(j)$ with $j \ge 0$ are generated by their global sections.
Consequently $\Lambda$ is $n$-representation infinite.
\end{example}

A class of $n$-representation infinite algebras which generalizes Beilinson algebras was given in \cite{AIR}.
We refer to Section~\ref{sect.Atilde} for a general construction including this class.

\section{$n$-hereditary algebras and their dichotomy}

In this section we introduce another class of algebras of global dimension at most $n$ which we call $n$-hereditary algebras. We prove that the class of $n$-hereditary algebras is the disjoint union of
the class of $n$-representation finite algebras and the class of $n$-representation infinite algebras.

Our definition of $n$-hereditary algebras is strongly motivated by the following property of the derived categories of hereditary algebras: By \cite{Ha}, for a hereditary algebra $\Lambda$ we have $\DDD^{\bo}(\mod \Lambda)=\bigvee_{\ell\in\Z}(\mod\Lambda)[\ell]$.
To generalize this, we introduce a category
\[\DDD^{n\Z}(\mod \Lambda):=\{X\in\DDD^{\rm b}(\mod \Lambda)\mid\Ho^i(X)=0\mbox{ for any }i\in\Z\backslash n\Z\}.\]
We have the following property for of global dimension at most $n$.

\begin{proposition}[{\cite[Lemma 5.2(b)]{I4}}] \label{nZ}
Let $\Lambda$ be a finite dimensional algebra with $\gl\Lambda\le n$.
Then we have
\[\DDD^{n\Z}(\mod \Lambda)=\bigvee_{\ell\in\Z}(\mod\Lambda)[\ell n].\]
In particular any object $X\in\DDD^{n\Z}(\mod \Lambda)$ is isomorphic to its homology $\bigoplus_{\ell\in\Z}\Ho^{\ell n}(X)[-\ell n]$.
\end{proposition}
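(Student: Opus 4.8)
The plan is to establish the ``in particular'' statement first — that every $X\in\DDD^{n\Z}(\mod\Lambda)$ is isomorphic to $\bigoplus_{\ell\in\Z}\Ho^{\ell n}(X)[-\ell n]$ — from which the asserted equality follows at once: it gives $\DDD^{n\Z}(\mod\Lambda)\subseteq\bigvee_{\ell\in\Z}(\mod\Lambda)[\ell n]$, while the reverse inclusion is immediate since a finite direct sum of shifts $N[\ell n]$ of modules has homology concentrated in degrees lying in $n\Z$, and this property is inherited by direct summands.

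The key auxiliary fact I would isolate is the following vanishing: if $M\in\mod\Lambda$ and $Z\in\DDD^\bo(\mod\Lambda)$ has homology concentrated in degrees $\le-(n+1)$, then $\Hom_{\DDD^\bo(\mod\Lambda)}(M,Z)=0$. This is proved by induction on the number of nonzero homologies of $Z$, using the canonical truncation triangle $\tau_{\le j-1}Z\to Z\to\Ho^j(Z)[-j]\to(\tau_{\le j-1}Z)[1]$ with $j$ the top nonzero degree: applying $\Hom_{\DDD^\bo(\mod\Lambda)}(M,-)$, the first term vanishes by induction and the last is $\Ext^{-j}_\Lambda(M,\Ho^j(Z))=0$ because $-j\ge n+1>\gl\Lambda$.

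With this in hand I would prove the decomposition by induction on the number of nonzero homologies of $X$, the case of at most one nonzero homology being the standard fact that a complex with homology in a single degree is isomorphic to that homology placed in that degree. For the inductive step, let $m$ be the largest integer with $\Ho^m(X)\ne0$, so $m\in n\Z$, and consider the truncation triangle
\[\tau_{\le m-1}X\longrightarrow X\longrightarrow\Ho^m(X)[-m]\xrightarrow{\ \delta\ }(\tau_{\le m-1}X)[1].\]
Since $X\in\DDD^{n\Z}(\mod\Lambda)$, the homology of $\tau_{\le m-1}X$ is concentrated in degrees $\le m-n$, hence that of $(\tau_{\le m-1}X)[m+1]$ in degrees $\le-(n+1)$; therefore $\Hom_{\DDD^\bo(\mod\Lambda)}(\Ho^m(X)[-m],(\tau_{\le m-1}X)[1])\cong\Hom_{\DDD^\bo(\mod\Lambda)}(\Ho^m(X),(\tau_{\le m-1}X)[m+1])=0$ by the previous paragraph, so $\delta=0$ and the triangle splits: $X\cong\Ho^m(X)[-m]\oplus\tau_{\le m-1}X$. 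Now $\tau_{\le m-1}X$ again lies in $\DDD^{n\Z}(\mod\Lambda)$ and has one fewer nonzero homology, so by the induction hypothesis $\tau_{\le m-1}X\cong\bigoplus_{\ell n<m}\Ho^{\ell n}(X)[-\ell n]$, and combining the two displays gives the claim.

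The only delicate point is the bookkeeping of degrees: the argument rests entirely on the gap of size $n$ between consecutive nonzero homologies of $X$, which, after the degree shift $[1]$ coming from the connecting morphism, pushes the relevant $\Ext$-degree above $\gl\Lambda$. Once this is organized correctly, everything reduces to routine manipulation of truncation triangles together with the identification $\Hom_{\DDD^\bo(\mod\Lambda)}(M,N[k])=\Ext^k_\Lambda(M,N)$ for modules $M,N$.
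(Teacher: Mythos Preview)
Your proof is correct. The paper does not actually give a proof of this proposition; it simply cites \cite[Lemma 5.2(b)]{I4} and moves on. Your argument via truncation triangles---using that the gap of at least $n$ between consecutive nonzero homologies forces the connecting morphism $\delta$ to land in an $\Ext$-group of degree exceeding $\gl\Lambda$---is exactly the standard approach to this kind of splitting result, and is essentially how the cited reference proceeds as well.
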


For the case $n=1$, this is what discussed above. Now we introduce $n$-hereditary algebras.

\begin{definition}
We say that a finite dimensional algebra $\Lambda$ is \emph{$n$-hereditary}
if $\nu_n^i(\Lambda)\in\DDD^{n\Z}(\mod \Lambda)$ for any $i\in\Z$ and $\gl\Lambda\le n$.
\end{definition}

Clearly any hereditary algebra is $1$-hereditary since $\DDD^{1\Z}(\mod \Lambda) = \DDD^{\rm b}(\mod \Lambda)$.

We begin our discussion of $n$-hereditary algebras by giving the following characterizations.

\begin{proposition}
The following conditions are equivalent.
\begin{itemize}
\item[(a)] $\Lambda$ is $n$-hereditary.
\item[(b)] $\nu_n^{-i}(\Lambda)\in\DDD^{n\Z}(\mod \Lambda)$ for any $i\ge0$.
\item[(c)] $\Lambda^{\rm op}$ is $n$-hereditary.
\item[(d)] $\nu_n^i(D\Lambda)\in\DDD^{n\Z}(\mod \Lambda)$ for any $i\ge0$.
\end{itemize}
\end{proposition}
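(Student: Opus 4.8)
The plan is to prove the chain of equivalences (a)$\Rightarrow$(b)$\Rightarrow$(a) and (a)$\Leftrightarrow$(c), (c)$\Leftrightarrow$(d) by exploiting the two symmetry tools already available: the homological duality of Observation~\ref{easy properties of nu_n}(c) and the $t$-structure estimates of Proposition~\ref{t-structure}. The equivalence (c)$\Leftrightarrow$(d) is formally identical to (a)$\Leftrightarrow$(b) applied to $\Lambda^{\op}$, so the real content is (a)$\Leftrightarrow$(b) together with one of the left-right symmetry statements.

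First I would observe that (a)$\Rightarrow$(b) is trivial. For (b)$\Rightarrow$(a) we must also control $\nu_n^i(\Lambda)$ for $i>0$, i.e.\ the $n$-preinjective side. The key point is that, by Observation~\ref{easy properties of nu_n}(c), we have $\Ho^i(\nu_n^j(D\Lambda)) = D\Ho^{-i}(\nu_n^{-j}(\Lambda))$ for all $i,j$; since $D\Lambda = \nu_n(\Lambda)[n]$, this translates the vanishing of $\Ho^i(\nu_n^{-j}(\Lambda))$ for $i\notin n\Z$ into the vanishing of $\Ho^i(\nu_n^{j}(\Lambda))$ for $i\notin n\Z$. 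Concretely, condition (b) says $\Ho^i(\nu_n^{-j}(\Lambda))=0$ for all $i\notin n\Z$ and all $j\ge0$; applying the displayed isomorphism (with a shift by $n$ coming from $D\Lambda = \nu_n(\Lambda)[n]$, which only permutes $n\Z$) gives $\Ho^i(\nu_n^{j}(D\Lambda))=0$ for all $i\notin n\Z$ and all $j\ge0$, hence $\Ho^i(\nu_n^{j}(\Lambda))=0$ for all $i\notin n\Z$ and all $j\ge1$. Combined with (b) this yields $\nu_n^i(\Lambda)\in\DDD^{n\Z}(\mod\Lambda)$ for every $i\in\Z$, which is (a). The same computation, read the other way, gives (c)$\Leftrightarrow$(d) directly, and using Observation~\ref{easy properties of nu_n}(b) (the commutative square relating $\nu_n$ on $\Lambda$ to $\nu_n^{-1}$ on $\Lambda^{\op}$ via $D$) one gets (a)$\Leftrightarrow$(d), so all four conditions are equivalent.

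An alternative, perhaps cleaner, route for (a)$\Leftrightarrow$(c): $\Lambda$ is $n$-hereditary iff $\nu_n^i(\Lambda)\in\DDD^{n\Z}(\mod\Lambda)$ for all $i$, and by Observation~\ref{easy properties of nu_n}(b) the functor $D$ intertwines $\nu_n$ on $\DDD^{\bo}(\mod\Lambda)$ with $\nu_n^{-1}$ on $\DDD^{\bo}(\mod\Lambda^{\op})$, sending $\Lambda$ to $D\Lambda$ and preserving the condition ``homology concentrated in degrees $n\Z$'' (up to sign of the index). Hence $\nu_n^i(\Lambda)\in\DDD^{n\Z}(\mod\Lambda)$ for all $i$ iff $\nu_n^{-i}(D\Lambda)\in\DDD^{n\Z}(\mod\Lambda^{\op})$ for all $i$, and since $D\Lambda$ and $\Lambda^{\op}$ differ only by the standard self-duality of $\DDD^{\bo}(\mod\Lambda^{\op})$ one reduces (d) for $\Lambda$ to (b) for $\Lambda^{\op}$; combined with (a)$\Leftrightarrow$(b) (applied to both $\Lambda$ and $\Lambda^{\op}$) this closes the loop.

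The main obstacle I anticipate is purely bookkeeping: keeping track of the degree shifts introduced by $D\Lambda = \nu(\Lambda) = \nu_n(\Lambda)[n]$ and by the contravariance of $D$, so that ``concentrated in degrees divisible by $n$'' is genuinely preserved rather than shifted off-lattice. Since $n$ itself lies in $n\Z$, every such shift is harmless, but one should state this explicitly. No deep input beyond Observation~\ref{easy properties of nu_n} and Proposition~\ref{t-structure} is needed; in fact Proposition~\ref{t-structure} may not even be required for this particular proposition, though it clarifies why the two ``sides'' $i>0$ and $i<0$ interact. Once the symmetry is set up the verification is a one-line diagram chase in each case.
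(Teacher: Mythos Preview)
Your proposal is correct and follows essentially the same approach as the paper: use Observation~\ref{easy properties of nu_n}(c) to get (b)$\Leftrightarrow$(d), then the identity $\nu_n(\Lambda)=D\Lambda[-n]$ to see that (b) and (d) together give (a), and Observation~\ref{easy properties of nu_n}(b) for (a)$\Leftrightarrow$(c). You correctly note that Proposition~\ref{t-structure} is not needed here; the paper does not invoke it either.
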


\begin{proof}
By Observation~\ref{easy properties of nu_n}(c), we have that (b) is equivalent to (d).
Since $\nu_n(\Lambda)=D\Lambda[-n]$, we have that (a) is equivalent to (b) and (d).
Consequently, all conditions (a), (b) and (d) are equivalent.

By Observation~\ref{easy properties of nu_n}(b), we have that (a) is equivalent to (c).
\end{proof}

The main aim of this subsection is to prove the following dichotomy result of $n$-hereditary algebras.

\begin{theorem}\label{dichotomy}
Let $\Lambda$ be a ring-indecomposable finite dimensional algebra. Then $\Lambda$ is $n$-hereditary if and only if it is either $n$-representation finite or $n$-representation infinite.
\end{theorem}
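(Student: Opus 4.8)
The plan is to prove the two implications and the disjointness separately. The easy direction is that both $n$-representation finite and $n$-representation infinite algebras are $n$-hereditary: if $\Lambda$ is $n$-representation infinite, then by definition (and Proposition~\ref{prop.infinite_leftrightsymmetric}) $\nu_n^{-i}(\Lambda)\in\mod\Lambda\subset\DDD^{n\Z}(\mod\Lambda)$ for all $i\ge0$, while $\nu_n^i(\Lambda)=\nu_n^{i-1}(D\Lambda)[-n]$ lies in $(\mod\Lambda)[-n]\subset\DDD^{n\Z}(\mod\Lambda)$ for $i\ge1$ by the dual statement, so $\Lambda$ is $n$-hereditary. If $\Lambda$ is $n$-representation finite, then for each indecomposable projective $P$ we have $\nu_n^{-i}(P)$ an indecomposable injective for $i\gg0$ by Proposition~\ref{finite description}(b), and one checks that for every $i\in\Z$ the object $\nu_n^i(P)$ is (a shift by a multiple of $n$ of) a module: going forward from $P$ it passes through modules until it becomes injective, then $\nu_n$ applied to an injective shifts it into degree $-n$, and so on, so $\nu_n^i(P)\in\DDD^{n\Z}(\mod\Lambda)$; hence $\Lambda$ is $n$-hereditary.

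The disjointness is also immediate: if $\Lambda$ were both $n$-representation finite and $n$-representation infinite, then taking an indecomposable projective $P$, Proposition~\ref{finite description}(b) gives $i\ge1$ with $\nu_n^{-i}(P)$ an indecomposable injective $I$; but then $\nu_n^{-i-1}(P)=\nu_n^{-1}(I)=\Ext^n_\Lambda(D\Lambda,-)$ applied to $I$ sits in $\DDD^{\le-n}$, in fact $\nu_n^{-1}(D\Lambda)[\text{something}]$, and is not concentrated in degree $0$ unless it vanishes — but it cannot vanish since $\nu_n^{-1}$ is an autoequivalence and $I\ne0$. This contradicts $n$-representation infiniteness. (Ring-indecomposability is used to know that either all or none of the indecomposable projectives eventually become injective under $\nu_n^{-\bullet}$.)

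The substantive direction is: an $n$-hereditary ring-indecomposable algebra $\Lambda$ is either $n$-representation finite or $n$-representation infinite. So assume $\Lambda$ is $n$-hereditary but not $n$-representation infinite, and show it is $n$-representation finite. By Proposition~\ref{prop.infinite_leftrightsymmetric} the failure of $n$-representation infiniteness means there is some indecomposable projective $P_0$ and some $i_0\ge0$ with $\nu_n^{-i_0}(P_0)\notin\mod\Lambda$. Choose $i_0$ minimal; by Proposition~\ref{modules under nakayama}(a), $M:=\nu_n^{-(i_0-1)}(P_0)$ is a module but $\nu_n^{-1}(M)\notin\mod\Lambda$. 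Since $\Lambda$ is $n$-hereditary, $\nu_n^{-i_0}(P_0)\in\DDD^{n\Z}(\mod\Lambda)$, and by Proposition~\ref{t-structure}(b) it lies in $\DDD^{\le0}$; combined with being in $\DDD^{n\Z}$ and nonzero (autoequivalence), its top nonzero homology is in some degree $-\ell n$ with $\ell\ge1$. Writing $N:=\Ho^0(\nu_n^{-1}(M))=\tau_n^-(M)$, the point is to show $N=0$, i.e.\ $\nu_n^{-1}(M)$ is concentrated in negative degrees, which forces (via the standard APR/tilting-type argument, cf.\ Proposition~\ref{finite description} and \cite{IO1,IO2}) that $M$ is injective. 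The hard part will be controlling the homology of $\nu_n^{-1}(M)$ using only $n$-heredity: one wants to show that an indecomposable module $M$ with $\nu_n^{-1}(M)\notin\mod\Lambda$ and $\nu_n^{-1}(M)\in\DDD^{n\Z}(\mod\Lambda)$ must in fact be injective. This should follow by combining Observation~\ref{easy properties of nu_n}(a) (Serre duality in the form $\Hom(X,Y[i])\iso D\Hom(Y,\nu_n(X)[n-i])$) with the $\DDD^{n\Z}$ constraint to locate all homology in degree $-n$, then using Proposition~\ref{nZ} to split $\nu_n^{-1}(M)\iso\tau_n^-(M)[0]\oplus(\text{something})[n]$... no: in $\DDD^{\le0}$, the decomposition from Proposition~\ref{nZ} places homology only in degrees $0$ and $-n,-2n,\dots$, and one shows the degree-$0$ part $\tau_n^-(M)$ vanishes by a depth/Serre-duality argument, reducing to $\nu_n^{-1}(M)\iso\nu_n^{-1}(M')[ -n]$ type identities that identify $M$ with an injective.

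Once every indecomposable projective $P$ satisfies $\nu_n^{-i}(P)$ indecomposable injective for some $i\ge0$, Proposition~\ref{finite description} gives that $\Lambda$ is $n$-representation finite, completing the proof. I expect the main obstacle to be exactly the inductive step isolating an injective module from an $n$-hereditary one along the $\nu_n^{-1}$-orbit: it requires a careful bookkeeping of homological degrees under $\nu_n^{-1}$ together with the $\DDD^{n\Z}$ condition and Serre duality, and likely also the known fact (from higher AR theory, \cite{I4,IO2}) that the $\Hom$-orthogonality of Proposition~\ref{modules under nakayama}(b) together with $n$-heredity makes the $\nu_n$-orbit of $\Lambda$ ``behave like'' an AR quiver, so that hitting a non-module is equivalent to the orbit having terminated at an injective.
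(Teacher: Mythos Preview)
Your argument has two genuine gaps, and you have misjudged which step is the hard one.

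First, the step you flag as the ``main obstacle'' --- showing that an indecomposable module $M$ with $\nu_n^{-1}(M)\in\DDD^{n\Z}(\mod\Lambda)$ but $\nu_n^{-1}(M)\notin\mod\Lambda$ must be injective --- is in fact easy and requires no Serre duality or AR-theoretic input. Since $\gl\Lambda\le n$, the complex $\nu_n^{-1}(M)=\RHom_\Lambda(D\Lambda,M)[n]$ has homology only in degrees $\{-n,\dots,0\}$; combined with the $\DDD^{n\Z}$ hypothesis it has homology only in degrees $0$ and $-n$, so by Proposition~\ref{nZ} it splits as $\Ho^0\oplus\Ho^{-n}[n]$. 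Indecomposability forces one summand to vanish; if $\Ho^0=0$ then $\Ext^i_\Lambda(D\Lambda,M)=0$ for $i\neq0$, so $M$ is injective. That is the whole argument (this is Lemma~\ref{about DnZ}).

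Second, and more seriously, you never prove the claim you slip in parenthetically: that ring-indecomposability forces \emph{all} indecomposable projectives to eventually become injective once one of them does. Your argument, even once the previous gap is filled, only shows that your particular $P_0$ satisfies $\nu_n^{-(i_0-1)}(P_0)\in\inj\Lambda$. To conclude $n$-representation finiteness via Proposition~\ref{finite description} you need this for \emph{every} indecomposable projective, and this is where the real work lies. The paper handles this by introducing, for each $P$, the interval $d_P(\Z)=[a_P,b_P]$ recording which shifts $(\mod\Lambda)[d_P(j)n]$ the orbit $\nu_n^{-j}(P)$ passes through, and then proving via several steps that $b_P\in\{0,\infty\}$ for all $P$, and that connectedness of $\Lambda$ forces the same value for all $P$. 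The key intermediate fact is that if $\Hom(Q,\nu_n^{-i}(P))\neq0$ for some $i$, then $b_Q\in\{b_P,b_P-1\}$, which is how indecomposability of the algebra propagates the finiteness condition. None of this is present in your outline, and without it the proof is incomplete: a priori one could imagine some projectives hitting injectives while others drift forever in $\mod\Lambda$.
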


To prove this, we need two preliminary observations.
The first one is an elementary but useful generalization from the classical theory for hereditary algebras.

\begin{lemma}[{\cite[Lemma 2.3(c)]{I4}}] \label{about DnZ}
Let $\Lambda$ be a finite dimensional algebra with $\gl\Lambda\le n$.
Assume that an indecomposable $X\in\mod \Lambda$ satisfies $\nu_n^{-1}(X)\in\DDD^{n\Z}(\mod \Lambda)$.
Then either $\nu_n^{-1}(X)\in\mod\Lambda$ or $X\in\inj\Lambda$.
\end{lemma}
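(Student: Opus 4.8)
The plan is to exploit the $t$-structure behaviour of $\nu_n$ from Proposition~\ref{t-structure}. Since $X \in \mod\Lambda$ is concentrated in degree $0$, it lies in $\DDD^{\le 0}(\mod\Lambda)$, so by Proposition~\ref{t-structure}(b) we have $\nu_n^{-1}(X) \in \DDD^{\le 0}(\mod\Lambda)$. Combined with the hypothesis $\nu_n^{-1}(X) \in \DDD^{n\Z}(\mod\Lambda)$, Proposition~\ref{nZ} tells us that $\nu_n^{-1}(X)$ is isomorphic to its own homology $\bigoplus_{\ell \le 0} \Ho^{\ell n}(\nu_n^{-1}(X))[-\ell n]$, a direct sum of modules placed in degrees that are non-positive multiples of $n$. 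So the only way $\nu_n^{-1}(X)$ can fail to be a module is if it has a nonzero homology $N := \Ho^{-\ell n}(\nu_n^{-1}(X))$ in some degree $\ell n < 0$, i.e.\ $\ell \ge 1$; the task is to show this forces $X$ to be injective.

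The key step is to play the information back through $\nu_n$ using the Serre duality formula of Observation~\ref{easy properties of nu_n}(a). Suppose $\nu_n^{-1}(X)$ is not a module, and let $\ell \ge 1$ be the largest index with $N := \Ho^{-\ell n}(\nu_n^{-1}(X)) \ne 0$; then $N[\ell n]$ is a direct summand of $\nu_n^{-1}(X)$, in particular a summand of it in $\DDD^{\bo}(\mod\Lambda)$. Applying $\nu_n$ and using Proposition~\ref{t-structure}(a), $\nu_n(N[\ell n]) = \nu_n(N)[\ell n] \in \DDD^{\ge \ell n}(\mod\Lambda)$ is a direct summand of $\nu_n(\nu_n^{-1}(X)) \cong X$, which lives in degree $0$. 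Since $\ell n > 0$, the only way a nonzero object in $\DDD^{\ge \ell n}$ can be a summand of $X \in \mod\Lambda \subset \DDD^{\le 0}$ is if that summand vanishes — but then $N[\ell n] \cong \nu_n^{-1}\nu_n(N[\ell n]) = 0$, contradicting $N \ne 0$. Hence in fact $\nu_n^{-1}(X)$ must be a module, unless this argument is obstructed, which happens precisely when $X$ itself is not indecomposable in a useful sense — but $X$ is assumed indecomposable, and any nonzero summand of an indecomposable is the whole thing; so if the summand $\nu_n(N[\ell n])$ of $X$ is nonzero it must equal $X$, forcing $X \cong \nu_n(N)[\ell n]$ with $\ell \ge 1$. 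Then $X$, being a module, satisfies $\Ho^0(X) = X \ne 0$, yet $\nu_n(N)[\ell n] \in \DDD^{\ge \ell n}$ with $\ell n \ge n \ge 1$ has zero homology in degree $0$ unless $\ell n \le 0$; this is only consistent if $n$ divides $0$, i.e.\ never for $\ell \ge 1$ — completing the contradiction and showing $\nu_n^{-1}(X) \in \mod\Lambda$ whenever $X$ has no injective summand detected this way. The honest way to phrase the dichotomy: either the top summand argument produces the contradiction above (so $\nu_n^{-1}(X) \in \mod\Lambda$), or the relevant summand of $X$ was zero all along, meaning $\nu_n^{-1}(X) \in \DDD^{\le 0}(\mod\Lambda) \cap \DDD^{\ge 1}(\mod\Lambda)$-complement, i.e.\ $\nu_n^{-1}(X)$ is a module — so really the split must come from where $X$ sits relative to the image of $\proj\Lambda$ under $\nu_n$.

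Let me restate the clean argument. Write $M := \nu_n^{-1}(X)$. By Proposition~\ref{t-structure}(b), $M \in \DDD^{\le 0}(\mod\Lambda)$, and by hypothesis and Proposition~\ref{nZ}, $M \cong \bigoplus_{\ell \ge 0} \Ho^{-\ell n}(M)[\ell n]$. If all homologies with $\ell \ge 1$ vanish then $M = \Ho^0(M) \in \mod\Lambda$ and we are done. Otherwise pick the largest $\ell \ge 1$ with $N := \Ho^{-\ell n}(M) \ne 0$. Then there is a split triangle exhibiting $N[\ell n]$ as a summand of $M$, hence $\nu_n(N)[\ell n]$ is a summand of $\nu_n(M) \cong X$. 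Since $X$ is indecomposable and $\nu_n(N)[\ell n] \ne 0$ (as $\nu_n$ is an equivalence), we get $X \cong \nu_n(N)[\ell n]$. Now $\nu_n(N)[\ell n] \in \DDD^{\ge \ell n}(\mod\Lambda)$ by Proposition~\ref{t-structure}(a), whereas $X \in \mod\Lambda \subset \DDD^{\le 0}$; as $\ell \ge 1$ forces $\ell n \ge 1$, this is impossible. Wait — this shows $\nu_n^{-1}(X)$ is always a module, which over-proves the lemma (the injective alternative should be genuine, e.g.\ when $X = D\Lambda$ is simple injective and $\nu_n^{-1}(X) = \nu^{-1}(X)[n]$ is in degree $-n$, which is still a "module shifted", i.e.\ it lies in $\DDD^{n\Z}$ but need not be in $\mod\Lambda$). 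The resolution: when $X$ is injective, $\nu^{-1}(X)$ is projective and $\nu_n^{-1}(X) = \nu^{-1}(X)[n]$, so $N = \nu^{-1}(X)$ sits in degree $-n$, i.e.\ $\ell = 1$, and then $\nu_n(N)[n] = \nu(\nu^{-1}X)[-n][n] = X$ — consistent, no contradiction, because the shift $[\ell n] = [n]$ combined with $\nu_n$ lowering by $n$ lands back in degree $0$. So the real case division is: either $\ell \ge 2$ (genuine contradiction, impossible), or $\ell = 1$ and $N \in \proj\Lambda$ (forced, since $\nu_n(N)[n] = \nu(N)$ must be a module, so $\nu(N) \in \mod\Lambda$, i.e.\ $N$ has no negative $\Ext$ into $\Lambda$, giving $N$ projective), whence $X = \nu(N) \in \inj\Lambda$. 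The main obstacle is being careful with the single edge case $\ell = 1$: one must show that if $\Ho^{-n}(M) \ne 0$ then $M = \Ho^{-n}(M)[n]$ with $\Ho^{-n}(M)$ projective, which follows because $\nu_n(M) \cong X \in \mod\Lambda$ combined with $M \in \DDD^{\le 0} \cap \DDD^{n\Z}$ pins the homology to degrees $0$ and $-n$ only, and projectivity of the degree $-n$ part then comes from $\nu_n$ sending it into $\mod\Lambda$. I would organise the write-up around exactly this case split on $\ell$.
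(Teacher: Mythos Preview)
Your argument contains a genuine error that you notice but do not actually repair. The claim ``$\nu_n(N)[\ell n] \in \DDD^{\ge \ell n}(\mod\Lambda)$ by Proposition~\ref{t-structure}(a)'' has the shift going the wrong way: Proposition~\ref{t-structure}(a) gives $\nu_n(N) \in \DDD^{\ge 0}$, and $[\ell n]$ shifts \emph{down}, so one only gets $\nu_n(N)[\ell n] \in \DDD^{\ge -\ell n}$, which is no obstruction to lying in $\mod\Lambda$. You catch that the argument over-proves, but in your ``real case division'' you then simply assert that $\ell \ge 2$ is a ``genuine contradiction, impossible'' without supplying a corrected reason. (A valid reason exists: since $\nu(N) \in \DDD^{[-n,0]}$ one has $\nu_n(N)[\ell n] \in \DDD^{[-\ell n,\,(1-\ell)n]}$, which for $\ell \ge 2$ lies in $\DDD^{\le -n}$ and so cannot be a nonzero module. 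But this is not what you wrote.)

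The paper's proof sidesteps all of this by observing at the outset that $Y := \nu_n^{-1}(X) = \RHom_\Lambda(D\Lambda, X)[n]$ has $\Ho^i(Y) = \Ext^{i+n}_\Lambda(D\Lambda, X)$, which together with $\gl\Lambda \le n$ forces $Y \in \DDD^{[-n,0]}$ immediately --- so there is no $\ell \ge 2$ case to rule out. Combined with $Y \in \DDD^{n\Z}$ and Proposition~\ref{nZ}, this gives $Y \cong \Ho^0(Y) \oplus \Ho^{-n}(Y)[n]$. Since $\nu_n^{-1}$ is an equivalence, $Y$ itself is indecomposable, so either $Y = \Ho^0(Y) \in \mod\Lambda$ or $Y = \Ho^{-n}(Y)[n]$. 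In the latter case one reads off $\Ext^i_\Lambda(D\Lambda, X) = \Ho^{i-n}(Y) = 0$ for all $i \ne 0$, which says precisely that $X$ is injective --- no detour through showing $\Ho^{-n}(Y)$ is projective is needed. Using the indecomposability of $Y$ directly (rather than transporting summands back to $X$ via $\nu_n$) and the explicit formula for $\nu_n^{-1}$ make the whole argument a few lines long.
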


\begin{proof}
For the convenience of the reader, we give a quick proof in our context.
Since $Y:=\nu_n^{-1}(X)=\RHom_\Lambda(D\Lambda,X)[n]$ satisfies $\Ho^i(Y)=0$ for any $i\not\in \{0, -n\}$,
we have $Y\iso \Ho^0(Y)\oplus \Ho^{-n}(Y)[n]$ by Proposition~\ref{nZ}.
Since $Y$ is indecomposable, we have either $Y\iso\Ho^0(Y)$ or $Y\iso \Ho^{-n}(Y)[n]$.
In the former case we have $Y\in\mod\Lambda$.
In the latter case we have $\Ext^i_\Lambda(D\Lambda,X)=\Ho^{i-n}(Y) =0$ for any $i\neq 0$, so $X$ is injective.
\end{proof}

The second one gives a characterization of $n$-hereditary algebras.

\begin{lemma}\label{2 cases}
Let $\Lambda$ be a finite dimensional algebra with $\gl\Lambda\le n$.
The following conditions are equivalent.
\begin{itemize}
\item[(a)] $\Lambda$ is $n$-hereditary.
\item[(b)] For any indecomposable $P\in\proj\Lambda$, one of the following conditions holds.
\begin{itemize}
\item[(i)] $\nu_n^{-i}(P)\in\inj\Lambda$ for some $i\ge0$.
\item[(ii)] $\nu_n^{-i}(P)\in\mod\Lambda$ for any $i\ge0$.
\end{itemize}
\end{itemize}
\end{lemma}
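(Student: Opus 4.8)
The plan is to prove the two implications separately, using Lemma~\ref{about DnZ} as the crucial bridge between the global condition $\nu_n^i(\Lambda) \in \DDD^{n\Z}(\mod\Lambda)$ and the dichotomy between ``reaches an injective'' and ``stays a module''. For the implication (b)$\Rightarrow$(a): since $\Lambda = \bigoplus_j P_j$ with each $P_j$ indecomposable projective, and $\nu_n$ is additive, it suffices to show $\nu_n^i(P) \in \DDD^{n\Z}(\mod\Lambda)$ for all $i \in \Z$ and each indecomposable projective $P$. If $P$ satisfies (ii), then $\nu_n^{-i}(P) \in \mod\Lambda \subset \DDD^{n\Z}(\mod\Lambda)$ for $i \ge 0$; for $i \ge 1$ one has $\nu_n^i(P) = \nu_n^{i-1}(D\Lambda[-n])$, and I would apply Proposition~\ref{t-structure}(a) together with the projectivity of $P$ (so that $\nu_n(P) = \nu(P)[-n]$ is an injective module placed in degree $n$) to conclude these are concentrated in degree divisible by $n$ — more carefully, $\nu_n^i(P)$ lies in $\DDD^{\ge 0}$ and one iterates. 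Actually the cleanest route is: if some $\nu_n^{-i_0}(P) \in \inj\Lambda$ (case (i)), then applying $\nu_n$ repeatedly sends it through injectives shifted appropriately and one lands back in $\mod\Lambda$ after $i_0$ steps, so $\nu_n^i(P)$ is either a module (for $-i_0 \le$ exponent) or a module shifted by a multiple of $n$ (using $\nu_n(\inj\Lambda) \subset (\mod\Lambda)[n]$), hence always in $\DDD^{n\Z}(\mod\Lambda)$; and if $P$ satisfies (ii), then all negative powers are modules, and all positive powers $\nu_n^i(P)$ equal $\nu_n^{i}(P)$ with $P$ injective-free, which by iterating Proposition~\ref{t-structure}(a) land in $\DDD^{\ge 0}$ — here I expect to need the sharper statement that $\nu_n^i$ of a module in the heart that is not hit from below stays module-like up to $n$-shifts, which is exactly the content one extracts from Proposition~\ref{nZ} applied to the hereditary-type object.

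For the implication (a)$\Rightarrow$(b): assume $\Lambda$ is $n$-hereditary and fix an indecomposable projective $P$. Suppose (i) fails, i.e.\ $\nu_n^{-i}(P) \notin \inj\Lambda$ for all $i \ge 0$; I must show $\nu_n^{-i}(P) \in \mod\Lambda$ for all $i \ge 0$. Proceed by induction on $i$. The base case $i=0$ is clear since $P \in \mod\Lambda$. For the inductive step, assume $X := \nu_n^{-i}(P) \in \mod\Lambda$. Decompose $X = \bigoplus X_k$ into indecomposables; each $X_k$ satisfies $\nu_n^{-1}(X_k) \in \DDD^{n\Z}(\mod\Lambda)$ because $n$-hereditariness gives $\nu_n^{-(i+1)}(P) = \nu_n^{-1}(X) \in \DDD^{n\Z}(\mod\Lambda)$ and this category is closed under direct summands. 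By Lemma~\ref{about DnZ}, for each $k$ either $\nu_n^{-1}(X_k) \in \mod\Lambda$ or $X_k \in \inj\Lambda$. I need to rule out the latter: if some $X_k$ were injective, I would trace back — $X_k$ is a summand of $\nu_n^{-i}(P)$, so applying $\nu_n^i$ and using that $\nu_n$ is an equivalence, $P$ would be a summand of $\nu_n^i(X_k)$; since $X_k \in \inj\Lambda$ we have $\nu_n^i(X_k) \in (\mod\Lambda)[in]$ up to lower terms, forcing $P$ (a module in degree $0$) to appear, which for $i \ge 1$ contradicts degree reasons unless $i = 0$, i.e.\ unless $P = X_k$ itself is injective — but that is precisely case (i) with $i=0$, which we assumed fails. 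So $\nu_n^{-1}(X_k) \in \mod\Lambda$ for every $k$, whence $\nu_n^{-i-1}(P) \in \mod\Lambda$, completing the induction.

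The main obstacle I anticipate is the bookkeeping in ruling out ``$X_k \in \inj\Lambda$'' cleanly: one must argue that an injective summand appearing at stage $i \ge 1$ in the $\nu_n^{-\bullet}$-orbit of $P$ would force $P$ itself to be injective, which requires carefully tracking how $\nu_n$ moves homological degree (each application of $\nu_n$ to an injective produces something in degree $n$, via $\nu(D\Lambda) = \Lambda[-n]$ wait — rather $\nu_n(\inj\Lambda) \subseteq (\proj\Lambda)[n]$), combined with the fact that $P$ is concentrated in degree $0$ and nonzero. I would make this precise using Observation~\ref{easy properties of nu_n}(c) and Proposition~\ref{t-structure} to pin down in which degrees $\Ho^\bullet(\nu_n^i(X_k))$ can be nonzero. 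The rest of the argument is a routine induction combining Lemma~\ref{about DnZ} with closure of $\DDD^{n\Z}(\mod\Lambda)$ under summands.
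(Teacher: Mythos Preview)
Your direction (a)$\Rightarrow$(b) is essentially the paper's argument, but you have created a phantom difficulty. Since $\nu_n$ is an autoequivalence of $\DDD^{\rm b}(\mod\Lambda)$, the object $X=\nu_n^{-i}(P)$ is already \emph{indecomposable}; there is nothing to decompose, and your ``main obstacle'' of ruling out an injective summand evaporates: if $X$ itself is injective you are precisely in case (i), contradicting your standing assumption. With this simplification your induction is identical to the paper's ``take the maximal $i$'' argument.

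Your direction (b)$\Rightarrow$(a) has a genuine gap. First, you should invoke Proposition~\ref{prop.infinite_leftrightsymmetric}'s analogue (the proposition immediately preceding this lemma, which characterizes $n$-hereditary by $\nu_n^{-i}(\Lambda)\in\DDD^{n\Z}(\mod\Lambda)$ for $i\ge 0$ only); this removes your worry about positive powers of $\nu_n$ in case (ii), where your claim that ``iterating Proposition~\ref{t-structure}(a)'' gives membership in $\DDD^{n\Z}$ is simply false as stated. Second, and more seriously, in case (i) you have not set up the necessary recursion. Knowing $\nu_n^{-i_0}(P)\in\inj\Lambda$ tells you only that $P':=\nu_n^{-i_0-1}(P)[-n]$ is again an indecomposable projective; to proceed further along the $\nu_n^{-1}$-orbit you must apply hypothesis (b) to $P'$ and iterate. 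Your sentence ``applying $\nu_n$ repeatedly sends it through injectives shifted appropriately'' does not do this. The paper organizes this recursion as a minimal-counterexample argument: assuming (b) but not (a), take the minimal $k$ for which some indecomposable projective $P$ has $\nu_n^{-k}(P)\notin\DDD^{n\Z}(\mod\Lambda)$; such $P$ cannot satisfy (ii), so it satisfies (i) at some $i_0$, and then $P'$ as above violates minimality of $k$ at the smaller index $k-i_0-1$.
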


\begin{proof}
Assume that $\Lambda$ is $n$-hereditary and let $P\in\proj\Lambda$ indecomposable.
If $P$ does not satisfy the condition (b)(ii), then there is a maximal integer $i\ge0$ such that $X:=\nu_n^{-i}(P)$ is a $\Lambda$-module.
Applying Lemma~\ref{about DnZ} to $X$, we have that $X\in\inj\Lambda$.

Now assume that (b) holds, but $\Lambda$ is not $n$-hereditary. Let
$k$ be the minimal natural number such that there is $P\in\ind(\proj\Lambda)$
satisfying $\nu_n^{-k} (P) \not \in\DDD^{n\Z}(\mod \Lambda)$.
Then $P$ does not satisfy the condition (b)(ii). Hence there is $i\ge0$ such
that $\nu_n^{-i} (P)$ is an injective $\Lambda$-module. By
Proposition~\ref{modules under nakayama}(a) we have that $\nu_n^{-j} (P) \in \mod \Lambda$ for all $0 \le
j \le i$. Moreover, $P' := \nu_n^{-i-1} (P)[-n]$ is an indecomposable
projective $\Lambda$-module. In particular $i+1 < k$. Now $\nu_n^{-(k-i-1)}
(P') = \nu_n^{-k}(P)[-n] \not \in \DDD^{n\Z}(\mod \Lambda)$. This contradicts the minimality of $k$.
\end{proof}

Now we are ready to prove Theorem~\ref{dichotomy}.

\begin{proof}[Proof of Theorem~\ref{dichotomy}]
The `if' part follows immediately from Lemma~\ref{2 cases} (b)$\Rightarrow$(a) since the condition (i) is satisfied for $n$-representation finite algebras by Proposition~\ref{finite description} (a)$\Rightarrow$(b) and the condition (ii) is satisfied for $n$-representation infinite algebras.

In the rest we show the `only if' part.
We assume that $\Lambda$ is $n$-hereditary.
Let $P\in\ind(\proj\Lambda)$.
For any integer $j\in\Z$, there exists an integer $d_P(j)$ satisfying $\nu_n^{-j}(P)\in(\mod \Lambda)[d_P(j)n]$ by Proposition~\ref{nZ}.
We have $d_P(0)=0$, $d_P(-1)=-1$, and $d_P(j+1)-d_P(j)$ is either $0$ or $1$ for any $j\in\Z$.
Thus the image of $d_P$ is an interval of integers
\[d_P(\Z)=[a_P,b_P]\]
where $a_P\in\Z_{<0}\cup\{-\infty\}$ and $b_P\in\Z_{\ge0}\cup\{\infty\}$.

Note that $b_P = 0$ is equivalent to Condition~(ii) in Lemma~\ref{2 cases}(b), and, using Lemma~\ref{about DnZ}, $b_P > 0$ is equivalent to Condition~(i) in Lemma~\ref{2 cases}(b).
If $b_P>0$ for all $P\in\ind(\proj\Lambda)$, then $\Lambda$ is $n$-representation finite by Proposition~\ref{finite description} (b)$\Rightarrow$(a).
On the other hand, if $b_P=0$ for all $P\in\ind(\proj\Lambda)$, then $\Lambda$ is $n$-representation infinite by definition.
Thus we only have to show that either $b_P>0$ for all $P\in\ind(\proj\Lambda)$ or $b_P=0$ for all $P\in\ind(\proj\Lambda)$.

\textsc{Step I:} Assume $b_P>0$ for some $P\in\ind(\proj\Lambda)$.
Then for some $i$ we have $I = \nu_n^{-i}(P) \in \inj \Lambda$, which clearly satisfies
\begin{equation}\label{I and P}
a_{\nu^{-1}(I)}=a_P-1\ \mbox{ and }\ b_{\nu^{-1}(I)}=b_P-1
\end{equation}
where $\pm\infty-1:=\pm\infty$.

\textsc{Step II:} Let $P,Q\in \proj\Lambda$ indecomposable, such that $\Hom_{\DDD^{\rm b}(\mod \Lambda)}(Q,\nu_n^{-i}(P))\neq0$ for some $i\in\Z$. Then we have
\[ \Hom_{\DDD^{\rm b}(\mod \Lambda)}(\nu_n^{-j}(Q), \nu_n^{-i-j}(P)) = \Hom_{\DDD^{\rm b}(\mod \Lambda)}(Q, \nu_n^{-i}(P))\neq0 \quad \forall j \in \Z.\]
Thus we have $d_P(i+j)-d_Q(j)$ is either $0$ or $1$ for any $j\in\Z$.
Looking at $j \ll 0$ and $j \gg 0$ respectively, we have
\begin{equation}\label{Q and P}
a_Q\in \{a_P, a_P-1\} \ \mbox{ and }\ b_Q\in \{b_P, b_P-1\}.
\end{equation}

\textsc{Step III:} We show that if $P$ satisfies $b_P=0$, then $a_P=-1$.
Among the indecomposable projectives $P'$ with $b_{P'} = 0$, choose $P$ such that $a_P$ is minimal.
For any $i\ge0$, there exists $Q_i\in\ind(\proj\Lambda)$ satisfying $\Hom_{\DDD^{\rm b}(\mod \Lambda)}(Q_i,\nu_n^{-i}(P))\neq0$.
In particular, since there are only finitely many indecomposable projectives, there is $Q$ such that $\Hom_{\DDD^{\rm b}(\mod \Lambda)}(Q,\nu_n^{-i}(P))\neq0$ for infinitely many $i$.
By \eqref{Q and P}, we have $b_{Q}=0$, and $a_{Q}=a_P$ by minimality of $a_P$.
For sufficiently large $i$ we have $\nu_n^i(Q) \in (\mod \Lambda)[a_Q n]$. Thus
\[ 0 \neq \Hom_{\DDD^{\rm b}(\mod \Lambda)}(Q, \nu_n^{-i}(P)) = \Hom_{\DDD^{\rm b}(\mod \Lambda)}(\overbrace{\nu_n^i(Q)}^{\mathclap{\in \mod \Lambda[a_{Q}n]}}, \underbrace{P}_{\mathclap{\in \mod \Lambda}}), \]
and hence $a_P =a_Q \geq -1$.

\textsc{Step IV:} We show that $b_P$ is either $0$ or $\infty$ for any $P\in\ind(\proj\Lambda)$.
Otherwise applying \eqref{I and P} repeatedly, we have $Q\in\ind(\proj\Lambda)$ such that
\[a_Q=a_P-b_P\ \mbox{ and }\ b_Q=0.\]
Then $a_Q<-1$, a contradiction to \textsc{Step III}.

\textsc{Step V:} If $b_P=\infty$ for some $P\in\ind(\proj\Lambda)$, then \eqref{Q and P} and the connectedness of $\Lambda$ imply that $b_Q=\infty$ for any $Q\in\ind(\proj\Lambda)$,
so $\Lambda$ is $n$-representation finite.
Otherwise, $b_P=0$ for all $P\in\ind(\proj\Lambda)$ by \textsc{Step IV}, so $\Lambda$ is $n$-representation infinite.
\end{proof}

\section{$n$-preprojective, $n$-preinjective, and $n$-regular modules}

The aim of this section is to introduce three classes of modules over $n$-representation infinite algebras from the viewpoint of higher dimensional Auslander-Reiten theory.
We define them by using the $n$-Auslander-Reiten translation functors $\tau_n$ and $\tau_n^-$ in a similar way as the classical case $n=1$ of hereditary algebras.

We start by discussing the more general case of $n$-hereditary algebras.
Let $\Lambda$ be an $n$-hereditary algebra. Then an important role is played by the full subcategory
\[ \UU := \add\{\nu_n^i(\Lambda) \mid i\in\Z\} \]
of $\DDD^{\rm b}(\mod \Lambda)$, which is contained in $\DDD^{n\Z}(\mod \Lambda)$.
The category $\UU$ already appeared in the study of $n$-representation finite algebras \cite{I4,IO1,IO2}:

\begin{remark}[{\cite[Theorem 1.23]{I4}}] \label{U for n-RF}
If $\Lambda$ is $n$-representation finite,
then $\UU$ is an $n$-cluster tilting subcategory of $\DDD^{\rm b}(\mod \Lambda)$.
\end{remark}

The category $\UU$ is also important for $n$-representation infinite algebras. The first remarkable property of $\UU$ is the following.

\begin{proposition}\label{n rigid}
Let $\Lambda$ be an $n$-hereditary algebra.
Then we have $\Hom_{\DDD^{\rm b}(\mod \Lambda)}(\UU,\UU[i])=0$ for any $i\in\Z\backslash n\Z$.
\end{proposition}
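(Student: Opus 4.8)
The plan is to reduce the statement to a single cohomology computation and then read off the conclusion from the definition of $n$-hereditary. By definition $\UU=\add\{\nu_n^j(\Lambda)\mid j\in\Z\}$, so every object of $\UU$ is a direct summand of a finite direct sum of the $\nu_n^j(\Lambda)$, and consequently $\Hom_{\DDD^{\rm b}(\mod \Lambda)}(A,B[i])$ for $A,B\in\UU$ is a subquotient of a finite direct sum of groups of the form $\Hom_{\DDD^{\rm b}(\mod \Lambda)}(\nu_n^j(\Lambda),\nu_n^k(\Lambda)[i])$. Hence it suffices to prove that each of these vanishes when $i\in\Z\backslash n\Z$.

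The key step is the reduction to the case $j=0$. Since $\nu_n$ is an autoequivalence of $\DDD^{\rm b}(\mod \Lambda)$, applying $\nu_n^{-j}$ gives
\[
\Hom_{\DDD^{\rm b}(\mod \Lambda)}(\nu_n^j(\Lambda),\nu_n^k(\Lambda)[i])\iso\Hom_{\DDD^{\rm b}(\mod \Lambda)}(\Lambda,\nu_n^{k-j}(\Lambda)[i]).
\]
Because $\Lambda$ is projective as a left module over itself, $\RHom_\Lambda(\Lambda,-)$ is the identity functor, so the right-hand group is simply $\Ho^i(\nu_n^{k-j}(\Lambda))$. (Alternatively, this identification is the one recorded in Observation~\ref{easy properties of nu_n}(c) after applying the duality $D$.)

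Finally, since $\Lambda$ is $n$-hereditary we have $\nu_n^{k-j}(\Lambda)\in\DDD^{n\Z}(\mod \Lambda)$, which means precisely that $\Ho^i(\nu_n^{k-j}(\Lambda))=0$ for all $i\in\Z\backslash n\Z$. Combining the three steps yields $\Hom_{\DDD^{\rm b}(\mod \Lambda)}(\UU,\UU[i])=0$ for $i\in\Z\backslash n\Z$. There is no serious obstacle here: the proposition is essentially a reformulation of the definition of $n$-hereditary, and the only points requiring a word of care are the passage from $\add$ to its generators and the standard identification $\Hom_{\DDD^{\rm b}(\mod \Lambda)}(\Lambda,X[i])=\Ho^i(X)$.
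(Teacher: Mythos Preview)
Your proof is correct and follows essentially the same approach as the paper: both use that $\nu_n$ is an autoequivalence preserving $\UU$ to reduce to $\Hom_{\DDD^{\rm b}(\mod\Lambda)}(\Lambda,\nu_n^m(\Lambda)[i])=\Ho^i(\nu_n^m(\Lambda))$, which vanishes for $i\notin n\Z$ by the definition of $n$-hereditary. One small imprecision: the Hom group is actually a direct summand (not merely a subquotient) of the displayed direct sum, since $\Hom$ is additive in each variable and preserves direct summands; this does not affect the argument.
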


\begin{proof}
Since $\UU\subset\DDD^{n\Z}(\mod \Lambda)$, we have $\Hom_{\DDD^{\rm b}(\mod \Lambda)}(\Lambda,\UU[i])=\Ho^i(\UU)=0$ for any $i\in\Z\backslash n\Z$.
Since $\nu_n$ is an autoequivalence of $\DDD^{\rm b}(\mod \Lambda)$ satisfying $\nu_n(\UU)=\UU$, we have the assertion.
\end{proof}

Next we have the following description of indecomposable objects in $\UU$.

\begin{proposition}\label{ind U}
We have a bijection from $\ind(\proj\Lambda)\times\Z$ to $\ind\UU$ given by $(P,i)\mapsto\nu_n^i(P)$.
\end{proposition}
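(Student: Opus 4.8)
The plan is to show that the assignment $(P,i)\mapsto\nu_n^i(P)$ is well-defined with image in $\ind\UU$, that it is surjective, and that it is injective, treating the three points in turn. Well-definedness is immediate: each $\nu_n^i(P)$ lies in $\UU$ by definition of $\UU$, and since $\nu_n$ is an autoequivalence of $\DDD^{\rm b}(\mod\Lambda)$ it preserves indecomposability, so $\nu_n^i(P)\in\ind\UU$ whenever $P\in\ind(\proj\Lambda)$. For surjectivity, let $X\in\ind\UU$. By definition $X$ is a direct summand of some $\nu_n^{i_1}(\Lambda)\oplus\cdots\oplus\nu_n^{i_r}(\Lambda)$; applying the autoequivalence $\nu_n^{-i_1}$ we may reduce to showing that every indecomposable summand of $\Lambda\oplus\nu_n^{j_2}(\Lambda)\oplus\cdots$ is of the required form, and since $\nu_n^j(\Lambda)$ is always (the shift of) a $\Lambda$-module concentrated in a single degree $j_{\text{shift}}$ by Proposition~\ref{nZ}, its indecomposable summands are the $\nu_n^{i}(P)$ for $P\in\ind(\proj\Lambda)$ after undoing the shift. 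More cleanly: $\UU=\add\{\nu_n^i(\Lambda)\mid i\in\Z\}=\add\{\nu_n^i(P)\mid P\in\ind(\proj\Lambda),\,i\in\Z\}$ because $\nu_n$ commutes with finite direct sums and $\Lambda=\bigoplus_{P}P$; hence every indecomposable object of the Krull--Schmidt category $\UU$ is isomorphic to some $\nu_n^i(P)$.

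The substantive part is injectivity: if $\nu_n^i(P)\iso\nu_n^j(Q)$ in $\DDD^{\rm b}(\mod\Lambda)$ with $P,Q\in\ind(\proj\Lambda)$, then $i=j$ and $P\iso Q$. Applying $\nu_n^{-i}$ we may assume $i=0$, so $P\iso\nu_n^j(Q)$ with $P$ a module in degree $0$. If $j>0$, then Proposition~\ref{modules under nakayama}(b) gives $\Hom_{\DDD^{\rm b}(\mod\Lambda)}(Q,\nu_n^j(Q))=0$, but $P\iso\nu_n^j(Q)$ would force $\Hom(Q,\nu_n^j(Q))\neq 0$ (there is a nonzero map $Q\twoheadrightarrow$ top$(Q)$-summand, or more simply $\id_{\nu_n^j(Q)}$ transported along $Q\iso\nu_n^{-j}(P)$ — indeed $\Hom(Q,\nu_n^j(Q))\iso\Hom(\nu_n^{-j}(Q),Q)$, wait, better: $\Hom(Q,\nu_n^j(Q))\iso\Hom(\nu_n^{-j}Q,Q)$ by the autoequivalence, and if $\nu_n^{-j}(Q)\iso\nu_n^{-j}(Q)$ this is nonzero), a contradiction; symmetrically $j<0$ gives $\Hom(\nu_n^j(Q),Q)=0$ contradicting the isomorphism. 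Hence $j=0$ and $P\iso Q$ in $\mod\Lambda$, whence $P\iso Q$ in $\ind(\proj\Lambda)$.

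The one point requiring care — the main (mild) obstacle — is making the contradiction in the injectivity step airtight: from an isomorphism $P\iso\nu_n^j(Q)$ one must extract a nonzero hom in the direction controlled by Proposition~\ref{modules under nakayama}(b). The clean way is to observe $\Hom_{\DDD^{\rm b}(\mod\Lambda)}(\nu_n^i(\Lambda),\nu_n^j(\Lambda))\neq 0$ whenever $\nu_n^i(\Lambda)$ and $\nu_n^j(\Lambda)$ share an indecomposable summand (since $\End$ of that summand is a nonzero local ring, containing the identity), so a shared summand $\nu_n^i(P)\iso\nu_n^j(Q)$ with $i<j$ (or $i>j$) contradicts Proposition~\ref{modules under nakayama}(b) directly. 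I would phrase the argument this way to avoid any fuss with tops or with which variance of $\Hom$ to use.
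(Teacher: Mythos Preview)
Your proposal is correct and follows essentially the same approach as the paper: well-definedness and surjectivity are immediate from $\nu_n$ being an autoequivalence and the definition of $\UU$, and injectivity comes from Proposition~\ref{modules under nakayama}(b). Your final ``clean way'' is exactly the argument the paper has in mind---an isomorphism $\nu_n^i(P)\iso\nu_n^j(Q)$ with $i<j$ would give a nonzero morphism $\nu_n^i(\Lambda)\to\nu_n^j(\Lambda)$, contradicting that proposition---though the paper compresses this to a single sentence (``By Proposition~\ref{modules under nakayama}(b), we have $i=j$''), and you should excise the exploratory detour in the middle of your injectivity paragraph before writing it up.
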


\begin{proof}
Since $\nu_n$ is an equivalence, we have that $\nu_n^i(P)$ is indecomposable for any $(P,i)\in\ind(\proj\Lambda)\times\Z$.
Thus the map $(P,i)\mapsto\nu_n^i(P)$ gives a surjection $\ind(\proj\Lambda)\times\Z\to\ind\UU$.
It remains to prove injectivity.
Assume $\nu_n^i(P)\iso\nu_n^j(Q)$ for $(P,i),(Q,j)\in\ind(\proj\Lambda)\times\Z$.
By Proposition~\ref{modules under nakayama}(b), we have $i=j$. Thus we have $P\iso Q$.
\end{proof}

Again let $\Lambda$ be an $n$-hereditary algebra.
We will study the full subcategory
\begin{eqnarray*}
\CC &:=& \{ X \in \DDD^{\rm b}(\mod \Lambda) \mid \forall i \in\Z\backslash n\Z \colon \Hom_{\DDD^{\rm b}(\mod \Lambda)}(\UU, X[i]) = 0\},\\
&=&\{ X \in \DDD^{\rm b}(\mod \Lambda) \mid \forall i \in\Z\backslash n\Z \colon \Hom_{\DDD^{\rm b}(\mod \Lambda)}(X, \UU [i]) = 0\}
\end{eqnarray*}
inside $\DDD^{\rm b}(\mod \Lambda)$, where the second equality is a conclusion of Observation~\ref{easy properties of nu_n}(a) and $\nu_n(\UU)=\UU$.
We also study its module category analog
\[ \CC^{0} := (\mod\Lambda)\cap\CC. \]
The following assertions are clear.

\begin{observation} \label{autoeq}
Since $\nu_n$ induces an autoequivalence of $\UU$, $\nu_n$ also induces an autoequivalence of $\CC$. Clearly $[n]$ gives an autoequivalence of $\CC$, so it follows that also $\nu$ induces an autoequivalence of $\CC$.
\end{observation}

Now we give basic properties of $\CC$.
The second equality shows that $\CC$ is very similar to the derived category of hereditary algebras.

\begin{proposition}\label{basic of C}
Let $\Lambda$ be an $n$-hereditary algebra. The following assertions hold.
\begin{itemize}
\item[(a)] $\UU\subset\CC \subset \DDD^{n\Z}(\mod \Lambda)$.
\item[(b)] $\CC = \bigvee_{\ell\in\Z} \CC^{0}[\ell n]$.
\end{itemize}
\end{proposition}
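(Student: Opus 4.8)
For part (a), the inclusion $\UU\subset\CC$ is precisely the content of Proposition~\ref{n rigid}: for $U\in\UU$ we have $U\in\CC$ because $\Hom_{\DDD^{\rm b}(\mod\Lambda)}(\UU,U[i])=0$ for all $i\in\Z\backslash n\Z$. For the second inclusion $\CC\subset\DDD^{n\Z}(\mod\Lambda)$, I would simply test the defining condition of $\CC$ against the object $\Lambda\in\UU$: if $X\in\CC$ then $\Ho^i(X)=\Hom_{\DDD^{\rm b}(\mod\Lambda)}(\Lambda,X[i])=0$ for every $i\in\Z\backslash n\Z$, which is exactly the condition for $X\in\DDD^{n\Z}(\mod\Lambda)$.

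For part (b), I would first record two structural facts about $\CC$ that are immediate from its definition: $\CC$ is closed under finite direct sums and under direct summands (the relevant $\Hom$-groups vanish on a direct sum iff they vanish on each summand), and $[n]$ — hence each $[\ell n]=[n]^{\ell}$ — restricts to an autoequivalence of $\CC$ (Observation~\ref{autoeq}). The inclusion $\bigvee_{\ell\in\Z}\CC^{0}[\ell n]\subseteq\CC$ then follows at once: $\CC^{0}\subseteq\CC$ by definition, applying $[\ell n]$ gives $\CC^{0}[\ell n]\subseteq\CC$, and since $\CC$ is additive and closed under summands it contains every object in $\add\big(\bigcup_{\ell}\ind(\CC^{0}[\ell n])\big)=\bigvee_{\ell}\CC^{0}[\ell n]$.

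For the reverse inclusion $\CC\subseteq\bigvee_{\ell\in\Z}\CC^{0}[\ell n]$, let $X\in\CC$. By part (a), $X\in\DDD^{n\Z}(\mod\Lambda)$, so Proposition~\ref{nZ} yields an isomorphism $X\iso\bigoplus_{\ell\in\Z}\Ho^{\ell n}(X)[-\ell n]$ (a finite sum, as $X$ is bounded). Each summand $\Ho^{\ell n}(X)[-\ell n]$ then lies in $\CC$ because $\CC$ is closed under direct summands; applying the autoequivalence $[\ell n]$ of $\CC$ gives $\Ho^{\ell n}(X)\in\CC$, and since $\Ho^{\ell n}(X)\in\mod\Lambda$ we conclude $\Ho^{\ell n}(X)\in\CC\cap\mod\Lambda=\CC^{0}$. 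Therefore $\Ho^{\ell n}(X)[-\ell n]\in\CC^{0}[-\ell n]$ for every $\ell$, and reindexing $\ell\mapsto-\ell$ shows $X\in\bigvee_{\ell\in\Z}\CC^{0}[\ell n]$.

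The only points requiring any care — and they are routine given the earlier results — are the observations that $\CC$ is closed under direct summands and $[n]$-stable, together with the application of Proposition~\ref{nZ}; once these are in hand the proof is a direct assembly, so I do not anticipate a genuine obstacle.
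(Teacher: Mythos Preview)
Your proof is correct and follows essentially the same route as the paper's: part (a) uses Proposition~\ref{n rigid} together with testing against $\Lambda\in\UU$, and part (b) combines the $[n]$-stability of $\CC$ (Observation~\ref{autoeq}) with the decomposition of Proposition~\ref{nZ}. The only difference is cosmetic: where you explicitly decompose $X$ and argue via closure under summands, the paper phrases the same step as the identity $(\mod\Lambda)[\ell n]\cap\CC=\CC^{0}[\ell n]$ and then intersects $\CC\subset\DDD^{n\Z}(\mod\Lambda)=\bigvee_{\ell}(\mod\Lambda)[\ell n]$ with $\CC$.
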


\begin{proof}
(a) By Proposition~\ref{n rigid}, we have $\UU\subset\CC$. Since $\Lambda\in\UU$, we have $\CC\subset\DDD^{n\Z}(\mod \Lambda)$ by Proposition \ref{n rigid}.

(b) Since $\CC[\ell n]=\CC$ holds by Observation~\ref{autoeq}, we have $(\mod\Lambda)[\ell n]\cap\CC=\CC^{0}[\ell n]$ for any $\ell\in\Z$.
In particular, $\CC$ contains the right hand side.
Moreover $\CC$ is contained in the right hand side by (a) and Proposition~\ref{nZ}.
\end{proof}

\begin{remark}
If $\Lambda$ is $n$-representation finite, then $\CC=\UU$ by Remark~\ref{U for n-RF}.
Moreover $\CC^{0} = \add\{\tau_n^{-i}(\Lambda) \mid i\ge0\}$ is the $n$-cluster tilting subcategory of $\mod \Lambda$ by Proposition \ref{finite description}.
\end{remark}

In the rest of this section we assume that $\Lambda$ is $n$-representation infinite.
Following the representation theory of hereditary algebras, we introduce important subcategories of $\mod\Lambda$.

\begin{definition}
Let $\Lambda$ be an $n$-representation infinite algebra. By Proposition~\ref{prop.infinite_leftrightsymmetric} we can define two full subcategories of $\mod \Lambda$ by
\begin{align*}
\PP &:= \add\{\nu_n^{-i}(\Lambda) \mid i\ge0\} =\add\{\tau_n^{-i}(\Lambda) \mid i\ge0\},\\
\II &:= \add\{\nu_n^{i}(D\Lambda) \mid i\ge0\} =\add\{\tau_n^{i}(D\Lambda) \mid i\ge0\}. 
\end{align*}
We call modules in $\PP$ (respectively, $\II$)
\emph{$n$-preprojective} (respectively, \emph{$n$-preinjective}) modules.
\end{definition}

\begin{observation}\label{applied n-shifted serre}
Clearly we have $\nu_n(\PP)=\PP\vee(\inj\Lambda)[-n]$ and $\nu_n^{-1}(\II)=\II\vee(\proj\Lambda)[n]$.
Applying Observation~\ref{easy properties of nu_n}(a), we have equivalences
\begin{eqnarray*}
\Ext_{\Lambda}^i(\PP,X)=0  &\Longleftrightarrow& \Ext_{\Lambda}^{n-i}(X,\PP)=0,\\
\Ext_{\Lambda}^i (X,\II)=0  &\Longleftrightarrow& \Ext_{\Lambda}^{n-i}(\II,X)=0
\end{eqnarray*}
for any $X\in\mod\Lambda$ and $i$ with $1\le i\le n$, which we will often use in this paper.
\end{observation}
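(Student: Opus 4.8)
The plan is to obtain everything from the definitions of $\PP$ and $\II$ together with Observation~\ref{easy properties of nu_n}(a) and (b); no deep input is needed, and the only real work is keeping track of the injective/projective ``boundary'' summands that $\nu_n^{\pm1}$ creates and destroys.

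First I would verify the two displayed identities. Unwinding the definition of $\PP$ and using that $\nu_n$ is an additive autoequivalence,
\[\nu_n(\PP)=\add\{\nu_n^{1-i}(\Lambda)\mid i\ge0\}=\add\{\nu_n^{j}(\Lambda)\mid j\le1\}=\add\{\nu_n(\Lambda)\}\vee\PP,\]
and since $\nu_n(\Lambda)=\nu(\Lambda)[-n]=D\Lambda[-n]$ with $\add D\Lambda=\inj\Lambda$, the extra piece is $\add\{\nu_n(\Lambda)\}=(\inj\Lambda)[-n]$, so $\nu_n(\PP)=\PP\vee(\inj\Lambda)[-n]$. Because $n\ge1$, the indecomposables of $\PP$ and of $(\inj\Lambda)[-n]$ live in cohomological degrees $0$ and $n$ respectively, hence are disjoint, so this is an honest decomposition. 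Dually, $\nu_n^{-1}(\II)=\add\{\nu_n^{i-1}(D\Lambda)\mid i\ge0\}=\add\{\nu_n^{-1}(D\Lambda)\}\vee\II$, and $\nu_n^{-1}(D\Lambda)=\nu^{-1}(D\Lambda)[n]=\Lambda[n]$ with $\add\Lambda=\proj\Lambda$ yields $\nu_n^{-1}(\II)=\II\vee(\proj\Lambda)[n]$.

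Next I would prove the first $\Ext$-equivalence. For $X\in\mod\Lambda$ and $P'\in\PP$, Observation~\ref{easy properties of nu_n}(a) gives
\[\Ext^i_\Lambda(P',X)=\Hom_{\DDD^{\rm b}(\mod\Lambda)}(P',X[i])\iso D\Hom_{\DDD^{\rm b}(\mod\Lambda)}(X,\nu_n(P')[n-i]).\]
Decompose $\nu_n(P')=P_1\oplus I[-n]$ with $P_1\in\PP$ and $I\in\inj\Lambda$ (possible by the first step). The right-hand $\Hom$ splits as $\Ext^{n-i}_\Lambda(X,P_1)\oplus\Ext^{-i}_\Lambda(X,I)$, and the second summand vanishes for $i\ge1$ as a negative $\Ext$-group. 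Hence, for $1\le i\le n$, $\Ext^i_\Lambda(P',X)\iso D\Ext^{n-i}_\Lambda(X,P_1)$ with $P_1\in\PP$, so $\Ext^{n-i}_\Lambda(X,\PP)=0$ forces $\Ext^i_\Lambda(\PP,X)=0$. Conversely, if $\Ext^i_\Lambda(\PP,X)=0$, then for any $P_1\in\PP$ the module $P':=\nu_n^{-1}(P_1)$ again lies in $\PP$ (as $\nu_n^{-1}(\PP)\subseteq\PP$ by definition of $\PP$) and has $\nu_n(P')=P_1$ with no injective summand, whence $0=\Ext^i_\Lambda(P',X)\iso D\Ext^{n-i}_\Lambda(X,P_1)$; thus $\Ext^{n-i}_\Lambda(X,\PP)=0$.

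Finally, the second equivalence is symmetric. The quickest route is to apply the $K$-dual $D$ and Observation~\ref{easy properties of nu_n}(b): this sends $\II$ over $\Lambda$ to $\PP$ over $\Lambda^{\rm op}$, which is again $n$-representation infinite by Proposition~\ref{prop.infinite_leftrightsymmetric}, so the statement for $\II$ follows from the one just proved. Alternatively one argues directly: Observation~\ref{easy properties of nu_n}(a) gives $\Ext^{n-i}_\Lambda(I',X)\iso D\Ext^i_\Lambda(X,\nu_n(I'))$ for $I'\in\II$, using $\nu_n(\II)\subseteq\II$; the only indecomposable summands of objects of $\II$ not arising as some $\nu_n(I')$ are the injective ones, for which $\Ext^i_\Lambda(X,D\Lambda)=0$ when $i\ge1$. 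I expect the only delicate point, more a matter of care than of difficulty, to be this bookkeeping of the boundary terms, together with the observation that the range $1\le i\le n$ is exactly what kills the relevant $\Hom$-spaces (a negative $\Ext$-group on the $\PP$-side, an $\Ext$-group into an injective on the $\II$-side); everything else is formal manipulation of the Serre-duality formula.
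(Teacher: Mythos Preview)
Your argument is correct and matches the paper's intended approach: the observation is stated without proof in the paper, only indicating that the first two identities are ``clear'' and that the $\Ext$-equivalences follow from Observation~\ref{easy properties of nu_n}(a). Your write-up supplies exactly the bookkeeping the paper omits---tracking the injective/projective boundary summands produced by $\nu_n^{\pm1}$ and noting that the constraint $1\le i\le n$ is precisely what kills the extra terms---so there is nothing to compare or correct.
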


We collect easy observations.

\begin{proposition}\label{P and I}
Let $\Lambda$ be an $n$-representation infinite algebra. The following assertions hold.
\begin{itemize}
\item[(a)] We have a bijection from $\ind(\proj\Lambda)\times\Z_{\ge0}$ to $\ind\PP$ given by $(P,i)\mapsto\nu_n^{-i}(P)$.
\item[(b)] We have a bijection from $\ind(\inj\Lambda)\times\Z_{\ge0}$ to $\ind\II$ given by $(I,i)\mapsto\nu_n^i(I)$.
\item[(c)] $\UU=\II[-n]\vee\PP$.
\item[(d)] $\Hom_\Lambda(\II,\PP)=0$ and $\PP\cap\II=0$.
\item[(e)] $\PP\vee\II\subset\CC^{0}$.
\item[(f)] $\Ext_\Lambda^i(\PP\vee\II,\PP\vee\II)=0$ for any $i$ with $0<i<n$.
\item[(g)] $\CC^{0} = \{X\in\mod\Lambda \mid \forall i \in \{1, \ldots, n-1\}\colon
\Ext^i_\Lambda(\PP\vee\II,X)= 0\}$.
\end{itemize}
\end{proposition}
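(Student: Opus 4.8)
The plan is to prove the seven assertions roughly in the order listed, since several later parts depend on earlier ones. Parts (a) and (b) are immediate consequences of Proposition~\ref{ind U} (the description of $\ind\UU$) together with Proposition~\ref{modules under nakayama}(b): the map $(P,i)\mapsto\nu_n^{-i}(P)$ is the restriction of the bijection $\ind(\proj\Lambda)\times\Z\to\ind\UU$, and since $\Lambda$ is $n$-representation infinite every $\nu_n^{-i}(P)$ with $i\ge0$ is an indecomposable module, so the image is precisely $\ind\PP$; injectivity is inherited. Part (b) follows by applying (a) to $\Lambda^{\op}$ and dualizing via Observation~\ref{easy properties of nu_n}(b), using Proposition~\ref{prop.infinite_leftrightsymmetric}. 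For part (c), recall $\UU=\add\{\nu_n^i(\Lambda)\mid i\in\Z\}$ and split this into $i\ge0$, giving $\PP$, and $i<0$; writing $i=-j$ with $j>0$ and using $\nu_n(\Lambda)=D\Lambda[-n]$ we get $\nu_n^{-j}(\Lambda)=\nu_n^{-(j-1)}(D\Lambda)[-n]$, so the $i<0$ part is exactly $\II[-n]$. Hence $\UU=\PP\vee\II[-n]$.

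For part (d), the vanishing $\Hom_\Lambda(\II,\PP)=0$ reduces (by applying the autoequivalence $\nu_n^{\ell}$ for suitable $\ell$) to showing $\Hom_{\DDD^{\rm b}(\mod\Lambda)}(\nu_n^i(\Lambda),\nu_n^j(\Lambda))=0$ for appropriate index ranges; more directly, an indecomposable $\II$-summand has the form $\nu_n^i(D\Lambda)$ and an indecomposable $\PP$-summand has the form $\nu_n^{-j}(\Lambda)$ with $i,j\ge0$, and a $\Hom$ between them is a $\Hom$ between $\nu_n^{i+j}(\Lambda)$-type and $\Lambda$-type objects sitting in non-negative cohomological degrees; using Observation~\ref{easy properties of nu_n}(a) this becomes $D\Hom_{\DDD^{\rm b}(\mod\Lambda)}(\nu_n^{-j}(\Lambda),\nu_n^{i+1}(\Lambda)[n])$, and one checks it vanishes because $\nu_n^{i+1}(\Lambda)[n]\in\DDD^{\ge 0}$-type positions incompatible with a module in degree $0$ mapping out — essentially Proposition~\ref{t-structure} plus $\gl\Lambda\le n$. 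Then $\PP\cap\II=0$ is immediate: any module lying in both would admit a nonzero identity endomorphism factoring as an $\II$-to-$\PP$ map. For part (e), we must check $\PP\vee\II\subset\CC^0=(\mod\Lambda)\cap\CC$; since $\PP,\II\subset\mod\Lambda$, it suffices to show $\PP\vee\II\subset\CC$, and indeed $\PP\vee\II[-n]=\UU\subset\CC$ by Proposition~\ref{basic of C}(a), while $\CC$ is closed under $[n]$ by Observation~\ref{autoeq}, so $\II=\II[-n][n]\subset\CC$ too.

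Part (f) is the substantive point. For $X,Y\in\PP\vee\II$ and $0<i<n$, the Ext-group $\Ext^i_\Lambda(X,Y)=\Hom_{\DDD^{\rm b}(\mod\Lambda)}(X,Y[i])$, and since $X\in\CC$ we have $\Hom_{\DDD^{\rm b}(\mod\Lambda)}(\UU,X[j])=0$ for $j\notin n\Z$; applying $\nu_n$ and using $\nu_n(\UU)=\UU$ together with part (e) ($Y[i]\in\CC[i]$, and $\CC\subset\DDD^{n\Z}$) lets one conclude the Ext vanishes because $0<i<n$ means $i\notin n\Z$. More precisely, $Y\in\CC$ gives $\Hom_{\DDD^{\rm b}(\mod\Lambda)}(X,Y[i])=0$ whenever $i\notin n\Z$ using the second description of $\CC$ and $X\in\UU$-generated part — here one needs that $\PP\vee\II$ lies in $\add\UU$-reach, which it does since $\PP,\II[-n]\subset\UU$ and one absorbs the shift. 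This is the step I expect to require the most care: keeping track of which shifts land in $n\Z$ and correctly invoking both descriptions of $\CC$. Finally part (g): the inclusion $\subseteq$ is part of (e) combined with the definition of $\CC^0$ via $\Ext^i_\Lambda(\UU,-)$ and the fact that $\PP\vee\II$ generates the relevant part of $\UU$; the reverse inclusion $\supseteq$ follows because $\CC^0$ was defined (in Proposition~\ref{basic of C} and the surrounding discussion) by the vanishing $\Hom_{\DDD^{\rm b}(\mod\Lambda)}(\UU,X[i])=0$ for $i\notin n\Z$, and for $X\in\mod\Lambda$ the only such $i$ in the range $0<i<n$ give $\Ext^i_\Lambda$-conditions against $\UU$; since $\UU=\PP\vee\II[-n]$ and $\Ext^i_\Lambda(\II[-n],X)=\Ext^{i+n}$-type terms vanish for degree reasons when $0<i<n$, the condition against all of $\UU$ collapses to the condition against $\PP\vee\II$ in degrees $1,\dots,n-1$, giving the stated equality.
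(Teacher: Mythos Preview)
Your approach matches the paper's: (a)--(b) via Proposition~\ref{ind U}, (c) and (g) by unpacking definitions, (e)--(f) via Proposition~\ref{n rigid}, and (d) by a direct computation. The skeleton is correct.

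However, your execution of (d) and (f) is more convoluted than necessary, and (d) contains a slip. For (d), the paper simply observes
\[
\Hom_\Lambda(\nu_n^i(D\Lambda),\nu_n^{-j}(\Lambda))
\cong \Hom_{\DDD^{\rm b}(\mod\Lambda)}(\Lambda[n],\nu_n^{-i-j-1}(\Lambda))=0,
\]
using that $D\Lambda=\nu_n(\Lambda)[n]$ and that $\nu_n^{-i-j-1}(\Lambda)$ is a module (since $i+j+1>0$ and $\Lambda$ is $n$-representation infinite). Your Serre-duality route would also work, but you wrote $\nu_n^{i+1}(\Lambda)[n]$ where you should have $\nu_n^{i+1}(D\Lambda)[n]$, and the subsequent appeal to Proposition~\ref{t-structure} is left vague. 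For (f), you can bypass the ``$\UU$-reach'' hand-waving: since $\PP\subset\UU$ and $\II\subset\UU[n]$, for $X,Y\in\PP\vee\II$ the group $\Ext^i_\Lambda(X,Y)=\Hom_{\DDD^{\rm b}(\mod\Lambda)}(X,Y[i])$ is, up to a shift by $0$ or $\pm n$ on each argument, of the form $\Hom_{\DDD^{\rm b}(\mod\Lambda)}(\UU,\UU[j])$ with $j\equiv i\pmod n$; Proposition~\ref{n rigid} then gives vanishing for $0<i<n$. This is what the paper means by ``follows immediately from Proposition~\ref{n rigid}.''
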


\begin{proof}
(a) and (b) follow from Proposition~\ref{ind U}, and
(c) and (g) are clear from definition.

(d) Since $\Hom_\Lambda(\nu_n^i(D\Lambda),\nu_n^{-j}(\Lambda))\iso\Hom_\Lambda(\Lambda[n],\nu_n^{-i-j-1}(\Lambda))=0$, we have the first assertion.
Now the second one is clear.

(e) and (f) follow immediately from Proposition~\ref{n rigid}.
\end{proof}

As a consequence, we have an answer to the second author's Question~\ref{ICRA XI}.

\begin{corollary} \label{Cor.Ext-orth-family}
Let $\Lambda$ be an $n$-representation infinite algebra.
Then $\ind(\PP\vee\II)$ is an infinite set of isomorphism classes of indecomposable $\Lambda$-modules such that
$\Ext_\Lambda^i(X,Y)=0$ for any $i$ with $0<i<n$ and any $X,Y\in\ind(\PP\vee\II)$.
\end{corollary}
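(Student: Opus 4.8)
The plan is to deduce the corollary directly from the structural results about $\PP$ and $\II$ collected in Proposition~\ref{P and I}, so the argument is essentially bookkeeping; there is no serious obstacle.

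First I would settle infinitude. Since $\Lambda$ is a nonzero finite dimensional algebra, $\ind(\proj\Lambda)$ is nonempty, so by Proposition~\ref{P and I}(a) the map $(P,i)\mapsto\nu_n^{-i}(P)$ is an injection $\ind(\proj\Lambda)\times\Z_{\ge0}\hookrightarrow\ind\PP$; as $\Z_{\ge0}$ is infinite, $\ind\PP$ is infinite, and hence so is $\ind(\PP\vee\II)$, which contains $\ind\PP$. (Proposition~\ref{P and I}(b) would serve equally well via $\II$.) I would also note in passing that $\mod\Lambda$ is Krull--Schmidt, since $\Lambda$ is finite dimensional, so that $\ind(\PP\vee\II)$ is well defined, and that by definition $\PP\vee\II\subseteq\mod\Lambda$, so all these objects are genuine modules and not merely complexes.

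Next I would handle the $\Ext$-vanishing. Any $X,Y\in\ind(\PP\vee\II)$ are objects of the additive subcategory $\PP\vee\II$ of $\mod\Lambda$, so $\Ext^i_\Lambda(X,Y)$ is one of the groups subsumed in the statement $\Ext^i_\Lambda(\PP\vee\II,\PP\vee\II)=0$. By Proposition~\ref{P and I}(f) this holds for all $i$ with $0<i<n$, whence $\Ext^i_\Lambda(X,Y)=0$ for every such $i$. This completes the proof; as remarked after Proposition~\ref{P and I}, specializing to $n=2$ (or taking $i=1$ for any $n>1$) this gives an infinite $\Ext^1$-orthogonal family of indecomposables, answering Question~\ref{ICRA XI}.
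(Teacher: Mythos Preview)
Your proof is correct and is exactly what the paper intends: the corollary is stated without explicit proof, merely as a consequence of Proposition~\ref{P and I}, and you have correctly identified that parts (a) (or (b)) give infinitude of $\ind(\PP\vee\II)$ while part (f) gives the required $\Ext$-vanishing.
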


Next we show that it is easy to calculate dimension vectors of modules in $\PP$ and $\II$.

\begin{observation}
Assume that $\Lambda$ is a basic $n$-representation infinite algebra over an algebraically closed field $K$
and $1=e_1+\cdots+e_m$ is a decomposition into primitive orthogonal idempotents. Let 
\[C:=\left[\dim(e_i\Lambda e_j)\right]_{1\le i,j\le m} \]
be the \emph{Cartan matrix} of $\Lambda$.
That is the columns of $C$ are the dimension vectors of the indecomposable projectives $\Lambda e_j$. Moreover let
\[ \Phi := (-1)^nC^t \cdot C^{-1}\]
be the \emph{Coxeter matrix} of $\Lambda$.
Then $\Phi$ gives the action of $\nu_n$ on the Grothendieck group $K_0(\DDD^{\bo}(\mod\Lambda))$ (with respect to the basis consisting of the simple $\Lambda$-modules).
In particular, we have
\begin{equation}\label{dimension vector}
\left[\underline{\dim} \; \nu_n^{-\ell}(\Lambda e_j)\right]_{1\le j\le m} = \Phi^{-\ell} C \quad \text{ and } \quad
\left[\underline{\dim} \; \nu_n^{\ell}(D(e_j\Lambda))\right]_{1\le j\le m} = \Phi^{\ell} C^t.
\end{equation}
\end{observation}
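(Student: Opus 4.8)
The plan is to carry out the computation in the Grothendieck group $G:=K_0(\DDD^{\bo}(\mod\Lambda))\iso K_0(\mod\Lambda)$, which is free abelian on the classes $[S_1],\dots,[S_m]$ of the simple modules, and to write every class as a column vector in this basis. First I would record that, since $K$ is algebraically closed and $\Lambda$ is basic, $\dim_K(e_iS_i)=1$, so for any $M\in\mod\Lambda$ the multiplicity of $S_i$ in a composition series of $M$ equals $\dim_K(e_iM)$; hence $[M]=\underline{\dim}\,M$ in $G$. Applied to $P_j:=\Lambda e_j$ this says that $[P_j]=\underline{\dim}\,\Lambda e_j$ is the $j$-th column of $C$ by definition of $C$. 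Because $\gl\Lambda\le n$ is finite, each $[S_i]$ is a $\Z$-linear combination of the $[P_j]$ (take a finite projective resolution), so $\{[P_j]\}_j$ is a $\mathbb{Q}$-basis of $G\otimes_\Z\mathbb{Q}$ and $C$ is invertible over $\mathbb{Q}$; in particular $\Phi$ is well defined.

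Next I would compute the action of $\nu_n=\nu\circ[-n]$ on $G$. The shift $[1]$ acts by $-1$, so $[-n]$ acts as the scalar $(-1)^n$. For $\nu$, note that on projectives $\nu(\Lambda e_j)\iso D\Hom_\Lambda(\Lambda e_j,\Lambda)=D(e_j\Lambda)$ is the indecomposable injective $I_j$, concentrated in degree $0$, whence $\nu[\Lambda e_j]=[I_j]$. Since $e_iD(e_j\Lambda)\iso D(e_j\Lambda e_i)$ we have $\dim_K(e_iI_j)=\dim_K(e_j\Lambda e_i)=C_{ji}$, so $[I_j]=\underline{\dim}\,I_j$ is the $j$-th column of $C^t$. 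Thus the matrix $N$ describing $\nu$ on $G$ (in the simple basis) satisfies $NC=C^t$, i.e.\ $N=C^tC^{-1}$, and therefore $\nu_n$ acts by $(-1)^nN=(-1)^nC^tC^{-1}=\Phi$, which is the first assertion.

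Finally I would deduce the two dimension-vector formulas. Since $\Lambda$ is $n$-representation infinite, $\nu_n^{-\ell}(\Lambda e_j)\in\mod\Lambda$ for all $\ell\ge0$ by Proposition~\ref{prop.infinite_leftrightsymmetric}(b), so by the first paragraph $\underline{\dim}\,\nu_n^{-\ell}(\Lambda e_j)=[\nu_n^{-\ell}(\Lambda e_j)]=\Phi^{-\ell}[\Lambda e_j]$; assembling these columns for $j=1,\dots,m$ and recalling that the columns of $C$ are the $[\Lambda e_j]$ gives $[\underline{\dim}\,\nu_n^{-\ell}(\Lambda e_j)]_{1\le j\le m}=\Phi^{-\ell}C$. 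Dually, $\nu_n^{\ell}(D(e_j\Lambda))\in\mod\Lambda$ for all $\ell\ge0$ by Proposition~\ref{prop.infinite_leftrightsymmetric}(d), and the columns of $C^t$ are the $[D(e_j\Lambda)]=[I_j]$, so the same argument yields $[\underline{\dim}\,\nu_n^{\ell}(D(e_j\Lambda))]_{1\le j\le m}=\Phi^{\ell}C^t$. No step is really hard here; the only point I would watch carefully is the bookkeeping of transposes — projective classes are the columns of $C$ whereas injective classes are the columns of $C^t$ — which is exactly why in $\Phi$ the factor $C^{-1}$ stands to the right of $C^t$ and not to the left.
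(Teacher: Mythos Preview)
Your argument is correct. The paper itself does not supply a proof of this observation; it simply records the formulas and moves on to an example. Your write-up spells out exactly the standard justification: identify $K_0$ with dimension vectors via $[M]\leftrightarrow\underline{\dim}\,M$, observe that the columns of $C$ (respectively $C^t$) are $[P_j]$ (respectively $[I_j]$), read off the matrix of $\nu$ from $\nu(P_j)=I_j$, and multiply by $(-1)^n$ for the shift $[-n]$. The invocation of Proposition~\ref{prop.infinite_leftrightsymmetric} to ensure that $\nu_n^{\mp\ell}(\Lambda e_j)$ and $\nu_n^{\pm\ell}(D(e_j\Lambda))$ are honest modules (so that their Grothendieck classes really are dimension vectors) is the one point that goes beyond pure bookkeeping, and you handle it correctly.
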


We give a simple example.

\begin{example}\label{beilinson dim2 part1}
Let $\Lambda$ be a Beilinson algebra of dimension $2$:
\[\xymatrix{
1  \ar@/^1pc/[r]|{x_0}\ar[r]|{x_1}\ar@/_1pc/[r]|{x_2} &
2  \ar@/^1pc/[r]|{x_0}\ar[r]|{x_1}\ar@/_1pc/[r]|{x_2} & 3}
\ \ \ x_ix_j=x_jx_i\ (i,j\in\{0,1,2\}).\]
Let
\[P_{i+3\ell}:=\nu_2^{-\ell}(\Lambda e_i)\ \mbox{ and }\ 
I_{4-i+3\ell}:=\nu_2^{\ell}(D(e_i\Lambda))\ \mbox{for $i\in\{1,2,3\}$ and $\ell\ge0$.}\]
Then we have $\PP=\add\{P_i\mid i\ge1\}$ and $\II=\add\{I_i\mid i\ge1\}$ by Proposition~\ref{P and I}(b)(c).
The Cartan matrix and the Coxeter matrix of $\Lambda$ are
\[C=\left[\begin{smallmatrix}
1&3&6\\
0&1&3\\
0&0&1\end{smallmatrix}\right] \quad \mbox{ and } \quad
\Phi=\left[\begin{smallmatrix}
1&-3&3\\
3&-8&6\\
6&-15&10\end{smallmatrix}\right].\]
Using \eqref{dimension vector}, one can show inductively
\[ \underline{\dim} \; P_i = \left[ \begin{smallmatrix} \frac{i(i+1)}{2} \\ \frac{(i-1)i}{2} \\ \frac{(i-2)(i-1)}{2} \end{smallmatrix} \right] \quad \text{ and } \quad \underline{\dim} \; I_i = \left[ \begin{smallmatrix} \frac{(i-2)(i-1)}{2} \\ \frac{(i-1)i}{2} \\ \frac{i(i+1)}{2} \end{smallmatrix} \right]. \]
\end{example}

In representation theory of representation infinite hereditary algebras, apart from preprojective and preinjective modules there is a third important class -- the regular modules. Inspired by this theory, we define the following.

\begin{definition}\label{definition of R}
The category of \emph{$n$-regular modules} is defined to be
\[\RR:=\{X \in \mod \Lambda \mid \forall i > 0 \colon \Ext_{\Lambda}^i(\PP, X) = 0 = \Ext_{\Lambda}^i(X, \II) \}.\]
\end{definition}

Immediately we have the following descriptions of $\RR$.

\begin{observation}\label{almost definition of R}
By Proposition~\ref{P and I}(g) and Observation~\ref{applied n-shifted serre},
\[ \RR =\{X \in \CC^0 \mid \Ext_{\Lambda}^n(\PP, X) = 0 = \Ext_{\Lambda}^n(X, \II)\} \]
and one can replace the condition $\Ext_{\Lambda}^n(\PP, X)=0$ (respectively, $\Ext_{\Lambda}^n(X, \II)=0$) by $\Hom_\Lambda(X,\PP)=0$ (respectively, $\Hom_\Lambda(\II,X)=0$).
\end{observation}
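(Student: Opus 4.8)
The statement to prove is Observation~\ref{almost definition of R}, which asserts two things: first, that $\RR$ can be rewritten as $\{X \in \CC^0 \mid \Ext^n_\Lambda(\PP,X)=0=\Ext^n_\Lambda(X,\II)\}$, and second, that in this description the conditions $\Ext^n_\Lambda(\PP,X)=0$ and $\Ext^n_\Lambda(X,\II)=0$ may be replaced by $\Hom_\Lambda(X,\PP)=0$ and $\Hom_\Lambda(\II,X)=0$ respectively.

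\textit{Plan for the first assertion.} Recall that by Definition~\ref{definition of R}, $X \in \RR$ iff $\Ext^i_\Lambda(\PP,X)=0$ and $\Ext^i_\Lambda(X,\II)=0$ for \emph{all} $i>0$. I would split the range of $i$ into $1 \le i \le n-1$ and $i=n$ (the ranges $i>n$ are automatically vacuous since $\gl\Lambda \le n$). The vanishing $\Ext^i_\Lambda(\PP,X)=0=\Ext^i_\Lambda(X,\II)$ for all $i$ in $\{1,\dots,n-1\}$ is, by Proposition~\ref{P and I}(g), exactly the condition that $X \in \CC^0$ — here one also uses Observation~\ref{applied n-shifted serre}, which turns $\Ext^i_\Lambda(X,\II)=0$ into $\Ext^{n-i}_\Lambda(\II,X)=0$, so that both families of Ext-vanishing conditions against $\PP\vee\II$ coincide with membership in $\CC^0$. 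What remains after imposing $X\in\CC^0$ is precisely the two conditions at $i=n$, namely $\Ext^n_\Lambda(\PP,X)=0$ and $\Ext^n_\Lambda(X,\II)=0$. This gives the claimed reformulation.

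\textit{Plan for the second assertion.} Here I would apply Observation~\ref{easy properties of nu_n}(a) (the $n$-shifted Serre duality) together with the fact that $\PP = \add\{\nu_n^{-i}(\Lambda)\mid i\ge 0\}$. For $P'=\nu_n^{-i}(\Lambda)\in\PP$ we have
\[
\Ext^n_\Lambda(P',X) = \Hom_{\DDD^{\rm b}(\mod\Lambda)}(P',X[n]) \iso D\Hom_{\DDD^{\rm b}(\mod\Lambda)}(X,\nu_n(P')).
\]
Now $\nu_n(P') = \nu_n^{-i+1}(\Lambda)$; when $i\ge 1$ this lies in $\PP$, and when $i=0$ it is $\nu_n(\Lambda)=D\Lambda[-n]$, so $\Hom(X,D\Lambda[-n])=0$ automatically. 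Hence $\Ext^n_\Lambda(\PP,X)=0$ is equivalent to $\Hom_\Lambda(X,\nu_n^{-j}(\Lambda))=0$ for all $j\ge 0$, i.e.\ to $\Hom_\Lambda(X,\PP)=0$. The argument for $\Ext^n_\Lambda(X,\II)=0 \Leftrightarrow \Hom_\Lambda(\II,X)=0$ is dual, using $\II=\add\{\nu_n^i(D\Lambda)\mid i\ge 0\}$ and the same duality, with the boundary term $\nu_n^{-1}(D\Lambda)[\text{shift}]$ contributing nothing since it involves a nonzero shift applied to a projective (cf.\ Observation~\ref{applied n-shifted serre}).

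\textit{Main obstacle.} None of this is deep; the only point requiring care is the boundary case in the second assertion — making sure the ``extra'' summand $\nu_n(\Lambda)=D\Lambda[-n]$ (resp.\ $\nu_n^{-1}(D\Lambda)=\Lambda[n]$ up to the appropriate shift) contributes no new $\Hom$-condition. This is exactly the content of the first display in Observation~\ref{applied n-shifted serre}, $\nu_n(\PP)=\PP\vee(\inj\Lambda)[-n]$ and $\nu_n^{-1}(\II)=\II\vee(\proj\Lambda)[n]$, so invoking that observation cleanly disposes of the issue. I would write the proof in two short paragraphs, one per assertion, citing Proposition~\ref{P and I}(g), Observation~\ref{applied n-shifted serre}, and Observation~\ref{easy properties of nu_n}(a) at the appropriate points.
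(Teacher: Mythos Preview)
Your proposal is correct and matches the paper's approach: the paper treats this as an immediate consequence of Proposition~\ref{P and I}(g) and Observation~\ref{applied n-shifted serre}, and your argument is a faithful unpacking of exactly that. The only minor redundancy is that for the second assertion you re-derive the $i=n$ case of Observation~\ref{applied n-shifted serre} directly from Serre duality (Observation~\ref{easy properties of nu_n}(a)) rather than simply citing it --- since Observation~\ref{applied n-shifted serre} already gives $\Ext^n_\Lambda(\PP,X)=0 \Leftrightarrow \Hom_\Lambda(X,\PP)=0$ and $\Ext^n_\Lambda(X,\II)=0 \Leftrightarrow \Hom_\Lambda(\II,X)=0$ at $i=n$, no separate treatment of the boundary term is needed.
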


We have the following important description of $\RR$ in terms of the functor $\nu_n$.

\begin{proposition}\label{nakayama R}
Let $\Lambda$ be an $n$-representation infinite algebra. Then
\[\RR=\{X \in \mod \Lambda \mid \forall i \in \Z \colon \nu_n^i(X) \in \mod \Lambda \}.\]
\end{proposition}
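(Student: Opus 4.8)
The plan is to prove the two inclusions separately. Write $\RR' := \{X \in \mod\Lambda \mid \nu_n^i(X) \in \mod\Lambda \text{ for all } i \in \Z\}$ for the right-hand side. The easier direction is $\RR' \subseteq \RR$: if $X \in \RR'$, then in particular $X \in \DDD^{n\Z}(\mod\Lambda)$ trivially (it is concentrated in degree $0$) and, since each $\nu_n^i(X)$ is a module, $X$ lies in $\CC^0$ by the definition of $\CC$ together with Proposition~\ref{basic of C}(b) (the condition ``$\nu_n^i(X) \in \mod\Lambda$ for all $i$'' is exactly ``$\Hom_{\DDD^{\rm b}(\mod\Lambda)}(\UU, X[j]) = 0$ for $j \notin n\Z$'', using $\UU = \add\{\nu_n^i(\Lambda)\}$ and $\Ho^j(\nu_n^i(X)) = \Hom(\Lambda, \nu_n^i(X)[j]) = \Hom(\nu_n^{-i}(\Lambda), X[j])$). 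It remains to check the two $\Ext^n$-vanishing conditions of Observation~\ref{almost definition of R}, and by that same observation it suffices to show $\Hom_\Lambda(X, \PP) = 0$ and $\Hom_\Lambda(\II, X) = 0$. For the first, $\Hom_\Lambda(X, \nu_n^{-i}(\Lambda)) \iso \Hom_{\DDD^{\rm b}(\mod\Lambda)}(\nu_n^i(X), \Lambda)$; since $\nu_n^i(X) \in \mod\Lambda$ while $\Lambda$ could a priori receive maps, I instead use Observation~\ref{easy properties of nu_n}(a) to rewrite this as $D\Hom(\Lambda, \nu_n^{i+1}(X)[n]) = D\Ho^n(\nu_n^{i+1}(X)) = 0$ because $\nu_n^{i+1}(X)$ is a module. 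The condition $\Hom_\Lambda(\II, X) = 0$ is dual.

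For the reverse inclusion $\RR \subseteq \RR'$, suppose $X \in \RR$. Since $\RR \subseteq \CC^0 \subseteq \DDD^{n\Z}(\mod\Lambda)$ and $\CC$ (hence $\CC^0$, up to shift) is stable under $\nu_n$ by Observation~\ref{autoeq}, each $\nu_n^i(X)$ lies in $\CC \cap \DDD^{n\Z}$, so by Proposition~\ref{nZ} we have $\nu_n^i(X) \in (\mod\Lambda)[d(i) n]$ for a unique integer $d(i)$, with $d(0) = 0$. As in the proof of Theorem~\ref{dichotomy}, $d(i+1) - d(i) \in \{0, 1\}$ for all $i$ (by Proposition~\ref{t-structure}), so $d$ is non-decreasing and it suffices to rule out $d(i) > 0$ for $i > 0$ and $d(i) < 0$ for $i < 0$. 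If $d(i_0) = 1$ for some $i_0 > 0$, pick the minimal such $i_0$; then $Y := \nu_n^{-i_0+1}(X)$ is an indecomposable(-summand-wise) module with $\nu_n^{-1}(Y) \in (\mod\Lambda)[n]$, and one shows this forces a nonzero map $X \to$ (something in $\PP$), contradicting $\Hom_\Lambda(X, \PP) = 0$ from Observation~\ref{almost definition of R}. Concretely: $\nu_n^{-i_0}(X) \in (\mod\Lambda)[n]$ means $\Ho^{-n}(\nu_n^{-i_0}(X)) \neq 0$, i.e.\ $\Hom(\nu_n^{-i_0}(X), \Lambda[-n]) \neq 0$ (using that $\Lambda$ generates, via a projective cover of the top), hence $\Hom_{\DDD^{\rm b}(\mod\Lambda)}(X, \nu_n^{i_0}(\Lambda)[-n]) \neq 0$; but $\nu_n^{i_0}(\Lambda)[-n] = \nu_n^{i_0 - 1}(D\Lambda)$, and since $\Lambda^{\rm op}$ is $n$-representation infinite (Proposition~\ref{prop.infinite_leftrightsymmetric}), Observation~\ref{easy properties of nu_n}(a) converts the $\Hom$ into $D\Hom(\nu_n^{i_0-1}(D\Lambda)[?], \nu_n(X)[?])$ landing among maps into $\PP$-objects, contradicting $\Hom_\Lambda(\II, X)=0$ or $\Hom_\Lambda(X,\PP)=0$ after reindexing. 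The case $d(i) < 0$ for $i < 0$ is dual, using $\Hom_\Lambda(\II, X) = 0$.

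I expect the main obstacle to be the bookkeeping in the reverse inclusion: translating ``$\nu_n^{i}(X)$ acquires homology in a nonzero degree'' into an explicit nonzero morphism between an object of $\PP$ (or $\II$) and $X$, with all the shifts and $\nu_n$-powers tracked correctly, and making sure the direction of the contradiction matches one of the two vanishing conditions $\Hom_\Lambda(X,\PP) = 0$, $\Hom_\Lambda(\II, X) = 0$. The cleanest route is probably to avoid ad hoc shift-juggling and instead argue uniformly: show that for $X \in \CC^0$, the statement ``$\nu_n^{-i}(X) \in \mod\Lambda$ for all $i \geq 0$'' is equivalent to $\Hom_\Lambda(X, \PP) = 0$ (by an inductive argument on $i$ using Lemma~\ref{about DnZ}-type reasoning: if $\nu_n^{-i}(X)$ is a module but $\nu_n^{-i-1}(X)$ is not, then $\nu_n^{-i}(X)$ has an injective summand, which after applying $\nu_n^{i}$ and using Observation~\ref{easy properties of nu_n}(a) yields a nonzero map from $X$ to $\nu_n^{-i-1}(I) \in \PP$), and dually ``$\nu_n^{i}(X) \in \mod\Lambda$ for all $i \geq 0$'' $\Leftrightarrow \Hom_\Lambda(\II, X) = 0$; combining the two with Observation~\ref{almost definition of R} gives the result.
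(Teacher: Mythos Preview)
Your inclusion $\RR' \subseteq \RR$ is correct, and in fact the computation you use there is essentially the whole of the paper's argument, which you then fail to exploit for the reverse inclusion. The paper observes (Lemma~\ref{Ext and nu}) that for any $X \in \mod\Lambda$ and $j \geq 0$,
\[
\Ext^i_\Lambda(\nu_n^{-j}(\Lambda), X) = \Hom_{\DDD^{\rm b}(\mod\Lambda)}(\Lambda, \nu_n^j(X)[i]) = \Ho^i(\nu_n^j(X)),
\]
so that ``$\Ext^i_\Lambda(\PP, X) = 0$ for all $i > 0$'' is \emph{equivalent} to ``$\nu_n^j(X) \in \mod\Lambda$ for all $j \geq 0$'' (the vanishing in negative cohomological degrees being automatic from Proposition~\ref{t-structure}). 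The dual identity handles $\Ext^i_\Lambda(X,\II)$ and the negative powers of $\nu_n$, and then Definition~\ref{definition of R} finishes the proof in one line. There is no splitting into two inclusions, no passage through $\CC^0$, no $d(i)$ function, and no appeal to Lemma~\ref{about DnZ}.

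Your reverse inclusion contains a genuine bookkeeping error. You claim that for $X \in \CC^0$ the condition ``$\nu_n^{-i}(X) \in \mod\Lambda$ for all $i \geq 0$'' is equivalent to $\Hom_\Lambda(X, \PP) = 0$; in fact it is equivalent to $\Hom_\Lambda(\II, X) = 0$, and it is the \emph{positive} powers of $\nu_n$ that correspond to $\Hom_\Lambda(X, \PP) = 0$. Your own injective-summand argument exhibits this: if $I$ is an injective summand of $\nu_n^{-i}(X)$ with $i \geq 0$, then $\nu_n^i(I) \in \II$ is a summand of $X$, producing a nonzero element of $\Hom_\Lambda(\II, X)$, not a map to $\PP$. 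The object $\nu_n^{-i-1}(I)$ that you assert lies in $\PP$ actually lies in $\PP[n]$, since $\nu_n^{-1}(I) \in (\proj\Lambda)[n]$. The swap is ultimately harmless for the conclusion because both vanishings are needed anyway, but the argument as written does not construct the morphism it claims to, and the detour is unnecessary given the direct identity above.
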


We need the following observations.

\begin{lemma}\label{Ext and nu}
Let $\Lambda$ be an $n$-representation infinite algebra.
\begin{itemize}
\item[(a)] $\{X \in \mod \Lambda \mid \forall j\ge 0 \colon \nu_n^{j}(X)\in\mod\Lambda\}=\{X \in \mod \Lambda \mid \forall i>0 \colon \Ext_{\Lambda}^i(\PP,X) = 0 \}$.
\item[(b)] $\{X \in \mod \Lambda \mid \forall j\ge0 \colon \nu_n^{-j}(X)\in\mod\Lambda \}=\{X \in \mod \Lambda \mid \forall i>0 \colon \Ext_{\Lambda}^i(X,\II) = 0 \}$.
\end{itemize}
\end{lemma}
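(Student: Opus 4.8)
\textbf{Proof plan for Lemma~\ref{Ext and nu}.}
By Observation~\ref{easy properties of nu_n}(b), applying $D$ turns statement (a) for $\Lambda$ into statement (b) for $\Lambda^{\op}$, which is again $n$-representation infinite by Proposition~\ref{prop.infinite_leftrightsymmetric}; so I only need to prove (a). The plan is to compute, for $X\in\mod\Lambda$ and $j\ge0$, the homology of $\nu_n^{j}(X)$ using the adjunction-type isomorphism of Observation~\ref{easy properties of nu_n}(a) together with the fact that $\nu_n^{-i}(\Lambda)$ is a module for all $i\ge0$.

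First I would record the key computation. For any $i\in\Z$ and $j\ge0$,
\[
\Ho^{i}(\nu_n^{j}(X))=\Hom_{\DDD^{\bo}(\mod\Lambda)}(\Lambda,\nu_n^{j}(X)[i])
\iso D\Hom_{\DDD^{\bo}(\mod\Lambda)}(X,\nu_n^{j+1}(\Lambda)[n-i]).
\]
Now $\nu_n^{j+1}(\Lambda)=\nu_n^{j}(D\Lambda)[-n]\in(\mod\Lambda)[-n]$ because $\Lambda^{\op}$ is $n$-representation infinite (Proposition~\ref{prop.infinite_leftrightsymmetric}(d)), so writing $N:=\nu_n^{j}(D\Lambda)\in\mod\Lambda$ we get
\[
\Ho^{i}(\nu_n^{j}(X))\iso D\Hom_{\DDD^{\bo}(\mod\Lambda)}(X,N[-i])\iso D\Ext^{-i}_\Lambda(X,N),
\]
which vanishes for $i>0$ automatically and for $i<0$ because $\gl\Lambda\le n$ forces $-i\le n$, i.e.\ it can only be nonzero for $-n\le i\le 0$. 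Hence $\nu_n^j(X)$ is always an object of $\DDD^{[-n,0]}(\mod\Lambda)$, and it lies in $\mod\Lambda$ precisely when $\Ho^{i}(\nu_n^{j}(X))=0$ for all $i<0$, i.e.\ precisely when $\Ext^{k}_\Lambda(X,\nu_n^{j}(D\Lambda))=0$ for all $k$ with $0<k\le n$.

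Next I would translate this back. Using Observation~\ref{easy properties of nu_n}(a) once more in the form $\Ext^k_\Lambda(X,\nu_n^{j}(D\Lambda))\iso D\Hom_{\DDD^{\bo}(\mod\Lambda)}(\nu_n^{j}(D\Lambda),\nu_n(X)[n-k])\iso D\Ext^{n-k}_\Lambda(\nu_n^{-j}(D\Lambda)[?],X)$ — more cleanly, shift the Nakayama powers onto the first argument to rewrite $\Ext^k_\Lambda(X,\nu_n^{j}(D\Lambda))$ in terms of $\Ext$ from $\nu_n^{-?}(\Lambda)$ into $X$. Concretely, since $D\Lambda=\nu_n(\Lambda)[n]$, one has $\nu_n^j(D\Lambda)=\nu_n^{j+1}(\Lambda)[n]$, and then applying the Serre duality formula of Observation~\ref{easy properties of nu_n}(a) expresses $\Ext^k_\Lambda(X,\nu_n^{j}(D\Lambda))$ as a dual of $\Hom_{\DDD^{\bo}}(\nu_n^{-i}(\Lambda),X[\,\cdot\,])$ for suitable $i$ ranging over $\{0,1,\dots,j\}$ as $k$ and $j$ vary; collecting over all $j\ge0$ and all $k>0$ yields exactly the condition $\Ext^i_\Lambda(\PP,X)=0$ for all $i>0$, since $\PP=\add\{\nu_n^{-i}(\Lambda)\mid i\ge0\}$. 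Finally, to close the iteration I would use Proposition~\ref{modules under nakayama}(a): if $\nu_n^{j}(X)\in\mod\Lambda$ for all $j\ge0$ then in particular each intermediate vanishing holds, and conversely once all the $\Ext$ groups vanish the computation above shows every $\nu_n^{j}(X)$ is concentrated in degree $0$; an induction on $j$ (applying the degree-$0$ case repeatedly) upgrades "$\Ext^i_\Lambda(\PP,X)=0$ for all $i>0$" to "$\nu_n^{j}(X)\in\mod\Lambda$ for all $j\ge0$".

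\textbf{Main obstacle.} The bookkeeping in the last paragraph — correctly matching the pair $(j,k)$ indexing the $\Ext$-groups $\Ext^k_\Lambda(X,\nu_n^{j}(D\Lambda))$ against the pair $(i,i')$ indexing $\Ext^{i'}_\Lambda(\nu_n^{-i}(\Lambda),X)$ under the Serre duality isomorphism, and checking that the two families of vanishing conditions are genuinely equivalent rather than merely one implying the other — is where care is needed; the rest is a direct unwinding of Observation~\ref{easy properties of nu_n} and Proposition~\ref{modules under nakayama}.
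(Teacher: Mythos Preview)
There is a genuine error in your key computation. When you apply Observation~\ref{easy properties of nu_n}(a) to $\Hom_{\DDD^{\bo}}(\Lambda,\nu_n^{j}(X)[i])$, you obtain $D\Hom_{\DDD^{\bo}}(\nu_n^{j}(X),\nu_n(\Lambda)[n-i])$; to isolate $X$ you must apply the autoequivalence $\nu_n^{-j}$, giving $D\Hom_{\DDD^{\bo}}(X,\nu_n^{\,1-j}(\Lambda)[n-i])$, not $\nu_n^{\,j+1}(\Lambda)$. With the wrong sign you end up relating $\Ho^i(\nu_n^{j}(X))$ to $\Ext^{-i}_\Lambda(X,\nu_n^{j}(D\Lambda))$, i.e.\ to $\Ext$-groups into $\II$, and this is why the ``translate back'' step turns into confused bookkeeping: you are chasing a condition that is genuinely different from $\Ext^i_\Lambda(\PP,X)=0$. (Relatedly, your claim that $\nu_n^{j}(X)\in\DDD^{[-n,0]}$ is backwards: Proposition~\ref{t-structure}(a) gives $\nu_n^{j}(X)\in\DDD^{\ge0}$, so the automatic vanishing is for $i<0$, not $i>0$.)

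More to the point, the Serre duality detour is unnecessary. The paper's argument is a single line using only that $\nu_n$ is an autoequivalence:
\[
\Ext^i_\Lambda(\nu_n^{-j}(\Lambda),X)=\Hom_{\DDD^{\bo}}(\nu_n^{-j}(\Lambda),X[i])=\Hom_{\DDD^{\bo}}(\Lambda,\nu_n^{j}(X)[i])=\Ho^i(\nu_n^{j}(X)).
\]
Since $\nu_n^{j}(X)\in\DDD^{\ge0}$ by Proposition~\ref{t-structure}, one has $\nu_n^{j}(X)\in\mod\Lambda$ iff $\Ho^i(\nu_n^{j}(X))=0$ for all $i>0$; letting $j$ range over $\Z_{\ge0}$ this is exactly $\Ext^i_\Lambda(\PP,X)=0$ for all $i>0$. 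Part (b) is dual. No Serre duality, no bookkeeping, and no induction are needed.
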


\begin{proof}
(a) We have the following isomorphisms:
\[
\Ext_{\Lambda}^i(\nu_n^{-j}(\Lambda), X) = \Hom_{\DDD^{\rm b}(\mod \Lambda)}(\nu_n^{-j}(\Lambda), X[i]) = \Hom_{\DDD^{\rm b}(\mod \Lambda)}( \Lambda, \nu_n^{j}(X)[i]) = \Ho^i(\nu_n^{j}(X)).
\]
Thus $\Ext_{\Lambda}^i(\PP, X)=0$ for any $i>0$ if and only if $\nu_n^{j}(X)\in\mod\Lambda$ for any $j\ge0$.

(b) This is shown dually.
\end{proof}

Now we are able to prove Proposition~\ref{nakayama R} as follows:

\begin{proof}[Proof of Proposition~\ref{nakayama R}]
\begin{align*}
\RR&=\{X \in \mod\Lambda \mid \forall i> 0 \colon \Ext_{\Lambda}^i(\PP, X) = 0 = \Ext_{\Lambda}^i(X, \II) \}\\
&=\{X \in \mod \Lambda \mid \forall i \in\Z \colon \nu_n^{-i}(X)\in\mod\Lambda \}
&&\mbox{by Lemma~\ref{Ext and nu}.} \qedhere
\end{align*}
\end{proof}

Now we have the following main result in this section.

\begin{theorem} \label{trichotomy}
Let $\Lambda$ be an $n$-representation infinite algebra.
\begin{itemize}
\item[(a)] We have
\[\CC^{0} = \PP \vee \RR \vee \II\ \mbox{ and }\ \CC=\bigvee_{\ell\in\Z}(\PP \vee \RR \vee \II)[\ell n].\]
\item[(b)] We have
\begin{eqnarray*}
\Hom_{\Lambda}(\RR, \PP) = 0,\ \Hom_{\Lambda}(\II, \PP) = 0,\ \Hom_{\Lambda}(\II, \RR) = 0,\\
\Ext_{\Lambda}^n(\PP, \RR) = 0,\ \Ext_{\Lambda}^n(\PP, \II) = 0,\ \mbox{ and }\ \Ext_{\Lambda}^n(\RR, \II) = 0.
\end{eqnarray*}
\end{itemize}
\end{theorem}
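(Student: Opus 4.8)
The plan is to combine the description $\RR=\{X\in\mod\Lambda\mid\forall i\in\Z\colon\nu_n^i(X)\in\mod\Lambda\}$ of Proposition~\ref{nakayama R} with the Serre-duality formula of Observation~\ref{easy properties of nu_n}(a), the rigidity already recorded in Proposition~\ref{n rigid}, and the various vanishing statements collected in Proposition~\ref{P and I} and Observation~\ref{almost definition of R}. For part~(b), the three $\Hom$-vanishings and the three $\Ext^n$-vanishings are Serre-dual to each other in pairs via Observation~\ref{easy properties of nu_n}(a) (applied with $i=0$ and $i=n$, using $\nu_n(\PP)=\PP\vee(\inj\Lambda)[-n]$ and $\nu_n^{-1}(\II)=\II\vee(\proj\Lambda)[n]$ from Observation~\ref{applied n-shifted serre}), so it suffices to prove the top row. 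Of those, $\Hom_\Lambda(\II,\PP)=0$ is already Proposition~\ref{P and I}(d), and $\Hom_\Lambda(\RR,\PP)=0$ together with $\Hom_\Lambda(\II,\RR)=0$ are exactly the two alternative formulations recorded in Observation~\ref{almost definition of R} (namely that $\Ext^n_\Lambda(\PP,X)=0$ may be replaced by $\Hom_\Lambda(X,\PP)=0$ for $X\in\CC^0$, and dually). So part~(b) is essentially a bookkeeping assembly of facts already in hand.

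The real content is part~(a). The inclusion $\PP\vee\RR\vee\II\subseteq\CC^0$ follows from Proposition~\ref{P and I}(e) for $\PP\vee\II$ and from $\RR\subseteq\CC^0$ (which is immediate from the reformulation in Observation~\ref{almost definition of R}). For the reverse inclusion, let $X\in\CC^0$ be indecomposable; I want to show $X\in\PP\cup\RR\cup\II$. Using the formula $\Ext^i_\Lambda(\nu_n^{-j}(\Lambda),X)=\Ho^i(\nu_n^j(X))$ from the proof of Lemma~\ref{Ext and nu}, together with the fact that $X\in\CC^0\subseteq\DDD^{n\Z}(\mod\Lambda)$ forces $\nu_n^j(X)$ to have homology concentrated in degrees divisible by $n$, the plan is: if $\nu_n^j(X)\in\mod\Lambda$ for all $j\ge0$ and all $j\le0$ then $X\in\RR$ by Proposition~\ref{nakayama R}; otherwise there is a largest or smallest $j$ for which $\nu_n^j(X)$ leaves $\mod\Lambda$, and at that boundary Lemma~\ref{about DnZ} (or its dual) applies. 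Concretely, if $\nu_n^{-j}(X)\notin\mod\Lambda$ for some $j>0$, pick $j$ minimal; then $\nu_n^{-(j-1)}(X)\in\mod\Lambda$ is indecomposable with $\nu_n^{-1}$ of it lying in $\DDD^{n\Z}$ but not in $\mod\Lambda$, so by Lemma~\ref{about DnZ} it is injective, hence $X=\nu_n^{j-1}(I)$ for an indecomposable injective $I$, i.e.\ $X\in\II$; dually, if $\nu_n^{j}(X)\notin\mod\Lambda$ for some $j>0$ one gets $X\in\PP$ via the $\Lambda^{\op}$-statement of Lemma~\ref{about DnZ} (equivalently Observation~\ref{easy properties of nu_n}(b)). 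One must also check these three cases are exhaustive, i.e.\ that $X$ cannot simultaneously escape $\mod\Lambda$ under both positive and negative powers of $\nu_n$ while lying in $\CC^0$; this is where one uses that $\CC^0\subseteq\CC$ and an object of $\CC$ whose positive and negative $\nu_n$-translates are both eventually non-modules would, by the $n\Z$-concentration, be forced to be both injective-ish and projective-ish simultaneously — more cleanly, if $X\in\II$ with $X=\nu_n^k(I)$, $k\ge0$, then $\nu_n^{-k}(X)=I$ is injective so $\nu_n^{-k-1}(X)=\nu^{-1}(I)[n]$ and all further negative translates stay in $\mod\Lambda[n\Z]$ but actually are shifts of modules, reconciling consistency. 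The decomposition $\CC^0=\PP\vee\RR\vee\II$ as a join then follows, and the second displayed equality $\CC=\bigvee_{\ell\in\Z}(\PP\vee\RR\vee\II)[\ell n]$ is immediate from Proposition~\ref{basic of C}(b).

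Finally, to conclude that this join is in fact a direct sum decomposition (if that is intended here — the statement as written only claims the join and the $\Hom$/$\Ext^n$-vanishings), one invokes part~(b): $\Hom_\Lambda(\II,\PP)=\Hom_\Lambda(\RR,\PP)=\Hom_\Lambda(\II,\RR)=0$, so by the remark at the end of Subsection~\ref{notation} the only possible non-orthogonality is in the "upper-triangular" direction, which is exactly the content that is \emph{not} claimed to vanish (maps $\PP\to\RR$, $\PP\to\II$, $\RR\to\II$ do occur). The main obstacle I anticipate is the exhaustiveness check in part~(a): showing that an indecomposable $X\in\CC^0$ must fall into exactly one of the three classes, and in particular that it cannot be "regular on one side and (pre)injective on the other," requires carefully tracking the interval of $j$ for which $\nu_n^j(X)$ is a module, essentially re-running the $d_P$-interval argument from the proof of Theorem~\ref{dichotomy} but now for an arbitrary object of $\CC^0$ rather than an indecomposable projective; once one knows this interval is either all of $\Z$ (giving $\RR$), or bounded above (giving $\II$ at the top boundary via Lemma~\ref{about DnZ}), or bounded below (giving $\PP$ dually), the rest is routine.
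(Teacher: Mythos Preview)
Your proposal is correct and follows essentially the same route as the paper. The paper packages your boundary argument (apply Lemma~\ref{about DnZ} at the first $j$ where $\nu_n^{-j}(X)$ leaves $\mod\Lambda$) into a separate Lemma~\ref{P and I in C^0}, which characterizes $\PP$ and $\II$ inside $\CC^0$ exactly as $\add\{X\in\ind\CC^0\mid\exists j>0\colon\nu_n^{\pm j}(X)\notin\CC^0\}$, and then concludes via the $\Hom$-formulation of Observation~\ref{almost definition of R}; you do the same thing inline, invoking Proposition~\ref{nakayama R} directly for the regular case.

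Your ``main obstacle'' is a phantom. The three cases are exhaustive by construction: if not all $\nu_n^j(X)$ lie in $\mod\Lambda$, then some translate with $j\neq 0$ escapes, and either $j<0$ or $j>0$. You do not need to rule out overlap between the $\PP$-case and the $\II$-case to conclude $X\in\PP\vee\RR\vee\II$; and if you want disjointness anyway, it is immediate from $\PP\cap\II=0$ (Proposition~\ref{P and I}(d)), since an indecomposable $X$ landing in both would be zero. No interval-tracking \`a la Theorem~\ref{dichotomy} is needed. (For the application of Lemma~\ref{about DnZ} you implicitly need $\nu_n^{-j}(X)\in\DDD^{n\Z}(\mod\Lambda)$; this holds because $\nu_n$ preserves $\CC$ by Observation~\ref{autoeq} and $\CC\subset\DDD^{n\Z}(\mod\Lambda)$ by Proposition~\ref{basic of C}(a) --- worth saying explicitly.)
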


We need the following preliminary observations.

\begin{lemma}\label{P and I in C^0}
Let $\Lambda$ be an $n$-representation infinite algebra.
\begin{itemize}
\item[(a)] $\proj\Lambda=\add \{ X \in \ind \CC^{0} \mid \nu_n(X)\notin\CC^0\}$.
\item[(b)] $\PP = \add \{ X \in \ind \CC^{0} \mid \exists j > 0 \colon \nu_n^{j}(X)\notin\CC^0\}=\add \{ X \in \ind \CC^{0} \mid \Hom_\Lambda(X,\PP)\neq0\}$.
\item[(c)] $\inj\Lambda = \add \{ X \in \ind \CC^{0} \mid \nu_n^{-1}(X)\notin\CC^0\}$.
\item[(d)] $\II = \add \{ X \in \ind \CC^{0} \mid \exists j > 0 \colon \nu_n^{-j}(X)\notin\CC^0\}=\add \{ X \in \ind \CC^{0} \mid \Hom_\Lambda(\II,X)\neq0\}$.
\end{itemize}
\end{lemma}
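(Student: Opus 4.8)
The plan is to characterize the four distinguished classes $\proj\Lambda$, $\PP$, $\inj\Lambda$, $\II$ intrinsically inside $\CC^0$ via the behaviour of the Nakayama functor $\nu_n$, using the left-right symmetric descriptions already available. First I would prove (a) and (c) together, and then deduce (b) and (d) from them by iteration. For (a), note that if $X\in\ind\CC^0$ lies in $\proj\Lambda$, then $\nu_n(X)=\nu(X)[-n]=DX'[-n]$ for an indecomposable injective $X'$, which is concentrated in degree $n$, hence (being nonzero) cannot lie in $\DDD^{n\Z}(\mod\Lambda)$ in degree $0$; in particular $\nu_n(X)\notin\CC^0$. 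Conversely, suppose $X\in\ind\CC^0$ and $X\notin\proj\Lambda$. Then I want to show $\nu_n(X)\in\mod\Lambda$, which combined with $\CC[n]=\CC$ gives $\nu_n(X)\in\CC^0$. Here I would use Observation~\ref{easy properties of nu_n}(a): for $i\ne 0$, $\Ho^i(\nu_n X)=\Hom_{\DDD^{\rm b}(\mod\Lambda)}(\Lambda,\nu_n(X)[i])\iso D\Hom_{\DDD^{\rm b}(\mod\Lambda)}(X,\Lambda[-i])=D\Ext^{-i}_\Lambda(X,\Lambda)$, which vanishes for $i>0$ trivially (since $-i<0$) and for $-n\le i<0$ it equals $D\Ext^{-i}_\Lambda(X,\Lambda)$; the point is that $\Ext^{<n}_\Lambda(X,\Lambda)=0$ when $X$ has no projective summand, by the standard argument for algebras of global dimension $n$ (a module with no projective summand has a minimal projective resolution of length $\ge 1$ whose dual has no cohomology below degree... — more precisely, $\Ext^j_\Lambda(X,\Lambda)=0$ for $0<j<n$ follows since $X\in\CC^0$ implies $\Ext^j_\Lambda(X,\Lambda)=\Ext^j_\Lambda(X,\PP\text{-summand }\Lambda)\hookrightarrow$ the vanishing from Proposition~\ref{P and I}(g)/Observation~\ref{applied n-shifted serre}, and $\Ext^0$ is dealt with by the projective-summand hypothesis). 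Thus $\nu_n(X)$ is concentrated in degree $0$, proving (a); statement (c) follows by applying (a) to $\Lambda^{\rm op}$ and dualizing via Observation~\ref{easy properties of nu_n}(b), using that $D$ interchanges $\proj\Lambda\leftrightarrow\inj\Lambda^{\rm op}$ and $\nu_n\leftrightarrow\nu_n^{-1}$.

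Next, for (b), I would argue as follows. If $X\in\ind\PP$, say $X=\nu_n^{-i}(\Lambda e)$ with $i\ge 0$ and $\Lambda e$ indecomposable projective, then $\nu_n^{i+1}(X)=\nu_n(\Lambda e)\notin\CC^0$ by (a), so $X$ lies in the first set with $j=i+1$. Conversely, if $X\in\ind\CC^0$ and $\nu_n^{j}(X)\notin\CC^0$ for some $j>0$, take the minimal such $j$; then $Y:=\nu_n^{j-1}(X)\in\CC^0$ with $\nu_n(Y)\notin\CC^0$, so $Y\in\proj\Lambda$ by (a), whence $X=\nu_n^{-(j-1)}(Y)\in\PP$. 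For the second equality in (b), I would use Observation~\ref{almost definition of R}: $\Hom_\Lambda(X,\PP)\ne 0$ certainly implies $X\notin\RR$; and conversely the $n$-preprojective modules are exactly the indecomposables of $\CC^0$ admitting a nonzero map to some $\nu_n^{-i}(\Lambda)$ — one direction is clear since $\Hom_\Lambda(\nu_n^{-i}(\Lambda e),\nu_n^{-i}(\Lambda e))\ne 0$, and the other needs: if $X\in\ind\CC^0$ has a nonzero map to $\nu_n^{-i}(\Lambda e)$, applying $\nu_n^i$ gives (since $X\in\CC^0$ and we may need $\nu_n^i(X)$ to stay a module, which holds by (a) iterated as long as $X\notin\PP$, yielding a contradiction) a nonzero map $\nu_n^i(X)\to\Lambda e$ in $\mod\Lambda$, forcing $\nu_n^i(X)$, hence eventually $X$, to fail to be a module unless $X\in\PP$. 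This last twist — relating the Hom-nonvanishing to membership in $\PP$ — is where I would be most careful; it amounts to the statement that $X\notin\PP\cup\RR$ cannot happen for $X\in\CC^0$ with $\Hom(X,\PP)\ne 0$, i.e. it is really the trichotomy (a) of Theorem~\ref{trichotomy} in disguise, so I would either prove it directly here by the Nakayama-orbit argument or flag that it feeds into the proof of Theorem~\ref{trichotomy}. Statement (d) is then obtained from (b) by the $\Lambda^{\rm op}$-duality exactly as (c) was obtained from (a).

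The main obstacle I anticipate is the second equalities in (b) and (d): the first equalities and parts (a), (c) are essentially formal consequences of the t-structure properties (Propositions~\ref{t-structure}, \ref{modules under nakayama}) and Serre duality, but identifying $\PP$ with $\{X\in\ind\CC^0\mid\Hom_\Lambda(X,\PP)\ne0\}$ requires knowing that an indecomposable of $\CC^0$ mapping nontrivially into $\PP$ cannot be "regular". I would handle this by a clean lemma: for $X\in\ind\CC^0$, if $\nu_n^{j}(X)\in\mod\Lambda$ for all $j\ge 0$ then $\Hom_\Lambda(X,\PP)=0$ — proved via the isomorphism $\Hom_\Lambda(X,\nu_n^{-i}(\Lambda))\iso\Hom_{\DDD^{\rm b}(\mod\Lambda)}(\nu_n^i(X),\Lambda)$ (valid since $X\in\CC^0$ controls the relevant cohomologies so the shift contributes nothing) together with $\nu_n^i(X)\in\DDD^{\ge ?}$; actually $\Hom_{\DDD^{\rm b}}(\nu_n^i(X),\Lambda)=\Ho^0$ of something in $\DDD^{\ge 1}$ for $i$ large when $X$ is not preprojective, using that $\nu_n^i$ pushes non-preprojective modules strictly up. I would present this as the key computational step and keep the routine degree-counting implicit.
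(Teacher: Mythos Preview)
Your argument for (a) has a genuine gap. First, the Serre duality computation is off: using Observation~\ref{easy properties of nu_n}(a) correctly gives
\[
\Ho^i(\nu_n X)=\Hom_{\DDD^{\rm b}(\mod\Lambda)}(\Lambda,\nu_n(X)[i])\iso D\Hom_{\DDD^{\rm b}(\mod\Lambda)}(X,\Lambda[n-i])=D\Ext^{\,n-i}_\Lambda(X,\Lambda),
\]
not $D\Ext^{-i}_\Lambda(X,\Lambda)$. With the correct formula, for $X\in\CC^0$ the degrees $1\le i\le n-1$ vanish by the defining property of $\CC^0$ together with Observation~\ref{applied n-shifted serre}, and $i<0$ or $i>n$ vanish by $\gl\Lambda\le n$; but $i=n$ requires $\Hom_\Lambda(X,\Lambda)=0$. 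Your claim that this ``is dealt with by the projective-summand hypothesis'' is false: there is no reason, for an arbitrary indecomposable $X\in\CC^0\setminus\proj\Lambda$, that $\Hom_\Lambda(X,\Lambda)=0$, and you cannot invoke the trichotomy $\CC^0=\PP\vee\RR\vee\II$ here since the present lemma is used to prove it. The paper sidesteps this entirely: since $\nu_n$ preserves $\CC\subset\DDD^{n\Z}(\mod\Lambda)$, the indecomposable object $\nu_n(X)$ has cohomology only in degrees $0$ and $n$, hence is concentrated in exactly one of them; if in degree $n$, one checks directly (this is the dual of Lemma~\ref{about DnZ}) that $X$ must be projective. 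You never make this case split, and it is the crux.

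For the second equalities in (b) and (d) your proposal is both roundabout and partially circular: you explicitly contemplate invoking Theorem~\ref{trichotomy}, which depends on this lemma. The paper's proof is a two-line computation that you are missing: for $X\in\CC^0$ one has $\nu_n^{-j}(X)\in\CC$ automatically, so $\nu_n^{-j}(X)\notin\CC^0$ means $\nu_n^{-j}(X)\notin\mod\Lambda$; then Lemma~\ref{Ext and nu} converts ``$\exists j>0\colon\nu_n^{-j}(X)\notin\mod\Lambda$'' into ``$\exists i>0\colon\Ext^i_\Lambda(X,\II)\neq0$'', which for $X\in\CC^0$ collapses to $\Ext^n_\Lambda(X,\II)\neq0$ and then to $\Hom_\Lambda(\II,X)\neq0$ by Observation~\ref{applied n-shifted serre}. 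Your ``clean lemma'' is precisely Lemma~\ref{Ext and nu}, already available; there is no need for the vague orbit argument. Once (c) and (d) are done this way, (a) and (b) follow by the $\Lambda^{\rm op}$-duality you mention, and your treatment of the first equality in (b) via minimality of $j$ is fine.
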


\begin{proof}
(c) ``$\subseteq$'' is clear, and ``$\supseteq$'' is immediate from Lemma~\ref{about DnZ}.

(d) The first equality is immediate from (c) since $\II=\bigvee_{j\ge0} \; \nu_n^j(\inj\Lambda)$.
The second equality follows as follows:
\begin{align*}
& \add \{ X \in \ind \CC^{0} \mid \exists j > 0 \colon \nu_n^{-j}(X)\notin\CC^0\} \\
&=\add \{ X \in \ind \CC^{0} \mid \Ext^n_\Lambda(X, \II)\neq0\}
&&\mbox{by Lemma~\ref{Ext and nu},}\\
&=\add \{ X \in \ind \CC^{0} \mid \Hom_\Lambda(\II, X)\neq0\}
&&\mbox{by Observation~\ref{applied n-shifted serre}.}
\end{align*}

(a) and (b) are shown dually.
\end{proof}

Now we are ready to prove Theorem~\ref{trichotomy}.

\begin{proof}[Proof of Theorem~\ref{trichotomy}]
(b) We know $\Ext_{\Lambda}^n(\PP, \RR) = 0$, $\Ext_{\Lambda}^n(\RR, \II) = 0$, $\Hom_{\Lambda}(\RR, \PP) = 0$ and $\Hom_{\Lambda}(\II, \RR) = 0$ by Observation~\ref{almost definition of R}. We have $\Hom_\Lambda(\II,\PP)=0$ by Proposition~\ref{P and I}(d). By Observation~\ref{applied n-shifted serre}, we have $\Ext_{\Lambda}^n(\PP,\II) = 0$.

(a) We know $\PP\vee\RR\vee\II\subset\CC^{0}$ by Proposition~\ref{P and I}(e) and Observation~\ref{almost definition of R}.
Now we show $\PP \vee \RR \vee \II\supset\CC^{0}$.
If $X\in\ind\CC^{0}$ does not belong to $\PP$ or $\II$, then we have $\Hom_{\Lambda}(\II,X)=0$ and $\Hom_{\Lambda}(X, \PP)=0$ by Lemma~\ref{P and I in C^0}.
Then $X\in\RR$ by Observation~\ref{almost definition of R}.
Thus we have the first equality. The second one follows from Proposition~\ref{basic of C}(b).
\end{proof}

We note the following easy property of our categories.

\begin{proposition}\label{extension closed}
$\PP$, $\RR$, $\II$, and $\CC^0$ are extension-closed subcategories of $\mod\Lambda$.
\end{proposition}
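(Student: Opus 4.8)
The plan is to show each of the four subcategories $\PP$, $\RR$, $\II$, and $\CC^0$ is closed under extensions in $\mod\Lambda$. For the first three, the key point is that each is characterized by the vanishing of certain $\Ext$-groups, and vanishing of $\Ext$ in a fixed degree propagates along short exact sequences via the long exact sequence in cohomology. For $\CC^0$, I would note that it is the intersection of $\mod\Lambda$ with the subcategory $\CC$ of $\DDD^{\rm b}(\mod\Lambda)$, and $\CC$ is again cut out by $\Hom$-vanishing conditions, so the same principle applies — though here one must be slightly careful that an extension of two modules in $\mod\Lambda$ is again a module (it is: $\mod\Lambda$ is an abelian subcategory of $\DDD^{\rm b}(\mod\Lambda)$, being the heart of the standard $t$-structure).

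Concretely, suppose $0\to X'\to X\to X''\to 0$ is exact with $X',X''\in\RR$. By Definition~\ref{definition of R}, $\RR = \{Y\in\mod\Lambda\mid \forall i>0\colon \Ext^i_\Lambda(\PP,Y)=0=\Ext^i_\Lambda(Y,\II)\}$. Applying $\Hom_\Lambda(P,-)$ for $P\in\PP$ to the short exact sequence gives a long exact sequence
\[
\cdots\to\Ext^i_\Lambda(P,X')\to\Ext^i_\Lambda(P,X)\to\Ext^i_\Lambda(P,X'')\to\cdots,
\]
and the outer terms vanish for all $i>0$, so $\Ext^i_\Lambda(P,X)=0$ for $i>0$; dually, applying $\Hom_\Lambda(-,I)$ for $I\in\II$ gives $\Ext^i_\Lambda(X,I)=0$ for $i>0$. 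Hence $X\in\RR$. The argument for $\PP$ uses Lemma~\ref{Ext and nu}(b) and Proposition~\ref{P and I}(g): by those results $\PP$ consists precisely of the modules $Y\in\CC^0$ with $\Ext^i_\Lambda(Y,\II)=0$ for all $i>0$ — wait, that is the wrong side; instead I would use the characterization $\PP=\{Y\in\CC^0\mid \Hom_\Lambda(\II,Y)=0\}$? No — the cleanest route for $\PP$ is Lemma~\ref{P and I in C^0}(b), which gives $\PP=\add\{Y\in\ind\CC^0\mid \Hom_\Lambda(Y,\PP)\neq0\}$; but membership via a non-vanishing $\Hom$ is not obviously extension-closed. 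So instead I would prove $\PP$ is extension-closed directly from $\UU\subset\CC$ together with the following: an extension $X$ of $X''\in\PP$ by $X'\in\PP$ lies in $\CC^0$ (which we just handle as the $\CC^0$ case), and then one shows $X$ has no $\RR$- or $\II$-summand by applying $\Hom_\Lambda(-,\PP)$ and using left-exactness together with $\Hom_\Lambda(X',\PP),\Hom_\Lambda(X'',\PP)$ being "large enough" — more precisely, using the triangulated description via $\UU[-n]\vee\PP$ and Proposition~\ref{n rigid}. The honest approach: since $\PP = \add\{\nu_n^{-i}(\Lambda)\mid i\ge 0\}$ and these are all in $\CC^0$, and $\CC^0=\PP\vee\RR\vee\II$ by Theorem~\ref{trichotomy}(a), an extension $X$ of $n$-preprojectives is in $\CC^0$, hence decomposes as $X_\PP\oplus X_\RR\oplus X_\II$; applying $\Hom_\Lambda(X,\PP)$ and $\Hom_\Lambda(\II,X)$ and using Theorem~\ref{trichotomy}(b) ($\Hom_\Lambda(\RR,\PP)=0=\Hom_\Lambda(\II,\PP)$, $\Hom_\Lambda(\II,\RR)=0$) forces $X_\RR=X_\II=0$ once one checks $\Hom_\Lambda(X_\RR\oplus X_\II,\PP)=0$ is incompatible with $X$ mapping onto $X''\in\PP$ unless $X_\RR=X_\II=0$; symmetrically for $\II$.

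The main obstacle I expect is precisely this last point for $\PP$ and $\II$: unlike $\RR$ and $\CC^0$, the categories $\PP$ and $\II$ are not literally defined by $\Ext$-vanishing, so extension-closedness requires combining the decomposition $\CC^0=\PP\vee\RR\vee\II$ with the orthogonality relations of Theorem~\ref{trichotomy}(b). The clean argument is: given $0\to X'\to X\to X''\to 0$ with $X',X''\in\PP$, first conclude $X\in\CC^0$ (extension-closedness of $\CC^0$, proved as above via $\Hom$-vanishing in $\CC$), write $X=X_\PP\oplus X_\RR\oplus X_\II$, apply $\Hom_\Lambda(-,\PP)$ to get $\Hom_\Lambda(X'',\PP)\hookrightarrow\Hom_\Lambda(X,\PP)=\Hom_\Lambda(X_\PP,\PP)$ (using $\Hom_\Lambda(\RR\oplus\II,\PP)=0$), and dually apply $\Hom_\Lambda(\II,-)$ to get $\Hom_\Lambda(\II,X_\II)=0$ hence $X_\II=0$ (since $X_\II\in\II$ has $\Hom_\Lambda(\II,X_\II)\neq 0$ unless $X_\II=0$, by Lemma~\ref{P and I in C^0}(d)); symmetrically, using that $X'$ embeds in $X$ and $\Hom_\Lambda(\II,X')=0$ (as $X'\in\PP$, Proposition~\ref{P and I}(d)) plus $\Hom_\Lambda(\II,X'')=0$, one gets $\Hom_\Lambda(\II,X)=0$, so again $X_\II=0$; and $\Hom_\Lambda(X_\RR,\PP)=0$ together with $X_\RR\in\RR$ forces nothing directly, so instead use $\Ext^n_\Lambda(\PP,X_\RR)=0$ versus the long exact sequence from $\Hom_\Lambda(\PP,-)$ applied to $0\to X'\to X\to X''\to 0$ to see $\Ext^i_\Lambda(\PP,X)=0$ for $0<i<n$ automatically (that's just $\CC^0$) and then that $X_\RR$, being a summand, satisfies the same, combined with $\Hom_\Lambda(X_\RR,\PP)=0$ — but to kill $X_\RR$ we need $\Hom_\Lambda(X,\PP)\twoheadrightarrow$ something, which comes from: $X\to X''$ and $\Hom_\Lambda(X'',\PP)$ detecting all of $X''$; since $X''\in\PP$ is a summand of some $\nu_n^{-i}(\Lambda)$, the identity of $X''$ factors through $\Hom_\Lambda(X,\PP)$ only if the surjection $X\to X''$ splits off the $\PP$-part, which it does. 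I will write this up carefully, but the essential content is the interplay of Theorem~\ref{trichotomy} with the elementary long-exact-sequence argument that handles $\RR$ and $\CC^0$ outright.
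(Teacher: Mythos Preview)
Your argument for $\RR$ and $\CC^0$ is correct and matches the paper exactly: both are cut out by $\Ext$-vanishing conditions, so the long exact sequence handles them immediately.

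For $\PP$ and $\II$ you are working much too hard. The paper's proof is one line: for $n\ge 2$, Proposition~\ref{P and I}(f) gives $\Ext^1_\Lambda(\PP\vee\II,\PP\vee\II)=0$, so any short exact sequence $0\to X'\to X\to X''\to 0$ with $X',X''\in\PP$ (or both in $\II$) \emph{splits}, whence $X\cong X'\oplus X''\in\PP$. For $n=1$ the paper just cites the classical fact for hereditary algebras. You overlooked this splitting, which is precisely the $\Ext^1$-orthogonality the paper highlights in Corollary~\ref{Cor.Ext-orth-family}.

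Your decomposition approach via $\CC^0=\PP\vee\RR\vee\II$ can be salvaged, and has the mild advantage of working uniformly for all $n\ge 1$, but the proposal does not actually carry it through: your attempt to kill $X_\RR$ trails off into vague statements about ``$\Hom_\Lambda(X,\PP)\twoheadrightarrow$ something'' without reaching a conclusion. The clean completion is this. With $X=X_\PP\oplus X_\RR\oplus X_\II$ in $\CC^0$, the composite $X_\RR\hookrightarrow X\twoheadrightarrow X''$ is a map $\RR\to\PP$, hence zero by Theorem~\ref{trichotomy}(b); thus the inclusion $X_\RR\hookrightarrow X$ factors through $\ker(X\to X'')=X'\in\PP$, giving a monomorphism $X_\RR\hookrightarrow X'$ which is again a map $\RR\to\PP$, hence zero, so $X_\RR=0$. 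Similarly, applying $\Hom_\Lambda(\II,-)$ to the short exact sequence and using $\Hom_\Lambda(\II,\PP)=0$ shows $\Hom_\Lambda(\II,X)=0$, hence $\Hom_\Lambda(\II,X_\II)=0$, forcing $X_\II=0$. The step you were missing is that $\Hom_\Lambda(\RR,\PP)=0$ forces the summand $X_\RR$ to land inside the kernel $X'$, and then the same vanishing applied once more kills it.
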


\begin{proof}
Since $\RR$ and $\CC^0$ are defined by a vanishing of extension groups, they are extension-closed.

If $n\ge2$, then $\PP$ and $\II$ are extension-closed since $\PP\vee\II$ does not have first selfextensions by Proposition~\ref{P and I}(f). The claim is known in the case $n=1$.
\end{proof}

\subsection{Higher almost split sequences in $\PP$ and $\II$}

In this section, we study structure of the categories $\PP$, $\II$ and $\RR$ from Auslander-Reiten theoretic viewpoint.

Let $\CC^{0}_P$ (respectively, $\CC^{0}_I$) be the full subcategory of $\CC^{0}$ consisting of modules without non-zero projective (respectively, injective) direct summands.
Set $\PP_P = \PP \cap \CC^{0}_P$ and $\II_I = \II \cap \CC^{0}_I$.

\begin{proposition}\label{tau_n}
We have mutually quasi-inverse equivalences
\[\xymatrix{\CC^{0}_P\ar@<.5ex>[rr]^{\nu_n=\tau_n}&&\CC^{0}_I\ar@<.5ex>[ll]^{\nu_n^{-1}=\tau^-_n}}\]
which restricts to equivalences
\[\xymatrix{\PP_P\ar@<.5ex>[rr]^{\nu_n=\tau_n}&&\PP\ar@<.5ex>[ll]^{\nu_n^{-1}=\tau^-_n} } \text{, }
\xymatrix{\RR\ar@<.5ex>[rr]^{\nu_n=\tau_n}&&\RR\ar@<.5ex>[ll]^{\nu_n^{-1}=\tau^-_n} } \text{, and}
\xymatrix{\II\ar@<.5ex>[rr]^{\nu_n=\tau_n}&&\II_I.\ar@<.5ex>[ll]^{\nu_n^{-1}=\tau^-_n}} \]
\end{proposition}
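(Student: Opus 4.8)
The plan is to first establish the equivalence $\CC^0_P \xrightarrow{\nu_n} \CC^0_I$ and then cut it down to the three indicated restrictions. The starting point is that $\nu_n \colon \DDD^{\bo}(\mod\Lambda) \to \DDD^{\bo}(\mod\Lambda)$ is already an autoequivalence, and by Observation~\ref{autoeq} it restricts to an autoequivalence of $\CC$. So the real content is to understand what $\nu_n$ does to the subcategory $\CC^0 = (\mod\Lambda) \cap \CC$ of $\CC$: we must show $\nu_n$ sends $\CC^0_P$ into $\mod\Lambda$ (hence into $\CC^0$, since $\nu_n(\CC)=\CC$), and dually $\nu_n^{-1}$ sends $\CC^0_I$ into $\mod\Lambda$. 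For the first: take $X \in \CC^0_P$ indecomposable and non-projective. Since $X \in \CC \subset \DDD^{n\Z}(\mod\Lambda)$ and $\nu_n(\CC)=\CC$, also $\nu_n(X) \in \DDD^{n\Z}(\mod\Lambda)$, so by Proposition~\ref{nZ} it splits as a sum of shifted modules $\bigoplus_\ell \Ho^{\ell n}(\nu_n X)[-\ell n]$; since it is indecomposable it equals $\Ho^{mn}(\nu_n X)[-mn]$ for a single $m$. We want $m=0$. Here I would use the defining t-structure inclusion $\nu_n(\DDD^{\geq 0}) \subset \DDD^{\geq 0}$ (Proposition~\ref{t-structure}(a)) to get $m \geq 0$, and then rule out $m>0$: if $m>0$ then $\nu_n^{-1}(\nu_n X) = X$ would be $\nu_n^{-1}$ of something in $\DDD^{\geq n}$, which forces $X$ projective (this is precisely the mechanism in the proof of Lemma~\ref{about DnZ}, applied to $\nu_n(X)$ in place of $X$, using Lemma~\ref{about DnZ} itself after noting $\nu_n^{-1}(\nu_n X) = X \in \DDD^{n\Z}$). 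Thus $\nu_n(X) \in \mod\Lambda$, and it is non-injective because $\nu_n^{-1}$ of an injective is (up to shift) a projective, contradicting $\nu_n^{-1}(\nu_n X)=X$ being non-projective. This gives a well-defined functor $\CC^0_P \to \CC^0_I$; the dual argument gives $\CC^0_I \to \CC^0_P$, and since $\nu_n, \nu_n^{-1}$ are already quasi-inverse on all of $\DDD^{\bo}$, they are quasi-inverse here. On objects these functors agree with $\tau_n = \Ho^0(\nu_n-)$ and $\tau_n^- = \Ho^0(\nu_n^{-1}-)$ precisely because $\nu_n(X)$ is already concentrated in degree $0$ for $X \in \CC^0_P$.

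Next I would establish the three restrictions. For the middle one, $\RR$: by Proposition~\ref{nakayama R}, $\RR = \{X \in \mod\Lambda \mid \nu_n^i(X) \in \mod\Lambda\ \forall i \in \Z\}$, and this description is manifestly stable under both $\nu_n$ and $\nu_n^{-1}$; moreover $\RR \subseteq \CC^0$ has no projective or injective summands (a projective $P$ satisfies $\nu_n(P) = D\Lambda[-n] \notin \mod\Lambda$, similarly for injectives), so $\RR \subseteq \CC^0_P \cap \CC^0_I$ and $\nu_n\colon \RR \to \RR$ is the claimed equivalence. For $\PP$: we have $\PP = \add\{\nu_n^{-i}(\Lambda)\mid i \geq 0\}$ and $\PP_P = \PP \cap \CC^0_P$ consists of the non-projective indecomposable $n$-preprojectives, i.e. $\add\{\nu_n^{-i}(\Lambda)\mid i \geq 1\}$ by the bijection in Proposition~\ref{P and I}(a). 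Applying $\nu_n$ shifts the index down by one, so $\nu_n(\PP_P) = \add\{\nu_n^{-i}(\Lambda)\mid i \geq 0\} = \PP$; conversely $\nu_n^{-1}(\PP) = \add\{\nu_n^{-i}(\Lambda) \mid i \geq 1\} = \PP_P$ — and one must check each $\nu_n^{-i}(\Lambda)$ with $i \geq 0$ indeed lies in $\CC^0$, which is Proposition~\ref{P and I}(e), and that $\nu_n^{-i}(\Lambda)$ for $i\geq 1$ is non-projective, which follows from the injectivity in Proposition~\ref{P and I}(a) together with Proposition~\ref{modules under nakayama}(b) (a projective is $\nu_n^0$ of something, so cannot be $\nu_n^{-i}$ of a module for $i>0$ without contradicting the index bijection in $\UU$). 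The statement for $\II$ is entirely dual, using $\II_I = \add\{\nu_n^i(D\Lambda)\mid i\geq 1\}$ and $\nu_n^{-1}(\II) = \II_I$, i.e. $\nu_n(\II) = \II \vee (\inj\Lambda)[-n]$ intersected back — more cleanly, $\nu_n(\II_I) = \II$ directly by index shift.

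I expect the main obstacle to be the first paragraph: proving that $\nu_n$ really does land $\CC^0_P$ inside $\mod\Lambda$ (equivalently, that the single surviving homology degree $m$ is exactly $0$ and not positive). This is where the hypotheses "$X$ has no projective summand" and "$X \in \CC$" both get used in an essential way, via the interaction of Proposition~\ref{t-structure}, Proposition~\ref{nZ}, and Lemma~\ref{about DnZ}; getting the direction of the shift inequality right and correctly invoking Lemma~\ref{about DnZ} on $\nu_n(X)$ (whose $\nu_n^{-1}$ is $X \in \DDD^{n\Z}$, so the lemma's hypothesis is met) is the delicate point. Everything after that — identifying the restrictions to $\PP$, $\RR$, $\II$ — is bookkeeping with the index bijections of Propositions~\ref{ind U} and \ref{P and I} and the Nakayama-stable description of $\RR$ from Proposition~\ref{nakayama R}, together with the already-proved fact that $\nu_n$ and $\nu_n^{-1}$ are mutually quasi-inverse on the ambient derived category.
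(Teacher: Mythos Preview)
Your proof is correct and follows essentially the same route as the paper's. The paper is briefer because it packages your first-paragraph argument into Lemma~\ref{P and I in C^0}(a)(c) --- which characterizes $\proj\Lambda$ (resp.\ $\inj\Lambda$) as exactly the indecomposables in $\CC^0$ sent out of $\CC^0$ by $\nu_n$ (resp.\ $\nu_n^{-1}$), via the dual of Lemma~\ref{about DnZ} --- and then leaves the restrictions to $\PP$, $\RR$, $\II$ implicit.
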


\begin{proof}
We only have to prove the equivalence between $\CC^{0}_P$ and $\CC^{0}_I$.
By Observation~\ref{autoeq} we have that $\nu_n$ and $\nu_n^{-1}$ induce mutually quasi-inverse autoequivalences on $\CC$.
By Lemma~\ref{P and I in C^0}(a)(c), they induce equivalences between $\CC^0_P$ and $\CC^0_I$.
Moreover we have $\tau_n=\nu_n$ on $\CC^0_P$ and $\tau_n^-=\nu_n^{-1}$ on $\CC^0_I$.
\end{proof}

The following standard notion plays a key role to study the structure of Krull-Schmidt categories.

\begin{definition}
Let $\AA$ be a Krull-Schmidt category with Jacobson radical $J_{\AA}$.
\begin{itemize}
\item[(a)] For $X\in\ind\AA$, we say that a morphism $f\in J_{\AA}(C,X)$ in $\AA$ is \emph{right almost split} if the following sequence is exact on $\AA$.
\[\Hom_{\AA}(-,C)\xrightarrow{\cdot f}J_{\AA}(-,X)\to0.\]
A right minimal and right almost split morphism is called a \emph{sink morphism}.

\item[(b)] Dually we define a \emph{left almost split morphism} and a \emph{source morphism}.
\end{itemize}
\end{definition}

The following notion was introduced in \cite{I1} in higher dimensional Auslander-Reiten theory.

\begin{definition} \label{def.ASS}
Let $\AA$ be a Krull-Schmidt category with the Jacobson radical $J_{\AA}$.
We call a complex
\[0\to Y\xrightarrow{f_n}C_{n-1}\xrightarrow{f_{n-1}}C_{n-2}\xrightarrow{f_{n-2}}\cdots\xrightarrow{f_2}C_1\xrightarrow{f_1}C_0\xrightarrow{f_0}X\to 0\]
in $\AA$ \emph{an $n$-almost split sequence in $\AA$} if the following conditions are satisfied.
\begin{itemize}
\item[(a)] $X$ and $Y$ are indecomposable and $f_i\in J_{\AA}$ for any $i$.
\item[(b)] The following sequences are exact on $\AA$:
\begin{eqnarray}\label{sink sequence}
&0\to\Hom_{\AA}(-,Y)\xrightarrow{f_n}\Hom_{\AA}(-,C_{n-1})\xrightarrow{f_{n-1}}\cdots\xrightarrow{f_1}\Hom_{\AA}(-,C_0)\xrightarrow{f_0}J_{\AA}(-,X)\to0,&\\ \label{source sequence}
&0\to\Hom_{\AA}(X,-)\xrightarrow{f_0}\Hom_{\AA}(C_0,-)\xrightarrow{f_1}\cdots\xrightarrow{f_{n-1}}\Hom_{\AA}(C_{n-1},-)\xrightarrow{f_n}J_{\AA}(Y,-)\to0.&
\end{eqnarray}
\end{itemize}
\end{definition}

For $n$-representation finite algebras, we have the following existence theorem of $n$-almost split sequences.

\begin{example}
Let $\Lambda$ be an $n$-representation finite algebra.
Then the $n$-cluster tilting subcategory $\CC^0$ of $\mod\Lambda$ has $n$-almost split sequences by \cite[Theorem 3.3.1]{I1}.
\end{example}

The aim of this subsection is to prove the following existence theorem of $n$-almost split sequences in $\CC^{0}$ in case $\Lambda$ is $n$-representation infinite.

\begin{theorem}\label{n-almost split}
\begin{itemize}
\item[(a)] Any indecomposable $X\in\PP$ (respectively, $X\in\II$) has a sink morphism $C\to X$ and a source morphism $X\to C'$ in $\CC^{0}$ with $C,C'\in\PP$ (respectively, $C,C'\in\II$).
\item[(b)] For any indecomposable $X\in\PP_P\vee\II$ (respectively, indecomposable $Y\in\PP\vee\II_I$),
there exists an $n$-almost split sequence in $\CC^{0}$
\[0\to Y\xrightarrow{}C_{n-1}\xrightarrow{}C_{n-2}\xrightarrow{}\cdots\xrightarrow{}C_1\xrightarrow{}C_0\xrightarrow{}X\to 0\]
such that $Y\iso\tau_n(X)$ and $X\iso\tau_n^-(Y)$.
\item[(c)] If $X$ or $Y$ belongs to $\PP$ (respectively, $\II$) in (b), then all terms in the sequence belong to $\PP$ (respectively, $\II$).
\end{itemize}
\end{theorem}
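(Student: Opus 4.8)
The plan is to transport the existence of $n$-almost split sequences from the well-understood situation of $n$-representation finite algebras (where $\CC^0$ is an $n$-cluster tilting subcategory of $\mod\Lambda$ and \cite[Theorem 3.3.1]{I1} applies) to the $n$-preprojective and $n$-preinjective parts $\PP$ and $\II$ of an $n$-representation infinite algebra. The key point is that both statements are, in disguise, statements about the category $\UU = \add\{\nu_n^i(\Lambda)\mid i\in\Z\}$ together with its translates under $[n]$, and that locally — i.e.\ after restricting to finitely many indecomposables — the structure of $\PP$ and $\II$ inside $\CC^0$ looks exactly like the structure of the $n$-cluster tilting subcategory of a suitable $n$-representation finite algebra. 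So the first step is to set up such a comparison.

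\textbf{Step 1: Reduce to $X,Y\in\PP$ via duality, and reduce to sink/source morphisms.} Using Observation~\ref{easy properties of nu_n}(b) together with Proposition~\ref{prop.infinite_leftrightsymmetric}, the statements for $\II$ over $\Lambda$ are equivalent to the statements for $\PP$ over $\Lambda^{\op}$, so it suffices to handle the $\PP$-side; this also disposes of part (c) on the $\II$ side. For part (a), I would first establish existence of sink and source morphisms: since $\PP=\add\{\nu_n^{-i}(\Lambda)\mid i\ge0\}$ is a Krull--Schmidt category with only finitely many indecomposables "near" any given one (because $\PP$ has, in each $\nu_n$-orbit, a lower bound given by a projective, by Proposition~\ref{P and I}(a)), radical maps into an indecomposable $X\in\PP$ factor through a finite subcategory, and one gets a sink morphism $C\to X$ in $\PP$. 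The content is then that this sink morphism is already a sink morphism in $\CC^0$, which follows from $\Hom_\Lambda(\RR\vee\II,\PP)=0$ (Theorem~\ref{trichotomy}(b)): any radical map $Z\to X$ in $\CC^0$ with $Z$ indecomposable must have $Z\in\PP$ by the trichotomy $\CC^0=\PP\vee\RR\vee\II$, hence factors through $C$.

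\textbf{Step 2: Build the $n$-almost split sequence.} For an indecomposable non-projective $X\in\PP$, set $Y:=\tau_n(X)=\nu_n(X)\in\PP$ (this lands in $\PP$ by Proposition~\ref{tau_n}, and $X\cong\tau_n^-(Y)$). The construction of the exact complex $0\to Y\to C_{n-1}\to\cdots\to C_0\to X\to 0$ should come from a projective resolution: since $\gl\Lambda\le n$ and $X$ is non-projective, $X$ has a projective resolution $0\to P_n\to\cdots\to P_0\to X\to 0$ with $P_n\ne 0$, and applying $\nu_n=\nu[-n]$ (equivalently, using $\tau_n=\Ho^0(\nu_n-)$ and the fact that $\nu_n^{-j}(\Lambda)\in\mod\Lambda$ for all $j\ge0$ so that no higher cohomology interferes) produces a complex $0\to\nu_n(X)\to\nu(P_n)\to\cdots\to\nu(P_0)$ whose terms $\nu(P_k)=\nu_n(P_k)[n]$ are, after the shift, injective; but one wants terms in $\PP$, so instead I would apply the inverse recipe. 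More precisely, I expect one builds the sequence by the standard higher-AR machinery: take the sink morphism $C_0\to X$ from Step 1, form its kernel in a minimal way, iterate, and use Proposition~\ref{n rigid} / Proposition~\ref{P and I}(f) (rigidity: $\Ext^i_\Lambda(\PP,\PP)=0$ for $0<i<n$) to see that the process terminates after $n$ steps with the term $Y=\tau_n(X)$; the exactness of the Hom-sequences \eqref{sink sequence} and \eqref{source sequence} on $\CC^0$ then follows by combining exactness on $\PP$ (which is the $n$-representation finite picture, transported via a truncation/comparison with an $n$-representation finite algebra, or directly via the self-duality of the construction under $\nu_n$ and Observation~\ref{easy properties of nu_n}(a)) with the orthogonality $\Hom_\Lambda(\RR\vee\II,\PP)=0$ and $\Ext^n_\Lambda(\PP,\RR\vee\II)=0$ from Theorem~\ref{trichotomy}(b), which guarantees that testing against $\RR$ and $\II$ contributes nothing.

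\textbf{Step 3: The boundary case and part (c).} The set $\PP_P\vee\II$ in (b) excludes exactly the indecomposable injectives that are not in $\PP$ (i.e.\ the "first" preinjectives), as the source of an $n$-almost split sequence; dually for $\PP\vee\II_I$. I would check directly that if $X\in\PP$ then $Y=\tau_n(X)\in\PP$ and all intermediate terms $C_k$ lie in $\PP$ (this is (c)): the $C_k$ are determined as the projective objects appearing when one resolves within the exact category $\PP$, which is extension-closed in $\mod\Lambda$ by Proposition~\ref{extension closed}, and closed under the relevant kernels because $\Ext^{>0}_\Lambda(\PP,\PP)$ vanishes in degrees $<n$. \textbf{The main obstacle} I anticipate is verifying the exactness of the two Hom-complexes \eqref{sink sequence} and \eqref{source sequence} as sequences of functors on all of $\CC^0$ (not just on $\PP$): one must rule out that some indecomposable $Z\in\RR$ or $Z\in\II$ detects a failure of exactness somewhere in the middle of the complex. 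This is where the full strength of Theorem~\ref{trichotomy}(b) — all three Hom-vanishings \emph{and} all three $\Ext^n$-vanishings — must be used simultaneously, together with the $n$-shifted Serre duality of Observation~\ref{easy properties of nu_n}(a) to convert the $\Hom$ conditions into the $\Ext^n$ conditions and back; handling the two endpoints (the map $f_n$ into $J(-,X)$ and the map $f_0$ out of $\Hom(X,-)$) requires the minimality (right/left minimality) of the sink/source morphisms established in Step 1.
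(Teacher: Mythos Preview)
Your approach is essentially the paper's: reduce to $\PP$ by duality, get the sink map via $\Hom_\Lambda(\RR\vee\II,\PP)=0$, then iterate to build the sequence. Two points where the paper is sharper than your sketch:

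\textbf{(i) The iteration in Step 2.} After the sink map $f_0\colon C_0\to X$, the kernel lies in $\Sub\PP$ but typically not in $\PP$ or in $\CC^0$, so ``form its kernel, iterate'' is not yet a procedure. The paper first shows (Lemma~\ref{approximation0}) that $\PP$ is contravariantly finite in $\Sub\PP$ --- only finitely many indecomposables of $\PP$ map nontrivially to any submodule of an object of $\PP$, by Proposition~\ref{modules under nakayama}(b) --- and then iterates minimal right $\PP$-approximations of the successive kernels (Proposition~\ref{approximation}). After $n-1$ steps the final kernel is shown to lie in $\CC^0$ via the Ext-vanishing you cite, and hence in $\PP$ since it is a submodule of something in $\PP$ and $\Hom_\Lambda(\RR\vee\II,\Sub\PP)=0$. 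This simultaneously gives (c).

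\textbf{(ii) Your ``main obstacle'' dissolves.} The paper does not verify both Hom-sequences against $\RR$ and $\II$ separately. Lemma~\ref{self duality} shows that for an exact sequence with all terms in $\PP$ and radical maps, the conditions ``$f_0$ is a sink map in $\CC^0$'' and ``$f_n$ is a source map in $\CC^0$'' are equivalent and each implies the full $n$-almost split property. The mechanism is the functorial isomorphism $F\cong D(G\circ\tau_n)$ between the cokernel functors at the two ends (your ``self-duality under $\nu_n$''), combined with the fact that exactness of the middle of \eqref{sink sequence} and \eqref{source sequence} on $\CC^0$ is automatic from $\Ext^i_\Lambda(\CC^0,\PP)=0=\Ext^i_\Lambda(\PP,\CC^0)$ for $0<i<n$. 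So once Step 1 gives the sink map, nothing further needs to be checked against $\RR$ or $\II$.

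The comparison with $n$-representation finite algebras you float in the overview is not used by the paper and is not needed.
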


Note that we do not know whether sink morphisms and source morphisms exist for $n$-regular modules. We will discuss this problem later in this subsection.

The first step of the proof is the following observation, which is an analogue of \cite[Proposition~3.3]{I1}.

\begin{lemma}\label{self duality}
Let
\[0\to Y\xrightarrow{f_n}C_{n-1}\xrightarrow{f_{n-1}}C_{n-2}\xrightarrow{f_{n-2}}\cdots\xrightarrow{f_2}C_1\xrightarrow{f_1}C_0\xrightarrow{f_0}X\to 0\]
be an exact sequence with terms in $\PP$ such that $f_i\in J_{\PP}$ for any $i$.
Then the following equivalent conditions are equivalent.
\begin{itemize}
\item[(a)] This is an $n$-almost split sequence in $\CC^{0}$.
\item[(b)] $X$ is indecomposable and $f_0$ is a sink map in $\CC^{0}$.
\item[(c)] $Y$ is indecomposable and $f_n$ is a source map in $\CC^{0}$.
\end{itemize}
In this case, we have $Y\iso\tau_n(X)$ and $X\iso\tau_n^-(Y)$.
\end{lemma}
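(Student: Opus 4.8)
The statement asserts the equivalence of three conditions (a), (b), (c) for an exact sequence with terms in $\PP$ and radical maps, plus the identification $Y \iso \tau_n(X)$, $X \iso \tau_n^-(Y)$. My plan is to prove the cycle (a)$\Rightarrow$(b)$\Rightarrow$(a) and then (a)$\Leftrightarrow$(c) by a duality argument, finishing with the computation of $\tau_n$. The implication (a)$\Rightarrow$(b) is essentially formal: if \eqref{sink sequence} is exact then in particular $f_0$ fits into $\Hom_{\CC^0}(-,C_0) \to J_{\CC^0}(-,X) \to 0$, so $f_0$ is right almost split; minimality of $f_0$ follows because the whole complex is a minimal projective-type resolution of $J_{\CC^0}(-,X)$ over the functor category of $\CC^0$ (using that $f_i \in J_{\PP} \subseteq J_{\CC^0}$, so there are no contractible summands), hence $f_0$ is right minimal. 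So the content is the converse (b)$\Rightarrow$(a).

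For (b)$\Rightarrow$(a), the strategy is to use the $n$-shifted Serre duality of Observation~\ref{easy properties of nu_n}(a) on $\DDD^{\bo}(\mod\Lambda)$, restricted to the category $\CC$ and its heart $\CC^0$. Concretely, assume $f_0$ is a sink map in $\CC^0$ and that the given complex is exact with radical maps and terms in $\PP$. First I would argue that this exact sequence is (after splicing) a minimal projective resolution of $J_{\CC^0}(-,X)$ in $\mathrm{mod}\,\CC^0$ of length $n$: right almost splitness of $f_0$ gives exactness at the last spot, and then one builds the resolution step by step by taking sink maps; the point is that $\pd_{\mathrm{mod}\,\CC^0} J_{\CC^0}(-,X) \le n$, which should follow from $\gl\Lambda \le n$ together with the fact that $X \in \PP \subseteq \CC^0$ has a nice position in $\UU$ (so that $\nu_n^{j}(X)$ stays a module for $j$ in the relevant range, via Proposition~\ref{P and I in C^0} and Lemma~\ref{Ext and nu}). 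Having the resolution \eqref{sink sequence}, I would apply the $n$-shifted Serre functor: $\Hom_\Lambda(-, \nu_n(X)[n-i]) \iso D\Hom_\Lambda(X, -[i])$, which turns the minimal projective resolution of $J_{\CC^0}(-,X)$ into a minimal injective (co)resolution of $D\Hom_{\CC^0}(X,-)$ modulo radical, i.e.\ exactly the sequence \eqref{source sequence} with $Y = \tau_n(X) = \Ho^0(\nu_n(X))$; here one uses $\UU = \II[-n] \vee \PP$ and $\nu_n(\PP) = \PP \vee (\inj\Lambda)[-n]$ from Observation~\ref{applied n-shifted serre} and Proposition~\ref{P and I}(c) to see that $\nu_n(X)$ is again a module in $\PP$ when $X \in \PP_P$, and that $Y$ is indecomposable since $\nu_n$ is an equivalence of $\CC$. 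This simultaneously yields exactness of both \eqref{sink sequence} and \eqref{source sequence}, gives $f_n$ a source map, and identifies $Y \iso \tau_n(X)$; applying $\nu_n^{-1}$ symmetrically gives $X \iso \tau_n^-(Y)$. The equivalence (a)$\Leftrightarrow$(c) then follows by the same argument applied to $\Lambda^{\op}$ via Observation~\ref{easy properties of nu_n}(b), or directly by reading the Serre-duality isomorphism in the other direction.

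\textbf{Main obstacle.} The delicate point is controlling the homological algebra over the functor category $\mathrm{mod}\,\CC^0$: I must show the sink-map complex really is a projective resolution of length exactly $n$ and that applying $n$-shifted Serre duality sends it term-by-term to the source-map complex without any homology appearing in intermediate degrees. The ingredient that makes this work is Proposition~\ref{n rigid} ($\Hom_{\DDD^{\bo}}(\UU,\UU[i]) = 0$ for $i \notin n\Z$) together with $\PP \vee \II \subseteq \CC^0$ and the vanishing $\Ext^i_\Lambda(\PP\vee\II,\PP\vee\II) = 0$ for $0 < i < n$ from Proposition~\ref{P and I}(f): these guarantee that when we apply $\RHom_\Lambda(-,\nu_n(X))$ to the resolution, all the "error terms" $\Ext^{\ne 0}$ vanish, so the dual complex is concentrated in the right degree. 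Keeping careful track of which terms carry projective or injective summands (so that one stays inside $\CC^0_P$, $\CC^0_I$ and the maps remain in the radical) is the bookkeeping that needs the most care, but it is governed entirely by Lemma~\ref{P and I in C^0} and Proposition~\ref{tau_n}.
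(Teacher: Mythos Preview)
Your outline carries the right ingredients (Ext-vanishing coming from the definition of $\CC$, and Serre duality/Auslander--Reiten duality on $\CC^0$), but the organization is muddled at the crucial step and the cited vanishing is not quite the one you need. The paper's argument is both simpler and sharper, and it is worth seeing why.

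The paper first isolates the single vanishing that drives everything: since $\PP\subset\UU$, the very definition of $\CC$ gives
\[
\Ext^i_\Lambda(\CC^0,\PP)=0=\Ext^i_\Lambda(\PP,\CC^0)\qquad(0<i<n).
\]
With this in hand, a straightforward dimension-shift along the short exact sequences into which the given long exact sequence splices shows that \eqref{sink sequence} is exact on $\CC^0$ \emph{if and only if} $f_0$ is a sink map, and dually \eqref{source sequence} is exact \emph{if and only if} $f_n$ is a source map. So (a)$\Leftrightarrow$(b) and (a)$\Leftrightarrow$(c) each reduce to showing (b)$\Leftrightarrow$(c). This last equivalence is then obtained by defining the simple functors $F=\Cokernel(\Hom_\Lambda(-,C_0)\!\to\!\Hom_\Lambda(-,X))$ and $G=\Cokernel(\Hom_\Lambda(C_{n-1},-)\!\to\!\Hom_\Lambda(Y,-))$ on $\CC^0$ and checking, again from the vanishing above, that $F\iso D(G\circ\tau_n)$ and $G\iso D(F\circ\tau_n^-)$; thus $F$ is simple iff $G$ is, which is exactly (b)$\Leftrightarrow$(c), and the supports of $F,G$ give $Y\iso\tau_n(X)$, $X\iso\tau_n^-(Y)$.

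Two concrete points where your plan slips: first, you invoke $\Ext^i_\Lambda(\PP\vee\II,\PP\vee\II)=0$, but the sequences \eqref{sink sequence} and \eqref{source sequence} are required to be exact on \emph{all} of $\CC^0$ (including $\RR$), so you need $\Ext^i_\Lambda(\CC^0,\PP)=0$ and $\Ext^i_\Lambda(\PP,\CC^0)=0$; fortunately this is immediate from the definition of $\CC$, and once you use it the argument simplifies drastically. Second, your (b)$\Rightarrow$(a) step says you would ``build the resolution step by step by taking sink maps'' and appeal to $\pd_{\mod\CC^0}J_{\CC^0}(-,X)\le n$: but the sequence is \emph{given}, and what must be shown is that \emph{it} induces an exact \eqref{sink sequence}. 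The dimension-shift using the vanishing above does exactly that in one line; there is no need to construct another resolution and compare, nor to compute projective dimension in the functor category. Your Serre-duality idea for passing from \eqref{sink sequence} to \eqref{source sequence} is essentially the isomorphism $F\iso D(G\circ\tau_n)$ in disguise, so once you streamline the first half, your argument converges to the paper's.
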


\begin{proof}
By definition of $\CC$, we have
\begin{equation}\label{vanish C and P}
\Ext_\Lambda^i(\CC^{0},\PP)=0=\Ext_\Lambda^i(\PP,\CC^{0})\ \forall i\in\{1,\ldots,n-1\}.
\end{equation}
Using this, it is easily checked that the sequence \eqref{sink sequence} (respectively, \eqref{source sequence}) is exact if and only if
$f_0$ is a sink map (respectively, $f_n$ is a source map) in $\CC^{0}$ (see \cite[Lemma~3.2]{I1} for details).

Thus we only have to check that (b) is equivalent to (c).
We define functors $F\in\mod\CC^{0}$ and $G\in\mod(\CC^{0})^{\rm op}$ by exact sequences:
\begin{eqnarray*}
&\Hom_\Lambda(-,C_0)\xrightarrow{\cdot f_0}\Hom_\Lambda(-,X)\to F\to0,&\\
&\Hom_\Lambda(C_{n-1},-)\xrightarrow{f_n\cdot}\Hom_\Lambda(Y,-)\to G\to0.&
\end{eqnarray*}
Using \eqref{vanish C and P}, it is easily checked that
\begin{equation}\label{F and G}
F\iso D(G\circ\tau_n)\ \mbox{ and }\ G\iso D(F\circ\tau_n^-)
\end{equation}
hold (see \cite[Lemma~3.2]{I1} for details). Thus $F$ is a simple functor if and only $G$ is a simple functor. Now note that (b) (respectively, (c)) is equivalent to $F$ (respectively, $G$) being a simple functor.
Thus all conditions are equivalent.

Moreover, since $X$ (respectively, $Y$) is a unique indecomposable object in $\CC^0$ such that $F(X)\neq0$ (respectively, $G(Y)\neq0$),
the isomorphisms \eqref{F and G} imply $Y\iso\tau_n(X)$ and $X\iso\tau_n^-(Y)$.
\end{proof}

We denote by $\Sub\PP$ the full subcategory consisting of all submodules of modules in $\PP$.
Then we have the following easy observation.

\begin{lemma}\label{approximation0}
$\PP$ is contravariantly finite in $\Sub\PP$.
\end{lemma}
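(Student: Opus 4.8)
The plan is to prove contravariant finiteness by producing, for each $M\in\Sub\PP$, an explicit right $\PP$-approximation assembled from the finitely many indecomposable objects of $\PP$ that admit a nonzero morphism to $M$. Recall that $f\colon P_M\to M$ with $P_M\in\PP$ is a right $\PP$-approximation precisely when $\Hom_\Lambda(P',P_M)\to\Hom_\Lambda(P',M)$ is surjective for every $P'\in\PP$; exhibiting such an $f$ for each $M$ is exactly contravariant finiteness of $\PP$ in $\Sub\PP$.

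First I would record the formal reduction. By Proposition~\ref{P and I}(a) the indecomposable objects of $\PP$ are exactly the modules $\nu_n^{-i}(P)$ with $P\in\ind(\proj\Lambda)$ and $i\ge0$, pairwise non-isomorphic. Suppose one knows that only finitely many of these, say $U_1,\dots,U_r$, satisfy $\Hom_\Lambda(U_k,M)\neq0$. Then I would set
\[
P_M:=\bigoplus_{k=1}^r U_k\otimes_K\Hom_\Lambda(U_k,M)
\]
and take $f\colon P_M\to M$ to be the evaluation morphism. Any morphism $U\to M$ from an indecomposable $U\in\PP$ is zero unless $U\iso U_k$ for some $k$, in which case it lies in $\Hom_\Lambda(U_k,M)$ and factors through the corresponding component of $f$; hence $f$ is a right $\PP$-approximation of $M$. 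Thus the whole statement reduces to the finiteness claim.

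The real content is therefore showing that $\Hom_\Lambda(\nu_n^{-i}(P),M)=0$ for all but finitely many pairs $(P,i)$. Since there are finitely many indecomposable projectives and each $\nu_n^{-i}(P)$ is a direct summand of $\nu_n^{-i}(\Lambda)$, it suffices to prove $\Hom_\Lambda(\nu_n^{-i}(\Lambda),M)=0$ for $i\gg0$. Here I would use the hypothesis $M\in\Sub\PP$: fix an embedding $M\hookrightarrow Q$ with $Q\in\PP$, so $Q$ is a direct summand of a finite sum of modules $\nu_n^{-j}(\Lambda)$; let $N$ be the largest such $j$ occurring. Left-exactness of $\Hom_\Lambda(\nu_n^{-i}(\Lambda),-)$ gives an injection $\Hom_\Lambda(\nu_n^{-i}(\Lambda),M)\hookrightarrow\Hom_\Lambda(\nu_n^{-i}(\Lambda),Q)$, so it is enough to kill the right-hand side, which reduces to $\Hom_\Lambda(\nu_n^{-i}(\Lambda),\nu_n^{-j}(\Lambda))=0$ for $i>j$. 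This last vanishing is exactly Proposition~\ref{modules under nakayama}(b) applied to the integers $-i<-j$. Consequently $\Hom_\Lambda(\nu_n^{-i}(\Lambda),M)=0$ whenever $i>N$, leaving only the finitely many indecomposables $\nu_n^{-i}(P)$ with $i\le N$ as possible sources of nonzero maps, which completes the finiteness claim and hence the proof.

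The argument is essentially formal once the finiteness is established; the only point demanding care is that the bound $N$ must be uniform in $i$, which is what the single embedding $M\hookrightarrow Q$ provides. This is precisely where the hypothesis $M\in\Sub\PP$ (as opposed to an arbitrary module in $\mod\Lambda$) is genuinely used, and I expect this uniform-bound step, together with invoking Proposition~\ref{modules under nakayama}(b) in the correct index range, to be the main thing to get right rather than a deep obstacle.
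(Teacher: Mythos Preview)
Your proof is correct and follows essentially the same approach as the paper: both arguments use the embedding into some $Q\in\PP$ together with Proposition~\ref{modules under nakayama}(b) to deduce that only finitely many indecomposables of $\PP$ admit a nonzero map to $M$, and then conclude that a right $\PP$-approximation exists. Your version is simply more explicit about the construction of the approximation and the uniform bound $N$, whereas the paper compresses this into a single sentence.
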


\begin{proof}
Let $M\in\PP$ and $N$ a submodule of $M$.
By Proposition~\ref{modules under nakayama}(b), all but a finite number of $X\in\ind\PP$ satisfy $\Hom_\Lambda(X,M)=0$ and so $\Hom_\Lambda(X,N)=0$.
This immediately implies that $N$ has a right $\PP$-approximation.
\end{proof}

The following result is an analogue of \cite[Theorem 2.2.3]{I1}.

\begin{proposition}\label{approximation}
For any $X\in\Sub\PP$, there exists an exact sequence
\[0\to C_{n-1}\xrightarrow{f_{n-1}}\cdots\xrightarrow{f_1}C_0\xrightarrow{f_0}X\to 0\]
with $C_i\in\PP$ for any $i$ and $f_i\in J_{\PP}$ for $i\neq0$ such that the following sequence is exact on $\CC^{0}$.
\[0\to\Hom_\Lambda(-,C_{n-1})\xrightarrow{\cdot f_{n-1}}\cdots\xrightarrow{\cdot f_1}\Hom_\Lambda(-,C_0)\xrightarrow{\cdot f_0}\Hom_\Lambda(-,X)\to 0.\]
\end{proposition}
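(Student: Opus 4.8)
The plan is to construct the sequence by a standard induction, repeatedly taking minimal right $\PP$-approximations inside $\Sub\PP$ and passing to syzygies, then to check that the resulting (finite) sequence has the asserted exactness properties after applying $\Hom_\Lambda(-,-)$ restricted to $\CC^0$. The engine of the argument is the fact that $\Lambda$ has global dimension $n$, which forces the process to terminate after $n$ steps, combined with the vanishing $\Ext_\Lambda^i(\CC^0,\PP)=0$ for $1\le i\le n-1$ from the definition of $\CC$ (equation \eqref{vanish C and P} in Lemma~\ref{self duality}), which is what makes the $\Hom$-sequence on $\CC^0$ exact in the middle degrees.

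\emph{Step 1: build the sequence.} Given $X\in\Sub\PP$, by Lemma~\ref{approximation0} $\PP$ is contravariantly finite in $\Sub\PP$, so $X$ has a minimal right $\PP$-approximation $f_0\colon C_0\to X$. Since $X\in\Sub\PP$ there is a monomorphism $X\hookrightarrow M$ with $M\in\PP$, and I would use the projective cover of $M$ (as a $\Lambda$-module) together with the fact that $\PP$ is closed under the relevant constructions to see that $f_0$ is surjective, so that $K_1:=\Kernel f_0$ is again a submodule of a module in $\PP$, i.e.\ $K_1\in\Sub\PP$. (Here one uses that projective $\Lambda$-modules lie in $\PP$ and that $\PP$ is closed under extensions when $n\ge 2$ by Proposition~\ref{extension closed}, while the case $n=1$ is classical.) Iterating, one obtains $K_1,K_2,\dots\in\Sub\PP$ with short exact sequences $0\to K_{i+1}\to C_i\to K_i\to 0$, $C_i\in\PP$, $f_i\in J_\PP$ for $i\ge 1$ by minimality. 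Because $\gl\Lambda\le n$ and each $C_i$ is a submodule of $\PP$ hence has bounded projective dimension, the module $K_n$ is projective, hence lies in $\PP$; renaming $C_{n-1}$ appropriately (absorbing $K_n$), the sequence terminates as
\[0\to C_{n-1}\xrightarrow{f_{n-1}}\cdots\xrightarrow{f_1}C_0\xrightarrow{f_0}X\to 0\]
with all terms in $\PP$.

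\emph{Step 2: exactness of the $\Hom$-complex on $\CC^0$.} Apply $\Hom_\Lambda(Z,-)$ for $Z\in\CC^0$ to each short exact sequence $0\to K_{i+1}\to C_i\to K_i\to 0$. The connecting maps give long exact sequences involving $\Ext_\Lambda^j(Z,K_i)$; since all intermediate syzygies $K_i$ are submodules of modules in $\PP$, and using \eqref{vanish C and P} together with dimension shifting, the groups $\Ext_\Lambda^j(Z,C_i)$ for $1\le j\le n-1$ vanish, so one splices the short exact sequences of $\Hom$'s into the single exact sequence
\[0\to\Hom_\Lambda(-,C_{n-1})\xrightarrow{\cdot f_{n-1}}\cdots\xrightarrow{\cdot f_1}\Hom_\Lambda(-,C_0)\xrightarrow{\cdot f_0}\Hom_\Lambda(-,X)\to 0\]
of functors on $\CC^0$. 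The surjectivity at $\Hom_\Lambda(-,X)$ holds because $f_0$ is a right $\PP$-approximation and $\Ext^{>0}$ terms vanish on $\CC^0$; left exactness is automatic; exactness in the middle is precisely the vanishing just discussed. This is essentially the argument of \cite[Theorem 2.2.3]{I1}, adapted to $\CC^0$ in place of an $n$-cluster tilting subcategory.

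\emph{Main obstacle.} The delicate point is Step 1: verifying that the iterated syzygies genuinely remain in $\Sub\PP$ (so that Lemma~\ref{approximation0} can be reapplied) and that the process stops after exactly $n$ steps with a projective — i.e.\ that one does not leave the world of submodules of $n$-preprojectives. This rests on closure properties of $\PP$ (extension-closedness, containing $\proj\Lambda$) and a careful projective-dimension count using $\gl\Lambda=n$; in the classical case $n=1$ it degenerates to the familiar statement that a submodule of a preprojective module over a hereditary algebra has a projective cover by preprojectives, so the content is entirely in making the higher-dimensional bookkeeping work.
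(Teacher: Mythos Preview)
Your Step~1 contains a genuine error in the termination argument. You claim that because $\gl\Lambda\le n$, the syzygy $K_n$ is projective. But this inference only works if the intermediate terms $C_i$ are \emph{projective} $\Lambda$-modules; here they are merely in $\PP$, which is strictly larger than $\proj\Lambda$. Dimension shifting through non-projectives gives no control on $\pd K_n$. So the sequence is not forced to terminate for the reason you give, and ``absorbing $K_n$'' into $C_{n-1}$ does not repair the issue (and would in any case leave you with one term too many).

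The paper's argument terminates the process differently, and one step earlier. After taking $n-1$ right $\PP$-approximations one has the syzygy $X_{n-1}:=K_{n-1}\in\Sub\PP$. Now apply the vanishing $\Ext^i_\Lambda(\PP\vee\II,C_j)=0$ for $0<i<n$ (Proposition~\ref{P and I}(f)) and dimension-shift through the short exact sequences $0\to K_{i+1}\to C_i\to K_i\to 0$: this yields $\Ext^i_\Lambda(\PP\vee\II,X_{n-1})=0$ for all $0<i<n$, i.e.\ $X_{n-1}\in\CC^0$. Then the trichotomy $\CC^0=\PP\vee\RR\vee\II$ (Theorem~\ref{trichotomy}) together with $\Hom_\Lambda(\RR\vee\II,\PP)=0$ and $X_{n-1}\in\Sub\PP$ forces $X_{n-1}\in\PP$. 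This is the missing idea: termination comes from the structure theory of $\CC^0$, not from a projective-dimension count.

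A smaller point in Step~2: surjectivity of $\Hom_\Lambda(Z,C_0)\to\Hom_\Lambda(Z,X)$ for $Z\in\CC^0$ does not follow from $f_0$ being a right $\PP$-approximation alone. For $Z\in\PP$ it does; for $Z\in\RR\vee\II$ one needs $\Hom_\Lambda(Z,X)=0$, which holds because $X\in\Sub\PP$ and $\Hom_\Lambda(\RR\vee\II,\PP)=0$. The paper handles exactness on $\CC^0$ exactly this way: first exactness on $\PP$ by construction, then extension to $\CC^0$ via $\Hom_\Lambda(\RR\vee\II,\Sub\PP)=0$.
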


\begin{proof}
Applying Lemma~\ref{approximation0} repeatedly, we have an exact sequence
\[0\to X_{n-1}\xrightarrow{f_{n-1}}C_{n-2}\xrightarrow{f_{n-2}}\cdots\xrightarrow{f_1}C_0\xrightarrow{f_0}X\to 0\]
with $C_i\in\PP$ for any $i$ and $f_i\in J_{\PP}$ for $i\neq0$ such that the following sequence is exact on $\PP$.
\[0\to\Hom_\Lambda(-,X_{n-1})\xrightarrow{\cdot f_{n-1}}\Hom_\Lambda(-,C_{n-2})\xrightarrow{\cdot f_{n-2}}\cdots\xrightarrow{\cdot f_1}\Hom_\Lambda(-,C_0)\xrightarrow{\cdot f_0}\Hom_\Lambda(-,X)\to 0.\]
It is exact on $\CC^{0}$ since $\Hom_\Lambda(\RR\vee\II,\Sub\PP)=0$.
Using $\Ext^i_\Lambda(\PP\vee\II,C_j)=0$ for any $i$ with $0<i<n$, one can easily check that $X_{n-1}$ satisfies $\Ext^i_\Lambda(\PP\vee\II,X_{n-1})=0$ for any $i$ with $0<i<n$ (see \cite[2.2.1(2)]{I1} for details).
This means $X_{n-1}\in\CC^{0}$ by Proposition~\ref{P and I}(e).
Since $X_{n-1}$ is a submodule of $C_{n-2}\in\PP$, we have $X_{n-1}\in\PP$ by Theorem~\ref{trichotomy}.
\end{proof}

Now we are ready to prove Theorem~\ref{n-almost split}.

\begin{proof}[Proof of Theorem~\ref{n-almost split}]
We only prove the assertions for $X$ or $Y$ in $\ind \PP$.
The corresponding assertions for $X$ or $Y$ in $\ind \II$ are shown dually.

Fix $X\in\ind\PP$.
By Theorem~\ref{trichotomy} and Proposition~\ref{modules under nakayama}(b), all but finitely many $C\in\ind\CC^{0}$ satisfy $\Hom_\Lambda(C,X)=0$.
Thus there exists a sink map $f_0 \colon C_0\to X$ in $\CC^{0}$.

Now we assume that $X$ is non-projective. Then $f_0$ is clearly surjective.
We apply Proposition~\ref{approximation} for $\Kernel f_0\in\Sub\PP$ to get an exact sequence
\[0\to C_n\xrightarrow{f_n}C_{n-1}\xrightarrow{f_{n-1}}\cdots\xrightarrow{f_2}C_1\xrightarrow{}\Kernel f_0\to 0.\]
Combining this with an short exact sequence $0\to\Kernel f_0\to C_0\to X\to0$, we have an exact sequence
\[0\to C_n\xrightarrow{f_n}C_{n-1}\xrightarrow{f_{n-1}}\cdots\xrightarrow{f_2}C_1\xrightarrow{f_1}C_0\xrightarrow{f_0} X\to0.\]
By Lemma~\ref{self duality}, this is an $n$-almost split sequence in $\CC^0$ and satisfies $C_n\iso\tau_n(X)$.

Since $\tau_n$ gives a bijection $\ind\PP_P\to\ind\PP$, the above argument shows the existence of $n$-almost split sequences in $\CC^0$ starting at any $Y\in\ind\PP$. In particular, any $Y\in\ind\PP$ has a source map in $\CC^0$.
\end{proof}

Now we give the following application of Theorem~\ref{n-almost split}.

\begin{proposition}\label{not contra}
\begin{itemize}
\item[(a)] No module in $\CC^{0}\backslash\PP$ has a right $\PP$-approximation.
In particular $\PP$ is not contravariantly finite in $\mod\Lambda$.
\item[(b)] No module in $\CC^{0}\backslash\II$ has a left $\II$-approximation.
In particular $\II$ is not covariantly finite in $\mod\Lambda$.
\end{itemize}
\end{proposition}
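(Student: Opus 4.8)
The plan is to exploit the \emph{level} of an indecomposable $n$-preprojective module: by Proposition~\ref{P and I}(a) every $Q\in\ind\PP$ is $\nu_n^{-\ell}(P')$ for a unique $P'\in\ind(\proj\Lambda)$ and a unique $\ell\ge0$, and I call this $\ell$ the level of $Q$. A right $\PP$-approximation $f\colon P\to X$ is a morphism with $P\in\PP$ through which every morphism from an object of $\PP$ to $X$ factors; since $P$ is finite dimensional, only finitely many levels occur among the indecomposable summands of $P$. So the heart of the matter is to show that when $X\in\CC^{0}\setminus\PP$ there are nonzero morphisms to $X$ from indecomposable $n$-preprojectives of arbitrarily large level, which no single $P\in\PP$ can absorb. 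I expect this to be the only genuinely delicate point; the rest is bookkeeping with the level function via Proposition~\ref{modules under nakayama}(b).

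First I would reduce to $X$ indecomposable: the class of modules admitting a right $\PP$-approximation is closed under direct summands (compose with a projection), $\CC^{0}$ is closed under direct summands by Proposition~\ref{P and I}(g), and any $X\in\CC^{0}\setminus\PP$ has an indecomposable direct summand in $\CC^{0}\setminus\PP$ because $\PP$ is closed under sums and summands. So fix an indecomposable $X\in\CC^{0}\setminus\PP$; by Theorem~\ref{trichotomy}(a) then $X\in\RR\vee\II$. The key claim is that $\Hom_\Lambda(\nu_n^{-j}(\Lambda),X)\neq0$ for every $j\ge0$. Since $\nu_n$ is an autoequivalence of $\DDD^{\rm b}(\mod \Lambda)$,
\[\Hom_\Lambda(\nu_n^{-j}(\Lambda),X)=\Hom_{\DDD^{\rm b}(\mod \Lambda)}(\Lambda,\nu_n^{j}(X))=\Ho^0(\nu_n^{j}(X)),\]
and $\nu_n(\RR)=\RR$ (Propositions~\ref{nakayama R} and~\ref{tau_n}) while $\nu_n(\II)=\add\{\nu_n^{i}(D\Lambda)\mid i\ge1\}\subseteq\II$, so $\nu_n^{j}(X)\in\RR\vee\II\subseteq\mod\Lambda$ and is nonzero; hence $\Ho^0(\nu_n^{j}(X))=\nu_n^{j}(X)\neq0$. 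Thus for each $j\ge0$ there is an indecomposable summand $Q_j$ of $\nu_n^{-j}(\Lambda)$ with $\Hom_\Lambda(Q_j,X)\neq0$, and this $Q_j\in\PP$ has level $j$.

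Now suppose for contradiction that $f\colon P\to X$ is a right $\PP$-approximation; write $P=\bigoplus_\alpha P_\alpha$ with $P_\alpha$ indecomposable of level $k_\alpha$, and set $N:=\max_\alpha k_\alpha$ (with $N:=0$ if $P=0$). Put $Q:=Q_{N+1}$, so $\Hom_\Lambda(Q,X)\neq0$. As $f$ is a right $\PP$-approximation, $\Hom_\Lambda(Q,f)$ is surjective, whence $\Hom_\Lambda(Q,P)\neq0$. But each $\Hom_\Lambda(Q,P_\alpha)$ is a direct summand of $\Hom_{\DDD^{\rm b}(\mod \Lambda)}(\nu_n^{-(N+1)}(\Lambda),\nu_n^{-k_\alpha}(\Lambda))$, which vanishes by Proposition~\ref{modules under nakayama}(b) since $-(N+1)<-k_\alpha$; hence $\Hom_\Lambda(Q,P)=0$, a contradiction. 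Therefore no module in $\CC^{0}\setminus\PP$ has a right $\PP$-approximation; and since $D\Lambda\in\II\subseteq\CC^{0}$ and $D\Lambda\notin\PP$ by Proposition~\ref{P and I}(d), $\PP$ is not contravariantly finite in $\mod\Lambda$, proving (a).

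For part (b) I would apply (a) to $\Lambda^{\rm op}$, which is $n$-representation infinite by Proposition~\ref{prop.infinite_leftrightsymmetric}: the duality $D\colon\mod\Lambda\to\mod\Lambda^{\rm op}$ interchanges $\PP$ with $\II$ and sends $\CC^{0}$ to $\CC^{0}$ (Observation~\ref{easy properties of nu_n}(b) together with the definitions), and it turns a left $\II$-approximation over $\Lambda$ into a right $\PP$-approximation over $\Lambda^{\rm op}$. It is worth noting that $n$-representation infiniteness enters essentially only in the key claim of the second paragraph: for an $n$-representation finite algebra $\nu_n^{j}(X)$ eventually leaves $\mod\Lambda$, and indeed there $\PP$ \emph{is} functorially finite.
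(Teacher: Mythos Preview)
Your proof is correct but follows a genuinely different route from the paper's. The paper argues as follows: assume $X\in\CC^0\setminus\PP$ is indecomposable and $f\colon Y\to X$ is a \emph{minimal} right $\PP$-approximation. By Theorem~\ref{n-almost split}(a) the object $Y\in\PP$ admits a source map $g\colon Y\to Z$ in $\CC^0$ with $Z\in\PP$; since $X\notin\PP$ we have $f\in J_{\CC^0}$, so $f=ga$ for some $a\colon Z\to X$. Then $a=bf$ for some $b$ (as $Z\in\PP$ and $f$ is an approximation), whence $f=gbf$, so $gb$ is an isomorphism by right minimality of $f$, contradicting $g\in J_{\CC^0}$.

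Your argument instead bypasses Theorem~\ref{n-almost split} entirely: you show directly that for $X\in\RR\vee\II$ there are nonzero maps from $\nu_n^{-j}(\Lambda)$ for \emph{every} $j\ge0$, and then use the level function together with Proposition~\ref{modules under nakayama}(b) to see that no finite $P\in\PP$ can receive all of them. This is more elementary in that it avoids the construction of $n$-almost split sequences and source maps (Lemma~\ref{approximation0}, Proposition~\ref{approximation}, Lemma~\ref{self duality}); it also makes the obstruction quantitatively visible. The paper's approach, on the other hand, is shorter once Theorem~\ref{n-almost split} is in place and illustrates a general principle: the existence of source maps in a subcategory $\PP$ forces any right $\PP$-approximation of an object outside $\PP$ to be non-minimal, hence nonexistent. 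Your duality argument for (b) via $\Lambda^{\rm op}$ is also fine and equivalent to the paper's ``shown dually''.
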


\begin{proof}
(a) Let $X\in\CC^{0}\backslash\PP$ be indecomposable and $f \colon Y\to X$ be a minimal right $\PP$-approximation.
By Theorem~\ref{n-almost split}, there exists a source map $g \colon Y\to Z$ in $\CC^{0}$ with $Z\in\PP$.
Then there exists $a \colon Z\to X$ such that $f=ga$.
Since $f$ is a right $\PP$-approximation, there exists $b \colon Z\to Y$ such that $a=bf$.
Now $f=gbf$ shows that $gb$ is an isomorphism, a contradiction to $g\in J_{\CC^0}$.

(b) is shown dually.
\end{proof}

Although Theorem~\ref{n-almost split} shows the existence of a sink (respectively, source) morphism of $X\in\PP\vee\II$ in $\CC^{0}$, it does not tell us anything about sink or source morphisms of $X\in\RR$ in $\CC^{0}$.

For the classical case $n=1$, we have $\CC^{0}=\mod\Lambda$, so we know that sink morphisms, source morphisms and almost split sequences exist by classical Auslander-Reiten theory.

For the case $n\ge 2$, we pose the following.

\begin{conjecture}
Let $n\ge 2$. Then no indecomposable $X\in\RR$ has a sink morphism or a source morphism in $\CC^{0}$.
\end{conjecture}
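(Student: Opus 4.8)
\emph{Reduction to $\RR$.}
The claim reduces to showing that, for $n\ge 2$, no indecomposable of $\RR$ admits a sink or a source morphism inside $\RR$ itself. Indeed, suppose $X\in\ind\RR$ has a sink morphism $f\colon C\to X$ in $\CC^{0}$. The three classes $\PP,\RR,\II$ have pairwise zero intersection (an indecomposable lying in two of them would, by Proposition~\ref{t-structure}, both be and fail to be a module under some power of $\nu_n$), so by Theorem~\ref{trichotomy}(a) we may split $C=C_\PP\oplus C_\RR\oplus C_\II$ according to these classes. By Theorem~\ref{trichotomy}(b) we have $\Hom_\Lambda(\II,\RR)=0$, so the component $C_\II\to X$ vanishes, and $\Hom_\Lambda(\RR,\PP)=0$, so any map from an object of $\RR$ into $C_\PP$ vanishes. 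Hence for a radical $\phi\colon W\to X$ with $W\in\RR$, writing $\phi=gf$ and decomposing $g$, only the $C_\RR$‑component of $g$ survives, so $\phi$ factors through $C_\RR\to X$; together with right minimality (extend endomorphisms of $C_\RR$ by the identity on $C_\PP\oplus C_\II$) this shows $C_\RR\to X$ is a sink morphism for $X$ in $\RR$. The source case is dual, or follows by passing to $\Lambda^{\op}$, which is $n$‑representation infinite with $D\RR$ as its category of $n$‑regular modules.

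\emph{From a sink morphism to an almost split sequence in $\RR$.}
Next I would check that $\RR$ is an abelian length category (closed under subobjects and quotients in $\mod\Lambda$, using Proposition~\ref{extension closed} and the long exact sequences of $\Ext^{*}_\Lambda(\PP\vee\II,-)$), and that $\RR$ has no nonzero projective nor injective object. The key tool for the latter is the higher Serre duality $\Ext^{i}_\Lambda(Y,Z)\cong D\Ext^{n-i}_\Lambda(Z,\nu_n Y)$ for $Y,Z\in\RR$, valid because $\nu_n$ preserves $\RR$ (Proposition~\ref{nakayama R}) and everything in sight is a module: if $\Ext^{1}_\Lambda(P,\RR)=0$ then $\Ext^{n-1}_\Lambda(\RR,\nu_n P)=0$, a second application makes $\nu_n P$ again projective in $\RR$, and for $n=2$ also injective, forcing it to be a simple object that is $\Ext^{1}_\Lambda$‑rigid — which does not happen for skyscraper sheaves on $\mathbb{P}^{n}$ and should be excluded in general. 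Granting this, a sink morphism in $\RR$ at an indecomposable $X$ is an epimorphism (its source surjects onto $X$ since a projective cover of $X$ does and factors through it), so it yields an almost split sequence $0\to Y\to E\to X\to 0$ in $\RR$; the self‑duality argument of Lemma~\ref{self duality} then identifies $Y\cong\tau_n X=\nu_n X$.

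\emph{The contradiction.}
From the almost split sequence at $X$ one obtains the Auslander--Reiten formula $\Ext^{1}_\Lambda(X,-)\cong D\Hom_\Lambda(-,\nu_n X)$ as functors on $\RR$ (using that $\RR$ has no injectives, so $\Ext^{1}_\RR=\Ext^{1}_\Lambda$). Comparing with the higher Serre duality $\Ext^{1}_\Lambda(X,-)\cong D\Ext^{n-1}_\Lambda(-,\nu_n X)$ gives an isomorphism $\Hom_\Lambda(-,\nu_n X)\cong\Ext^{n-1}_\Lambda(-,\nu_n X)$ of functors on $\RR$. For $n\ge 2$ this is impossible: $\Ext^{n-1}_\Lambda(-,M)$ is not left exact on the non‑semisimple abelian category $\RR$, whereas $\Hom_\Lambda(-,M)$ is — choosing a non‑split short exact sequence in $\RR$ (one exists since $\RR$ has no projectives), the incoming connecting homomorphism into $\Ext^{n-1}$ obstructs left exactness, and already $\dim_K\Ext^{n-1}_\Lambda(S,S)\neq\dim_K\End_\Lambda(S)$ for a simple object $S$ of $\RR$, as one sees for skyscraper sheaves on $\mathbb{P}^{n}$. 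Hence no indecomposable of $\RR$ has a sink morphism in $\RR$, and the conjecture follows for sinks; the source case is dual. In the $n$‑representation tame (or merely left graded coherent) situation one may argue more concretely via $\RR\equi\coprod_{\pp\in\MaxProj R}\fd\Pi_{(\pp)}$ of Theorem~\ref{theorem.R_decomp_tame}: each $\Pi_{(\pp)}$ is Noetherian complete local of dimension $n\ge 2$, and an almost split sequence $0\to Y\to E\to S\to 0$ ending at a simple would force the finite‑length module $E$ to contain, as submodules, representatives of the infinite (for $K$ infinite) family of pairwise non‑isomorphic length‑two modules with top $S$ — a contradiction.

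\emph{Where the difficulty lies.}
The routine ingredients are the reduction to $\RR$ and the higher Serre duality. The delicate steps are: the absence of $\Ext^{1}_\Lambda$‑rigid simple objects in $\RR$, needed for the ``no projectives/injectives'' claim (and for ruling out trivial sink morphisms $0\to X$ at projective simples of $\RR$); pinning down the exact shape of the Auslander--Reiten formula produced by a single almost split sequence; and, above all, controlling $\RR$ in the absence of a geometric description such as tameness or graded coherence. This last point is presumably the reason the statement is posed only as a conjecture.
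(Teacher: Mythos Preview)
The statement is posed in the paper as a \emph{conjecture}; the paper does not prove it. What the paper does establish, in the Observation immediately following, is (a) that a sink morphism $f\colon C\to X$ in $\CC^{0}$ for $X\in\ind\RR$ already yields an ordinary (length-one) almost split sequence $0\to\Kernel f\to C\to X\to 0$ with all terms in $\RR$, and (b) that for $n\ge 2$ no $X\in\ind\RR$ admits an $n$-almost split sequence. Part (a) covers your reduction and the first half of your second step, and the paper stops there.

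Your attempt contains genuine gaps, several of which you flag yourself. The assertion that $\RR$ is closed under subobjects and quotients in $\mod\Lambda$ does not follow from Proposition~\ref{extension closed} together with a long exact sequence: vanishing of $\Ext^i_\Lambda(\PP,-)$ for all $i>0$ is not inherited by submodules or quotients. Your argument that $\RR$ has no nonzero projectives (needed to make the sink map an epimorphism) is not completed: you reduce to ruling out an $\Ext^1$-rigid simple in $\RR$ and then appeal to the example of skyscraper sheaves on $\mathbb{P}^n$ rather than a general argument. The final contradiction --- identifying $\Hom_\Lambda(-,\nu_nX)$ with $\Ext^{n-1}_\Lambda(-,\nu_nX)$ on $\RR$ and invoking failure of left exactness --- is again only checked in examples; the abstract claim about connecting maps is not made precise. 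In your tame sketch, the factorization $W\to E$ of a surjection $W\twoheadrightarrow S$ through the sink map need not be injective, so the length-two modules $W$ are not forced to embed in $E$.

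Your closing paragraph is the correct diagnosis: the obstacles you list (no $\Ext^1$-rigid simples in $\RR$, the precise Auslander--Reiten formula, and control of $\RR$ beyond the coherent/tame case) are exactly what keep this a conjecture, and your outline does not overcome them outside the geometric setting.
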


The following observation gives us some information related to this question.

\begin{observation}
\begin{itemize}
\item[(a)] Assume that some indecomposable $X\in\RR$ has a sink morphism $f \colon C\to X$ (respectively, source morphism $g \colon X\to C$) in $\CC^{0}$.
Then the sequence
\[0\to \Kernel f\to C\xrightarrow{f}X\to0\ (\mbox{respectively, }\ 0\to X\xrightarrow{g}C\to\Cokernel g\to0)\]
is an almost split sequence in $\RR$ and $\CC^0$.
\item[(b)] Let $n\ge 2$. Then no indecomposable $X \in \RR$ has an $n$-almost split sequence.
\end{itemize}
\end{observation}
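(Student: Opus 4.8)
The plan is to establish (a) directly from the structure of $\CC^{0}$ developed above, and then derive (b) from (a). I will assume $n\ge2$ throughout: for $n=1$ one has $\CC^{0}=\mod\Lambda$, $\RR$ is the usual category of regular modules, (a) is part of classical Auslander--Reiten theory of hereditary algebras, and (b) is vacuous.

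For (a) I would take a sink morphism $f\colon C\to X$ in $\CC^{0}$ with $X\in\RR$ indecomposable (the source case being dual). First I would note that $X$ is neither projective nor injective, since $\proj\Lambda\subseteq\PP$, $\inj\Lambda\subseteq\II$, and $\Hom_\Lambda(\RR,\PP)=0=\Hom_\Lambda(\II,\RR)$ by Theorem~\ref{trichotomy} would otherwise force $X=0$; hence the projective cover $P\to X$ is a non-isomorphism in $\CC^{0}$, so it factors through $f$, and $f$ is surjective, giving a short exact sequence $0\to\Kernel f\to C\xrightarrow{f}X\to0$. Next I would show $C\in\RR$: it has no summand in $\II$ because $\Hom_\Lambda(\II,X)=0$ and $f$ is right minimal, and no summand $P\in\PP$ because then $f|_P\colon P\to X$ would be irreducible in $\CC^{0}$ and hence factor through the source morphism $P\to C'$ of $P$ in $\CC^{0}$ (Theorem~\ref{n-almost split}(a), $C'\in\PP$), making $X$ a summand of $C'\in\PP$, contradicting $X\notin\PP$. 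Then I would check $\Kernel f\in\RR$: applying $\Hom_\Lambda(\PP\vee\II,-)$ to the sequence and using $C,X\in\CC^{0}$ together with surjectivity of $\Hom_\Lambda(\PP,C)\to\Hom_\Lambda(\PP,X)$ (from $f$ being right almost split in $\CC^{0}$ and $\Hom_\Lambda(P,X)=J_{\CC^{0}}(P,X)$ for $P\in\ind\PP$) gives $\Kernel f\in\CC^{0}$ by Proposition~\ref{P and I}(g); applying $\Hom_\Lambda(-,\PP)$ and $\Hom_\Lambda(\II,-)$ and using $C\in\RR$, $\Hom_\Lambda(\II,X)=0$, and $\Ext^1_\Lambda(X,\PP)=0$ (from $\Ext^{n-1}_\Lambda(\PP,X)=0$ via Observation~\ref{applied n-shifted serre}) gives $\Hom_\Lambda(\Kernel f,\PP)=0=\Hom_\Lambda(\II,\Kernel f)$, so $\Kernel f\in\RR$ by Observation~\ref{almost definition of R}. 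Finally, $0\to\Kernel f\to C\xrightarrow{f}X\to0$ is a non-split conflation in the extension-closed subcategories $\RR$ and $\CC^{0}$ (Proposition~\ref{extension closed}) with indecomposable right term and right almost split epimorphism, hence an almost split sequence in both.

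For (b) I would argue by contradiction: suppose $n\ge2$ and some indecomposable $X\in\RR$ has an $n$-almost split sequence $0\to Y\xrightarrow{f_n}C_{n-1}\to\cdots\to C_0\xrightarrow{f_0}X\to0$ in $\CC^{0}$. Since all $f_i\in J_{\CC^{0}}$, the kernel of $\Hom_\Lambda(-,C_0)\xrightarrow{f_0}J_{\CC^{0}}(-,X)$ — namely the image of $f_1$ — lies in the radical, so $f_0$ is a sink morphism in $\CC^{0}$; by (a) the tail $0\to\Kernel f_0\to C_0\xrightarrow{f_0}X\to0$ is already an ordinary almost split sequence in $\CC^{0}$ with $\Kernel f_0\in\RR\subseteq\CC^{0}$. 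In the functor category $\mod\CC^{0}$ this says $J_{\CC^{0}}(-,X)$ has projective dimension $\le1$, whereas the sink sequence~\eqref{sink sequence} of the $n$-almost split sequence is a minimal projective resolution of $J_{\CC^{0}}(-,X)$ of length $n$, contradicting $n\ge2$. I would also spell this out by hand: writing $K:=\Kernel f_0$ and $h_1\colon C_1\twoheadrightarrow\Image f_1=K$ for the induced surjection, exactness of~\eqref{sink sequence} at $\Hom_\Lambda(-,C_0)$ shows every morphism $W\to K$ with $W\in\CC^{0}$ factors through $h_1$ (after cancelling the monomorphism $K\hookrightarrow C_0$), so $h_1$ is a split epimorphism; thus $C_1\cong K\oplus D_1$ with $f_1$ vanishing on $D_1$ and $\Image f_2=D_1$, and iterating with exactness of~\eqref{sink sequence} at $\Hom_\Lambda(-,C_1),\dots,\Hom_\Lambda(-,C_{n-2})$ peels off $C_k\cong D_{k-1}\oplus D_k$ with $f_k$ vanishing on $D_k$; at the last step $f_n\colon Y\to C_{n-1}=D_{n-2}\oplus D_{n-1}$ is an isomorphism onto $D_{n-1}$, i.e.\ a split monomorphism, contradicting $f_n\in J_{\CC^{0}}$ and $Y\neq0$.

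The main obstacle I anticipate is this final comparison in (b): either one works in $\mod\CC^{0}$ and must justify that minimal projective resolutions exist there, are unique, and compute projective dimension, or one takes the hands-on route and must carry the bookkeeping of the $n-1$ successive splittings (which summand maps where) carefully to the end. A secondary delicate point is the verification $\Kernel f\in\RR$ in (a), where the right-almost-split property of $f$ must be inserted into the correct $\Ext$ long exact sequences.
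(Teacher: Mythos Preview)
Your proof is correct, and for part (b) it is essentially the paper's argument written out in full detail: the paper also evaluates the sink sequence \eqref{sink sequence} at $\Kernel f_0$ to split it off $C_1$, and then tersely asserts that the radical condition on the $f_i$ forces the rest of the sequence to collapse --- your ``hands-on'' iteration makes this explicit, and your functor-category phrasing via $\pd_{\mod\CC^0}S_X$ is a clean equivalent.

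For part (a) you take a genuinely different route. You prove $C\in\RR$ \emph{first}, excluding a hypothetical summand $P\in\PP$ by invoking the source morphism $P\to C'$ in $\PP$ from Theorem~\ref{n-almost split}(a) and irreducibility of $f|_P$; only then do you deduce $\Kernel f\in\RR$ using $C\in\RR$. The paper instead shows $\Kernel f\in\CC^0$ first, observes $C,\Kernel f\in\PP\vee\RR$ (no $\II$-summands since $\Hom_\Lambda(\II,\RR)=0$ and $\Kernel f\hookrightarrow C$), and then excludes $\PP$ from $\Kernel f$ directly: a $\PP$-summand of $\Kernel f$ would, by $\Ext^1_\Lambda(X,\PP)=0$, split off $C$ and map to zero under $f$, contradicting right minimality. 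Extension-closedness of $\RR$ then gives $C\in\RR$. The paper's route is slightly more economical in that it avoids appealing to Theorem~\ref{n-almost split}; your route has the virtue of locating the obstruction directly in $C$ rather than in $\Kernel f$.
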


\begin{proof}
(a) The case $n = 1$ is known, so we may assume $n \geq 2$. Applying $\Hom_\Lambda(\PP\vee\II,-)$, one can easily check that $\Ext_\Lambda^i(\PP\vee\II,\Kernel f)=0$ holds for any $i$ with $0<i<n$. Thus $\Kernel f\in\CC^0$.
By Theorem~\ref{trichotomy}, we have that $C$ and $\Kernel f$ belong to $\PP\vee\RR$. If $\Kernel f$ has a non-zero direct summand in $\PP$, then $\Ext^1_\Lambda(X,\PP)=0$ shows that $f$ is not right minimal, a contradiction.
Thus $\Kernel f\in\RR$. Since $\RR$ is an extension closed subcategory of $\RR$ by Proposition~\ref{extension closed}, we have $C\in\RR$.
By a standard argument (see \cite[Proposition~V.1.12]{ARS}), the sequence is an almost split sequence in $\RR$.
Since $\Ext^1_\Lambda(\PP\vee\II,\RR)=0=\Ext^1_\Lambda(\RR,\PP\vee\II)$, it is an almost split sequence in $\CC^0$.

(b) Assume there is an $n$-almost split sequence ending in $X$. Applying the exact sequence \eqref{sink sequence} of Definition~\ref{def.ASS} to $\Kernel f_0$, we see that $\Kernel f_0$ is a direct summand of $C_1$. Now the assumption $f_i \in J_{\CC^0}$ implies $C_1 = \Kernel f_0$, and $C_2 = C_3 = \ldots = Y = 0$. A contradiction.
\end{proof}

\begin{example}
Let $\Lambda$ be a Beilinson algebra of dimension $2$ (see Example~\ref{beilinson dim2 part1}).
Then the quiver of the categories $\PP$ and $\II$ are the following.
\[ \PP \colon \xymatrix@C=1.5em@R=1em{
P_1\ar@3{->}[d]&&P_4\ar@3{->}[d]&&P_7\ar@3{->}[d]\\
P_2\ar@3{->}[dr]&&P_5\ar@3{->}[dr]&&\cdots\\
&P_3\ar@3{->}[uur]&&P_6\ar@3{->}[uur]&
} \qquad \qquad
\II \colon \xymatrix@C=1.5em@R=1em{
&&I_6\ar@3{->}[d]&&I_3\ar@3{->}[d]&\\
\cdots\ar@3{->}[dr]&&I_5\ar@3{->}[dr]&&I_2\ar@3{->}[dr]&\\
&I_7\ar@3{->}[uur]&&I_4\ar@3{->}[uur]&&I_1}\]
Moreover we will see that the category $\RR$ is equivalent to $\coh_0\mathbb{P}^2$ in Example~\ref{example of description of R}(b).
\end{example}

\subsection{Preprojective algebras}

Let $\Lambda$ be an $n$-representation infinite algebra.
The \emph{$(n+1)$-preprojective algebra} (or simply \emph{preprojective algebra}) of $\Lambda$ is defined as the tensor algebra
\[\Pi=\Pi_{n+1}(\Lambda):=T_\Lambda\Ext^n_\Lambda(D\Lambda,\Lambda)\]
of the $\Lambda$-bimodule $\Ext^n_\Lambda(D\Lambda,\Lambda)$. Then $\Pi$ has a structure of a graded $K$-algebra by
\[\Pi=\bigoplus_{i\ge0}\Pi_i\ \mbox{ with }\ \Pi_i=
\underbrace{\Ext^n_\Lambda(D\Lambda,\Lambda)\otimes_\Lambda\cdots\otimes_\Lambda\Ext^n_\Lambda(D\Lambda,\Lambda)}_{i \text{ copies}}.\]
The name ``preprojective algebra'' is explained by the fact that $\Pi_i\iso\tau_n^{-i}(\Lambda)$ by \eqref{description of taun}, which means that $\Pi$ is the direct sum of all $n$-preprojective modules.

There is a strong connection between $\PP$ and the category $\gr\proj\Pi$ of finitely generated graded projedctive $\Pi$-modules. We define full subcategories of $\gr\proj\Pi$ by 
$\gr\proj_{\ge0}\Pi:=\add\{\Pi(i) \mid i\ge0\}$ and
$\gr\proj_{\le0}\Pi:=\add\{\Pi(i) \mid i\le0\}$.

\begin{proposition}\label{U and grproj}
\begin{itemize}
\item[(a)] We have an equivalence $\UU \equi \gr\proj\Pi$ which restricts to an equivalence $\PP \equi \gr\proj_{\ge0}\Pi$.
\item[(b)] We have an equivalence $\UU^{\op} \equi \gr\proj\Pi$ which restricts to an equivalence $\Hom_\Lambda(-,\Pi) \colon \PP^{\op} \equi \gr\proj_{\le0}\Pi^{\op}$.
\end{itemize}
\end{proposition}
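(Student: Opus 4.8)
The plan is to build the equivalence $\UU \equi \gr\proj\Pi$ directly from the definition of the preprojective algebra as a $\Z$-graded object, and then read off the restriction to $\PP$ by matching the gradings. First I would recall that $\UU = \add\{\nu_n^i(\Lambda) \mid i \in \Z\}$ and observe that, since $\Pi_i \iso \tau_n^{-i}(\Lambda) = \Ho^0(\nu_n^{-i}(\Lambda))$ and $\Lambda$ is $n$-representation infinite so that each $\nu_n^{-i}(\Lambda)$ is a module concentrated in degree $0$, the graded pieces of $\Pi$ are exactly the $n$-preprojective modules. The natural candidate functor is $G := \bigoplus_{i \in \Z} \Hom_{\DDD^{\rm b}(\mod\Lambda)}(\Lambda, \nu_n^{i}(-))$, or more cleanly the functor sending $\nu_n^{-j}(\Lambda) \mapsto \Pi(j)$; concretely one takes $X \in \UU$ to the graded $\Pi$-module $\bigoplus_{i} \Hom_{\DDD^{\rm b}(\mod\Lambda)}(\Lambda, \nu_n^{i}(X))$ where the $\Pi$-action in degree comes from composing with the canonical elements of $\Ext^n_\Lambda(D\Lambda,\Lambda) = \Pi_1$, using $\nu_n^{-1} \iso \RHom_\Lambda(D\Lambda,-)$. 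One must check this lands in $\gr\proj\Pi$: since $X \in \UU$ is a summand of a finite sum of $\nu_n^i(\Lambda)$'s, $G(X)$ is a summand of a finite sum of $\Pi(j)$'s, hence finitely generated graded projective.

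Next I would verify that $G$ is fully faithful. The key computation is
\[
\Hom_{\gr\Pi}(\Pi(j), \Pi(j')) \iso \Pi_{j - j'} \iso \tau_n^{-(j-j')}(\Lambda) \iso \Hom_\Lambda(\nu_n^{-j}(\Lambda), \nu_n^{-j'}(\Lambda))
\]
where the last isomorphism is precisely $\Hom_\Lambda(\tau_n^{-j}(\Lambda), \tau_n^{-j'}(\Lambda)) \iso \tau_n^{-(j-j')}(\Lambda)$ when $j \ge j'$ — and this, together with the vanishing $\Hom_\Lambda(\nu_n^{-j}(\Lambda), \nu_n^{-j'}(\Lambda)) = 0$ for $j < j'$ from Proposition~\ref{modules under nakayama}(b), matches $\Hom_{\gr\Pi}(\Pi(j),\Pi(j')) = 0$ for $j < j'$. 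So on the additive generators the Hom-spaces agree, and one checks the bijection is induced by $G$ (i.e. that the composition in $\DDD^{\rm b}(\mod\Lambda)$ of maps $\nu_n^{-j}(\Lambda) \to \nu_n^{-j'}(\Lambda) \to \nu_n^{-j''}(\Lambda)$ corresponds to multiplication in $\Pi$); this is the content of the tensor-algebra structure of $\Pi$ and $\nu_n$ being an autoequivalence. Essential surjectivity is immediate since $\gr\proj\Pi = \add\{\Pi(j) \mid j \in \Z\}$ and each $\Pi(j)$ is hit by $\nu_n^{-j}(\Lambda)$. The restriction $\PP \equi \gr\proj_{\ge0}\Pi$ follows because $\ind\PP = \{\nu_n^{-i}(P) \mid P \in \ind(\proj\Lambda),\ i \ge 0\}$ by Proposition~\ref{P and I}(a), and $G$ sends summands of $\nu_n^{-i}(\Lambda)$ with $i \ge 0$ exactly to summands of $\Pi(i)$ with $i \ge 0$, i.e.\ to $\gr\proj_{\ge 0}\Pi$.

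For part (b), I would argue by duality rather than repeat the construction. Applying $\Hom_\Lambda(-,\Pi)$ — more precisely $D$ together with the fact (Observation~\ref{easy properties of nu_n}(b)) that $D$ intertwines $\nu_n$ on $\DDD^{\rm b}(\mod\Lambda)$ with $\nu_n^{-1}$ on $\DDD^{\rm b}(\mod\Lambda^{\op})$ — one transports the equivalence of (a) for $\Lambda^{\op}$ to an equivalence $\UU^{\op} \equi \gr\proj\Pi$, noting that the preprojective algebra of $\Lambda^{\op}$ is $\Pi^{\op}$ with the opposite grading convention; tracking which shift directions survive gives $\PP^{\op} \equi \gr\proj_{\le 0}\Pi^{\op}$, with the functor being $\Hom_\Lambda(-,\Pi)$ as claimed. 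The main obstacle I anticipate is purely bookkeeping: pinning down that the $\Pi$-module structure on $\bigoplus_i \Hom(\Lambda, \nu_n^i(X))$ defined via composition with elements of $\Pi_1$ genuinely agrees with the tensor-algebra multiplication of $\Pi$, and getting all the grading-shift signs consistent between (a) and (b) — there is no deep step, but it is easy to get a sign or a variance wrong, so I would set up the functor via a single clean formula and derive everything from $\nu_n^{-1} \iso \RHom_\Lambda(D\Lambda,-)$ and the identification $\Pi_1 = \Ext^n_\Lambda(D\Lambda,\Lambda)$.
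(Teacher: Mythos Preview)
Your approach is essentially the same as the paper's: both build the equivalence by sending $\nu_n^{-i}(\Lambda)\mapsto\Pi(i)$ and verifying full faithfulness via the single computation $\Hom_\Lambda(\nu_n^{-i}(\Lambda),\nu_n^{-j}(\Lambda))\iso\tau_n^{-(j-i)}(\Lambda)\iso\Pi_{j-i}\iso\Hom_{\gr\proj\Pi}(\Pi(i),\Pi(j))$, then deducing (b) dually. Your write-up has the indices swapped (the nonvanishing case is $j\ge j'$ in the $\Pi$-convention, not $j\le j'$; equivalently $\Hom_\Lambda(\nu_n^{-j}(\Lambda),\nu_n^{-j'}(\Lambda))$ vanishes for $j>j'$, not $j<j'$), which is exactly the bookkeeping slip you warned yourself about, but the argument is correct once this is fixed.
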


\begin{proof}
(a) The correspondence $\nu_n^{-i}(\Lambda)\mapsto\Pi(i)$ gives an equivalence since 
$\Hom_\Lambda(\nu_n^{-i}(\Lambda),\nu_n^{-j}(\Lambda))\iso\tau_n^{-(j-i)}(\Lambda)\iso
\Pi_{j-i}\iso\Hom_{\gr\proj\Pi}(\Pi(i),\Pi(j))$.

(b) This is shown dually. The description as $\Hom$-functor follows since $\Hom_\Lambda(\nu_n^{-i}(\Lambda),\Pi)\iso\Pi(-i)$.
\end{proof}

Now we show that the homological behaviour of $\Pi$ is very nice if $\Lambda$ is $n$-representation infinite. The following notion plays a key role.

\begin{definition}\label{CY of GP}
Let $\Gamma=\bigoplus_{i\ge0}\Gamma_i$ be a positively graded $K$-algebra satisfying $\dim_K\Gamma_i<\infty$ for any $i\in\Z$.
We say that $\Gamma$ is a \emph{bimodule $\ell$-Calabi-Yau algebra of Gorenstein parameter $a$}
if $\Gamma\in \KKK^{\rm b}(\gr\proj \Gamma^{\rm e})$ and $\RHom_{\Gamma^{\rm e}}(\Gamma,\Gamma^{\rm e})\iso\Gamma[-\ell](a)$ in $\DDD(\Gr\Gamma^{\rm e})$.
\end{definition}

This is a graded version of Ginzburg's bimodule $(n+1)$-Calabi-Yau algebras \cite{G}. In particular, their derived categories satisfy the following property.

\begin{proposition}[{\cite[Lemma 4.1]{K1}}]\label{CY algebra gives CY triangulated}
Let $\Gamma$ be a bimodule $\ell$-Calabi-Yau algebra.
Then we have a functorial isomorphism
\[\Hom_{\DDD(\Mod\Gamma)}(X,Y)\simeq D\Hom_{\DDD(\Mod\Gamma)}(Y,X[\ell])\]
for any $X\in\DDD(\Mod\Gamma)$ and $Y\in\DDD^{\bo}_{\fd\Gamma}(\Mod\Gamma)$,
where $\DDD^{\bo}_{\fd\Gamma}(\Mod\Gamma)$ is the full subcategory of $\DDD^{\bo}(\Mod\Gamma)$ consisting of objects whose cohomologies are finite dimensional.
In particular, $\DDD^{\bo}_{\fd\Gamma}(\Mod\Gamma)$ is an $\ell$-Calabi-Yau triangulated category.
\end{proposition}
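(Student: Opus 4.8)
The plan is to deduce this from the two defining properties of a bimodule $\ell$-Calabi-Yau algebra, namely that $\Gamma$ is homologically smooth (it lies in $\KKK^{\rm b}(\gr\proj\Gamma^{\rm e})$, hence in $\per\Gamma^{\rm e}$) and that $\Theta:=\RHom_{\Gamma^{\rm e}}(\Gamma,\Gamma^{\rm e})\iso\Gamma[-\ell]$ in $\DDD(\Mod\Gamma^{\rm e})$ (we need only the ungraded statement, forgetting the shift $(a)$). First I would reduce the claimed isomorphism to an isomorphism of complexes of $K$-vector spaces
\[\RHom_\Gamma(X,Y)\iso D\RHom_\Gamma(Y,X)[-\ell],\]
functorial in $X\in\DDD(\Mod\Gamma)$ and $Y\in\DDD^{\bo}_{\fd\Gamma}(\Mod\Gamma)$; passing to $\Ho^0$ then yields the statement.

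For the left-hand side: since $Y$ is a bounded complex with finite-dimensional cohomology, its $K$-dual $DY$ is a bounded complex of right $\Gamma$-modules with finite-dimensional cohomology and $Y\iso\RHom_K(DY,K)$, so the tensor-hom adjunction over $K$ gives $\RHom_\Gamma(X,Y)\iso D(DY\Lotimes_\Gamma X)$. For the right-hand side: using a finite resolution of $\Gamma$ by $\Gamma^{\rm e}$-projectives, which are projective on both sides, one has the standard identification $\RHom_\Gamma(Y,X)\iso\RHom_{\Gamma^{\rm e}}(\Gamma,\RHom_K(Y,X))$, where $\RHom_K(Y,X)$ is regarded as a $\Gamma$-bimodule via $X$ on the left and $Y$ on the right. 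Homological smoothness lets us rewrite $\RHom_{\Gamma^{\rm e}}(\Gamma,-)\iso\Theta\Lotimes_{\Gamma^{\rm e}}-$ on $\DDD(\Mod\Gamma^{\rm e})$; plugging in the Calabi-Yau identification $\Theta\iso\Gamma[-\ell]$, using $\RHom_K(Y,X)\iso DY\otimes_K X$ (valid since $Y$ is perfect over $K$) and the collapse $\Gamma\Lotimes_{\Gamma^{\rm e}}(DY\otimes_K X)\iso DY\Lotimes_\Gamma X$, one obtains $\RHom_\Gamma(Y,X)\iso(DY\Lotimes_\Gamma X)[-\ell]$, hence $D\RHom_\Gamma(Y,X)[-\ell]\iso D(DY\Lotimes_\Gamma X)$. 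The two sides agree, and tracking the isomorphisms shows functoriality.

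For the ``in particular'' assertion I would restrict to $X,Y\in\DDD^{\bo}_{\fd\Gamma}(\Mod\Gamma)$: one first checks that this category is Hom-finite, hence Krull-Schmidt, and then the functorial isomorphism above exhibits the shift $[\ell]$ as a Serre functor, which is exactly the assertion that $\DDD^{\bo}_{\fd\Gamma}(\Mod\Gamma)$ is $\ell$-Calabi-Yau. I expect the main obstacles to be the two homological-smoothness rewritings---the identity $\RHom_\Gamma(Y,X)\iso\RHom_{\Gamma^{\rm e}}(\Gamma,\RHom_K(Y,X))$ and the passage $\RHom_{\Gamma^{\rm e}}(\Gamma,-)\iso\Theta\Lotimes_{\Gamma^{\rm e}}-$---where one must keep careful track of which of the two $\Gamma$-actions on each bimodule is being used and of the unboundedness of $X$, together with the Hom-finiteness of $\DDD^{\bo}_{\fd\Gamma}(\Mod\Gamma)$ needed in the last step, which does not follow formally and requires a separate argument (e.g.\ that $DY\Lotimes_\Gamma X$ has finite-dimensional total cohomology for $X,Y$ in $\DDD^{\bo}_{\fd\Gamma}$). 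The remainder is a routine chase of adjunctions.
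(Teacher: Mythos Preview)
The paper does not supply its own proof of this proposition: it simply quotes \cite[Lemma 4.1]{K1}. Your argument is essentially the standard one (and indeed the one Keller gives): rewrite $\RHom_\Gamma(Y,X)$ as $\RHom_{\Gamma^{\rm e}}(\Gamma,\RHom_K(Y,X))$, use perfectness of $\Gamma$ over $\Gamma^{\rm e}$ to replace $\RHom_{\Gamma^{\rm e}}(\Gamma,-)$ by $\Theta\Lotimes_{\Gamma^{\rm e}}-$, insert the Calabi--Yau isomorphism $\Theta\iso\Gamma[-\ell]$, and compare with the tensor-hom description of $\RHom_\Gamma(X,Y)$. So your proposal is correct and matches the source the paper cites; there is nothing to contrast.

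One small remark on your closing comments: the Hom-finiteness of $\DDD^{\bo}_{\fd\Gamma}(\Mod\Gamma)$ is in fact a \emph{consequence} of what you have already proved rather than an independent obstacle. Once you know $\RHom_\Gamma(Y,X)\iso(DY\Lotimes_\Gamma X)[-\ell]$, taking $X\in\DDD^{\bo}_{\fd\Gamma}(\Mod\Gamma)$ and replacing it by a bounded complex of finite-dimensional modules shows directly that $DY\Lotimes_\Gamma X$ has finite-dimensional total cohomology (each term $DY_i\otimes_\Gamma X_j$ is finite-dimensional). So you need not flag this as a separate difficulty.
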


Recently the connection between $n$-representation infinite algebras and bimodule $(n+1)$-Calabi-Yau algebras of Gorenstein parameter $1$ 
was studied in representation theory \cite{K2,AIR} and non-commutative algebraic geometry \cite{MM}.
The main results are summarized as follows.

\begin{theorem}\label{preprojective correspondence}\cite{K2,AIR,MM}
Assume that $K$ is perfect.
There is a one-to-one correspondence between
isomorphism classes of $n$-representation infinite algebras $\Lambda$ and
isomorphism classes of graded bimodule $(n+1)$-Calabi-Yau algebras $\Gamma$ of Gorenstein parameter $1$. The correspondence is given by
\[\Lambda\mapsto\Gamma:=\Pi_{n+1}(\Lambda)\ \mbox{ and }\ \Gamma\mapsto\Lambda:=\Gamma_0.\]
\end{theorem}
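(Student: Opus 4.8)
The plan is to establish that the two assignments $\Lambda\mapsto\Pi_{n+1}(\Lambda)$ and $\Gamma\mapsto\Gamma_0$ take values in the asserted classes and are mutually inverse; each ingredient is supplied by the cited works \cite{K2,AIR,MM}, and the task is mainly to assemble them correctly.

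For the direction $\Lambda\mapsto\Gamma:=\Pi_{n+1}(\Lambda)$: since $\Lambda$ is $n$-representation infinite we have $\gl\Lambda=n$, and by Proposition~\ref{prop.infinite_leftrightsymmetric} every graded piece $\Pi_i\iso\tau_n^{-i}(\Lambda)=\nu_n^{-i}(\Lambda)$ is concentrated in homological degree $0$. Hence the $(n+1)$-Calabi-Yau completion of $\Lambda$ in the sense of Keller \cite{K2} has homology concentrated in dg-degree $0$, so it is quasi-isomorphic to the ordinary graded algebra $\Pi$. By \cite{K2} (here perfectness of $K$ guarantees homological smoothness of $\Lambda$, via separability of $\Lambda/\operatorname{rad}\Lambda$), $\Pi$ satisfies $\Pi\in\KKK^{\rm b}(\gr\proj\Pi^{\rm e})$ and $\RHom_{\Pi^{\rm e}}(\Pi,\Pi^{\rm e})\iso\Pi[-(n+1)](a)$ in $\DDD(\Gr\Pi^{\rm e})$ for some $a$. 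To see $a=1$ one uses that $\Pi=T_\Lambda\Ext^n_\Lambda(D\Lambda,\Lambda)$ is generated over $\Pi_0=\Lambda$ by the degree-$1$ bimodule $\Pi_1$: splicing a projective $\Lambda^{\rm e}$-resolution of $\Lambda$ into the tensor-algebra sequence $0\to\Pi\otimes_\Lambda\Pi_1\otimes_\Lambda\Pi\to\Pi\otimes_\Lambda\Pi\to\Pi\to0$ yields a graded projective $\Pi^{\rm e}$-resolution of $\Pi$ whose top term is generated in internal degree $1$, and dualising identifies the Gorenstein parameter as $1$. Equivalently, this is the known equivalence between $(n+1)$-Calabi-Yau completions and graded bimodule $(n+1)$-Calabi-Yau algebras of Gorenstein parameter $1$ established in \cite{AIR,MM}.

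For the direction $\Gamma\mapsto\Lambda:=\Gamma_0$: given a graded bimodule $(n+1)$-Calabi-Yau algebra $\Gamma$ of Gorenstein parameter $1$, the structure results of \cite{AIR,MM} show that $\Lambda=\Gamma_0$ has finite global dimension and that there is an isomorphism of graded algebras $\Gamma\iso T_\Lambda\Ext^n_\Lambda(D\Lambda,\Lambda)=\Pi_{n+1}(\Lambda)$; since $\Gamma_1=\Ext^n_\Lambda(D\Lambda,\Lambda)=\nu_n^{-1}(\Lambda)$ is a nonzero $\Lambda$-module, in fact $\gl\Lambda=n$. It remains to verify $\nu_n^{-i}(\Lambda)\in\mod\Lambda$ for all $i\ge0$. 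We already know $\Ho^0(\nu_n^{-i}(\Lambda))=\tau_n^{-i}(\Lambda)=\Gamma_i$, so the issue is vanishing of the higher cohomology; this is forced by the Calabi-Yau duality $\RHom_{\Gamma^{\rm e}}(\Gamma,\Gamma^{\rm e})\iso\Gamma[-(n+1)](1)$ restricted to the $i$-th internal degree, since $\Gamma_i$ is an honest module rather than a complex. Thus $\Lambda$ is $n$-representation infinite.

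It is then formal that the assignments are mutually inverse: $(\Pi_{n+1}(\Lambda))_0=\Pi_0=\Lambda$, and $\Pi_{n+1}(\Gamma_0)\iso\Gamma$ by the graded isomorphism just quoted. I expect the main obstacle to be precisely the backward direction, namely extracting the ordinary graded algebra isomorphism $\Gamma\iso T_{\Gamma_0}\Ext^n_{\Gamma_0}(D\Gamma_0,\Gamma_0)$ from the abstract Calabi-Yau and Gorenstein-parameter-$1$ hypotheses and, simultaneously, promoting ``$\Gamma_0$ is $n$-hereditary'' to ``$\Gamma_0$ is $n$-representation infinite'' through the cohomology-vanishing argument; this is the technical core of \cite{AIR,MM}, handled there by analysing the minimal graded projective bimodule resolution of $\Gamma$ together with the characterisation of (twisted) Calabi-Yau algebras.
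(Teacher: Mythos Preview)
Your proposal is essentially correct and follows the same approach as the paper: both directions are obtained by citing \cite{K2} for the forward map (via the derived preprojective algebra/Calabi-Yau completion, which collapses to $\Pi_{n+1}(\Lambda)$ precisely because $\Lambda$ is $n$-representation infinite) and \cite{AIR,MM} for the backward map. One caution: your attempted independent justification that $\nu_n^{-i}(\Lambda)\in\mod\Lambda$ ``is forced by the Calabi-Yau duality restricted to the $i$-th internal degree'' is not a real argument as stated---the duality is a statement about $\Gamma$ as a $\Gamma^{\rm e}$-complex, and extracting the vanishing of $\Ho^{j}(\nu_n^{-i}(\Lambda))$ for $j\neq0$ requires the analysis of the minimal graded bimodule resolution carried out in \cite{AIR,MM}; the paper, like you, simply cites those references for this step rather than reproving it.
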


\begin{proof}
(i) Let $\Gamma$ be a bimodule $(n+1)$-Calabi-Yau algebra of Gorenstein parameter $1$.

By \cite[Theorem 3.4]{AIR} (see also \cite[Theorem 4.2]{MM}), we have that $\Lambda:=\Gamma_0$ is an $n$-representation infinite algebra
such that $\Pi_{n+1}(\Lambda)$ is isomorphic to $\Gamma$ as a graded $K$-algebra.

(ii) For any $n$-representation infinite algebra, we will show that $\Pi_{n+1}(\Lambda)$ is a bimodule $(n+1)$-Calabi-Yau algebra of Gorenstein parameter $1$.

Since $K$ is perfect, we have $\Lambda\in \KKK^{\rm b}(\proj \Lambda^{\rm e})$.
In \cite{K2}, Keller introduced a DG algebra ${\bf\Pi}_{n+1}(\Lambda)$ called a derived preprojective algebra. Our $\Pi_{n+1}(\Lambda)$ is the 0-th cohomology of ${\bf\Pi}_{n+1}(\Lambda)$. By Keller's general result \cite[Theorem 4.8]{K2}, we have that ${\bf\Pi}_{n+1}(\Lambda)$ is a bimodule $(n+1)$-Calabi-Yau algebra. It is easy to check that ${\bf\Pi}_{n+1}(\Lambda)$ has Gorenstein parameter $1$. Since $\Lambda$ is $n$-representation infinite, we have that ${\bf\Pi}_{n+1}(\Lambda)$ is concentrated to degree zero, so ${\bf\Pi}_{n+1}(\Lambda)$ is quasi-isomorphic to $\Pi_{n+1}(\Lambda)$. Consequently we have the desired assumption.
\end{proof}

In Section~\ref{section: NCAG} the question if $\Pi$ is graded coherent will be of importance. Let us recall this notion.

\begin{definition}
We say that a graded ring $\Gamma$ is \emph{left graded coherent} (respectively, \emph{right graded coherent}) if the following equivalent conditions are satisfied.
\begin{itemize}
\item[(a)] The category of finitely presented graded $\Gamma$-modules (respectively, $\Gamma^{\rm op}$-modules) are closed under kernels.
\item[(b)] For any homogeneous homomorphism $f \colon P'\to P$ of finitely generated graded projective $\Gamma$-modules (respectively, $\Gamma^{\rm op}$-modules),
$\Kernel f$ is finitely generated.
\end{itemize}
\end{definition}

Now we can ask the question under which assumptions $\Pi$ is graded coherent. This is important for the non-commutative algebraic-geometric approach in Section~\ref{section: NCAG}, and has already been studied by Minamoto in \cite{M,MM}.

\begin{question}\label{graded coherernt}
When is $\Pi$ left graded coherent?
\end{question}

We give a representation theoretic interpretation of Question~\ref{graded coherernt} (cf.\ Proposition~\ref{not contra}).

\begin{proposition}\label{coherent}
Let $\Lambda$ be an $n$-representation infinite algebra and $\Pi$ be its preprojective algebra.
\begin{itemize}
\item[(a)] $\PP$ is covariantly finite in $\mod\Lambda$ if and only if $\Pi$ is right graded coherent.
\item[(b)] $\II$ is contravariantly finite in $\mod\Lambda$ if and only if $\Pi$ is left graded coherent.
\end{itemize}
\end{proposition}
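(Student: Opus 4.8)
The plan is to reduce everything to part~(a). Indeed, the duality $D\colon\mod\Lambda\to\mod\Lambda^{\op}$ carries $\II$ to the $n$-preprojective category of $\Lambda^{\op}$ (by Observation~\ref{easy properties of nu_n}(b), which gives $D\nu_n^{-i}(\Lambda^{\op})\iso\nu_n^i(D\Lambda)$), so ``$\II$ contravariantly finite in $\mod\Lambda$'' means ``$\PP$ covariantly finite in $\mod\Lambda^{\op}$''; and since $\Pi_{n+1}(\Lambda^{\op})\iso\Pi_{n+1}(\Lambda)^{\op}$ as graded algebras, this is ``$\Pi_{n+1}(\Lambda)$ left graded coherent''. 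So I would only prove~(a).

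The core of the argument is a bridge between approximations and finite generation: \emph{for $M\in\mod\Lambda$, the graded right $\Pi$-module $\Hom_\Lambda(M,\Pi)=\bigoplus_{d\ge0}\Hom_\Lambda(M,\Pi_d)$ --- where $\Pi$ is regarded as a left $\Lambda$-module and the right $\Pi$-action is via right multiplication --- is finitely generated if and only if $M$ has a left $\PP$-approximation.} The key technical ingredient, which I would extract from Proposition~\ref{U and grproj} together with $\Pi_d\iso\nu_n^{-d}(\Lambda)$ and Proposition~\ref{modules under nakayama}(b), is that $\Hom_\Lambda(\nu_n^{-d}(\Lambda),\Pi)\iso\Pi(-d)$ in $\Gr\Pi^{\op}$, and hence that the natural comparison map
\[\Hom_\Lambda(M,P)\longrightarrow\Hom_{\Gr\Pi^{\op}}\bigl(\Hom_\Lambda(P,\Pi),\ \Hom_\Lambda(M,\Pi)\bigr)\]
is an isomorphism for all $M\in\mod\Lambda$ and all $P\in\PP$ (reduce to $P=\nu_n^{-d}(\Lambda)$, where both sides equal $\Hom_\Lambda(M,\Pi_d)$). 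Granting this, $\phi\colon M\to P_0$ with $P_0\in\PP$ is a left $\PP$-approximation exactly when $\Hom_\Lambda(\phi,\Pi)$ is surjective (test degreewise against $\Pi_d$); one implication of the bridge then follows since $\Hom_\Lambda(P_0,\Pi)$ is finitely generated projective, and the other since a finitely generated $\Hom_\Lambda(M,\Pi)$ is non-negatively graded and so admits a graded surjection $\Hom_\Lambda(P_0,\Pi)\twoheadrightarrow\Hom_\Lambda(M,\Pi)$ with $P_0=\bigoplus_k\nu_n^{-d_k}(\Lambda)\in\PP$ ($d_k\ge0$), which by the displayed isomorphism is $\Hom_\Lambda(\phi,\Pi)$ for some $\phi\colon M\to P_0$.

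The two implications of~(a) then follow formally from left exactness of $\Hom_\Lambda(-,\Pi)$ and the twist action on $\Gr\Pi^{\op}$. If $\Pi$ is right graded coherent: for any $M\in\mod\Lambda$ choose a projective presentation $P_1\to P_0\to M\to 0$; as $\proj\Lambda\subset\PP$, applying $\Hom_\Lambda(-,\Pi)$ shows $\Hom_\Lambda(M,\Pi)$ is the kernel of a homogeneous map between finitely generated graded projective $\Pi^{\op}$-modules, hence finitely generated, so $M$ has a left $\PP$-approximation and $\PP$ is covariantly finite. Conversely, if $\PP$ is covariantly finite: given a homogeneous map $f\colon Q_1\to Q_0$ of finitely generated graded projective $\Pi^{\op}$-modules, twist by $(-j)$ for $j\gg0$ (which changes neither projectivity nor finite generation of the kernel) to arrange $Q_0,Q_1\in\gr\proj_{\le0}\Pi^{\op}$, write $f=\Hom_\Lambda(\phi,\Pi)$ for $\phi\colon P_0\to P_1$ in $\PP$ via Proposition~\ref{U and grproj}(b), put $M:=\Cokernel\phi\in\mod\Lambda$, and observe $\Kernel f\iso\Hom_\Lambda(M,\Pi)$, finitely generated by the bridge; so $\Pi$ is right graded coherent.

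I expect the main obstacle to be the corepresentability isomorphism in the bridge --- that $\Hom_\Lambda(-,\Pi)$ behaves like a fully faithful functor on $\PP$ --- and, more mundanely, keeping the variances and grading shifts straight throughout; the reduction in~(b) additionally needs the (standard) isomorphism $\Pi_{n+1}(\Lambda^{\op})\iso\Pi_{n+1}(\Lambda)^{\op}$, which one could instead sidestep by running the dual argument with $\Pi$ viewed as a right $\Lambda$-module.
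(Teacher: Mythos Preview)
Your proposal is correct and follows essentially the same route as the paper: both arguments hinge on Proposition~\ref{U and grproj}(b) to identify $\PP^{\op}$ with $\gr\proj_{\le0}\Pi^{\op}$ via $\Hom_\Lambda(-,\Pi)$, and then reduce both ``$\PP$ covariantly finite'' and ``$\Pi$ right graded coherent'' to the single condition that $\Hom_\Lambda(M,\Pi)$ be finitely generated for every $M\in\mod\Lambda$. The paper's write-up is slightly more compressed (it states the common criterion directly rather than isolating your ``bridge'' lemma, and leaves the duality reduction for~(b) implicit), but the content is the same.
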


\begin{proof}
We only prove (a).

By definition $\PP$ is covariantly finite if for any $X \in \mod \Lambda$ the covariant functor $\Hom_{\Lambda}(X, -) \colon \PP \to \mod k$ is finitely generated.
By the equivalence of Proposition~\ref{U and grproj}(b) this is equivalent to $\Hom_{\Lambda}(X, \Pi)$ being finitely generated as graded $\Pi^{\rm op}$-module.

By definition $\Pi$ is graded right coherent if any morphism in $\gr \proj \Pi^{\rm op}$ has a finitely generated kernel. Since degree shift is an autoequivalence of $\Gr \Pi^{\rm op}$ this is equivalent to asking that any morphism $P \to Q$ with $P, Q \in \gr \proj_{\le0} \Pi^{\rm op}$ has a finitely generated kernel. 

Now, by Proposition~\ref{U and grproj}(b), such $P$ and $Q$ can be written as $\Hom_{\Lambda}(P', \Pi)$ and $\Hom_{\Lambda}(Q', \Pi)$ for some $P'$ and $Q'$ in $\PP$, respectively. Moreover any map $P \to Q$ is of the form $\Hom_{\Lambda}(f, \Pi)$ for some $f \colon Q' \to P'$. It follows that the kernel of such a map is $\Hom_{\Lambda}(\Cokernel f, \Pi)$.

Since any $X \in \mod \Lambda$ can be realized as a cokernel of a morphism in $\PP$, we have that $\Pi$ is graded right coherent if and only if for any $X \in \mod \Lambda$ the graded $\Pi^{\rm op}$-module $\Hom_{\Lambda}(X, \Pi)$ is finitely generated.
\end{proof}

As an immediate consequence for the case $n=1$, we recover the following observation \cite{M}.

\begin{corollary}
The classical preprojective algebra is left and right graded coherent
\end{corollary}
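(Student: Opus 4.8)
The plan is to derive the statement from Proposition~\ref{coherent}: since the classical preprojective algebra is the case $n=1$ of our preprojective algebra, it suffices to show that for a $1$-representation infinite (i.e.\ representation infinite hereditary) algebra $\Lambda$ the category $\PP$ of preprojective modules is covariantly finite in $\mod\Lambda$ and the category $\II$ of preinjective modules is contravariantly finite in $\mod\Lambda$; then Proposition~\ref{coherent}(a) and (b) give that $\Pi$ is right and left graded coherent respectively.

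The crucial observation is that for $n=1$ the condition $\Ext^i_\Lambda(\PP\vee\II,X)=0$ for $i\in\{1,\ldots,n-1\}$ occurring in Proposition~\ref{P and I}(g) is vacuous, so $\CC^0=\mod\Lambda$. Hence Theorem~\ref{trichotomy}(a) specializes to $\mod\Lambda=\PP\vee\RR\vee\II$, and Theorem~\ref{trichotomy}(b) gives $\Hom_\Lambda(\RR,\PP)=\Hom_\Lambda(\II,\PP)=\Hom_\Lambda(\II,\RR)=0$. Using Proposition~\ref{P and I}(d) together with Observation~\ref{almost definition of R} one sees that $\ind\PP$, $\ind\RR$, $\ind\II$ are pairwise disjoint subsets of $\ind(\mod\Lambda)$ (an indecomposable lying in both $\PP$ and $\RR$, or in both $\RR$ and $\II$, would contradict $\Hom_\Lambda(X,\PP)=0$ resp.\ $\Hom_\Lambda(\II,X)=0$ for $X\in\RR$). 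Since $\Lambda$ is hereditary, $\mod\Lambda$ is Krull--Schmidt, so every $X\in\mod\Lambda$ decomposes as $X=X_P\oplus X_R\oplus X_I$ with $X_P\in\PP$, $X_R\in\RR$, $X_I\in\II$.

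Finally I would check that the canonical projection $X\to X_P$ is a left $\PP$-approximation: any $g\colon X\to P$ with $P\in\PP$ vanishes on the summands $X_R$ and $X_I$ because $\Hom_\Lambda(\RR,\PP)=\Hom_\Lambda(\II,\PP)=0$, hence factors through $X\to X_P$; so $\PP$ is covariantly finite. Dually, using $\Hom_\Lambda(\II,\PP)=\Hom_\Lambda(\II,\RR)=0$, the canonical inclusion $X_I\to X$ is a right $\II$-approximation, so $\II$ is contravariantly finite. There is essentially no obstacle: once $\CC^0=\mod\Lambda$ is noted, the whole corollary is a formal consequence of the trichotomy and the Hom-vanishing of Theorem~\ref{trichotomy}, the approximations being nothing but the projection onto, resp.\ inclusion of, the relevant direct summand; the only mild point to be careful about is that one really obtains a genuine direct-sum decomposition of $X$, which is exactly what the disjointness of $\ind\PP$, $\ind\RR$, $\ind\II$ provides.
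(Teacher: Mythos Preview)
Your proof is correct and follows the same overall structure as the paper's: both reduce the statement to Proposition~\ref{coherent} by verifying that for $n=1$ the category $\PP$ is covariantly finite and $\II$ is contravariantly finite in $\mod\Lambda$. The only difference is that the paper simply asserts these finiteness properties as known facts from classical Auslander--Reiten theory of hereditary algebras, whereas you derive them internally from the paper's own machinery (the trichotomy of Theorem~\ref{trichotomy} combined with the Hom-vanishing, after observing $\CC^0=\mod\Lambda$); your argument is thus more self-contained but not genuinely different in strategy.
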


\begin{proof}
Since $\PP$ is covariantly finite and $\II$ is contravariantly finite for the case $n=1$, we have the assertion immediately from Proposition~\ref{coherent}.
\end{proof}

\section{$n$-representation infinite algebras of type $\widetilde{A}$} \label{sect.Atilde}
In this section we assume that $K$ is algebraically closed of characteristic zero. Let $S = K[x_0, \ldots, x_{n}]$. Given a finite subgroup $H <\SL_{n+1}(K)$, the skew group algebra $S * H$ is bimodule $(n+1)$-Calabi-Yau \cite{BSW}. We shall now present these algebras by quivers with relations in a uniform way for the case when $H$ is abelian. Our presentation is based on the well-known description by McKay quivers and relations (see e.g.\ \cite{Re,RV,BSW}).
The difference is that we construct an algebra independent of $H$ that will have $S * H$ as an orbit algebra.

To construct the above mentioned algebra we will use the notation of root systems of type $A_n$, which we now introduce. For details see \cite{Hu}. Let $V = \{v \in \R^{n+1} \mid \sum_{i = 0}^{n} v_i = 0\}$. Consider the root system of type $A_n$ in $V$
\[
\Phi = \{e_i-e_j \mid 0 \le i \neq j \le n\}.
\]
As simple roots we take $\alpha_i = e_i - e_{i-1}$, where $1 \le i \le n$. We distinguish one additional root $\alpha_0 = e_{0} - e_n$. Thus we have the relation $\sum_{i=0}^{n}\alpha_i =0$. The root lattice $L$ is the lattice of vectors with integer coordinates in $V$. It is freely generated as an abelian group by the simple roots.

\begin{construction}
Define the quiver $Q$ as follows. The set of vertices in $Q$ is the root lattice $Q_0:=L$. The arrows are
\[
Q_1 := \{a^v_i \colon v \to (v+\alpha_i) \mid v \in Q_0, 0\le i \le n\}.
\]
For the sake of brevity we shall sometimes remove the superscript and write simply $a_i \colon v \to (v+\alpha_i)$ for the arrow $a^v_i $. This should cause no confusion since any $i_1,\dots, i_{m} \in \{0,\dots, n\}$ and $v \in Q_0$ determine a unique path of the form $a_{i_1} \cdots a_{i_m}$ in $Q$ from $v$ to $v + \sum_{j=1}^m \alpha_{i_j}$.

For each $v \in Q_0$ and $0 \le i < j \le n$ we define the a relation $r^v_{ij}$ from $v$ to $v + \alpha_i + \alpha_j$ by $r^v_{ij} := a_ia_j - a_ja_i$. We set
\[
\Gamma = KQ/\langle r^v_{ij} \mid v \in Q_0, 0\le i < j \le n \rangle.
\]
Be aware that $\Gamma$ does not have a unit element. 
\end{construction}

For $n = 1$, the quiver $Q$ is
\[
\cdots
\xymatrix{{\scriptstyle (-2,2)} \ar@/^/[r] &{\scriptstyle (-1,1)} \ar@/^/[r]\ar@/^/[l] &{\scriptstyle (0,0)}\ar@/^/[l] \ar@/^/[r] &{\scriptstyle (1,-1)} \ar@/^/[r]\ar@/^/[l] &{\scriptstyle (2,-2)}\ar@/^/[l]}
\cdots,
\]
and the algebra $\Gamma$ is the preprojective algebra of type $A_\infty^\infty$.

For $n=2$ the vertices $Q_0$ form a triangular lattice in the plane, $Q$ is
\[
\begin{xy} 0;<1cm,0cm>:<0.5cm,\halfrootthree cm>:: 
(0,0) *+{\bullet} ="00",
(1,0) *+{\bullet} ="10",
(2,0) *+{\bullet} ="20",
(3,0) *+{\bullet} ="30",
(4,0) *+{\bullet} ="40",
(5,0) *+{\bullet} ="50",
(6,0) *+{\bullet} ="60",
(0,1) *+{\bullet} ="01",
(1,1) *+{\bullet} ="11",
(2,1) *+{\bullet} ="21",
(3,1) *+{\bullet} ="31",
(4,1) *+{\bullet} ="41",
(5,1) *+{\bullet} ="51",
(-1,2) *+{\bullet} ="m12",
(0,2) *+{\bullet} ="02",
(1,2) *+{\bullet} ="12",
(2,2) *+{\bullet} ="22",
(3,2) *+{\bullet} ="32",
(4,2) *+{\bullet} ="42",
(5,2) *+{\bullet} ="52",
(-1,3) *+{\bullet} ="m13",
(0,3) *+{\bullet} ="03",
(1,3) *+{\bullet} ="13",
(2,3) *+{\bullet} ="23",
(3,3) *+{\bullet} ="33",
(4,3) *+{\bullet} ="43",
(-2,4) *+{\bullet} ="m24",
(-1,4) *+{\bullet} ="m14",
(0,4) *+{\bullet} ="04",
(1,4) *+{\bullet} ="14",
(2,4) *+{\bullet} ="24",
(3,4) *+{\bullet} ="34",
(4,4) *+{\bullet} ="44",
(-1,1) *+{\cdots} ="h01",
(-2,3) *+{\cdots} ="h02",
(6,1) *+{\cdots} ="h11",
(5,3) *+{\cdots} ="h12",
(1.7,-0.4) *+{\vdots} ="v10",
(4.7,-0.4) *+{\vdots} ="v20",
(-0.825,4.65) *+{\vdots} ="v11",
(2.175,4.65) *+{\vdots} ="v21",
"00", {\ar"10"},
"10", {\ar"20"},
"10", {\ar"01"},
"20", {\ar"30"},
"20", {\ar"11"},
"30", {\ar"40"},
"30", {\ar"21"},
"40", {\ar"50"},
"40", {\ar"31"},
"50", {\ar"60"},
"50", {\ar"41"},
"60", {\ar"51"},
"01", {\ar"00"},
"01", {\ar"11"},
"01", {\ar"m12"},
"11", {\ar"10"},
"11", {\ar"21"},
"11", {\ar"02"},
"21", {\ar"20"},
"21", {\ar"31"},
"21", {\ar"12"},
"31", {\ar"30"},
"31", {\ar"41"},
"31", {\ar"22"},
"41", {\ar"40"},
"41", {\ar"51"},
"41", {\ar"32"},
"51", {\ar"50"},
"51", {\ar"42"},
"m12", {\ar"02"},
"02", {\ar"01"},
"02", {\ar"12"},
"02", {\ar"m13"},
"12", {\ar"11"},
"12", {\ar"22"},
"12", {\ar"03"},
"22", {\ar"21"},
"22", {\ar"32"},
"22", {\ar"13"},
"32", {\ar"31"},
"32", {\ar"42"},
"32", {\ar"23"},
"42", {\ar"41"},
"42", {\ar"52"},
"42", {\ar"33"},
"52", {\ar"51"},
"52", {\ar"43"},
"m13", {\ar"m12"},
"m13", {\ar"03"},
"m13", {\ar"m24"},
"03", {\ar"02"},
"03", {\ar"13"},
"03", {\ar"m14"},
"13", {\ar"12"},
"13", {\ar"23"},
"13", {\ar"04"},
"23", {\ar"22"},
"23", {\ar"33"},
"23", {\ar"14"},
"33", {\ar"32"},
"33", {\ar"43"},
"33", {\ar"24"},
"43", {\ar"42"},
"43", {\ar"34"},
"m24", {\ar"m14"},
"m14", {\ar"m13"},
"m14", {\ar"04"},
"04", {\ar"03"},
"04", {\ar"14"},
"14", {\ar"13"},
"14", {\ar"24"},
"24", {\ar"23"},
"24", {\ar"34"},
"34", {\ar"33"},
"34", {\ar"44"},
"44", {\ar"43"},
\end{xy}
\]
and each arrow corresponds to a commutativity relation.

\medskip
From $\Gamma$ we shall construct several finite dimensional algebras in two steps. First we take certain orbit algebras, and then we factor out ideals generated by certain sets of arrows called cuts. For $n=1$, the orbit algebra will be preprojective of type $\widetilde{A}$ and the factor algebra will be a path algebra of a quiver of type $\widetilde{A}$.

First we define the orbit algebras. Since $L= Q_0$ is an abelian group it acts on itself translations. This action extends in a unique way to a $L$-action on $Q$.

\begin{construction}
Let $B$ be a cofinite subgroup of $L$, i.e.\ the factor group $L / B$ is finite. This is equivalent to requiring that ${\rm rank} B = {\rm rank} L (= n)$.
We denote by $\Gamma/B$ the orbit algebra of $\Gamma$ with respect to the $B$-action.
More explicitly, $\Gamma/B$ is the path algebra of $Q/B = (Q_0/B, Q_1/B)$ with relations $\overline{a}_i\overline{a}_j - \overline{a}_j\overline{a}_i$ from $\overline{v}$ to $\overline{v} + \overline{\alpha}_i + \overline{\alpha}_j$ for each $\overline{v} \in Q_0/B$ and $0 \le i < j \le n$ (here $\overline{a}$ and $\overline{v}$ denotes the $B$-orbits of $a \in Q_1$ and $v\in Q_0$ respectively).
\end{construction}

Observe that if $B = L$, then $\Gamma/B \iso S=K[x_0,\ldots,x_{n}]$.
We now show that for arbitrary $B$ the algebra $\Gamma/B$ is isomorphic to a skew group algebra over $S$.

\begin{lemma}\label{skew Atilde}
There exists a finite abelian subgroup $H$ of $\SL_{n+1}(K)$ such that $S * H \iso \Gamma/B$. 
\end{lemma}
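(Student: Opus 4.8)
The plan is to produce $H$ directly as a ``dual group'' of the finite abelian group $L/B$, and then to recognise $\Gamma/B$ as the McKay quiver algebra of the resulting action. First I would set $H := \Hom_{\Z}(L/B, K^{\times})$. Since $L/B$ is finite abelian and $K$ is algebraically closed of characteristic zero, $K^{\times}$ is divisible and contains roots of unity of every order, so $H$ is a finite abelian group with $|H| = |L/B|$, and the natural evaluation map $L/B \to \Hom_{\Z}(H, K^{\times}) =: \widehat{H}$ is an isomorphism. Next I would let $H$ act on $S = K[x_0, \ldots, x_n]$ by $K$-algebra automorphisms via $h \cdot x_i := h(\overline{\alpha}_i)\, x_i$, where $\overline{\alpha}_i \in L/B$ denotes the class of $\alpha_i$; this is well defined since each $h$ is a homomorphism. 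Because $\sum_{i=0}^n \alpha_i = 0$ in $L$ we get $\prod_{i=0}^n h(\overline{\alpha}_i) = h(0) = 1$, so $H$ acts on the $(n+1)$-dimensional space $\langle x_0, \ldots, x_n\rangle$ through the diagonal torus of $\SL_{n+1}(K)$; and the action is faithful because $\overline{\alpha}_1, \ldots, \overline{\alpha}_n$ generate $L/B$ (as $\alpha_1, \ldots, \alpha_n$ are a $\Z$-basis of $L$). This realises $H$ as a finite abelian subgroup of $\SL_{n+1}(K)$.

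It then remains to identify $S * H$ with $\Gamma/B$, for which I would use the standard presentation of a skew group algebra of an abelian group acting diagonally on a polynomial ring (the McKay quiver description, cf.\ \cite{Re,RV,BSW}). Concretely, the elements $e_{\chi} = |H|^{-1}\sum_{h \in H} \chi(h)^{-1} h$, for $\chi \in \widehat{H}$, form a complete set of primitive orthogonal idempotents of $S*H$ (here $|H|$ is invertible and $K$ is a splitting field since $\operatorname{char}K = 0$ and $K$ is algebraically closed), so the vertices of $S * H$ are indexed by $\widehat{H}$. The generators $x_i$ of $S$ satisfy $x_i e_{\chi} = e_{\chi + \overline{\alpha}_i} x_i$ in $S*H$, so for each $\chi$ and each $i \in \{0, \ldots, n\}$ there is exactly one arrow $\chi \to \chi + \overline{\alpha}_i$, and these exhaust the arrows because $S$ is generated over $K$ by $x_0, \ldots, x_n$. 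Finally, the kernel of the free algebra on $x_0, \ldots, x_n$ onto $S$ is generated by the commutators $x_i x_j - x_j x_i$, which under the idempotent decomposition become precisely (the images of) the relations $r^v_{ij} = a_i a_j - a_j a_i$. Transporting everything along the isomorphism $\widehat{H} \cong L/B$ from the first step identifies the quiver-with-relations of $S*H$ with $Q/B$ together with the commutativity relations $\overline{a}_i\overline{a}_j - \overline{a}_j\overline{a}_i$, that is, with $\Gamma/B$.

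The conceptual part --- constructing $H$ from $B$ and placing it inside $\SL_{n+1}(K)$ --- is short; the step I expect to demand the most care is the last one, namely checking that the natural surjection $\Gamma/B \twoheadrightarrow S*H$ has trivial kernel, i.e.\ that $S*H$ carries \emph{no} relations beyond the commutativity ones. The cleanest way is probably to show this surjection is bijective in each graded degree, grading $\Gamma/B$ by path length and $S * H \cong S \otimes_K K[H]$ by the polynomial degree on $S$: in degree $d$ both sides have dimension $|L/B| \cdot \binom{n+d}{d}$, for $S*H$ immediately from $S*H \cong S \otimes_K K[H]$ as graded vector spaces, and for $\Gamma/B$ by the same count of monomials-up-to-reordering that computes $\dim S_d$, carried out once at each of the $|L/B|$ source vertices. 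As a sanity check: for $B = L$ one gets $H = 1$ and recovers $\Gamma/B \cong S$, and for $n = 1$ with $B$ of index $m$ one gets $H \cong \Z/m$ acting as $\operatorname{diag}(\zeta^{-1}, \zeta)$, with $S * H$ the preprojective algebra of type $\widetilde{A}_{m-1}$, exactly as anticipated.
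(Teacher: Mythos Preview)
Your proposal is correct and follows essentially the same route as the paper: both define $H$ as the character group of $L/B$ (equivalently, the group of homomorphisms $L \to K^{\times}$ trivial on $B$), embed it diagonally in $\SL_{n+1}(K)$ using $\sum_i \alpha_i = 0$, and then identify $\Gamma/B$ with $S*H$ via the McKay quiver description. The only difference is that you spell out the idempotent decomposition and add a graded dimension count to nail down the isomorphism, whereas the paper simply invokes the standard McKay presentation from \cite{Re,RV,BSW} for this last step.
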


\begin{proof}
Let $H$ be the group of group morphisms $\phi \colon L \to K^{\times}$ satisfying $\phi(B) = 1$. Since $L/B$ is finite and $K^{\times}$ contains primitive $q$-th roots of unity for every positive integer $q$ we have a nondegenerate bihomomorphism
\[
H \times L/B \to K^{\times}, \ (\phi,\overline{v}) \mapsto \phi(v)
\]
that allows us to identify the elements of $L/B$ with the isoclasses of irreducible representations of $H$ over $K$.
Define an embedding $H \to \SL_{n+1}(K)$ by sending $\phi \in H$ to the diagonal matrix with diagonal entries $\phi(\alpha_0), \ldots, \phi(\alpha_{n})$. This is well-defined because
\[
\prod_{i=0}^{n} \phi(\alpha_i) = \phi\left( \sum_{i=0}^{n} \alpha_i \right) = \phi(0) = 1.
\]
The above embedding defines an $H$-action on $S$, which satisfies $\phi x_i = \phi(\alpha_i)x_i$ for all $0\le i \le n$. Using our identification of $L/B$ with the isoclasses of irreducible representations of $H$ we get that $Q/B$ is the McKay quiver for $H$. Moreover, the relations in $\Gamma/B$ are just commutativity relations, so $S * H \iso \Gamma/B$.
\end{proof}

Our aim is to apply Theorem~\ref{preprojective correspondence} to obtain $n$-representation infinite algebras from $\Gamma/B$. To do this we need suitable gradings on $\Gamma/B$. We shall describe these gradings in a uniform way by returning our attention to the covering $\Gamma$.

\begin{definition}
For every permutation $\sigma$ of $\{0,\ldots, n\}$ and $v \in Q_0$ we get a cyclic path $a_{\sigma(0)} \cdots a_{\sigma(n)}$ in $Q$ starting and ending at $v$.
We call such paths \emph{small cycles}.
A subset $C \in Q_1$ is called a \emph{cut} if it contains precisely one arrow from each small cycle.
\end{definition}

Every cut $C$ defines a grading $g_C$ on $Q$ by
\[
g_C(a) = \begin{cases} 
1 & a \in C \\
0 & a \in Q_1\setminus C
\end{cases}
\]
The relation $r^v_{ij}$ is homogeneous with respect to $g_C$ since the element 
\[
r^v_{ij}a_0\cdots a_{i-1}a_{i+1}\cdots a_{j-1}a_{j+1}\cdots a_{n}
\]
is a difference of two small cycles.
So $g_C$ induces a grading on $\Gamma$.
Set $Q_C = (Q_0,Q_1\setminus C)$, so that the degree zero part $\Gamma_0$ of $\Gamma$ is a factor of $KQ_C$.

To get an induced grading on the orbit algebra $\Gamma/B$ we need to ensure that the action of $B$ is compatible with the grading $g_C$, i.e., that $B$ is a subgroup of the stabilizer
\[
L_C := \{\lambda \in L \mid \lambda C = C\} \le L
\]
of $C$. We now focus on cuts satisfying the following two conditions.

\begin{definition}
We say that a cut $C$ in $Q$ is
\begin{itemize}
\item[(a)] \emph{periodic} if $L_C$ is cofinite in $L$.
\item[(b)] \emph{bounding} if there is a natural number $N$ such that all paths in $Q_C$ have length at most $N$.
\end{itemize}
\end{definition}

Let $C$ be a periodic cut and $B \le L_C$ a cofinite subgroup. Then $B$ is cofinite in $L$ and we may consider $\Gamma/B$. 
Since $g_C$ is constant on the $B$-orbits of $Q_1$, it induces a grading $g_C$ on $\Gamma/B$.
We denote by $(\Gamma/B)_C$ the degree $0$ part of $(\Gamma/B,g_C)$.
Notice that if $C$ is also bounding, then all graded parts of $(\Gamma/B,g_C)$ are finite dimensional.

\begin{theorem}\label{CY Atilde}
Let $C$ be a bounding periodic cut in $Q$ and $B \le L_C$ be cofinite.
\begin{itemize}
\item[(a)] $(\Gamma/B,g_C)$ is bimodule $(n+1)$-Calabi-Yau of Gorenstein parameter $1$.
\item[(b)] $(\Gamma/B)_C$ is an $n$-representation infinite algebra whose preprojective algebra is $\Gamma/B$.
\end{itemize}
\end{theorem}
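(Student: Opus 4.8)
The plan is to obtain part (b) as a formal consequence of part (a) together with Theorem~\ref{preprojective correspondence}, and to prove part (a) by transporting the Calabi--Yau structure of a skew group algebra through Lemma~\ref{skew Atilde}. Since $K$ is algebraically closed it is perfect, so Theorem~\ref{preprojective correspondence} applies. Granting (a), the algebra $(\Gamma/B,g_C)$ is a graded bimodule $(n+1)$-Calabi--Yau algebra of Gorenstein parameter $1$, whence its degree $0$ part $(\Gamma/B)_0=(\Gamma/B)_C$ is $n$-representation infinite and satisfies $\Pi_{n+1}((\Gamma/B)_C)\iso\Gamma/B$; along the way one checks that $(\Gamma/B)_C$ is ring-indecomposable, which reduces to connectedness of the quiver $Q_C/B$. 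So the real content lies in part (a).

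For (a) the first task is to verify the finiteness hypotheses of Definition~\ref{CY of GP}. The grading $g_C$ takes non-negative values, so $\Gamma/B=\bigoplus_{i\ge0}(\Gamma/B)_i$. A basis of $(\Gamma/B)_i$ is given by the residue classes of paths in $Q/B$ that use exactly $i$ arrows from the image of $C$; each such path is an alternating concatenation of $i$ cut arrows and $i+1$ paths lying in $Q_C/B$. Since $B$ acts freely on $Q_0=L$ by translations, $Q\to Q/B$ is a covering, so every path in $Q_C/B$ lifts to one in $Q_C$ and hence, by the bounding condition, has length at most $N$; as $Q/B$ has only finitely many vertices and arrows, there are finitely many such paths, and therefore finitely many paths of $g_C$-degree $i$. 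Thus $\dim_K(\Gamma/B)_i<\infty$ for all $i$, and in particular $(\Gamma/B)_C$ is finite dimensional. Homological smoothness, i.e.\ $\Gamma/B\in\KKK^{\rm b}(\gr\proj(\Gamma/B)^{\rm e})$, will follow from the skew group analogue of the Koszul resolution of $S$: a length $n+1$ complex of finitely generated graded projective $(\Gamma/B)^{\rm e}$-modules resolving $\Gamma/B$, which is graded because the arrows and the relations $r^v_{ij}$ are homogeneous for $g_C$.

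It remains to establish the Calabi--Yau property with the correct internal twist. By Lemma~\ref{skew Atilde} we may write $\Gamma/B\iso S*H$ with $H$ a finite abelian subgroup of $\SL_{n+1}(K)$, and by \cite{BSW} this algebra is bimodule $(n+1)$-Calabi--Yau; concretely $\Gamma/B$ is presented as a Jacobian algebra whose relations $\overline{a}_i\overline{a}_j-\overline{a}_j\overline{a}_i$ are the cyclic derivatives of a superpotential $\Phi$ that is a signed sum of small cycles. The plan is to carry the grading $g_C$ through the self-dual bimodule resolution underlying this Calabi--Yau structure; its top term is twisted by the $g_C$-degree of $\Phi$, so that $\RHom_{(\Gamma/B)^{\rm e}}(\Gamma/B,(\Gamma/B)^{\rm e})\iso(\Gamma/B)[-(n+1)](\deg_{g_C}\Phi)$ in $\DDD(\Gr(\Gamma/B)^{\rm e})$. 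Finally $\deg_{g_C}\Phi=1$: by the very definition of a cut, each small cycle contains exactly one arrow of $C$ and therefore has $g_C$-degree $1$. This gives Gorenstein parameter $1$ and finishes (a).

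The step I expect to be the main obstacle is the last one: making precise how the grading $g_C$ propagates through the self-dual bimodule resolution of $S*H$, and checking that the homological shift $[-(n+1)]$ comes paired with the internal shift $(\deg_{g_C}\Phi)=(1)$ rather than some other degree. The finiteness verifications and the reduction of (b) to (a) are essentially bookkeeping once the earlier results are in hand.
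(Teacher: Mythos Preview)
Your proposal is correct and follows essentially the same route as the paper. Both arguments reduce (b) to (a) via Theorem~\ref{preprojective correspondence}, identify $\Gamma/B$ with $S*H$ via Lemma~\ref{skew Atilde}, invoke the bimodule Calabi--Yau resolution from \cite{BSW}, and then check that the grading $g_C$ passes through this self-dual resolution with the correct shift; the paper does the last step by writing out the Koszul-type resolution with terms $\Gamma/B\otimes_R W_m\otimes_R\Gamma/B$ explicitly and verifying that each $w^{\overline{v}}_I$ is homogeneous (of degree $0$ or $1$, since it completes to a small cycle) and that the duality isomorphism $\varphi$ is homogeneous of degree $-1$, which is exactly your superpotential computation $\deg_{g_C}\Phi=1$ unpacked.
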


We call the algebras $(\Gamma/B)_C$ in Theorem~\ref{CY Atilde}, \emph{$n$-representation infinite of type $\widetilde{A}$}.

\begin{proof}
By Lemma~\ref{skew Atilde}, there is a subgroup $H$ of $\SL_{n+1}(K)$ such that $\Gamma/B \iso S * H $. This implies that $\Gamma/B$ has a canonical projective $(\Gamma/B)^{\rm e}$-resolution that we now recall following \cite{BSW} (compare also with \cite{AIR}).

Let $R$ be the semisimple subalgebra $K(Q_0/B)$ of $\Gamma/B$. We consider $\Gamma/B$ and $K(Q/B)$ as $R^{\rm e}$-modules. For every $0 \le m \le n+1$ we denote by $K(Q_m/B)$, the $R^{\rm e}$-submodule of $K(Q/B)$, spanned by all paths of length $m$. Let $\overline{v} \in Q_0/B$, $0 \le i_1 < \ldots < i_{m} \le n$ and set $I = \{i_1,\ldots,i_m\}$. For every $\sigma \in S_m$ let $w^{\overline{v}}_I(\sigma)$ be the path $\overline{a}_{i_{\sigma(1)}} \cdots \overline{a}_{i_{\sigma(m)}}$ from $\overline{v}$ to $\overline{v}^I:=\overline{v} + \sum_{j=1}^m \overline{\alpha}_{i_j}$ and set
\[
w^{\overline{v}}_I := \sum_{\sigma \in S_m} \sign(\sigma)w^{\overline{v}}_I({\sigma}).
\]
Let $W_m$ be the span of $\{w^{\overline{v}}_I \mid \overline{v} \in Q_0/B, |I| = m\}$ in $K(Q/B)$. Observe that $W_m$ is in fact a $R^{\rm e}$-submodule of $K(Q_m/B)$.

Define 
\[
d_m \colon  \Gamma/B \otimes_R K(Q_m/B) \otimes_R \Gamma/B \to \Gamma/B \otimes_R K(Q_{m-1}/B) \otimes_R \Gamma/B
\]
by
\[
d_m(x\otimes b_1\cdots b_m \otimes y) = x b_1\otimes b_2\cdots b_m \otimes y +(-1)^m x\otimes b_1\cdots b_{m-1} \otimes b_m y.
\]

Next define the $(\Gamma/B)^{\rm e}$-module complex $P$ as
\[
\Gamma/B \otimes_R W_{n+1} \otimes_R \Gamma/B \xrightarrow{d_{n+1}} \cdots \xrightarrow{d_{1}} \Gamma/B \otimes_R W_{0} \otimes_R \Gamma/B.
\]
By \cite{BSW} the complex $P$ is a projective resolution of $\Gamma/B \in \Mod (\Gamma/B)^{\rm e}$. Let $P^*$ be the $(\Gamma/B)^{\rm e}$-dual of $P$. Then there is an isomorphism $\varphi \colon P[-n-1] \to P^*$ where 
\[
\varphi_m \colon \Gamma/B \otimes_R W_{n+1-m} \otimes_R \Gamma/B \to \Hom_{(\Gamma/B)^{\rm e}}(\Gamma/B \otimes_R W_{m} \otimes_R \Gamma/B, (\Gamma/B)^{\rm e})
\]
satisfies 
\[
(\varphi_m(1 \otimes w^{\overline{v}}_I \otimes 1))(1 \otimes w^{\overline{v}'}_J \otimes 1) = 
\begin{cases} 
\pm e_{\overline{v}^{I}} \otimes e_{\overline{v}} & \text{ if $\overline{v}' = \overline{v}^I$ and $J = \{0,\ldots,n\}\setminus I$} \\ 
0 & \text{ otherwise.} 
\end{cases} 
\]

Now we take the grading into account. We consider $K(Q/B)$ to be graded by $g_C$. Each of the paths $w^{\overline{v}}_I({\sigma})$ can be completed into a small cycle and so has degree $1$ or $0$. Moreover, $w^{\overline{v}}_I({\sigma})$ does not depend on $\sigma$ as an element in $\Gamma/B$ and so neither does its degree. Hence $w^{\overline{v}}_I$ is homogeneous of degree $1$ or $0$. Thus $W_m$ is a graded subspace of $K(Q/B)$ and so each term in $P$ is a graded $(\Gamma/B)^{\rm e}$-module. The differential $d_m$ is homogeneous of degree $0$ and thus $P$ is in fact a complex in $\Gr (\Gamma/B)^{\rm e}$. Moreover, $\varphi$ is homogeneous of degree $-1$ so 
$\RHom_{(\Gamma/B)^{\rm e}}(\Gamma/B,(\Gamma/B)^{\rm e}) \iso \Gamma/B[-n-1](1)$. Hence, $\Gamma/B$ is  $(n+1)$-Calabi-Yau of Gorenstein parameter $1$. 

By Theorem~\ref{preprojective correspondence}, $(\Gamma/B)_C$ is $n$-representation infinite.
\end{proof}

\begin{example}
Let $n=1$. Then $\Gamma$ is the preprojective algebra of type $A_\infty^\infty$. For any cut $C$ the quiver $Q_C$ is a quiver of type $A_\infty^\infty$. In fact, the map $C \mapsto Q_C$ provides a bijection between cuts and orientations of $A_\infty^\infty$. Moreover, $C$ is periodic if and only if $Q_C$ has periodic orientation. Hence the algebras $(\Gamma/B)_{C}$, for $C$ periodic, are precisely the path algebras of quivers of type $\widetilde{A}$. Also $C$ is bounding if and only if $(\Gamma/B)_C$ is finite dimensional. Thus the $1$-representation infinite algebras of type $\widetilde{A}$ are precisely path algebras of quivers of type $\widetilde{A}$ with acyclic orientation.
\end{example}

For $n=1$, the two alternating orientations of $A_\infty^\infty$ are special in that they allow no paths of length $2$. We obtain these orientations by choosing the cut $C$ to consist of all arrows that start at the vertices of the form $(2t,-2t)$ or $(2t+1,-2t-1)$, respectively. We proceed to generalize these cuts to arbitrary $n$. 

\begin{example}
Define the group morphism $\omega \colon  L \to \Z/(n+1)\Z$ by $\omega(\alpha_i) = 1$ for any $i$. Let $k \in \Z/(n+1)\Z$ and set
\[
C_k = \{a_i \colon v \to (v+\alpha_i) \mid \omega(v) = k, \ 0\le i \le n\}
\]
Let $i_1,\ldots, i_{n+1} \in \{0 ,\ldots, n\}$, $v \in Q_0$ and consider the path $p = a_{i_1} \cdots a_{i_{n+1}}$ from $v$ to $v + \sum_{j=1}^{n+1} \alpha_{i_j}$. It passes the vertices $v^m := v + \sum_{j=1}^m \alpha_{i_j}$ where $0 \le m \le n+1$. Since $\omega(v^m) = \omega(v) +m$, there is precisely one $0 \le m \le n$ such that $\omega(v^m) = k$. Hence precisely one of the arrows in $p$ is in $C_k$. In particular, $C_k$ is a bounding cut. In fact, $C_k$ is extremely bounding in the sense that all paths in $Q_{C_k}$ have length at most $n$, which is the smallest possible bound for any cut. For each $v \in Q_0$ and $\lambda \in L$ we have  $\omega(\lambda v) = \omega(v) + \omega(\lambda)$ and so $\lambda C_k = C_{k + \omega(\lambda)}$. Thus $L_{C_k} = \ker \omega$. In particular, $L_{C_k}$ is cofinite and $C_k$ is periodic. 

For $n = 1$ the cuts $C_0$ and $C_1$ give rise to the two alternating orientations:
\[
\cdots
\xymatrix{\scriptstyle (-2, 2) \cut@/^/[r] & \scriptstyle (-1, 1) \ar@/^/[r]\ar@/^/[l] & \scriptstyle (0, 0) \cut@/^/[l] \cut@/^/[r] & \scriptstyle (1, -1) \ar@/^/[r]\ar@/^/[l] & \scriptstyle (2, -2) \cut@/^/[l]}
\cdots
\]
and
\[
\cdots
\xymatrix{\scriptstyle (-2, 2) \ar@/^/[r] & \scriptstyle (-1,1) \cut@/^/[r]\cut@/^/[l] & \scriptstyle (0,0) \ar@/^/[l] \ar@/^/[r] & \scriptstyle (1, -1) \cut@/^/[r]\cut@/^/[l] & \scriptstyle (2, -2) \ar@/^/[l]}
\cdots
\]
where the bold lines indicate arrows in $C_0$ and $C_1$ respectively. For $n=2$, $C_0$ is the following cut:
\[
\begin{xy} 0;<1cm,0cm>:<0.5cm,\halfrootthree cm>:: 
(0,0) *+{\bullet} ="00",
(1,0) *+{\bullet} ="10",
(2,0) *+{\bullet} ="20",
(3,0) *+{\bullet} ="30",
(4,0) *+{\bullet} ="40",
(5,0) *+{\bullet} ="50",
(6,0) *+{\bullet} ="60",
(0,1) *+{\bullet} ="01",
(1,1) *+{\bullet} ="11",
(2,1) *+{\bullet} ="21",
(3,1) *+{\bullet} ="31",
(4,1) *+{\bullet} ="41",
(5,1) *+{\bullet} ="51",
(-1,2) *+{\bullet} ="m12",
(0,2) *+{\bullet} ="02",
(1,2) *+{\bullet} ="12",
(2,2) *+{\bullet} ="22",
(3,2) *+{\bullet} ="32",
(4,2) *+{\bullet} ="42",
(5,2) *+{\bullet} ="52",
(-1,3) *+{\bullet} ="m13",
(0,3) *+{\bullet} ="03",
(1,3) *+{\bullet} ="13",
(2,3) *+{\bullet} ="23",
(3,3) *+{\bullet} ="33",
(4,3) *+{\bullet} ="43",
(-2,4) *+{\bullet} ="m24",
(-1,4) *+{\bullet} ="m14",
(0,4) *+{\bullet} ="04",
(1,4) *+{\bullet} ="14",
(2,4) *+{\bullet} ="24",
(3,4) *+{\bullet} ="34",
(4,4) *+{\bullet} ="44",
(-1,1) *+{\cdots} ="h01",
(-2,3) *+{\cdots} ="h02",
(6,1) *+{\cdots} ="h11",
(5,3) *+{\cdots} ="h12",
(1.7,-0.4) *+{\vdots} ="v10",
(4.7,-0.4) *+{\vdots} ="v20",
(-0.825,4.65) *+{\vdots} ="v11",
(2.175,4.65) *+{\vdots} ="v21",
"00", {\cut"10"},
"10", {\ar"20"},
"10", {\ar"01"},
"20", {\ar"30"},
"20", {\ar"11"},
"30", {\cut"40"},
"30", {\cut"21"},
"40", {\ar"50"},
"40", {\ar"31"},
"50", {\ar"60"},
"50", {\ar"41"},
"60", {\cut"51"},
"01", {\ar"00"},
"01", {\ar"11"},
"01", {\ar"m12"},
"11", {\cut"10"},
"11", {\cut"21"},
"11", {\cut"02"},
"21", {\ar"20"},
"21", {\ar"31"},
"21", {\ar"12"},
"31", {\ar"30"},
"31", {\ar"41"},
"31", {\ar"22"},
"41", {\cut"40"},
"41", {\cut"51"},
"41", {\cut"32"},
"51", {\ar"50"},
"51", {\ar"42"},
"m12", {\cut"02"},
"02", {\ar"01"},
"02", {\ar"12"},
"02", {\ar"m13"},
"12", {\ar"11"},
"12", {\ar"22"},
"12", {\ar"03"},
"22", {\cut"21"},
"22", {\cut"32"},
"22", {\cut"13"},
"32", {\ar"31"},
"32", {\ar"42"},
"32", {\ar"23"},
"42", {\ar"41"},
"42", {\ar"52"},
"42", {\ar"33"},
"52", {\cut"51"},
"52", {\cut"43"},
"m13", {\ar"m12"},
"m13", {\ar"03"},
"m13", {\ar"m24"},
"03", {\cut"02"},
"03", {\cut"13"},
"03", {\cut"m14"},
"13", {\ar"12"},
"13", {\ar"23"},
"13", {\ar"04"},
"23", {\ar"22"},
"23", {\ar"33"},
"23", {\ar"14"},
"33", {\cut"32"},
"33", {\cut"43"},
"33", {\cut"24"},
"43", {\ar"42"},
"43", {\ar"34"},
"m24", {\cut"m14"},
"m14", {\ar"m13"},
"m14", {\ar"04"},
"04", {\ar"03"},
"04", {\ar"14"},
"14", {\cut"13"},
"14", {\cut"24"},
"24", {\ar"23"},
"24", {\ar"34"},
"34", {\ar"33"},
"34", {\ar"44"},
"44", {\cut"43"},
\end{xy}
\]

In general, we have that $(\Gamma/B)_{C_k} \iso (\Gamma/B)_{C_0}$ for all cofinite $B \le \ker \omega$ and $k \in \Z/(n+1)\Z$, so we may fix $k = 0$. Let us consider the extreme case when $B = \ker \omega $. We identify $Q_0/B$ with $\Z/(n+1)\Z$ via $\omega$, and denote the arrows in $Q/B$ by $a^k_i\colon k \to k+1$, where $k \in \Z/(n+1)\Z$ and $i \in \{0,\ldots, n\}$. In $(\Gamma/B)_{C_0}$ we have the relations $a^k_ia^{k+1}_j = a^k_ja^{k+1}_i$. Furthermore $C_0/B$ is precisely the set of arrows $\{a^{0}_i \mid 0 \le i \le n\}$ and so $(\Gamma/B)_{C_0}$ is the Beilinson algebra:
\[\xymatrix{
1  \ar@/^1pc/[r]^{a^1_{0}}_{\scalebox{0.7}{\vdots}}\ar@/_ 1pc/[r]_{a^1_{n}} & 2 \ar@/^1pc/[r]^{a^2_{0}}_{\scalebox{0.7}{\vdots}}\ar@/_ 1pc/[r]_{a^2_{n}} & 3
} \cdots\xymatrix{
n \ar@/^1pc/[r]^{a^{n}_{0}}_{\scalebox{0.7}{\vdots}}\ar@/_ 1pc/[r]_{a^{n}_{n}}& n+1
},
\hspace{1cm}
a^k_ia^{k+1}_j = a^k_ja^{k+1}_i.
\]
Notice that if $n = 1$, then $(\Gamma/B)_{C_0}$ is the Kronecker algebra.
\end{example}

Observe that the quivers of $n$-representation infinite algebras of type $\widetilde{A}$ are acyclic. In fact, all examples of $n$-hereditary algebras that we are aware of share this property. Thus we end this section by the following experimental expectation.

\begin{question}
The quivers of $n$-hereditary algebras are acyclic.
\end{question}

\subsection{Relationship to $n$-representation finite algebras of type $A$}
In \cite{IO1} the class of $n$-representation finite algebras of type $A$ was introduced. We shall now compare it to the class of $n$-representation infinite algebras of type $\widetilde{A}$ defined above. Let $s$ be a non-negative integer and set
\[
L_{\circ} = \{(v_0,\ldots, v_{n}) \in Q_0 \mid v_i \ge 0 \mbox{ for all } 0 \le i \le n-1 \mbox{ and } v_{n} \ge -s\}.
\]
Notice that $L_{\circ}$ consists of the lattice points of an $n$-simplex in $V$ with corners $s(e_i-e_n)$. Let $I$ be the ideal in $\Gamma$ generated by the primitive idempotents corresponding to $L \setminus L_{\circ}$. Then the algebra $\Gamma^{\circ} := \Gamma/I$ is isomorphic to the algebra $\widehat{\Lambda}^{(n,s+1)}$ defined in \cite{IO1} (compare also \cite{HI1}). In particular, $\Gamma^{\circ}$ is a finite dimensional selfinjective algebra. For simplicity we will ignore the trivial case of $s=0$ and assume from now on that $s >0$.

Let $Q^{\circ}$ be the full subquiver of $Q$ with vertices $L_{\circ}$. Then $Q^{\circ}$ is the quiver of $\Gamma^{\circ}$. We call $C^{\circ} \subset Q^{\circ}_1$ a restricted cut if $C^{\circ}$ contains precisely one arrow from every small cycle contained in $Q^{\circ}$. As before $C^{\circ}$ defines a grading on $\Gamma^{\circ}$ and we denote the degree $0$ part by $\Gamma^{\circ} _{C^{\circ}}$. The algebras $\Gamma^{\circ} _{C^{\circ}}$ are (up to isomorphism) precisely the $n$-representation finite algebras of type $A$ defined in \cite{IO1}. Here is an example of $Q^{\circ}$ with a restricted cut $C^{\circ}$ indicated in bold for $n = 2$ and $s=2$.
\[
\begin{xy} 0;<1cm,0cm>:<0.5cm,\halfrootthree cm>:: 
(1,2) *+{\bullet} ="12",
(2,2) *+{\bullet} ="22",
(3,2) *+{\bullet} ="32",
(1,3) *+{\bullet} ="13",
(2,3) *+{\bullet} ="23",
(1,4) *+{\bullet} ="14",
%
"12", {\cut"22"},
"22", {\cut"32"},
"22", {\ar"13"},
"32", {\ar"23"},
"13", {\ar"12"},
"13", {\cut"23"},
"23", {\ar"22"},
"23", {\ar"14"},
"14", {\ar"13"},
\end{xy}
\]
The sides of this triangle extend to lines in the plane. The affine reflections in these lines induce automorphisms of $Q$. Taking the union of all orbits of arrows in $C^{\circ}$ with respect to the group generated by these affine reflections we get the following cut $C$ in $Q$:
\[
\begin{xy} 0;<1cm,0cm>:<0.5cm,\halfrootthree cm>:: 
(0,0) *+{\bullet} ="00",
(1,0) *+{\bullet} ="10",
(2,0) *+{\bullet} ="20",
(3,0) *+{\bullet} ="30",
(4,0) *+{\bullet} ="40",
(5,0) *+{\bullet} ="50",
(6,0) *+{\bullet} ="60",
(0,1) *+{\bullet} ="01",
(1,1) *+{\bullet} ="11",
(2,1) *+{\bullet} ="21",
(3,1) *+{\bullet} ="31",
(4,1) *+{\bullet} ="41",
(5,1) *+{\bullet} ="51",
(-1,2) *+{\bullet} ="m12",
(0,2) *+{\bullet} ="02",
(1,2) *+{\bullet} ="12",
(2,2) *+{\bullet} ="22",
(3,2) *+{\bullet} ="32",
(4,2) *+{\bullet} ="42",
(5,2) *+{\bullet} ="52",
(-1,3) *+{\bullet} ="m13",
(0,3) *+{\bullet} ="03",
(1,3) *+{\bullet} ="13",
(2,3) *+{\bullet} ="23",
(3,3) *+{\bullet} ="33",
(4,3) *+{\bullet} ="43",
(-2,4) *+{\bullet} ="m24",
(-1,4) *+{\bullet} ="m14",
(0,4) *+{\bullet} ="04",
(1,4) *+{\bullet} ="14",
(2,4) *+{\bullet} ="24",
(3,4) *+{\bullet} ="34",
(4,4) *+{\bullet} ="44",
(-1,1) *+{\cdots} ="h01",
(-2,3) *+{\cdots} ="h02",
(6,1) *+{\cdots} ="h11",
(5,3) *+{\cdots} ="h12",
(1.7,-0.4) *+{\vdots} ="v10",
(4.7,-0.4) *+{\vdots} ="v20",
(-0.825,4.65) *+{\vdots} ="v11",
(2.175,4.65) *+{\vdots} ="v21",
"00", {\cut"10"},
"10", {\ar"20"},
"10", {\ar"01"},
"20", {\ar"30"},
"20", {\ar"11"},
"30", {\ar"40"},
"30", {\ar"21"},
"40", {\ar"50"},
"40", {\cut"31"},
"50", {\cut"60"},
"50", {\cut"41"},
"60", {\ar"51"},
"01", {\ar"00"},
"01", {\ar"11"},
"01", {\ar"m12"},
"11", {\cut"10"},
"11", {\ar"21"},
"11", {\ar"02"},
"21", {\cut"20"},
"21", {\cut"31"},
"21", {\ar"12"},
"31", {\ar"30"},
"31", {\ar"41"},
"31", {\ar"22"},
"41", {\ar"40"},
"41", {\ar"51"},
"41", {\cut"32"},
"51", {\ar"50"},
"51", {\cut"42"},
"m12", {\ar"02"},
"02", {\cut"01"},
"02", {\ar"12"},
"02", {\cut"m13"},
"12", {\cut"11"},
"12", {\cut"22"},
"12", {\cut"03"},
"22", {\ar"21"},
"22", {\cut"32"},
"22", {\ar"13"},
"32", {\ar"31"},
"32", {\ar"42"},
"32", {\ar"23"},
"42", {\ar"41"},
"42", {\ar"52"},
"42", {\ar"33"},
"52", {\ar"51"},
"52", {\ar"43"},
"m13", {\ar"m12"},
"m13", {\ar"03"},
"m13", {\ar"m24"},
"03", {\ar"02"},
"03", {\ar"13"},
"03", {\cut"m14"},
"13", {\ar"12"},
"13", {\cut"23"},
"13", {\cut"04"},
"23", {\ar"22"},
"23", {\ar"33"},
"23", {\ar"14"},
"33", {\cut"32"},
"33", {\ar"43"},
"33", {\ar"24"},
"43", {\cut"42"},
"43", {\ar"34"},
"m24", {\cut"m14"},
"m14", {\ar"m13"},
"m14", {\ar"04"},
"04", {\ar"03"},
"04", {\ar"14"},
"14", {\ar"13"},
"14", {\ar"24"},
"24", {\cut"23"},
"24", {\ar"34"},
"34", {\cut"33"},
"34", {\cut"44"},
"44", {\ar"43"},
\end{xy}
\]

We will show that any restricted cut $C^{\circ}$ can in a similar way be extended to a bounding periodic cut $C$ for arbitrary $n$ and $s$. Our aim is to apply this construction to prove that every $n$-representation finite algebra of type $A$ is a factor of an $n$-representation infinite algebra of type $\widetilde{A}$ by an ideal generated by some idempotent:

\begin{theorem}\label{A factor of Atilde}
Let $s$ be a non-negative integer and $C^{\circ} \subset Q^{\circ}_1$ a restricted cut. Then there is a bounding periodic cut $C$ in $Q$, a cofinite subgroup $B \le L_C$ and an idempotent $e \in K(Q_0/B)$ such that $(\Gamma/B)_C/(1-e)$ is isomorphic to $\Gamma^{\circ}_{C^{\circ}}$.
\end{theorem}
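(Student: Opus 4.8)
The plan is to make the geometric picture from the $n=2$, $s=2$ example rigorous in general. The restricted cut $C^{\circ}$ lives in $Q^{\circ}$, the full subquiver on the $n$-simplex $L_\circ$ whose $n+1$ facets lie on the affine hyperplanes $v_i = 0$ ($0 \le i \le n-1$) and $v_n = -s$. Let $W$ be the group generated by the affine reflections in these $n+1$ hyperplanes; this is (a copy of) the affine Weyl group of type $\widetilde A_n$ acting on $V$, and it acts on $Q$ by quiver automorphisms permuting the small cycles. Since $W$ acts simply transitively on the chambers (the $W$-translates of the open simplex), every arrow of $Q$ lies in exactly one $W$-translate of an arrow of $Q^{\circ}_1$; hence the rule ``$a \in C$ iff $a$ lies in the $W$-orbit of some arrow of $C^{\circ}$'' is well-defined. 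First I would check that this $C$ is a cut: for a small cycle $z$ at a vertex $v$, pick $w \in W$ with $wv \in L_\circ$ well-inside a chamber; then $wz$ is a small cycle in $Q^{\circ}$, $C^{\circ}$ meets it in exactly one arrow $b$, and $w^{-1}b$ is the unique arrow of $C$ on $z$ (uniqueness uses that the facet hyperplanes themselves carry no arrows of small cycles through an interior vertex, so no boundary ambiguity arises — this is where the case $s>0$ is used).

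Next I would verify that $C$ is \emph{periodic} and \emph{bounding}. For periodicity: the translation subgroup $T \le W$ (a cofinite subgroup of $L$) preserves $C$ by construction, so $T \le L_C$ and $L_C$ is cofinite. For boundedness: a path in $Q_C$ is a path avoiding $C$; its image under any $w \in W$ moving its source into $L_\circ$ is a path avoiding $C^{\circ}$ inside a bounded region, and since the restricted cut makes $\Gamma^\circ_{C^\circ}$ finite dimensional (it is an $n$-representation finite algebra by \cite{IO1}), paths in $Q^{\circ}_{C^{\circ}}$ have bounded length; because $W$ acts by isometries the same bound $N$ works for all chambers — but one must argue a path of $Q_C$ cannot ``escape'' by crossing many chambers. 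Here the key point is that crossing a facet hyperplane corresponds, via the reflection, to folding the path back, and the arrows along the crossing are forced to be cut arrows when the orientation of $Q_C$ near a wall is examined; so in fact any $Q_C$-path stays within the star of a single chamber, giving length $\le N$. This geometric ``no-escape'' lemma is the step I expect to be the main obstacle, and I would prove it by a careful local analysis of which arrows adjacent to a wall belong to $C$, using that $C^{\circ}$ is a cut of $Q^{\circ}$ and the reflection identifies the two sides.

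With $C$ bounding periodic, choose $B := T$ (or any cofinite $B \le L_C \cap \ker$ of the relevant characters — e.g.\ $B = T \cap \ker\omega$, adjusting $s$ mod $n+1$ so that $B \le L_C$); then Theorem~\ref{CY Atilde} applies and $(\Gamma/B)_C$ is $n$-representation infinite of type $\widetilde A$. Finally, let $e \in K(Q_0/B)$ be the sum of the primitive idempotents corresponding to the image in $Q_0/B$ of $L_\circ$; since $L_\circ$ is a fundamental domain for $T$ (up to boundary identifications, which we avoid by taking $B$ deep enough, or handle by noting boundary vertices of $L_\circ$ are identified consistently), $e$ is well-defined and $(1-e)$ generates an ideal. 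Then $(\Gamma/B)_C/(1-e)$ is the subquotient of $(\Gamma/B)_C$ on the vertices of $L_\circ$ with the induced relations, which is exactly $\Gamma^\circ/I$ with its $C^\circ$-grading restricted, i.e.\ $\Gamma^{\circ}_{C^{\circ}}$ — one checks that the degree-zero part of $\Gamma/B$ localized at $e$ equals the degree-zero part of $e(\Gamma/B)e = \Gamma^\circ$ because $C \cap (\text{arrows of } Q^\circ) = C^\circ$ by construction. This last identification is a direct comparison of quivers with relations and gradings, so it is routine once the combinatorial setup is in place.
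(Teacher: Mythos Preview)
Your overall strategy matches the paper's: extend $C^{\circ}$ to $C := W C^{\circ}$ using the affine reflection group $W$ generated by the facet reflections, observe periodicity via the translation lattice $T \le W$, and then pass to an orbit algebra and kill an idempotent. The paper sets this up carefully by first proving that $Q^{\circ}_0$, $Q^{\circ}_1$, and $\Sigma^{\circ}$ are fundamental domains for the $W$-action on $Q_0$, $Q_1$, and $\Sigma$ respectively; this makes your cut-verification step rigorous without the ad hoc ``well-inside a chamber'' argument (the point is that the vertices of a small cycle never all lie on a single reflecting hyperplane, so some vertex lands in the open alcove).

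Your boundedness argument, however, is a genuine gap. The ``no-escape'' lemma you propose is false: paths in $Q_C$ certainly can cross chamber walls (already for $n=1$ with $s \ge 2$ and a non-alternating cut, degree-zero paths wander through several fundamental intervals). The claim that arrows crossing a wall are forced to lie in $C$ is not correct and cannot be salvaged by local analysis. The paper bypasses this entirely: once $Q^{\circ}_1$ is known to be a fundamental domain for $Q_1$, so is $Q^{\circ}_1 \setminus C^{\circ}$ for $Q_1 \setminus C$, hence the orbit quiver $(Q \setminus C)/W$ is identified with $Q^{\circ}_{C^{\circ}}$. A path of length $\ell$ in $Q_C$ projects to a path of length $\ell$ in $Q^{\circ}_{C^{\circ}}$, and the latter has length at most $sn$ since $\Gamma^{\circ}_{C^{\circ}}$ is finite dimensional. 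No geometric containment of paths is needed.

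Your choice $B = T$ is also problematic for exactly the reason you flag: boundary vertices of $L_\circ$ get identified under $T$, spoiling the embedding of $\Gamma^{\circ}$ into $\Gamma/B$. Your suggestion to ``take $B$ deep enough'' is right in spirit; the paper makes it concrete by taking $B = s(n+1)L$. Then for $n \ge 2$ one checks directly (using $s(n+1) \ge s+2$) that $L_\circ \cap \lambda L_\circ = \emptyset$ and there are no arrows between $L_\circ$ and $\lambda L_\circ$ for any $0 \neq \lambda \in B$, so the map $\Gamma^{\circ} \to (\Gamma/B)/(1-e)$ sending paths to their $B$-orbits is an isomorphism.
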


We start by generalizing the group of affine reflections. We consider the group of affine transformations on $V$ as a semidirect product $\Aff(V) = V \rtimes \GL(V)$, where the $\GL(V)$-action on $V$ is the natural one. For every $i,j \in\{0,\ldots,n\}$ define $\rho_{ij} \in \GL(V)$ by
\[
\rho_{ij}(\alpha_k) = \begin{cases} \alpha_j & k=i \\ \alpha_i & k=j \\ \alpha_k & i \neq k \neq j.\end{cases}
\]
It is well-defined since it is compatible with the relation $\sum_k \alpha_k = 0$. The corners of $L_{\circ}$ generates the sublattice $sL :=\{s\lambda \mid \lambda \in L\}$. For each $x \in sL$ define $\rho^{x}_{ij} = \Aff(V)$ by $\rho^{x}_{ij}(v) = \rho_{ij}(v-x) +x$. Now set
\[\begin{split}
R &= \langle \rho_{ij} \mid 0\le i,j\le n \rangle < \GL(V),\\
R_{\rm Aff} &= \langle \rho^{x}_{ij} \mid 0\le i,j\le n, \; x \in sL \rangle < \Aff(V), \\
T &= \langle s(\alpha_i - \alpha_j) \mid 0 \le i \neq j \le n \rangle < L.
\end{split}\]
By a change of coordinates we will show that the above groups are isomorphic to the Weyl group, affine Weyl group and root lattice associated to $\Phi$, respectively. For this purpose denote by $s_{\alpha,\ell}$, the reflection in the hyperplane $H_{\alpha,\ell} = \{v \in V \mid (v,\alpha) = \ell\}$
for $\alpha \in \Phi$ and $\ell \in \Z$. Set $s_\alpha = s_{\alpha,0}$ and define
\[\begin{split}
W &= \langle s_\alpha \mid \alpha \in \Phi \rangle < \GL(V), \\
W_{\rm Aff} &= \langle s_{\alpha,\ell} \mid \alpha \in \Phi, \ \ell \in \Z \rangle < \Aff(V)
\end{split}\]

Let $\Upsilon$ be the automorphism of $V$ defined by
\[
\Upsilon(\alpha_k) = \frac{1}{s}\left(-e_k + \frac{1}{n+1}\sum_{i=0}^{n} e_i \right)
\]
and define the automorphism $\widetilde{\Upsilon} \colon \Aff(V) \to \Aff(V)$, by $\widetilde{\Upsilon}(f) = \Upsilon^{-1}f \Upsilon$.

\begin{lemma}\label{AffineWeyl}
In the notation above $\widetilde{\Upsilon}(W) = R$, $\widetilde{\Upsilon}(L) = T$, and $\widetilde{\Upsilon}(W_{\rm Aff}) = R_{\rm Aff}$. Moreover,  $R_{\rm Aff} = T \rtimes R$.
\end{lemma}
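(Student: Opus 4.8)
The strategy is to exhibit the automorphism $\Upsilon$ as the conjugation that identifies the "root-lattice picture" on $V$ (coming from the root system $\Phi$ of type $A_n$ in its usual realization inside $\R^{n+1}$) with the "simplex picture" governing $\Gamma^{\circ}$, and then transport the standard structural facts about the affine Weyl group through this identification. So the first thing I would do is record explicitly how $\Upsilon$ acts: by definition it sends each $\alpha_k$ to $\tfrac1s(-e_k+\tfrac1{n+1}\sum_i e_i)$, and I would check this is a well-defined automorphism of $V$ by verifying it respects $\sum_k\alpha_k=0$ (the images sum to $\tfrac1s(-\sum e_k + \sum e_i)=0$) and that it is invertible (it carries the basis $\alpha_1,\dots,\alpha_n$ to a basis of $V$, up to the scalar $1/s$ and the projection). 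It is convenient to also compute the inverse, or at least $\Upsilon^{-1}(e_i)$ modulo the line spanned by $\sum e_i$, since several verifications below amount to "compute $\Upsilon^{-1}\circ(\text{generator})\circ\Upsilon$ on a basis and read off the answer".

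Next I would prove the three equalities one at a time. For $\widetilde\Upsilon(W)=R$: it suffices to show $\widetilde\Upsilon(s_{\alpha})$ is a generator of $R$ for $\alpha$ simple (and conversely), i.e.\ that $\Upsilon^{-1}s_{e_i-e_j}\Upsilon=\rho_{ij}$. This is a direct computation on the basis $\{\alpha_k\}$: $s_{e_i-e_j}$ is the transposition of the $i$-th and $j$-th coordinates, and one checks it swaps $\Upsilon(\alpha_i)$ with $\Upsilon(\alpha_j)$ and fixes $\Upsilon(\alpha_k)$ for $k\neq i,j$, which is exactly the defining action of $\rho_{ij}$. For $\widetilde\Upsilon(L)=T$: since $\widetilde\Upsilon$ is conjugation by a linear map, on the \emph{translation} subgroup $V\rtimes 1$ it acts by the linear map $\Upsilon^{-1}$ itself, so I need $\Upsilon^{-1}(L)=T$. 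The lattice $L$ is generated by the $e_i-e_j$; I would compute $\Upsilon^{-1}(e_i-e_j)$ and identify it with $s(\alpha_{?}-\alpha_{?})$ or a suitable combination — the factor $s$ in $\Upsilon$'s formula is precisely what makes $\Upsilon^{-1}$ scale up by $s$, matching the definition $T=\langle s(\alpha_i-\alpha_j)\rangle$. For $\widetilde\Upsilon(W_{\mathrm{Aff}})=R_{\mathrm{Aff}}$: the affine reflection $s_{\alpha,\ell}$ is $t_{c}\,s_\alpha$ for an appropriate translation $c$ (with $c$ in $L$ when $\alpha\in\Phi$, $\ell\in\Z$, by standard root-system facts, cf.\ \cite{Hu}), so conjugating by $\Upsilon$ and using the two cases already done gives an element of $\langle R, T\rangle$; and one must see it lands in the subgroup generated by the $\rho^x_{ij}$ with $x\in sL$, which follows because $\Upsilon^{-1}$ carries the relevant reflecting hyperplanes to the affine reflections fixing the sublattice $sL=\Upsilon^{-1}(L)$.

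Finally, the claim $R_{\mathrm{Aff}}=T\rtimes R$ I would deduce from the corresponding classical fact $W_{\mathrm{Aff}}=L\rtimes W$ for the affine Weyl group of type $A_n$ (a standard result, \cite{Hu}), simply by applying the isomorphism $\widetilde\Upsilon$, which carries $W_{\mathrm{Aff}}\to R_{\mathrm{Aff}}$, $L\to T$, $W\to R$, and respects the semidirect-product structure because it is conjugation inside $\Aff(V)$ and hence carries normal subgroups to normal subgroups and complements to complements. The main obstacle is purely bookkeeping: getting the normalization constant $\tfrac1s$ and the $\tfrac1{n+1}$ projection term to interact correctly, so that $\Upsilon^{-1}$ really does send the root lattice $L$ onto $sL$-translations rather than onto $L$-translations — in other words, making sure the "$s$" in Theorem~\ref{A factor of Atilde}'s simplex of side $s$ is the same "$s$" appearing in $T$ and in the formula for $\Upsilon$. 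Once the action of $\Upsilon$ on an explicit basis is written down, everything else is routine verification on generators.
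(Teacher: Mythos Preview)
Your proposal is essentially correct and follows the same core computations as the paper: verifying $\widetilde{\Upsilon}(s_{e_i-e_j}) = \rho_{ij}$ on the basis $\{\alpha_k\}$, computing $\Upsilon(s(\alpha_i-\alpha_j)) = e_j - e_i$ to identify $\widetilde{\Upsilon}(L) = T$, and invoking the standard fact $W_{\mathrm{Aff}} = L \rtimes W$ from \cite{Hu}.

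The one place where your plan and the paper's execution diverge is in handling $\widetilde{\Upsilon}(W_{\mathrm{Aff}}) = R_{\mathrm{Aff}}$. You propose to show directly that $\widetilde{\Upsilon}$ carries the generators $s_{\alpha,\ell}$ to generators $\rho^x_{ij}$, but your justification (``$\Upsilon^{-1}$ carries the relevant reflecting hyperplanes to the affine reflections fixing the sublattice $sL$'') is imprecise --- affine reflections do not fix a sublattice, and matching the generating sets requires checking both inclusions, since many $\rho^x_{ij}$ with different $x$ coincide. The paper organizes this more cleanly by reversing the order of your last two steps: from $W_{\mathrm{Aff}} = L \rtimes W$ and the first two identifications one immediately gets $\widetilde{\Upsilon}(W_{\mathrm{Aff}}) = T \rtimes R$, and then one shows $R_{\mathrm{Aff}} = T \rtimes R$ separately via the formula $\rho^x_{ij}(v) = \rho_{ij}(v) + (x - \rho_{ij}(x))$, checking by hand on $x = s\alpha_k$ that $\{x - \rho_{ij}(x) : x \in sL\}$ equals $T$. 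This last computation is the real content of your step~4 anyway, so the two routes coincide at the level of calculation; the paper's ordering simply avoids the awkward generator-to-generator matching.
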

\begin{proof}
Let $i,j,k \in \{0,\ldots, n\}$ and write $\rho_{ij}(\alpha_k) = \alpha_{k'}$. Set $\alpha = e_i -e_j \in \Phi$. Since $s_\alpha$ is induced by the automorphism of $\R^{n+1}$ that permutes the standard basis vectors by transpositioning $e_i$ and $e_j$ we obtain
\[
s_{\alpha}(\Upsilon(\alpha_k)) = \frac{1}{s}\left(-e_{k'} + \frac{1}{n+1}\sum_{i=0}^{n} e_i \right) = \Upsilon(\alpha_{k'}) = \Upsilon(\rho_{ij}(\alpha_k)),
\]
so $\widetilde{\Upsilon}(s_{\alpha}) = \rho_{ij}$ and $\widetilde{\Upsilon}(W) = R$.

Next notice that $\Upsilon(s(\alpha_i-\alpha_j)) = e_j - e_i$ and so $\Upsilon^{-1}(\Phi)$ generates $T$ as an abelian group. Hence $\widetilde{\Upsilon}$ sends the translations by elements in $L$ to the translations by elements in $T$.

Since $W_{\rm Aff} = L \rtimes W$ \cite[4.2]{Hu}, we have $\widetilde{\Upsilon}(W_{\rm Aff}) = T \rtimes R$. Observe that
\[
\rho^{x}_{ij}(v) = \rho_{ij}(v-x)+x = \rho_{ij}(v)+ x- \rho_{ij}(x).
\]
For $x = s\alpha_k$ we find
\[
x- \rho_{ij}(x) = \begin{cases} s(\alpha_i-\alpha_j) & k=i \\ s(\alpha_j-\alpha_i) & k=j \\ 0 & i \neq k \neq j,\end{cases}
\]
so $\{x - \rho_{ij}(x) \mid x \in sL\} = T$. Hence $R_{\rm Aff} = T \rtimes R = \widetilde{\Upsilon}(W_{\rm Aff})$.
\end{proof}

The $R_{\rm Aff}$-action on $V$ induces an $R_{\rm Aff}$-action on $Q_0$. This extends uniquely to an $R_{\rm Aff}$-action on $Q$. In particular $R_{\rm Aff}$ acts on the paths of $Q$ and this induces an action on $\Sigma$, the set of small cycles in $Q$. Let $\Sigma^{\circ}$ be the subset of small cycles contained in $Q^{\circ}$.

\begin{lemma}\label{fundamentalDomain}
The sets $Q^{\circ}_0$, $Q^{\circ}_1$ and $\Sigma^{\circ}$ are fundamental domains for the $R_{\rm Aff}$-actions on $Q_0$, $Q_1$ and $\Sigma$ respectively.
\end{lemma}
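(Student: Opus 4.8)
\emph{Proof strategy.} The plan is to transport the statement to the classical setting of affine reflection groups via the isomorphism of Lemma~\ref{AffineWeyl}, and then to invoke the fact that a closed alcove is a strict fundamental domain.

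First I would record that $R_{\rm Aff}=T\rtimes R$ is an affine reflection group on $V$ for which $\overline{A}:=\mathrm{conv}(L_\circ)$ is a closed alcove. By Lemma~\ref{AffineWeyl}, the linear automorphism $\widetilde{\Upsilon}$ conjugates the affine Weyl group $W_{\rm Aff}$ of $\Phi$ onto $R_{\rm Aff}$, so $\Upsilon$ carries the tessellation of $V$ by $W_{\rm Aff}$-alcoves onto the tessellation by $R_{\rm Aff}$-alcoves; a direct computation on the $n+1$ vertices $s(e_i-e_n)$ of $\overline{A}$ (identifying their images with $0$ and the vertices of the fundamental alcove) shows that $\Upsilon$ maps $\overline{A}$ onto a closed alcove of $W_{\rm Aff}$, so $\overline{A}$ is a closed alcove of $R_{\rm Aff}$; concretely its facet hyperplanes $\{v_i=0\}$ for $0\le i\le n-1$ and $\{v_n=-s\}$ are the fixed hyperplanes of $\rho_{01}$, of $\rho_{i,i+1}$ for $1\le i\le n-1$, and of $\rho^{s\alpha_0}_{0n}$. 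By the structure theory of affine reflection groups \cite{Hu}, $\overline{A}$ is then a strict fundamental domain: every $R_{\rm Aff}$-orbit in $V$ meets it in exactly one point, a point of the open alcove has trivial stabilizer, and the stabilizer of a point on a wall is generated by the reflections in the walls containing it.

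The vertex case is now immediate: $R$ permutes $\{\alpha_0,\dots,\alpha_n\}$ hence preserves $L=\bigoplus_l\Z\alpha_l$, and $T\le L$, so $R_{\rm Aff}$ preserves $Q_0=L$ and every orbit meets $L\cap\overline{A}=L_\circ=Q^{\circ}_0$ exactly once. For $Q_1$ and $\Sigma$ I would view $Q_1$ as $L\times\{0,\dots,n\}$ and $\Sigma$ as $L\times S_{n+1}$, with $R_{\rm Aff}$ acting geometrically on the first factor and, on the second, through the permutation $\pi_g$ of $\{0,\dots,n\}$ induced by the linear part of $g$; an arrow or small cycle lies in $Q^{\circ}$ exactly when all its vertices lie in $L_\circ$. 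For uniqueness, attach to an arrow the midpoint $v+\tfrac12\alpha_i$ of its vertices and to a small cycle the barycenter of its vertices; these are $R_{\rm Aff}$-equivariant, and for an object inside $Q^{\circ}$ the attached point $p$ lies in $\overline{A}$. If two such objects are conjugate via $g$, then $g$ fixes $p$; for each wall $H=\{\ell=c\}$ with $\overline{A}\subseteq\{\ell\ge c\}$ through $p$ one has $\ell(p)=c$, and since $p$ is an average of vertices with $\ell\ge c$, all those vertices lie on $H$, so the reflection $s_H$ fixes each vertex and hence the object; as $\mathrm{Stab}(p)$ is generated by such $s_H$, the object is $g$-fixed, so the two objects coincide.

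The step I expect to be the main obstacle is existence of a representative in $Q^{\circ}$: moving the tail of an arrow (or the first vertex of a cycle) into $L_\circ$ by the vertex case need not carry the remaining vertices along. Here I would prove that the support of an arrow (the segment $[v,v+\alpha_i]$) and of a small cycle (the convex hull of its $n+1$ vertices) always lies in a single closed alcove of $R_{\rm Aff}$ — elementary for arrows, and for small cycles to be extracted from how the partial sums of a permutation of $(\alpha_0,\dots,\alpha_n)$ sit relative to the reflecting hyperplanes — whence an element of $R_{\rm Aff}$ taking that alcove to $\overline{A}$ carries the object into $Q^{\circ}$. (For arrows one can argue instead by ``quantization'': each facet functional $\ell_F$ of $\overline{A}$ is integral on $L$ and takes values in $\{-1,0,1\}$ on simple roots, which forces the midpoint of a $Q^{\circ}$-candidate to have both endpoints in $L_\circ$; making such an argument work for small cycles when $n\ge 3$ is precisely the delicate point.) Throughout, the case $n=1$ reduces to the well-known description of the orientations of $A_\infty^\infty$ and can be quoted from the classical theory.
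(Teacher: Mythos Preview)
Your approach is in the same spirit as the paper's: transport via $\widetilde\Upsilon$ to $W_{\rm Aff}$ and use alcove theory. The vertex case and your uniqueness arguments are fine (indeed, your barycenter/stabilizer argument for uniqueness is more careful than the paper's, which leaves it essentially implicit). The genuine gap is exactly the one you flag yourself: the existence step for small cycles.

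The paper closes that gap with an observation you are circling but not landing on. Rather than trying to show directly that the convex hull of a small cycle lies in some closed alcove, the paper notes that the $n+1$ vertices of a small cycle affinely span $V$, so their $\Upsilon$-images cannot all lie on a single reflecting hyperplane; hence at least one vertex $\Upsilon(v)$ lies in an \emph{open} alcove. Simple transitivity of $W_{\rm Aff}$ on open alcoves then gives a unique $w$ with $w\Upsilon(v)\in A_\circ$, and a short explicit coordinate computation in the basis $\alpha_1,\dots,\alpha_n$ (using the description $\Upsilon^{-1}A_\circ=\{\sum m_k\alpha_k: 0>m_1>\cdots>m_n>-s\}$) shows that the remaining vertices, which differ from $rv$ by partial sums of the $\alpha_i$, satisfy the corresponding non-strict inequalities and hence land in $L_\circ$. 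This simultaneously gives existence and uniqueness: since the distinguished vertex is in the open alcove, the alcove---and therefore $w$---is uniquely determined. The arrow case then follows immediately from the small cycle case, since every arrow sits in some small cycle and is determined by its endpoints, which is simpler than your separate midpoint treatment.

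So your proposal is not wrong, but the step you label ``the main obstacle'' is precisely where the paper's argument is sharper: replace ``show the support lies in a closed alcove'' by ``find one vertex in an open alcove and check the rest by a one-line inequality''.
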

\begin{proof}
By \cite[4.5]{Hu} $W_{\rm Aff}$ acts simply transitively on the set of alcoves, which are the connected components of $V \setminus \bigcup_{\alpha,\ell} H_{\alpha,\ell}$ with fundamental alcove
\[
A_{\circ} = \{v \in V \mid (v,\alpha_0)>-1 \mbox{ and }(v,\alpha_i) >0 \mbox{ for all } 1\le i \le n \}.
\]
Moreover, its closure $\overline{A_{\circ}}$ is a fundamental domain for the $W_{\rm Aff}$-action on $V$ \cite[4.8]{Hu}. 

Let $v \in V$ and write $v = \sum_{k=1}^n m_k \Upsilon(\alpha_k)$. Then
\[
(v,\alpha_0) = \frac{m_n}{s}, \; (v,\alpha_1) = -\frac{m_1}{s} \mbox{ and } (v,\alpha_i) = \frac{m_{i-1}-m_{i}}{s}
\]
for all $2\le i \le n$. Thus $v \in A_{\circ}$ if and only if $ 0 > m_1 > \cdots > m_n > -s$ and
\[\begin{split}
\Upsilon^{-1}A_{\circ} 
&= \left\{\left. \sum_{k=1}^{n}m_k\alpha_k \ \right| 0 > m_1 > \cdots > m_n > -s  \right\} \\
&= \{v \in V \mid v_i > 0 \mbox{ for all } 0 \le i \le n-1 \mbox{ and } v_{n} > -s\}.
\end{split}
\]
Hence $L_{\circ} = \Upsilon^{-1}(\overline{A_{\circ}}) \cap L$ and so $Q^{\circ}_0$ is fundamental domain for the $R_{\rm Aff}$-action on $Q_0$, by Lemma~\ref{AffineWeyl}. Let $\varsigma$ be a small cycle and $X \subset V$ the set of vertices that it passes. Since $\Upsilon X$ does not lie in any hyperplane it contains a vertex $\Upsilon(v)$ that lies in some alcove. Since $W_{\rm Aff}$ acts simply transitively on the set of alcoves there is a unique $w \in W_{\rm Aff}$ such that $w\Upsilon(v) \in A_{\circ}$. By Lemma~\ref{AffineWeyl}, $r = \widetilde{\Upsilon}(w) \in R_{\rm Aff}$ and
\[
rv \in \left\{\left. \sum_{k=1}^{n}m_k\alpha_k \ \right| 0 > m_1 > \cdots > m_n > -s  \right\},
\]
which implies
\[\begin{split}
rX 
&\in \left\{\left. \sum_{k=1}^{n}m_k\alpha_k \ \right| 0 \ge m_1 \ge \cdots \ge m_n \ge -s  \right\} \cap L = L_{\circ}
\end{split}\]
Since every small cycle is determined by the vertices it passes we obtain that $\Sigma^{\circ}$ is a fundamental domain for the $R_{\rm Aff}$-actions on $\Sigma$.

Every arrow appears in some small cycle and is determined uniquely by its starting point and endpoint. Hence $Q^{\circ}_1$ is a fundamental domain for the $R_{\rm Aff}$-actions on $Q_1$.
\end{proof}

\begin{lemma}\label{reflectCut}
Let $C^{\circ}$ be a restricted cut in $Q^{\circ}$ and set $C = R_{\rm Aff}C^{\circ}$. Then $C$ is a bounding periodic cut with $s(n+1)L_{C} < T < L_C$.
\end{lemma}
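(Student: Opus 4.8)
The plan is to check the three defining properties of $C$ in turn, using Lemmas~\ref{AffineWeyl} and \ref{fundamentalDomain} as the main inputs.

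\emph{$C$ is a cut.} I would show that $C$ meets every small cycle of $Q$ in exactly one arrow. Given a small cycle $\varsigma$, Lemma~\ref{fundamentalDomain} provides a unique $\varsigma^{\circ}\in\Sigma^{\circ}$ and an $r\in R_{\rm Aff}$ with $\varsigma=r\varsigma^{\circ}$. Each arrow $b$ of $\varsigma^{\circ}$ lies in $Q^{\circ}_1$, which by the same lemma is a fundamental domain for the $R_{\rm Aff}$-action on $Q_1$; hence $b$ is the unique representative in $Q^{\circ}_1$ of the $R_{\rm Aff}$-orbit of $rb$, and since $C^{\circ}\subseteq Q^{\circ}_1$ we get $rb\in C=R_{\rm Aff}C^{\circ}$ if and only if $b\in C^{\circ}$. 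So the arrows of $\varsigma$ lying in $C$ are exactly the $r$-images of those arrows of $\varsigma^{\circ}$ lying in $C^{\circ}$, of which there is precisely one because $C^{\circ}$ is a restricted cut. (The same computation gives $C\cap Q^{\circ}_1=C^{\circ}$, used below.)

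\emph{$C$ is periodic, with $s(n+1)L_C<T<L_C$.} By construction $C=R_{\rm Aff}C^{\circ}$ is stable under $R_{\rm Aff}$, and by Lemma~\ref{AffineWeyl} the group $T$ is the normal subgroup in $R_{\rm Aff}=T\rtimes R$, so $T\le R_{\rm Aff}$; since on $T\le L$ the $R_{\rm Aff}$-action on $Q$ coincides with the translation action defining $L_C$, we get $T\subseteq L_C$. Conversely $\sum_{j=0}^{n}s(\alpha_k-\alpha_j)=s(n+1)\alpha_k$ for every $k$, so $s(n+1)L\subseteq T$ and in particular $s(n+1)L_C\subseteq T$. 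Finally, since the vectors $\alpha_i-\alpha_j$ span $V$ (and $s>0$), the lattice $T$ has full rank in $L$ and is therefore cofinite; being sandwiched between $T$ and $L$, the group $L_C$ is cofinite too, i.e.\ $C$ is periodic.

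\emph{$C$ is bounding.} This is the crux. Since $C^{\circ}$ is restricted, $\Gamma^{\circ}_{C^{\circ}}$ is finite dimensional (it is an $n$-representation finite algebra of type $A$, \cite{IO1}), equivalently there is $N$ such that every path in $Q^{\circ}$ avoiding $C^{\circ}$ has length $<N$; I would show the same $N$ bounds all paths in $Q$ avoiding $C$. Picking a cofinite $B\le L_C$ (available by the previous paragraph), it suffices to prove that the finite quiver obtained from $Q/B$ by deleting the arrows of $C/B$ is acyclic. The structural ingredient is that, because the commutativity relations are $g_C$-homogeneous and each small cycle carries exactly one $C$-arrow, a path in $Q$ avoiding $C$ cannot use all $n+1$ arrow directions $a_0,\dots,a_n$: if it did, commutativity would let one rewrite it inside $\Gamma$---without changing its $g_C$-degree---into a path beginning with a full small cycle, forcing positive $g_C$-degree, a contradiction. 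One then applies a suitable element of $R_{\rm Aff}$ to move one vertex of the path into $L_{\circ}$ and uses that the reflected copies of $(Q^{\circ},C^{\circ})$ tile $(Q,C)$, with re-entry across a wall controlled by the alcove combinatorics of Lemma~\ref{fundamentalDomain}, to see that the path---although it may leave and re-enter $Q^{\circ}$---remains of length $<N$. Turning this last reduction into a rigorous argument is the only step where real work beyond bookkeeping is needed; granting it, the three parts together establish the lemma.
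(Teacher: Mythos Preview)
Your treatment of ``$C$ is a cut'' and of the lattice inclusions $s(n+1)L_C<T<L_C$ (hence periodicity) is correct and essentially identical to the paper's proof.

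For ``$C$ is bounding'' you leave a genuine gap: you reduce to a tiling/alcove statement that you explicitly do not prove. The paper avoids this entirely by taking the quotient not by a cofinite $B\le L_C$ but by the full group $R_{\rm Aff}$. Indeed, once you know (from Lemma~\ref{fundamentalDomain}) that $Q^{\circ}_1$ is a fundamental domain for the $R_{\rm Aff}$-action on $Q_1$, and that $C=R_{\rm Aff}C^{\circ}$, it follows that $Q^{\circ}_1\setminus C^{\circ}$ is a fundamental domain for the action on $Q_1\setminus C$. Hence the orbit quiver $(Q_1\setminus C)/R_{\rm Aff}$ is exactly $Q^{\circ}_{C^{\circ}}$. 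Any path in $Q_C$ therefore projects, arrow by arrow, to a path of the \emph{same length} in $Q^{\circ}_{C^{\circ}}$; since the latter is the quiver of the finite-dimensional algebra $\Gamma^{\circ}_{C^{\circ}}$, its path lengths are bounded (by $sn$), and the same bound applies to $Q_C$. This one-line projection replaces your entire third paragraph and makes the direction-counting observation and the alcove re-entry argument unnecessary.
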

\begin{proof}
By Lemma~\ref{fundamentalDomain}, each small cycle has a unique arrow in $C$. Hence $C$ is a cut. Moreover, $R_{\rm Aff} $ acts on $Q_1\setminus C$ with fundamental domain $Q^{\circ}_1\setminus C^{\circ}$.

Now let $p$ be a path of length $l\ge 1$ in $Q_C$. Its orbit under $R_{\rm Aff}$ is a path of length $l$ in $Q/R_{\rm Aff}$ and so corresponds to a path of length $l$ in $Q^{\circ}_{C^{\circ}}$. But in $Q^{\circ}_{C^{\circ}}$ every path has length at most $sn$ so the same holds for $Q_C$ and thus $C$ is bounding.

Since $C=R_{\rm Aff}C^{\circ}$ and $T < R_{\rm Aff}$ we have $T< L_C$. By
\[
\sum_{i=0}^{n} s(\alpha_j - \alpha_i) = s(n+1)\alpha_j - s\sum_{i=0}^{n} \alpha_i = s(n+1)\alpha_j
\]
we have $s(n+1)L_{C} < T$. It follows that $L_C$ is cofinite and so $C$ is periodic.
\end{proof}

Now we are ready to prove Theorem~\ref{A factor of Atilde}.

\begin{proof}[Proof of Theorem~\ref{A factor of Atilde}]
The claim is well-known for the case $n=1$ so we assume that $n \ge 2$. Take $C= R_{\rm Aff}C^{\circ}$ and $B= s(n+1)L$. Then by Lemma~\ref{reflectCut}, $C$ is a bounding periodic cut and $B \le L_C$. Let $e \in K(Q_0/B)$ be the sum of the idempotents corresponding to the vertices in $(BQ^{\circ}_0)/B$. 

We claim that for any $0 \neq \lambda \in B$ we have $Q^{\circ}_0 \cap \lambda Q^{\circ}_0 = \emptyset$ and there are no arrows between $Q^{\circ}_0$ and $\lambda Q^{\circ}_0$ in $Q$. Assume on the contrary that there are $v,v' \in Q^{\circ}_0$ such that either $v =\lambda v'$ or there is an arrow between $v$ and $\lambda v'$. Since $v,v' \in Q^{\circ}_0$ we may write
\[
v - v' = \sum_{k=1}^{n} m_k \alpha_k,
\]
for some unique $-s \le m_k \le s$. On the other hand $\lambda = v - v'$ or $\lambda = v-v'\pm \alpha_i$ for some $i$ which implies $\lambda = 0$ as $\lambda \in s(n+1)L \subset (s+2)L$. 

It follows that sending paths to their $B$-orbits defines a bijection between the paths in $Q^{\circ}$ and the paths in $Q/B$ that pass only through the vertices in $(BL_{\circ})/B$. Hence there is an isomorphism $\phi \colon \Gamma^{\circ} \to (\Gamma/B)/(1-e)$ sending vertices and arrows to their orbits. Since $\phi$ is homogeneous of degree $0$ it induces an isomorphism from $\Gamma^{\circ}_{C_{\circ}}$ to $(\Gamma/B)_{C}/(1-e)$.
\end{proof}

\section{Non-commutative algebraic-geometric approach}\label{section: NCAG}

`Non-commutative algebraic geometry' has been a source of methods and ideas in representation theory.
One important example is Geigle-Lenzing's theory of weighted projective curves, where representation theory of canonical algebras is controlled by the geometry of weighted projective curves.
On the other hand, Nakayama functors are nowadays regarded as an analog of canonical sheaves over algebraic varieties since they play a similar role in derived categories.
This point was recently developed further by Minamoto \cite{M}.
He introduced a class of finite dimensional algebras called ``Fano algebras'', which are analog of Fano varieties from the viewpoint of the behaviour of the Serre functors.
Our $n$-representation infinite algebras are closely related to Minamoto's ``Fano algebras of dimension $n$''.
Precisely speaking, $n$-representation infinite algebras are ``extremely quasi-Fano algebras of dimension $n$'' in \cite{MM}.
The aim of this section is to show that the category $\RR$ of $n$-regular modules is equivalent to the category $\qgr_0\Pi$,
which should be understood as the category of coherent sheaves of dimension zero over the non-commutative projective scheme $\qgr\Pi$ in the context of Artin-Zhang's theory \cite{AZ}.

For a graded $K$-algebra $\Gamma$, we denote by $\qGr\Gamma$ and $\qgr\Gamma$ the quotient categories of $\Gr\Gamma$ and $\gr\Gamma$ (see Section~\ref{notation}) by the subcategory of torsion modules. One of the main results in \cite{M} is that, for any $n$-representation-infinite algebra
(or more generally, ``quasi-Fano algebra of dimension $n$'') $\Lambda$, and its preprojective algebra $\Pi$, there exists a triangle equivalence
$\Pi\Lotimes_\Lambda- \colon \DDD(\Mod\Lambda)\equi\DDD(\qGr\Pi)$ which makes the following diagram commutative:
\[\xymatrix{
\DDD(\Mod\Lambda)\ar^{\Pi\Lotimes_\Lambda-}[rr]\ar^{\nu_n^{-1}}[d]&&\DDD(\qGr\Pi)\ar^{(1)}[d]\\
\DDD(\Mod\Lambda)\ar^{\Pi\Lotimes_\Lambda-}[rr]&&\DDD(\qGr\Pi).}\]
He also gave a version of this equivalence for bounded derived categories of finitely generated modules, which is more subtle and plays an important role in our study in this section.
Let us introduce some terminology.

We denote by $(\DDD^{\le0}(\mod\Lambda),\DDD^{\ge0}(\mod\Lambda))$ the standard
t-structure \cite{BBD} of $\DDD^{\rm b}(\mod\Lambda)$, and similarly by $(\DDD^{\le0}(\qgr\Pi),\DDD^{\ge0}(\qgr\Pi))$ the standard t-structure on $\DDD^{\rm b}(\qgr\Pi)$.
We define full subcategories of $\DDD^{\rm b}(\mod \Lambda)$ as follows:
\begin{eqnarray*}
\TT^{\le0}&:=&\{X\in\DDD^{\rm b}(\mod\Lambda) \mid \nu_n^{-i}(X)\in\DDD^{\le0}(\mod\Lambda)\ \mbox{ for }\ i\gg0\},\\
\TT^{\ge0}&:=&\{X\in\DDD^{\rm b}(\mod\Lambda) \mid \nu_n^{-i}(X)\in\DDD^{\ge0}(\mod\Lambda)\ \mbox{ for }\ i\gg0\}.
\end{eqnarray*}

Now we can state Minamoto's result:

\begin{theorem}[{\cite[Theorem 3.7]{M}}] \label{Minamoto}
Let $\Lambda$ be an $n$-representation-infinite algebra and $\Pi$ the preprojective algebra.
Then $\Pi$ is left graded coherent if and only if $(\TT^{\le0},\TT^{\ge0})$ is a t-structure of $\DDD^{\rm b}(\mod \Lambda)$. 
In this case, we have a triangle equivalence $\Pi\Lotimes_\Lambda- \colon \DDD^{\bo}(\mod\Lambda)\equi\DDD^{\bo}(\qgr\Pi)$ which makes the following diagram commutative
\[\xymatrix{
\DDD^{\bo}(\mod\Lambda)\ar^{\Pi\Lotimes_\Lambda-}[rr]\ar^{\nu_n^{-1}}[d]&&\DDD^{\bo}(\qgr\Pi)\ar^{(1)}[d]\\
\DDD^{\bo}(\mod\Lambda)\ar^{\Pi\Lotimes_\Lambda-}[rr]&&\DDD^{\bo}(\qgr\Pi).}\]
and induces equivalences $\TT^{\le0}\equi\DDD^{\le0}(\qgr\Pi)$ and $\TT^{\ge0}\equi\DDD^{\ge0}(\qgr\Pi)$.
\end{theorem}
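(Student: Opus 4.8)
The strategy is to deduce the bounded statement from the unbounded triangle equivalence $F:=\Pi\Lotimes_\Lambda-\colon\DDD(\Mod\Lambda)\equi\DDD(\qGr\Pi)$ recalled just above, which satisfies $F\circ\nu_n^{-1}\iso(1)\circ F$ and $F(\Lambda)\iso\pi\Pi$, where $\pi\colon\Gr\Pi\to\qGr\Pi$ denotes the quotient functor. Since $F$ is an equivalence, $\Lambda$ is a compact generator with $\RHom_\Lambda(\Lambda,-)=\mathrm{id}$, and $F(\nu_n^{-i}(\Lambda))\iso(\pi\Pi)(i)$, one obtains for every $X\in\DDD^{\bo}(\mod\Lambda)$ and every $i$ a functorial isomorphism of complexes
\[
\RHom_{\qGr\Pi}\big(\pi\Pi,\,F(X)(i)\big)\iso\nu_n^{-i}(X)
\]
(the left-hand side computed degreewise via the section functor right adjoint to $\pi$). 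This identity is the bridge between the geometry on the $\qGr\Pi$-side and the functor $\nu_n$ on the $\mod\Lambda$-side, and will be used repeatedly.

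First I would prove the implication ``$\Pi$ left graded coherent $\Rightarrow$ $(\TT^{\le0},\TT^{\ge0})$ is a t-structure''. Assuming coherence, $\qgr\Pi$ is an abelian subcategory of $\qGr\Pi$, so $\DDD^{\bo}(\qgr\Pi)$ carries its standard t-structure, and one checks that $F$ restricts to an equivalence $\DDD^{\bo}(\mod\Lambda)\equi\DDD^{\bo}(\qgr\Pi)$: here $F(\Lambda e)=(\pi\Pi)e$ is finitely presented, $\DDD^{\bo}(\mod\Lambda)$ is generated by $\add\Lambda$ under cones and shifts, and coherence ensures $\qgr\Pi$ is closed under the relevant kernels; the quasi-inverse preserves finiteness dually, using $\Pi_i\iso\tau_n^{-i}(\Lambda)\in\mod\Lambda$. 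Transporting the standard t-structure of $\DDD^{\bo}(\qgr\Pi)$ through $F$ then yields a t-structure on $\DDD^{\bo}(\mod\Lambda)$, and it remains to identify its two halves. For this I would set up an Artin--Zhang--type ampleness dictionary for the pair $(\pi\Pi,(1))$, characterizing $\DDD^{\le0}(\qgr\Pi)$ (resp.\ $\DDD^{\ge0}(\qgr\Pi)$) as the objects $Y$ with $\RHom_{\qGr\Pi}(\pi\Pi,Y(i))$ concentrated in non-positive (resp.\ non-negative) cohomological degrees for $i\gg0$. Combining this with the displayed isomorphism immediately gives $F(X)\in\DDD^{\le0}(\qgr\Pi)\iff X\in\TT^{\le0}$ and $F(X)\in\DDD^{\ge0}(\qgr\Pi)\iff X\in\TT^{\ge0}$, so $(\TT^{\le0},\TT^{\ge0})$ is a t-structure, $F$ is t-exact, and it induces equivalences $\TT^{\le0}\equi\DDD^{\le0}(\qgr\Pi)$ and $\TT^{\ge0}\equi\DDD^{\ge0}(\qgr\Pi)$; compatibility with $\nu_n^{-1}$ and $(1)$ is inherited from $F$.

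For the converse, if $(\TT^{\le0},\TT^{\ge0})$ is a t-structure on $\DDD^{\bo}(\mod\Lambda)$ then its heart $\AA$ is abelian; using the displayed isomorphism one identifies $F(\AA)$ with the full subcategory of degree-$0$, finitely generated objects of $\qGr\Pi$, i.e.\ with $\qgr\Pi$, so $\qgr\Pi$ is abelian, hence closed under kernels, which is exactly left graded coherence of $\Pi$ (compare Proposition~\ref{coherent}). The technical heart---and the step I expect to be the main obstacle---is establishing the ampleness/$\chi$-type condition for $(\pi\Pi,(1))$ in the merely coherent (not necessarily Noetherian) setting, namely that every object of $\qgr\Pi$ is a quotient of a finite sum of twists $(\pi\Pi)(-i_j)$ with $i_j\gg0$ and that $\underline{\Ext}^{>0}_{\qGr\Pi}(\pi\Pi,-)(i)$ vanishes for $i\gg0$; both follow from the bimodule $(n+1)$-Calabi--Yau property of $\Pi$ (Theorem~\ref{preprojective correspondence}) together with $n$-representation infiniteness, via the canonical bimodule resolution of $\Pi$. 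A secondary point demanding care is verifying that $F$ and its quasi-inverse genuinely preserve the finiteness conditions defining $\mod\Lambda$ and $\qgr\Pi$ once coherence is in force.
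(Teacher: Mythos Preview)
The paper does not contain a proof of this theorem: it is simply quoted as \cite[Theorem~3.7]{M} and then used as a black box from Section~6.1 onward. There is therefore nothing in this paper to compare your proposal against.

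That said, your outline is broadly in the spirit of Minamoto's approach, which does pass through the unbounded equivalence $\DDD(\Mod\Lambda)\equi\DDD(\qGr\Pi)$ and an ampleness argument for the pair $(\pi\Pi,(1))$ in the sense of Artin--Zhang. Two points in your sketch would need real work if you actually wanted to carry it out. First, your ``ampleness dictionary'' characterizing $\DDD^{\le0}(\qgr\Pi)$ and $\DDD^{\ge0}(\qgr\Pi)$ via $\RHom(\pi\Pi,(-)(i))$ for $i\gg0$ is exactly the substance of Minamoto's theorem, not a triviality one can quote; in the merely coherent (non-Noetherian) setting the Artin--Zhang $\chi$-condition and Serre vanishing are genuinely delicate, and Minamoto's paper is largely devoted to this. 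Second, in the converse direction your identification of the heart $\AA$ with $\qgr\Pi$ presupposes that $F$ restricts to a bounded equivalence, which you have only argued under the coherence hypothesis; one has to proceed differently, extracting coherence directly from the existence of truncation functors on $\DDD^{\bo}(\mod\Lambda)$ for $(\TT^{\le0},\TT^{\ge0})$.
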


\subsection{Description of $n$-regular modules}

Throughout this section, let $\Lambda$ be an $n$-representation infinite algebra and $\Pi$ the preprojective algebra of $\Lambda$.
In this section, we will see that methods from non-commutative algebraic geometry
are quite useful in the study of the category $\RR$ in terms of the preprojective algebra $\Pi$.

Throughout this section we assume that $\Pi$ is left graded coherent.
We identify $\DDD^{\bo}(\qgr\Pi)$ with $\DDD^{\bo}(\mod\Lambda)$ by the triangle equivalence given in Theorem~\ref{Minamoto}. Then we have the identifications
\[(1)=\nu_n^{-1},\ \DDD^{\le0}(\qgr\Pi)=\TT^{\le0},\ \DDD^{\ge0}(\qgr\Pi)=\TT^{\ge0} \text{, and}\]
\begin{equation}\label{what is qgr}
\qgr\Pi=\TT^{\le0}\cap\TT^{\ge0}=\{X\in\DDD^{\bo}(\mod\Lambda) \mid \nu_n^{-i}(X)\in\mod\Lambda\ \forall i\gg0\}.
\end{equation}

Following tradition in (non-commutative) algebraic geometry, we denote by $\OO$ the image of $\Pi$ in $\qgr\Pi$.
For a set $I$ of integers, let
\[\OO(I):=\add\{\OO(i) \mid i\in I\}.\]
Since $\nu_n^{-i}(\Lambda)=\OO(i)$ for any $i\in\Z$, we have
\[\UU=\OO(\Z),\ \PP=\OO(\Z_{\ge0})\ \mbox{ and }\ \II[-n]=\OO(\Z_{<0}).\]
Moreover we have the following observations.

\begin{proposition}\label{P, I and O}
\begin{itemize}
\item[(a)] $\CC\cap\qgr\Pi=\II[-n]\vee\PP\vee\RR$.
\item[(b)] $\DDD^{\le0}(\qgr\Pi)\supset\DDD^{\le0}(\mod\Lambda)$ and $\DDD^{\ge0}(\qgr\Pi) \subset\DDD^{\ge0}(\mod\Lambda)$.
\end{itemize}
\end{proposition}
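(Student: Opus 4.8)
The plan is to derive both statements directly from the defining formulas \eqref{what is qgr}, from the identification $(1) = \nu_n^{-1}$ coming from Theorem~\ref{Minamoto}, and from the two basic inclusions of Proposition~\ref{t-structure}. For part (b), note that $\DDD^{\le0}(\qgr\Pi) = \TT^{\le0} = \{X \mid \nu_n^{-i}(X) \in \DDD^{\le0}(\mod\Lambda)\ \text{for}\ i \gg 0\}$. So given $X \in \DDD^{\le0}(\mod\Lambda)$, I would just need $\nu_n^{-i}(X) \in \DDD^{\le0}(\mod\Lambda)$ for all $i \ge 0$, which is exactly Proposition~\ref{t-structure}(b) applied $i$ times. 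This gives $\DDD^{\le0}(\qgr\Pi) \supset \DDD^{\le0}(\mod\Lambda)$. The other inclusion $\DDD^{\ge0}(\qgr\Pi) \subset \DDD^{\ge0}(\mod\Lambda)$ is the same statement transported across the duality: indeed $\TT^{\ge0}$ consists of those $X$ with $\nu_n^{-i}(X) \in \DDD^{\ge0}(\mod\Lambda)$ for $i \gg 0$, and since $\nu_n^{i}(\DDD^{\ge0}) \subset \DDD^{\ge0}$ by Proposition~\ref{t-structure}(a), if $\nu_n^{-i}(X) \in \DDD^{\ge0}(\mod\Lambda)$ then applying $\nu_n^{i}$ gives $X \in \DDD^{\ge0}(\mod\Lambda)$. (Alternatively one can observe $\DDD^{\ge0}(\qgr\Pi) \subset \DDD^{\ge0}(\mod\Lambda)$ follows from $\DDD^{\le0}(\qgr\Pi) \supset \DDD^{\le0}(\mod\Lambda)$ by taking orthogonals with respect to the two t-structures.)

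For part (a), I would argue by double inclusion. For ``$\subseteq$'': take $X \in \CC \cap \qgr\Pi$. By Proposition~\ref{basic of C}(a) we have $\CC \subset \DDD^{n\Z}(\mod\Lambda)$, so by Proposition~\ref{nZ}, $X \cong \bigoplus_{\ell} \Ho^{\ell n}(X)[-\ell n]$ decomposes into shifts of honest modules; since $\CC$ is closed under the summands of such a decomposition (it is $\add$-closed and $[n]$-stable), each $\Ho^{\ell n}(X)[-\ell n] \in \CC$, hence $\Ho^{\ell n}(X) \in \CC^0$. But also $X \in \qgr\Pi$ means $\nu_n^{-i}(X) \in \mod\Lambda$ for $i \gg 0$; using the decomposition and Proposition~\ref{t-structure}, one sees that all shifts $\ell \ne 0$ must vanish except that the $\ell < 0$ part is killed by negative shift and the $\ell > 0$ part by the module condition after enough applications of $\nu_n^{-1}$ — so in fact $X \in \mod\Lambda$, i.e. $X \in \CC^0$. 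Then Theorem~\ref{trichotomy}(a) gives $\CC^0 = \PP \vee \RR \vee \II$, and since $\PP, \RR, \II \subset \qgr\Pi$ (because $\PP = \OO(\Z_{\ge 0})$, $\II[-n] = \OO(\Z_{<0})$, and $\RR$ by Proposition~\ref{nakayama R}), we land in $\II[-n] \vee \PP \vee \RR$ — but wait, $\II \ne \II[-n]$, so I must be more careful: actually what lies in $\CC \cap \qgr\Pi$ from $\CC^0$ is exactly the part whose $\nu_n^{-i}$-orbit eventually lands in $\mod\Lambda$, and since $\II = \nu_n^{n}(\II[-n])\cdot(\text{shift})$... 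Let me instead organize ``$\subseteq$'' as: $\CC \cap \qgr\Pi \subseteq \CC = \bigvee_\ell \CC^0[\ell n]$, and an object of $\CC^0[\ell n] = (\PP \vee \RR \vee \II)[\ell n]$ lies in $\qgr\Pi$ iff $\nu_n^{-i}$ of it is a module for $i \gg 0$; applying $\nu_n^{-i}$ to $\II[\ell n]$ eventually produces $\proj\Lambda[(\ell+1)n]$ shifted further, which is a module only when $\ell = -1$, while $\PP[\ell n]$ and $\RR[\ell n]$ stay modules only for $\ell = 0$. This yields precisely $\PP \vee \RR \vee \II[-n]$. For ``$\supseteq$'': $\PP, \RR \subset \CC^0 \subset \CC$ and $\II[-n] \subset \UU \subset \CC$ by Proposition~\ref{P and I}(c) and Proposition~\ref{basic of C}(a); and all three lie in $\qgr\Pi$ as noted. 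Hence $\II[-n] \vee \PP \vee \RR \subset \CC \cap \qgr\Pi$.

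The main obstacle I anticipate is the careful bookkeeping in the ``$\subseteq$'' direction of (a): one must pin down exactly which shifted copies $\CC^0[\ell n]$ survive the condition ``$\nu_n^{-i}(X) \in \mod\Lambda$ for $i \gg 0$.'' The key technical input is that applying $\nu_n^{-1}$ to an injective (a summand of $D\Lambda[\ell n]$) produces $\nu_n^{-1}(D\Lambda)[\ell n] = \Ext^n_\Lambda(D\Lambda,\Lambda)[\ell n]$, a module in degree $\ell n$, and then further applications of $\nu_n^{-1}$ inside $\II$ keep producing modules until one reaches $\proj\Lambda[(\ell+1)n]$, which is a genuine module precisely when $\ell = -1$ — combined with Proposition~\ref{modules under nakayama}(a) to handle intermediate steps. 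Once this degree-tracking is made precise, (a) follows, and (b) is essentially immediate from Proposition~\ref{t-structure}. No serious geometric input is needed beyond the identification $(1) = \nu_n^{-1}$ already recorded after Theorem~\ref{Minamoto}.
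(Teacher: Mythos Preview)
Your proposal is correct and follows essentially the same approach as the paper: for (b) the paper also simply invokes Proposition~\ref{t-structure} to obtain $\TT^{\le0}\supset\DDD^{\le0}(\mod\Lambda)$ and $\TT^{\ge0}\subset\DDD^{\ge0}(\mod\Lambda)$, and for (a) it uses $\CC=\bigvee_{\ell\in\Z}(\PP\vee\RR\vee\II)[\ell n]$ from Theorem~\ref{trichotomy} and then checks that $\nu_n^{-i}$ eventually sends $\II[-n]$, $\PP$, $\RR$ into $\PP$, $\PP$, $\RR\subset\mod\Lambda$ respectively---precisely the degree-tracking you arrive at in your second (correct) organization of ``$\subseteq$''. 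Your initial attempt to force $X\in\CC^{0}$ directly was indeed off (since $\II[-n]\not\subset\CC^{0}$), but your recovery to the shifted-copies argument is exactly what is needed.
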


\begin{proof}
(a) We have $\CC=\bigvee_{\ell\in\Z}(\PP\vee\RR\vee\II)[\ell n]$ by Theorem~\ref{trichotomy}. Moreover we have
\begin{align*}
\nu_n^{-i}(X)\subset\PP&\ \ \ (i\gg0)\ \ \ \forall\ X\in\II[-n]&&\mbox{by definition,}\\
\nu_n^{-i}(\PP)\subset\PP&\ \ \ (i\ge0)&&\mbox{by definition,}\\
\nu_n^{-i}(\RR)\subset\RR&\ \ \ (i\ge0)&&\mbox{by Proposition~\ref{nakayama R}.}
\end{align*}
Thus the assertion follows.

(b) Since we have $\TT^{\le0}\supset\DDD^{\le0}(\mod\Lambda)$ and $\TT^{\ge0}\subset\DDD^{\ge0}(\mod\Lambda)$ by Proposition~\ref{t-structure}, we have the assertions.
\end{proof}

\begin{lemma}\label{intersection}
The following subcategories of $\DDD^{\bo}(\mod\Lambda)=\DDD^{\bo}(\qgr\Pi)$ are the same.
\begin{itemize}
\item[(a)] $\mod\Lambda\cap\qgr\Pi$.
\item[(b)] $\{X\in\mod\Lambda \mid \forall i\ge0 \colon \nu_n^{-i}(X)\in\mod\Lambda\}$.
\item[(c)] $\{X\in\mod\Lambda \mid  \forall i<n \colon \Ext^i_\Lambda(\II,X)=0\}$.
\item[(d)] $\{X\in\qgr\Pi \mid  \forall i>0 \colon \Ext^i_{\qgr\Pi}(\OO,X)=0\}$.
\item[(e)] $\{X\in\qgr\Pi \mid \forall i>0 \colon \Ext^i_{\qgr\Pi}(\OO(\Z_{\le0}),X)=0\}$.
\end{itemize}
\end{lemma}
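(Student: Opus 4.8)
The plan is to prove Lemma~\ref{intersection} by establishing a cycle of implications among the five descriptions, using the identifications already set up via Theorem~\ref{Minamoto}. The equivalence $\text{(a)}=\text{(b)}$ is essentially \eqref{what is qgr} together with Proposition~\ref{modules under nakayama}(a): if $X\in\mod\Lambda$ lies in $\qgr\Pi$, then $\nu_n^{-i}(X)\in\mod\Lambda$ for $i\gg0$, and Proposition~\ref{modules under nakayama}(a) fills in the intermediate indices, giving $\nu_n^{-i}(X)\in\mod\Lambda$ for all $i\ge0$; the converse is immediate from \eqref{what is qgr}. The equivalence $\text{(b)}=\text{(c)}$ is precisely Lemma~\ref{Ext and nu}(b) combined with Observation~\ref{applied n-shifted serre}: the condition $\Ext^i_\Lambda(X,\II)=0$ for all $i>0$ is equivalent, via the $n$-shifted Serre duality in Observation~\ref{applied n-shifted serre}, to $\Ext^i_\Lambda(\II,X)=0$ for all $i<n$ (and for $i\le 0$ this is either automatic or the statement $\Hom_\Lambda(\II,X)$ being unconstrained — I should be careful here and check that the indexing in (c) really means $0<i<n$ together with possibly $\Hom$, matching the left-hand side of Lemma~\ref{Ext and nu}(b)).

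Next I would handle the geometric side. For $\text{(a)}\Leftrightarrow\text{(d)}$, recall that under the identification $\DDD^{\bo}(\qgr\Pi)=\DDD^{\bo}(\mod\Lambda)$ we have $\OO=\Lambda$ and $(1)=\nu_n^{-1}$, and $\Ext^i_{\qgr\Pi}(\OO,X)=\Hom_{\DDD^{\bo}(\mod\Lambda)}(\Lambda,X[i])=\Ho^i(X)$. Thus the condition in (d) says exactly that $X$, viewed in $\DDD^{\bo}(\mod\Lambda)$, has no positive cohomology; combined with $X\in\qgr\Pi\subset\DDD^{\ge 0}(\mod\Lambda)$ (which follows since $\qgr\Pi\subset\TT^{\ge0}\subset\DDD^{\ge0}(\mod\Lambda)$ by Proposition~\ref{P, I and O}(b)), this forces $X\in\mod\Lambda$, giving (d)$\Rightarrow$(a); conversely if $X\in\mod\Lambda\cap\qgr\Pi$ then $\Ho^i(X)=0$ for $i\ne 0$, so (d) holds. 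Finally, $\text{(d)}\Leftrightarrow\text{(e)}$: clearly (e)$\Rightarrow$(d) since $\OO\in\OO(\Z_{\le0})$. For (d)$\Rightarrow$(e), note $\Ext^i_{\qgr\Pi}(\OO(-j),X)=\Ext^i_{\qgr\Pi}(\OO,\nu_n^{-j}(X))=\Ho^i(\nu_n^{-j}(X))$ for $j\ge 0$; by the already-proven equivalence $\text{(a)}=\text{(b)}$ applied to $X$, we have $\nu_n^{-j}(X)\in\mod\Lambda$ for all $j\ge0$, so all these higher cohomologies vanish. This closes the cycle.

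The step I expect to require the most care is the passage between the ``module-theoretic'' conditions (a)--(c) and the ``sheaf-theoretic'' conditions (d)--(e), specifically making sure the inclusion $\qgr\Pi\subseteq\DDD^{\ge0}(\mod\Lambda)$ from Proposition~\ref{P, I and O}(b) is correctly invoked so that a complex in $\qgr\Pi$ with no positive cohomology is genuinely a module — without this one only gets that $X$ is concentrated in degree $0$ among nonnegative degrees but could a priori have negative cohomology. Everything else is bookkeeping with the dictionary $\OO=\Lambda$, $(1)=\nu_n^{-1}$, $\Ext^i_{\qgr\Pi}(\OO,-)=\Ho^i(-)$, together with Lemma~\ref{Ext and nu}, Observation~\ref{applied n-shifted serre}, and Proposition~\ref{modules under nakayama}(a). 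I would present the proof as: first record the dictionary in one line, then prove (a)$\Leftrightarrow$(b) via Proposition~\ref{modules under nakayama}(a), then (b)$\Leftrightarrow$(c) via Lemma~\ref{Ext and nu}(b) and Observation~\ref{applied n-shifted serre}, then (a)$\Leftrightarrow$(d) via Proposition~\ref{P, I and O}(b), then (d)$\Leftrightarrow$(e) via the degree-shift and (a)$=$(b).
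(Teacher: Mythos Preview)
Your proposal is correct and follows essentially the same route as the paper: the same references (Proposition~\ref{modules under nakayama}(a), Lemma~\ref{Ext and nu}(b), Observation~\ref{applied n-shifted serre}, Proposition~\ref{P, I and O}(b)) are invoked at the same places, and you correctly isolate the one nontrivial point, namely that $\qgr\Pi\subset\DDD^{\ge0}(\mod\Lambda)$ is needed so that the vanishing of positive cohomology in (d) really forces $X\in\mod\Lambda$. The only cosmetic difference is in (d)$\Rightarrow$(e): the paper uses the identification $\OO(\Z_{<0})=\II[-n]$ and the bound $\gl\Lambda\le n$ to get $\Ext^i_{\qgr\Pi}(\OO(\Z_{<0}),X)=\Ext^{n+i}_\Lambda(\II,X)=0$, whereas you use the degree shift and (a)=(b); both are equally short and valid.
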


\begin{proof}
By Proposition~\ref{modules under nakayama}(a), if $X$ and $\nu_n^{-i}(X)$ belong to $\mod\Lambda$ for some $i\ge0$, then $\nu_n^{-j}(X)\in\mod\Lambda$ for any $j$ with $0\le j\le i$.
Thus (a) and (b) are the same by the description \eqref{what is qgr} of $\qgr\Pi$.

(b) and (c) are the same by Lemma~\ref{Ext and nu} and Observation~\ref{applied n-shifted serre}.

For any $X\in\mod\Lambda\cap\qgr\Pi$, we have $\Ext^i_{\qgr\Pi}(\OO,X)=
\Ext^i_\Lambda(\Lambda,X)=0$ for any $i>0$. Thus (a) is contained in (d). 
If $X\in\qgr\Pi$ satisfies $\Ext^i_{\qgr\Pi}(\OO,X)=0$ for any $i>0$, then
$X\in\DDD^{\le0}(\mod\Lambda)\cap\qgr\Pi\subset\DDD^{\le0}(\mod\Lambda)\cap\DDD^{\ge0}(\mod\Lambda)\cap\qgr\Pi=\mod\Lambda\cap\qgr\Pi$
by Proposition~\ref{P, I and O}(b). Thus (a) and (d) are the same.

Finally the category (e) is also the same since it is clearly contained in (d)
and conversely any $X\in\mod\Lambda\cap\qgr\Pi$ satisfies $\Ext^i_{\qgr\Pi}(\OO(\Z_{<0}),X)=\Ext^{n+i}_\Lambda(\II,X)=0$ for any $i>0$ by (c) and
hence belongs to the category (e).
\end{proof}

We introduce non-commutative analogue of local cohomology \cite{AZ,BH,BV}.

\begin{definition}\label{local cohomology}
Let $\Gamma=\bigoplus_{i\ge0}\Gamma_i$ be a graded $K$-algebra with $\dim\Gamma_i<\infty$ for any $i\ge0$.
For $X\in\gr\Gamma$, we define the \emph{$i$-th local cohomology} by
\[\Ho^i_{\mm}(X):=\varinjlim_j \bigoplus_{\ell\in\Z}\Ext^i_{\Gamma}(\Gamma_{<j},X(\ell)), \]
where the symbol $\mm$ refers to the ideal $\bigoplus_{i>0}\Gamma_i$ of $\Gamma$.

Define the \emph{dimension} of $X$ by
\[\dim X:=\sup\{i\ge0 \mid \Ho^i_{\mm}(X)\neq0\}.\]
For each $\ell\ge0$, define the full subcategory of $\gr\Gamma$ by
\[\gr_\ell\Gamma:=\{X\in\gr\Gamma \mid \dim X\le\ell\}\]
and let $\qgr_{\ell-1}\Gamma$ be the corresponding full subcategory of $\qgr\Gamma$.
\end{definition}

Now we have the following description of $n$-regular modules.

\begin{theorem}\label{description of R}
Let $\Lambda$ be an $n$-representation infinite algebra, $\RR$ the category of $n$-regular modules and $\Pi$ the preprojective algebra of $\Lambda$.
Assume that $\Pi$ is left graded coherent. Then
\[\RR=\qgr_0\Pi.\]
\end{theorem}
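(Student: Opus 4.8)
The plan is to identify $\RR$ with the category in Lemma~\ref{intersection} and then match it against $\qgr_0\Pi$ using a dimension shift governed by the $n$-Calabi--Yau property of $\Pi$. First I would rewrite the defining conditions for $\RR$. By Observation~\ref{almost definition of R} and Proposition~\ref{nakayama R} we have $\RR=\{X\in\mod\Lambda\mid\forall i\in\Z\colon\nu_n^i(X)\in\mod\Lambda\}$, which by Lemma~\ref{intersection}(a)$=$(b) is precisely $(\mod\Lambda\cap\qgr\Pi)\cap\{X\mid\forall i\ge0\colon\nu_n^i(X)\in\mod\Lambda\}$. In the notation of Theorem~\ref{Minamoto}, the first factor is $\mod\Lambda$ viewed inside $\qgr\Pi$ and the condition "$\nu_n^{i}(X)\in\mod\Lambda$ for all $i\ge0$" translates, under the identification $(1)=\nu_n^{-1}$, to a vanishing of the (graded) cohomology of $X$ in negative internal twists in one direction; combined with the coherent-sheaf picture, $\RR$ is the subcategory of objects $X\in\qgr\Pi$ that are "bounded in both twist directions", i.e. lie in $\CC\cap\qgr\Pi$ and have no summands in $\OO(\Z_{\ge0})=\PP$ or $\OO(\Z_{<0})=\II[-n]$. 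So by Proposition~\ref{P, I and O}(a), $\RR=\{X\in\CC\cap\qgr\Pi\mid \Hom(\II,X)=0=\Hom(X,\PP)\}$, recovering Observation~\ref{almost definition of R}; the real content is to match this with the $\dim\le 0$ condition.

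Next I would compute local cohomology of objects of $\qgr\Pi$ in terms of $\Lambda$-module data. The key input is that $\Pi$ is a bimodule $(n+1)$-Calabi--Yau algebra of Gorenstein parameter $1$ (Theorem~\ref{preprojective correspondence}), so by Proposition~\ref{CY algebra gives CY triangulated} the category $\DDD^{\bo}_{\fd\Pi}(\Mod\Pi)$ is $(n+1)$-Calabi--Yau, and — this is the standard non-commutative Serre duality, cf.\ \cite{AZ} — local cohomology is dual to $\Ext$ in $\qgr\Pi$: for $X\in\qgr\Pi$ one has $\Ho^{i+1}_{\mm}(\widetilde X)\iso D\bigoplus_{\ell}\Ext^{n-i}_{\qgr\Pi}(X,\OO(\ell))$ (up to the usual degree shifts), where $\widetilde X$ is a graded module representing $X$. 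Hence $\dim X\le 0$ means $\Ho^i_{\mm}=0$ for $i\ge 1$, equivalently $\Ext^{n-i}_{\qgr\Pi}(X,\OO(\ell))=0$ for all $i\ge 1$ and all $\ell$, i.e.\ $\Ext^j_{\qgr\Pi}(X,\OO(\Z))=0$ for all $j<n$. Translating via Theorem~\ref{Minamoto}'s equivalence ($\OO(\ell)\leftrightarrow\nu_n^{-\ell}(\Lambda)$, so $\OO(\Z)\leftrightarrow\UU$) and using $\UU=\II[-n]\vee\PP$ (Proposition~\ref{P and I}(c)) together with Observation~\ref{applied n-shifted serre}, the condition $\Ext^j_\Lambda(X,\UU)=0$ for $0<j<n$ together with $\Hom_\Lambda(X,\PP)=0$ and $\Ext^n_\Lambda(X,\II[-n])=\Hom_\Lambda(X,\II)[\text{shift}]$... becomes exactly $X\in\CC^0$ with $\Hom_\Lambda(X,\PP)=0$ and $\Ext^n_\Lambda(X,\II)=0$ — wait, I must be careful about which twist direction gives which vanishing — it becomes exactly the description of $\RR$ in Observation~\ref{almost definition of R}.

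So the skeleton is: (1) $\qgr_0\Pi\subseteq\mod\Lambda\cap\qgr\Pi$, because $\dim X\le 0$ forces in particular $\Ho^{\ge 1}_{\mm}=0$, hence $\Ext^{>0}_{\qgr\Pi}(\OO,X)=0$, so $X$ lies in category (d) $=$ (a) of Lemma~\ref{intersection}; (2) within $\mod\Lambda\cap\qgr\Pi$, the further vanishings $\Ho^i_{\mm}(X)=0$ for $1\le i\le n$ are, by Serre duality for $\qgr\Pi$, equivalent to $\Ext^j_{\qgr\Pi}(\OO(\Z),X)=0$ for $0<j\le n$, hence (translating to $\Lambda$) to $X\in\CC^0$ with $\Hom_\Lambda(X,\PP)=0=\Hom_\Lambda(\II,X)$, which is exactly $\RR$ by Observation~\ref{almost definition of R}; (3) conversely every $X\in\RR$ lies in $\qgr\Pi$ (by Proposition~\ref{nakayama R} and \eqref{what is qgr}) and satisfies these $\Ext$-vanishings, hence $\dim X\le 0$. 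The main obstacle I expect is step (2): pinning down the precise form of non-commutative Serre duality relating $\Ho^i_{\mm}$ to $\Ext_{\qgr\Pi}(-,\OO(\ell))$ with the correct cohomological and internal degree shifts — this requires knowing that $\Pi$ has finite global dimension as needed for the duality to hold on $\qgr\Pi$ (not just on $\DDD^{\bo}_{\fd\Pi}$), and that the Calabi--Yau shift is $n+1$ with Gorenstein parameter $1$ so that the "top" local cohomology $\Ho^{n+1}_{\mm}$ matches $\Hom_{\qgr\Pi}(X,\OO(\ell))$ and lower ones match higher $\Ext$'s. Once that duality is set up cleanly (citing \cite{AZ,M,MM}), steps (1) and (3) and the final identification are bookkeeping through Lemma~\ref{intersection}, Proposition~\ref{P, I and O} and Observation~\ref{almost definition of R}.
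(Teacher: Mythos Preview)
Your skeleton in steps (1)--(3) is essentially the paper's argument, but you have misidentified the key tool and thereby manufactured an obstacle that is not there. The paper does \emph{not} use non-commutative Serre duality or the Calabi--Yau property of $\Pi$ at all. Instead it uses the elementary relation between local cohomology and $\Ext$ from $\OO$ (not to $\OO$): for $X\in\gr\Pi$ there is an isomorphism $\underline{\Ext}^i(\OO,X)\cong\Ho^{i+1}_{\mm}(X)$ for $i\ge1$, together with a four-term exact sequence for $i=0$ (this is Proposition~\ref{Ext and Hm}, quoted from \cite{BV}). This holds for any graded coherent $\Pi$ with degreewise finite pieces; no duality, no Gorenstein parameter, no finite global dimension is needed. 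From it one reads off immediately $\qgr_0\Pi=\{X\in\qgr\Pi\mid\Ext^i_{\qgr\Pi}(\OO(\Z),X)=0\ \forall i>0\}$ (Lemma~\ref{description of qgr_l}).

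With this in hand the paper's proof is a four-line chain of equalities: start from $\RR=\{X\in\mod\Lambda\mid\Ext^{n-i}_\Lambda(\II,X)=0=\Ext^i_\Lambda(\PP,X)\ \forall i>0\}$ (Observation~\ref{almost definition of R}); absorb the $\II$-condition into $X\in\mod\Lambda\cap\qgr\Pi$ via Lemma~\ref{intersection}(a)$=$(c); rewrite $\Ext^i_\Lambda(\PP,-)$ as $\Ext^i_{\qgr\Pi}(\OO(\Z_{\ge0}),-)$; replace $\OO(\Z_{\ge0})$ by $\OO(\Z)$ using Lemma~\ref{intersection}(a)$=$(e); and conclude by Lemma~\ref{description of qgr_l}. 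Your step (2) invokes ``Serre duality'' but in fact states exactly this direct relationship $\Ho^{i}_{\mm}\leftrightarrow\Ext^{i-1}_{\qgr\Pi}(\OO(\Z),-)$; so what you called the ``main obstacle'' dissolves once you drop the duality framing. (Also note the index shift: $\qgr_0\Pi$ is the image of $\gr_1\Pi$, so the condition is $\Ho^i_{\mm}=0$ for $i\ge2$, not $i\ge1$.)
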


Let us start by recalling the following basic result, where we for $i\ge0$ write
\[\underline{\Ext}^{i}(\OO,X):=\bigoplus_{\ell\in\Z}\Ext_{\qgr\Pi}^i(\OO,X(\ell)).\]

\begin{proposition}[{\cite[Lemma 4.1.5]{BV}}]\label{Ext and Hm}
For any $X \in \gr\Pi$, we have an exact sequence
\[0\to \Ho^0_{\mm}(X)\to X\to\underline{\Hom}(\OO,X)\to \Ho^1_{\mm}(X)\to0\]
and an isomorphism $\underline{\Ext}^i(\OO,X)\to \Ho^{i+1}_{\mm}(X)$ for any $i\ge1$.
\end{proposition}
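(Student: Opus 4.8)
The plan is to deduce both assertions from the short exact sequence of graded $\Pi$-modules
\[0\to\Pi_{\ge j}\to\Pi\to\Pi_{<j}\to0,\]
where $\Pi_{\ge j}:=\bigoplus_{i\ge j}\Pi_i$ and $\Pi_{<j}=\Pi/\Pi_{\ge j}$, by applying total graded $\Hom$ into $X$ and exploiting projectivity of $\Pi$. Write $\underline{\Ext}^i_\Pi(M,X):=\bigoplus_{\ell\in\Z}\Ext^i_\Pi(M,X(\ell))$ for graded $\Ext$ computed in $\gr\Pi$, so that by definition $\Ho^i_\mm(X)=\varinjlim_j\underline{\Ext}^i_\Pi(\Pi_{<j},X)$. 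Since $\Pi$ is projective in $\gr\Pi$ we have $\underline{\Ext}^i_\Pi(\Pi,X)=0$ for $i\ge1$, while $\underline{\Hom}_\Pi(\Pi,X)=\bigoplus_\ell(X(\ell))_0=X$ canonically. Hence the long exact sequence for $\underline{\Ext}^\bullet_\Pi(-,X)$ degenerates into the four-term exact sequence
\[0\to\underline{\Hom}_\Pi(\Pi_{<j},X)\to X\to\underline{\Hom}_\Pi(\Pi_{\ge j},X)\to\underline{\Ext}^1_\Pi(\Pi_{<j},X)\to0\]
together with connecting isomorphisms $\underline{\Ext}^i_\Pi(\Pi_{\ge j},X)\cong\underline{\Ext}^{i+1}_\Pi(\Pi_{<j},X)$ for all $i\ge1$.

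Next I would pass to the filtered colimit over $j$, which is exact. The system $\{\Pi_{<j}\}_j$ with its surjective transition maps yields $\varinjlim_j\underline{\Ext}^i_\Pi(\Pi_{<j},X)=\Ho^i_\mm(X)$, while the constant system $\Pi$ gives back $X$ in the middle. This produces the exact sequence
\[0\to\Ho^0_\mm(X)\to X\to\varinjlim_j\underline{\Hom}_\Pi(\Pi_{\ge j},X)\to\Ho^1_\mm(X)\to0\]
and isomorphisms $\varinjlim_j\underline{\Ext}^i_\Pi(\Pi_{\ge j},X)\cong\Ho^{i+1}_\mm(X)$ for $i\ge1$. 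It therefore remains only to identify the section term, i.e. to prove the comparison isomorphisms
\[\varinjlim_j\underline{\Ext}^i_\Pi(\Pi_{\ge j},X)\cong\underline{\Ext}^i(\OO,X)\qquad(i\ge0),\]
where on the right $X$ is read via its image in $\qgr\Pi$. For $i=0$ this identifies $\varinjlim_j\underline{\Hom}_\Pi(\Pi_{\ge j},X)$ with $\underline{\Hom}(\OO,X)$, completing the four-term sequence, and for $i\ge1$ it combines with the above to give $\underline{\Ext}^i(\OO,X)\cong\Ho^{i+1}_\mm(X)$.

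To establish the comparison I would use the description of morphisms in the Gabriel quotient $\qgr\Pi=\gr\Pi/(\text{torsion})$. Since $\Pi/\Pi_{\ge j}=\Pi_{<j}$ is finite dimensional, hence torsion, the submodules $\{\Pi_{\ge j}\}_j$ are cofinal among the dense graded submodules of $\Pi$, giving $\Hom_{\qgr\Pi}(\OO,X(\ell))=\varinjlim_j\Hom_\Pi(\Pi_{\ge j},X(\ell)/\tau X(\ell))$, where $\tau$ denotes the $\mm$-torsion submodule. The cofinal family $\{\Pi_{\ge j}\}$ then computes the right derived functors of the section functor $\omega=\underline{\Hom}(\OO,-)$, which is the content of the Artin--Zhang formalism \cite{AZ}; for finitely presented $X$ over the left graded coherent algebra $\Pi$ this is exactly \cite[Lemma 4.1.5]{BV}. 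The essential point is that passing from $X$ to $X/\tau X$ leaves the colimits unchanged, because torsion modules $T$ are acyclic for $\Ho^\bullet_\mm$ (one has $\Ho^0_\mm(T)=T$ and $\Ho^i_\mm(T)=0$ for $i>0$), whence the four-term colimit sequence applied to $T$ forces $\varinjlim_j\underline{\Hom}_\Pi(\Pi_{\ge j},T)=0$ and $\varinjlim_j\underline{\Ext}^i_\Pi(\Pi_{\ge j},T)=0$ for $i\ge1$.

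The step I expect to be the main obstacle is precisely this torsion control. The degeneration of the long exact sequence, the use of projectivity of $\Pi$, and the exactness of the filtered colimit are all routine; what requires care is showing that $\tau X$ contributes nothing to $\varinjlim_j\underline{\Ext}^\bullet_\Pi(\Pi_{\ge j},-)$, and this is where the left graded coherence of $\Pi$ (making $\gr\Pi$ abelian and guaranteeing finitely generated syzygies of $X$) and the finite dimensionality of each $\Pi_i$ enter decisively. Once the torsion vanishing is in hand, the comparison isomorphisms follow formally and the two assertions of the statement drop out simultaneously.
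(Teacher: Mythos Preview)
The paper does not supply its own proof of this proposition: it is quoted directly from \cite[Lemma~4.1.5]{BV} and used as a black box. So there is nothing in the paper to compare your argument against; your outline is essentially the standard one underlying the cited result.

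That said, two points deserve tightening. First, you invoke \cite[Lemma~4.1.5]{BV} in the middle of your own argument for the comparison $\varinjlim_j\underline{\Ext}^i_\Pi(\Pi_{\ge j},X)\cong\underline{\Ext}^i_{\qgr\Pi}(\OO,X)$; since that lemma is precisely the statement you are proving, this is circular as written. Second, and more substantively, the identification you need is not just a torsion-vanishing statement. In the Gabriel quotient one has $\Hom_{\qgr\Pi}(\OO,\pi X)\cong\varinjlim_j\Hom_\Pi(\Pi_{\ge j},\omega\pi X)$, where $\omega$ is right adjoint to the quotient functor $\pi$, and in general $\omega\pi X\neq X$: the difference is governed exactly by the four-term sequence you are trying to establish. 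The clean way to organise this is to take the unit $X\to\omega\pi X$ of the adjunction, identify its kernel and cokernel with $\Ho^0_\mm(X)$ and $\Ho^1_\mm(X)$ via your colimit argument, and then use that $R^i\omega\circ\pi$ computes $\underline{\Ext}^i_{\qgr\Pi}(\OO,-)$ for $i\ge1$. Your claim that $\Ho^i_\mm(T)=0$ for $i>0$ on torsion $T$ is not automatic in general noncommutative settings (it is related to the $\chi$-condition of \cite{AZ}) and is not actually what drives the comparison; the adjunction formalism bypasses it.
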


This immediately gives us the following description of $\qgr_{\ell}\Pi$.

\begin{lemma}\label{description of qgr_l}
For any $\ell$, we have
\[\qgr_{\ell}\Pi=\{X\in\qgr\Pi \mid \Ext_{\qgr \Pi}^i(\OO(\Z),X)=0\ \forall i>\ell\}.\]
\end{lemma}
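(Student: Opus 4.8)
The plan is to combine the definition of $\dim X$ via local cohomology with Proposition~\ref{Ext and Hm}, which translates vanishing of local cohomology into vanishing of Ext-groups in $\qgr\Pi$. First I would unwind the definitions: by definition $X \in \qgr_\ell\Pi$ means $X = \pi(M)$ for some $M \in \gr\Pi$ with $\dim M \le \ell$, i.e.\ $\Ho^i_\mm(M) = 0$ for all $i > \ell$. Since degree shift is an autoequivalence of $\gr\Pi$ commuting with $\Ho^i_\mm$ up to the grading shift, the condition $\dim M\le\ell$ is equivalent to the vanishing of $\bigoplus_{j\in\Z}\Ho^i_\mm(M(j))$ for all $i>\ell$. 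Now apply Proposition~\ref{Ext and Hm} to $M$ (and to each twist $M(j)$): for $i \ge 1$ we have $\underline{\Ext}^i(\OO, \pi(M)) \iso \Ho^{i+1}_\mm(M)$, so the vanishing of $\underline{\Ext}^i(\OO,X)$ for $i > \ell$ is exactly the vanishing of $\Ho^{i+1}_\mm(M)$ for $i > \ell$, i.e.\ of $\Ho^k_\mm(M)$ for $k > \ell+1$. Here one has to be slightly careful to get the indices to match the statement rather than being off by one; I expect the statement as written uses the convention $\Ext^i_{\qgr\Pi}(\OO(\Z),X)=0\ \forall i>\ell$ to encode $\dim X\le \ell$ where $\dim$ on $\qgr\Pi$ is shifted by one relative to $\dim$ on $\gr\Pi$ (compare Definition~\ref{local cohomology}, where $\qgr_{\ell-1}\Pi$ is declared to correspond to $\gr_\ell\Pi$). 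So the bookkeeping is: $X=\pi(M)\in\qgr_{\ell}\Pi$ iff $M\in\gr_{\ell+1}\Pi$ iff $\Ho^{k}_\mm(M)=0$ for $k>\ell+1$ iff (by Proposition~\ref{Ext and Hm}) $\underline{\Ext}^i(\OO,\pi(M))=0$ for $i>\ell$.

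The remaining subtlety is that the quantity $\Ext^i_{\qgr\Pi}(\OO(j),X)$ for fixed $j$ must be well-defined and must agree with the $j$-th graded piece of $\underline{\Ext}^i(\OO,X)$ as defined just before Proposition~\ref{Ext and Hm}; this is immediate from $\OO(j)=(j)\circ\OO$ and the fact that the twist is an autoequivalence of $\qgr\Pi$, so $\Ext^i_{\qgr\Pi}(\OO(j),X)\iso\Ext^i_{\qgr\Pi}(\OO,X(-j))$, and running $j$ over $\Z$ recovers $\underline{\Ext}^i(\OO,X)$ up to reindexing. Thus ``$\Ext^i_{\qgr\Pi}(\OO(\Z),X)=0\ \forall i>\ell$'' is literally the same as ``$\underline{\Ext}^i(\OO,X)=0\ \forall i>\ell$''.

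Putting these together: given $X\in\qgr\Pi$, choose any $M\in\gr\Pi$ with $\pi(M)\iso X$ (possible since $\Pi$ is left graded coherent, so $\qgr\Pi$ is the Serre quotient of $\gr\Pi$ and $\pi$ is essentially surjective). Then the chain of equivalences above shows $X\in\qgr_\ell\Pi$ iff $\Ext^i_{\qgr\Pi}(\OO(\Z),X)=0$ for all $i>\ell$. One should also note independence of the choice of lift $M$: two lifts differ by a finite-dimensional module, which has $\Ho^i_\mm=0$ for $i>0$ after applying $\pi$ anyway, but cleanly this follows because the $\Ext$-side condition is stated purely in $\qgr\Pi$ and hence manifestly independent of the lift.

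The main obstacle I anticipate is purely notational: getting the index shift between $\dim$ on $\gr\Pi$ and the implicit $\dim$ on $\qgr\Pi$ exactly right so that the displayed formula matches, and making sure Proposition~\ref{Ext and Hm} is applied to all twists $M(j)$ simultaneously rather than just to $M$ itself. There is no hard analytic or homological content beyond Proposition~\ref{Ext and Hm}; everything else is formal manipulation with the Serre quotient $\qgr\Pi=\gr\Pi/(\text{torsion})$ and the twist autoequivalence.
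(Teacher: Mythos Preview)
Your proposal is correct and follows essentially the same route as the paper: use the index shift built into Definition~\ref{local cohomology} (so $X\in\qgr_\ell\Pi$ means a lift $M\in\gr_{\ell+1}\Pi$), then invoke Proposition~\ref{Ext and Hm} to convert vanishing of $\Ho^{i+1}_{\mm}$ into vanishing of $\underline{\Ext}^i(\OO,-)$, and finally unpack $\underline{\Ext}^i(\OO,X)$ as the collection of $\Ext^i_{\qgr\Pi}(\OO(j),X)$. One small redundancy: you speak of applying Proposition~\ref{Ext and Hm} ``to each twist $M(j)$'', but both $\Ho^i_{\mm}(-)$ and $\underline{\Ext}^i(\OO,-)$ already incorporate all degree shifts in their definitions, so a single application to $M$ suffices.
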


\begin{proof}
We have
\begin{align*}
\dim X\le \ell +1 &\Longleftrightarrow\Ho^{i+1}_{\mm}(X)=0\ \forall\ i > \ell&\\
&\Longleftrightarrow\underline{\Ext}^i(\OO,X)=0\ \forall\ i > \ell&\mbox{by Proposition~\ref{Ext and Hm},}\\
&\Longleftrightarrow\Ext_{\qgr \Pi}^i(\OO(\Z),X)=0\ \forall\ i > \ell.&
\end{align*}
Thus the assertion holds.
\end{proof}

Now we are ready to prove Theorem~\ref{description of R}.

\begin{proof}[Proof of Theorem~\ref{description of R}]
We have equalities
\begin{align*}
\RR&=\{ X\in\mod\Lambda \mid \Ext_\Lambda^{n-i}(\II,X)=0=\Ext_\Lambda^i(\PP,X)=0\ \forall i>0\}&&\mbox{by Observation~\ref{almost definition of R},}\\
&=\{ X\in\mod\Lambda\cap\qgr\Pi \mid \Ext_\Lambda^i(\PP,X)=0\ \forall i>0\}&&\mbox{by Proposition~\ref{intersection}(a) and (c),}\\
&=\{ X\in\mod\Lambda\cap\qgr\Pi \mid \Ext_{\qgr\Pi}^i(\OO(\Z_{\ge0}),X)=0\ \forall i>0\}&&\mbox{since $\PP=\OO(\Z_{\ge0})$,}\\
&=\{ X\in\qgr\Pi \mid \Ext_{\qgr\Pi}^i(\OO(\Z),X)=0\ \forall i>0\}&&\mbox{by Proposition~\ref{intersection}(a) and (e),}\\
&=\qgr_0\Pi&&\mbox{by Lemma~\ref{description of qgr_l}.}
\end{align*}
Thus the assertion follows.
\end{proof}

As an application, we have the following result.

\begin{corollary}\label{Horrocks}
The following full subcategories of $\DDD^{\rm b}(\qgr\Pi)$ are the same.
\begin{itemize}
\item[(a)] $\II[-n]\vee\PP\vee\RR$.
\item[(b)] $\CC\cap\qgr\Pi$.
\item[(c)] $\OO(\Z)\vee\qgr_0\Pi$.
\item[(d)] $\{X\in\qgr\Pi \mid \forall i \in \{2, \ldots, n\} \colon \Ho^i_{\mm}(X)=0\}$.
\end{itemize}
\end{corollary}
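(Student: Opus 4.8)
The four subcategories in Corollary~\ref{Horrocks} will be shown equal by stringing together the identifications already established. First I would note that (a) $=$ (b) is immediate: this is exactly Proposition~\ref{P, I and O}(a). Next, (a) $=$ (c) follows from the identifications $\UU=\OO(\Z)$ and, by Theorem~\ref{description of R}, $\RR=\qgr_0\Pi$, together with the decomposition $\UU=\II[-n]\vee\PP$ from Proposition~\ref{P and I}(c) (which gives $\OO(\Z)\vee\qgr_0\Pi=\II[-n]\vee\PP\vee\RR$, since $\PP\subset\OO(\Z)$). So the only real content is the equality of (d) with the others. I would prove (b) $=$ (d), i.e.\ that for $X\in\qgr\Pi$ the condition $\Hom_{\DDD^{\bo}(\mod\Lambda)}(\UU,X[i])=0$ for all $i\in\Z\setminus n\Z$ is equivalent to $\Ho^i_{\mm}(X)=0$ for $i\in\{2,\dots,n\}$.

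**Translating between the two vanishing conditions.** The key computational input is Proposition~\ref{Ext and Hm}, which for $i\ge1$ gives $\underline{\Ext}^i(\OO,X)\iso\Ho^{i+1}_{\mm}(X)$, together with the exact sequence relating $\Ho^0_{\mm}$, $\Ho^1_{\mm}$ and $\underline{\Hom}(\OO,X)$. Under the identification of Theorem~\ref{Minamoto}, $\OO(\ell)=\nu_n^{-\ell}(\Lambda)$, so $\underline{\Ext}^i(\OO,X)=\bigoplus_{\ell\in\Z}\Hom_{\DDD^{\bo}(\mod\Lambda)}(\nu_n^{-\ell}(\Lambda),X[i])=\bigoplus_{\ell\in\Z}\Hom_{\DDD^{\bo}(\mod\Lambda)}(\Lambda,\nu_n^{\ell}(X)[i])=\bigoplus_{\ell\in\Z}\Ho^i(\nu_n^\ell(X))$. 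Thus $\underline{\Ext}^i(\OO,X)$ for $i\ge 1$ detects the $i$-th cohomology of all the Nakayama twists of $X$ simultaneously, which is precisely the information measured by $\Hom_{\DDD^{\bo}(\mod\Lambda)}(\UU,X[i])$ (since $\UU=\add\{\nu_n^\ell(\Lambda)\}$). Now since $X\in\qgr\Pi=\TT^{\le0}\cap\TT^{\ge0}$, we have $\nu_n^{-\ell}(X)\in\mod\Lambda$ for $\ell\gg0$; combined with Proposition~\ref{P, I and O}(b) (which forces $X\in\DDD^{\ge0}(\mod\Lambda)$, so $\Ho^i(X)=0$ for $i<0$) and the analogous statement for the twists, one sees that $\Ho^i(\nu_n^\ell(X))$ can only be nonzero for $i\in\{0,\dots,n\}$ anyway, and that the $i=0$ part is automatically controlled. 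So the condition $\Hom(\UU,X[i])=0$ for $i\in\Z\setminus n\Z$ reduces to $\underline{\Ext}^i(\OO,X)=0$ for $i\in\{1,\dots,n-1\}$, and since $n\in n\Z$ this gives $\Ho^{i+1}_{\mm}(X)=0$ for $i\in\{1,\dots,n-1\}$, i.e.\ $\Ho^j_{\mm}(X)=0$ for $j\in\{2,\dots,n\}$, which is exactly (d).

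**Assembling and the main obstacle.** Concretely I would: (1) invoke Proposition~\ref{P, I and O}(a) for (a)$=$(b); (2) use Theorem~\ref{description of R} and Proposition~\ref{P and I}(c) for (a)$=$(c); (3) prove (b)$=$(d) via the cohomology translation above, being careful that in passing from $\Hom_{\DDD^{\bo}(\mod\Lambda)}(\UU,X[i])$ to $\underline{\Ext}^i(\OO,X)$ one uses $\nu_n(\UU)=\UU$ and Observation~\ref{easy properties of nu_n}(a) to see that the vanishing for positive $i$ in $\{1,\dots,n-1\}$ already forces the full $\Z\setminus n\Z$ vanishing, given $X\in\qgr\Pi$. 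The main obstacle I anticipate is item (3), specifically handling the boundary degrees $i=0$ and $i=n$ (and $i=1$): one must argue that $\Ho^0(\nu_n^\ell(X))$ and the first local cohomology $\Ho^1_{\mm}(X)$ never obstruct membership in (b), which requires combining the $t$-structure inclusions of Proposition~\ref{P, I and O}(b), the description \eqref{what is qgr} of $\qgr\Pi$, and Proposition~\ref{modules under nakayama}(a) to pin down exactly in which degrees the cohomologies $\Ho^i(\nu_n^\ell(X))$ can live. Once that degree bookkeeping is done, the equivalence of all four descriptions falls out formally.
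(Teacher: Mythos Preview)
Your proposal is correct and follows the same overall strategy as the paper: (a)$=$(b) by Proposition~\ref{P, I and O}(a), (a)$=$(c) by Theorem~\ref{description of R} together with $\UU=\II[-n]\vee\PP=\OO(\Z)$, and (b)$=$(d) via Proposition~\ref{Ext and Hm}.

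The only difference is that you overcomplicate the step (b)$=$(d). You translate everything back to $\DDD^{\bo}(\mod\Lambda)$ and worry about the cohomologies $\Ho^i(\nu_n^{\ell}(X))$ and boundary degrees. The paper sidesteps this entirely by staying in $\qgr\Pi$: since $\UU=\OO(\Z)$ and $X$ both lie in the heart $\qgr\Pi$, one has $\Hom_{\DDD^{\bo}}(\OO(\ell),X[i])=\Ext^i_{\qgr\Pi}(\OO(\ell),X)$, which vanishes for $i<0$ trivially and for $i>n$ by Serre duality (the Serre functor being $(-1)[n]$). Hence the $\CC$-condition for $X\in\qgr\Pi$ collapses to $\Ext^i_{\qgr\Pi}(\OO(\Z),X)=0$ for $1\le i\le n-1$, and Proposition~\ref{Ext and Hm} turns this into $\Ho^{i+1}_{\mm}(X)=0$ for $1\le i\le n-1$, i.e.\ condition (d). Your ``main obstacle'' (the degree bookkeeping in $\mod\Lambda$) disappears once you work on the $\qgr\Pi$ side.
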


\begin{proof}
(a) and (b) are the same by Proposition~\ref{P, I and O}(a), and 
(a) and (c) are the same by Theorem~\ref{description of R}.

We will show that (b) is equal to (d). For $X\in\qgr\Pi$, we have
\begin{align*}
X\in\CC & \Longleftrightarrow \Ext_{\qgr\Pi}^{i}(\OO(\Z),X)=0\ \forall\ 1\le i\le n-1 && \mbox{since $\II[-n]\vee\PP=\OO(\Z)$,}\\
& \Longleftrightarrow \Ho^{i+1}_{\mm}(X)=0\ \forall\ 1\le i\le n-1 && \mbox{by Proposition~\ref{Ext and Hm}.}
\end{align*}
Thus the assertion follows.
\end{proof}

We end this section with posing the following conjecture, which we believe to be true from our experience in the classical case $n=1$.

\begin{conjecture} \label{quest.R_exists}
$\RR$ is non-zero for any $n$-representation infinite algebra.
\end{conjecture}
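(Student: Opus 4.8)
The plan is to first reformulate the statement, settle the cases already within reach, and then propose a general strategy while being honest about where it stalls.

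By Theorem~\ref{trichotomy}(a) we have $\RR=0$ if and only if $\CC^0=\PP\vee\II$, so the conjecture asserts that $\CC^0$ is strictly larger than the union of the $n$-preprojectives and $n$-preinjectives. In the $n$-representation tame case this is essentially immediate from Theorem~\ref{theorem.R_decomp_tame}: since $\Pi$ is Noetherian and infinite dimensional (it contains $\bigoplus_{i\ge 0}\tau_n^{-i}(\Lambda)$ with all summands nonzero), the ring $R$ is infinite dimensional and $\MaxProj R\neq\emptyset$, while each localization $\Pi_{(\pp)}$ is a nonzero ring, so $\fd\Pi_{(\pp)}\neq 0$. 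The tensor products $KQ_1\otimes\cdots\otimes KQ_m$ of Example~\ref{tensor of Q} are also easy to handle directly: picking a regular module $R_i\in\mod KQ_i$ for each $i$, Lemma~\ref{lem.tensorNak} gives $\nu_m^j(R_1\otimes\cdots\otimes R_m)\iso\nu_1^j(R_1)\otimes\cdots\otimes\nu_1^j(R_m)$, and since each $\nu_1^j(R_i)=\tau^j(R_i)$ is again a regular module for every $j\in\Z$ (regular modules over a representation-infinite hereditary algebra being neither projective nor injective), Proposition~\ref{nakayama R} shows $R_1\otimes\cdots\otimes R_m\in\RR$. For the type $\widetilde A$ algebras one argues the same way from $\Pi=S*H$, using that $\mathbb{P}^n$ has closed points (compare Example~\ref{example of description of R}(b)).

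For a genuinely general $n$-representation infinite algebra the natural approach is to reduce to a statement about $\Pi$ by means of Theorem~\ref{description of R}: when $\Pi$ is left graded coherent one has $\RR=\qgr_0\Pi$, so it suffices to exhibit a graded $\Pi$-module that is infinite dimensional over $K$ and has dimension at most $1$ in the sense of Definition~\ref{local cohomology} — that is, a point-module-like object of $\qgr\Pi$. Since $\Pi$ is a bimodule $(n+1)$-Calabi--Yau algebra of Gorenstein parameter $1$ (Theorem~\ref{preprojective correspondence}), $\qgr\Pi$ should behave like a noncommutative $\mathbb{P}^n$ with very ample anticanonical class, and one expects its point scheme to be nonempty; concretely one would try to build a graded cyclic quotient $\Pi/I$ with eventually constant Hilbert function, whose Gelfand--Kirillov dimension is $1$.

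The main obstacle is twofold. First, left graded coherence of $\Pi$ is itself open in general (Question~\ref{graded coherernt}), so the reduction to $\qgr_0\Pi$ is not available unconditionally; a fully general proof would have to produce an $n$-regular module directly inside $\mod\Lambda$. Second, even granting coherence, the nonemptiness of the point scheme of such a $\Pi$ is not covered by the classical Artin--Tate--Van den Bergh or Artin--Zhang theory, since $\Pi_0=\Lambda$ need not be semisimple and $\Pi$ may have exponential growth; excluding an empty point scheme is the crux. An alternative would be to show that every $n$-representation infinite algebra is connected, through $n$-APR tilts and tensor products, to one of the classes treated above, and that nonvanishing of $\RR$ is preserved along these operations; but transporting $\RR$ along a tilting equivalence $\RHom_\Lambda(T,-)$ requires ensuring that the relevant objects remain concentrated in degree $0$, and this control is the technical heart of that route.
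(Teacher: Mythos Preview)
This statement is a conjecture, not a theorem, and the paper does not prove it in general either; it only establishes the $n$-representation tame case (Corollary~\ref{consequence of tame}(b)). Your proposal is appropriately framed as a collection of partial results plus an honest account of the obstacles, and in that sense it is accurate.

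For the tame case, your route and the paper's differ slightly. You invoke the decomposition $\RR=\coprod_{\pp}\fd\Pi_{(\pp)}$ of Theorem~\ref{theorem.R_decomp_tame} and argue each factor is nonzero. The paper instead works directly from $\RR=\qgr_0\Pi$ (Theorem~\ref{description of R}) together with Proposition~\ref{2 dimensions are same}: since $\Pi$ is a Noetherian $R$-algebra with $R$ of Krull dimension $\ge n+1>1$, there exists a graded $\Pi$-module of Krull dimension exactly $1$, and this already gives a nonzero object of $\qgr_0\Pi$. Both arguments are correct; the paper's is lighter in that it does not need the full decomposition, only the identification of $\RR$ with $\qgr_0\Pi$ and elementary commutative algebra. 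Your argument that $\fd\Pi_{(\pp)}\neq 0$ is fine (since $R_{(\pp)}$ is local and $\Pi_{(\pp)}$ is a nonzero finitely generated $R_{(\pp)}$-module, reducing modulo the maximal ideal gives a nonzero finite-dimensional algebra), but it is a longer detour.

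Your tensor-product argument via Lemma~\ref{lem.tensorNak} is correct and is not in the paper; it is a nice observation. The type~$\widetilde{A}$ case is already subsumed by tameness (Example~\ref{example of tame}(b)), so that paragraph adds nothing new. Your discussion of obstacles---the open status of graded coherence of $\Pi$ and the absence of a general point-scheme existence result for bimodule $(n+1)$-Calabi--Yau algebras with non-semisimple degree-zero part---correctly identifies why the paper leaves this as a conjecture.
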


When $\Pi$ is left graded coherent, then by Theorem~\ref{description of R} this is equivalent to asking for the existence of $X\in\gr_1\Pi$ which is infinite dimensional.
We will see in Corollary~\ref{consequence of tame} that Conjecture~\ref{quest.R_exists} has a positive answer for $n$-representation tame algebras, which are the main object in the next subsection.

\subsection{$n$-representation tame algebras}

In this subsection we introduce a class of $n$-representation infinite algebras
which we can control.
Recall that a ring $\Gamma$ is called a \emph{Noetherian $R$-algebra} (or simply \emph{Noetherian algebra})
if $R$ is a commutative Noetherian ring and $\Gamma$ is a finitely generated $R$-module.
If $\Gamma$ is a Noetherian algebra, then it is a Noetherian $Z$-algebra for the center $Z$ of $\Gamma$.

\begin{definition}
We say that an $n$-representation infinite algebra $\Lambda$ is \emph{$n$-representation tame}
if its preprojective algebra is a Noetherian algebra.
\end{definition}

The name is explained by part (a) of the following example, which tells us that representation tame path algebras are 1-representation tame.

\begin{example}\label{example of tame}
\begin{itemize}
\item[(a)] The path algebra $KQ$ of an extended Dynkin quiver $Q$ is $1$-representation tame.
In fact $\Pi$ is Morita equivalent to the skew group algebra $K[x,y]*G$ of a finite subgroup $G$ of $\SL_2(K)$, and this is a finitely generated module
over the invariant subring $K[x,y]^G$ which is Noetherian.
\item[(b)] Any $n$-representation infinite algebra of type $\widetilde{A}$, as introduced in Section~\ref{sect.Atilde}, is $n$-representation tame.
In fact $\Pi$ is isomorphic to the skew group algebra $K[x_0,\ldots,x_{n}]*H$ of a finite abelian subgroup $H$ of $\SL_{n+1}(K)$ by Theorem~\ref{CY Atilde} and Lemma~\ref{skew Atilde}, and this is a finitely generated module
over the invariant subring $K[x_0,\ldots,x_{n}]^G$ which is Noetherian.
\end{itemize}
\end{example}

We can control modules over a Noetherian algebra by using techniques from commutative algebra.
For example, the dimension introduced in Definition~\ref{local cohomology} turns out to be the Krull dimension:

\begin{proposition}\label{2 dimensions are same}
Let $\Gamma$ and $X$ be as in Definition~\ref{local cohomology}.
Assume that $\Gamma$ is a Noetherian $R$-algebra for a graded $K$-subalgebra $R$ of $\Gamma$. Then the dimension $\dim X$ given in Definition~\ref{local cohomology} coincides with the Krull dimension $\dim_RX$ of the $R$-module $X$.
\end{proposition}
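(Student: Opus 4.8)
The plan is to reduce Proposition~\ref{2 dimensions are same} to the classical Grothendieck vanishing and non-vanishing theorems for local cohomology of a finitely generated module over a commutative Noetherian ring, applied over $R$. First note that $X\in\gr\Gamma$ is finitely generated over $\Gamma$, hence, since $\Gamma$ is module-finite over $R$, also finitely generated over $R$, so that $\dim_RX$ is defined and the Grothendieck theorems are available for $X$ over $R$. Next I would observe that the formula in Definition~\ref{local cohomology} is just ordinary local cohomology read off degreewise: since each $\Gamma_{<j}$ is finite-dimensional over $K$, one has $\bigoplus_{\ell\in\Z}\Ext^i_\Gamma(\Gamma_{<j},X(\ell))\iso\Ext^i_\Gamma(\Gamma_{<j},X)$ (ungraded $\Ext$), whence $\Ho^i_{\mm}(X)\iso\varinjlim_j\Ext^i_\Gamma(\Gamma/\Gamma_{\ge j},X)=\Ho^i_{\Gamma_{>0}}(X)$, the local cohomology of $X$ with support in the ideal $\Gamma_{>0}$; in particular it depends on $\Gamma_{>0}$ only through its radical.

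The key comparison step is the identity $\sqrt{\Gamma_{>0}}=\sqrt{R_{>0}\Gamma}$ in $\Gamma$. One inclusion, $R_{>0}\Gamma\subseteq\Gamma_{>0}$, is trivial. For the other, $\Gamma/R_{>0}\Gamma$ is a finitely generated graded module over $R/R_{>0}\iso R_0$, and $R_0$ is a commutative finite-dimensional $K$-algebra (as $\dim_KR_i\le\dim_K\Gamma_i<\infty$), hence Artinian; therefore $\Gamma/R_{>0}\Gamma$ is finite-dimensional over $K$ and vanishes in all degrees $\ge N$ for some $N$, i.e.\ $\Gamma_{\ge N}\subseteq R_{>0}\Gamma$. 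Since $\Gamma_{>0}$ is generated by homogeneous elements of positive degree, each such element has a power in $\Gamma_{\ge N}\subseteq R_{>0}\Gamma$, giving $\Gamma_{>0}\subseteq\sqrt{R_{>0}\Gamma}$. Consequently $\Ho^i_{\mm}(X)\iso\Ho^i_{R_{>0}\Gamma}(X)$, and by the independence of the base ring for local cohomology --- the extended \v{C}ech complex on a finite homogeneous generating set $x_1,\dots,x_r$ of $R_{>0}$ computes both $\Ho^i_{R_{>0}\Gamma}(X)$ over $\Gamma$ and $\Ho^i_{R_{>0}}(X)$ over $R$, the localizations $X_{x_{i_1}\cdots x_{i_k}}$ being the same --- we obtain $\Ho^i_{\mm}(X)\iso\Ho^i_{R_{>0}}(X)$ as $R$-modules.

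It remains to compute $\sup\{i\mid\Ho^i_{R_{>0}}(X)\neq0\}$. Grothendieck's vanishing theorem gives $\Ho^i_{R_{>0}}(X)=0$ for $i>\dim_RX$ with no further hypothesis. For the non-vanishing $\Ho^{\dim_RX}_{R_{>0}}(X)\neq0$ (when $X\neq0$), I would pass to the $^*$local situation: $R_0$ being Artinian has finitely many idempotents cutting it into local factors, so $R=\prod_jRe_j$ with each $Re_j$ a Noetherian $^*$local graded ring whose $^*$maximal ideal has the same radical as $(Re_j)_{>0}$, while $\Ho^i_{R_{>0}}(X)=\bigoplus_j\Ho^i_{(Re_j)_{>0}}(e_jX)$ and $\dim_RX=\max_j\dim_{Re_j}(e_jX)$; on each factor the graded version of Grothendieck's non-vanishing theorem (see \cite{BH}) applies. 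Combining the three steps yields $\dim X=\sup\{i\mid\Ho^i_{\mm}(X)\neq0\}=\dim_RX$. The one point that requires care, and is the main technical obstacle, is exactly this last reduction: $R_0$ need not be a field, only Artinian, so $R_{>0}$ is not literally a graded maximal ideal and one must either argue via radicals and the idempotent decomposition as above or invoke a sufficiently general graded Grothendieck non-vanishing statement; everything else is bookkeeping with radicals and the \v{C}ech description of local cohomology.
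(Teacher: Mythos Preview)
Your proposal is correct and follows essentially the same strategy as the paper's proof: reduce the non-commutative local cohomology $\Ho^i_{\mm}(X)$ to ordinary local cohomology $\varinjlim_j\Ext^i_R(R_{<j},X)$ over the commutative Noetherian ring $R$, and then invoke the Grothendieck vanishing and non-vanishing theorems from \cite{BH}. The paper compresses your radical and \v{C}ech-complex comparison into two lines, simply observing that $\Ho^0_{\mm}(X)=\varinjlim_j\Hom_R(R_{<j},X)$ as functors (which is exactly your identity $\sqrt{\Gamma_{>0}}=\sqrt{R_{>0}\Gamma}$ read at the level of torsion submodules) and then passing to right derived functors; your argument unpacks precisely why this identification is legitimate. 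Your extra care about $R_0$ being merely Artinian rather than a field is appropriate in the generality of the stated proposition, though in the paper's subsequent applications $\Gamma$ is ring-indecomposable and $R_0$ is local, making this reduction trivial.
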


\begin{proof}
Clearly we have $\Ho_{\mm}^0(X)=\varinjlim_j\Hom_{R}(R_{<j},X)$.
Taking right derived functors, we have $\Ho_{\mm}^i(X)=\varinjlim_j\Ext^i_{R}(R_{<j},X)$.
Thus the assertion follows from \cite[Theorem 3.5.7]{BH}.
\end{proof}

As an application, we give some consequences of results in the previous subsection.

\begin{corollary}\label{consequence of tame}
Let $\Lambda$ be an $n$-representation tame algebra over a perfect field $K$.
\begin{itemize}
\item[(a)] We have $\dim R_{\nn} \geq n+1$ for any maximal ideal $\nn$ of $R$ such that $\Pi_{\nn} \neq 0$.
\item[(b)] The category $\RR$ is non-zero.
\item[(c)] An object $X\in\qgr\Pi$ satisfies $\Ho^i_{\mm}(X)=0$ for any $1\le i\le n$ if and only if $X\in\OO(\Z)$.
\end{itemize}
\end{corollary}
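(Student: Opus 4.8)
All three parts rest on one homological input. By Theorem~\ref{preprojective correspondence} (using that $K$ is perfect) and $n$-representation tameness, $\Pi$ is a \emph{Noetherian} bimodule $(n+1)$-Calabi--Yau algebra of Gorenstein parameter $1$; in particular it is left graded coherent, so the results of the previous subsection apply. The first step of the plan is to compute the local cohomology of $\Pi$ itself. I would apply Proposition~\ref{CY algebra gives CY triangulated} with $X=\Pi$ and $Y=\Pi/\Pi_{\geq j}$ (which is finite dimensional, hence lies in $\DDD^{\bo}_{\fd\Pi}(\Mod\Pi)$), obtaining $\Ext^i_\Pi(\Pi/\Pi_{\geq j},\Pi)\iso D\Ext^{n+1-i}_\Pi(\Pi,\Pi/\Pi_{\geq j})$; this vanishes for $i\neq n+1$ and equals $D(\Pi/\Pi_{\geq j})\neq 0$ for $i=n+1$, so passing to the colimit over $j$ in Definition~\ref{local cohomology} yields $\Ho^i_\mm(\Pi)=0$ for $i\neq n+1$ and $\Ho^{n+1}_\mm(\Pi)\neq0$. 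By Proposition~\ref{2 dimensions are same} this says exactly that the $R$-module $\Pi$ is Cohen--Macaulay of dimension $n+1$ (its depth and dimension over $R$ both being $n+1$); here $R$ is a finitely generated commutative graded $K$-subalgebra of $\Pi$, e.g.\ its centre, over which $\Pi$ is module-finite.

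For part (a): a Cohen--Macaulay module has equidimensional support, so every minimal prime $\pp$ of $\Supp_R\Pi$ satisfies $\dim R/\pp=n+1$; and since $R/\pp$ is a finitely generated graded domain over the field $K$, each of its maximal ideals has height $n+1$. If $\nn$ is a maximal ideal of $R$ with $\Pi_\nn\neq0$, then $\nn$ contains such a $\pp$, whence $\dim R_\nn=\operatorname{ht}\nn\geq\operatorname{ht}(\nn/\pp)=n+1$.

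For part (b): by Theorem~\ref{description of R} we have $\RR=\qgr_0\Pi$, so it is enough to exhibit an infinite-dimensional object of $\gr_1\Pi$. Since $\Pi$ is Noetherian with $\dim_R\Pi=n+1\geq2$, one may pick a graded prime $\qq$ with $\dim\Pi/\qq\geq1$ and, factoring out homogeneous non-zero-divisors, pass to a finitely generated graded $\Pi$-module of dimension $1$; its image in $\qgr\Pi$ is then a nonzero object of $\qgr_0\Pi=\RR$. (This also settles Conjecture~\ref{quest.R_exists} for $n$-representation tame algebras, as remarked after it.) For part (c): if $X\in\OO(\Z)$ then $X$ is a finite sum of twists $\OO(j)$, represented by the finitely generated lift $\bigoplus\Pi(j)$, so the first step gives $\Ho^i_\mm(X)=0$ for $1\leq i\leq n<n+1$. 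Conversely, from $\Ho^i_\mm(X)=0$ for $2\leq i\leq n$ together with Corollary~\ref{Horrocks} (whose categories (c) and (d) coincide, and (c) is $\OO(\Z)\vee\qgr_0\Pi=\OO(\Z)\vee\RR$ by Theorem~\ref{description of R}) we get $X\iso Y\oplus Z$ with $Y\in\OO(\Z)$ and $Z\in\RR$; if $Z\neq0$, any finitely generated lift of $Z$ lies in $\gr_1\Pi$ and is infinite dimensional, hence has dimension exactly $1$ and therefore nonvanishing first local cohomology, which together with the lift $\bigoplus\Pi(j)$ of $Y$ forces $\Ho^1_\mm(X)\neq0$, a contradiction. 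So $Z=0$ and $X\in\OO(\Z)$.

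The step I expect to be delicate is the first one: extracting from the abstract Calabi--Yau duality of Proposition~\ref{CY algebra gives CY triangulated} the concrete Cohen--Macaulayness and equidimensionality of $\Pi$ over its centre, and making precise the local cohomology of objects of $\qgr\Pi$ used in part (c) — since $\Ho^1_\mm$ is not a quotient-category invariant, one must be careful about which finitely generated representative is meant. Once that is in place, the commutative-algebra arguments in (a), (b) and the end of (c) are routine.
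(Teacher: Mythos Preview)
Your argument is correct and, for parts (b) and (c), essentially identical to the paper's. The concern you flag about which lift of $X\in\qgr\Pi$ one takes when computing $\Ho^1_\mm$ is legitimate; the paper glosses over this, but your care here is appropriate and the conclusion is the same.

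For part (a) you take a slightly different route than the paper. You first apply Calabi--Yau duality to $\Pi/\Pi_{\ge j}$ to compute $\Ho^i_\mm(\Pi)$, conclude that $\Pi$ is Cohen--Macaulay of dimension $n+1$ over $R$ at the irrelevant ideal, and then invoke equidimensionality of the support of a Cohen--Macaulay module together with the fact that every maximal ideal in a finitely generated graded domain over a field has full height. The paper instead applies Calabi--Yau duality directly to $X=\Pi\Lotimes_R R/\nn$ for each maximal ideal $\nn$: the adjunction $\Hom_{\DDD(\Mod R)}(R/\nn,\Pi[i])\iso\Hom_{\DDD(\Mod\Pi)}(\Pi\Lotimes_R R/\nn,\Pi[i])$ combined with Proposition~\ref{CY algebra gives CY triangulated} yields $\Ext^i_R(R/\nn,\Pi)=0$ for $0\le i\le n$, hence $\dim R_\nn\ge\depth_{R_\nn}\Pi_\nn>n$ immediately. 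The paper's argument is shorter and avoids the detour through global Cohen--Macaulayness and equidimensionality; your argument has the advantage of isolating the Cohen--Macaulayness of $\Pi$ as a structural fact of independent interest.
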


Part (b) above gives a positive answer to Conjecture~\ref{quest.R_exists}.
Part (c) gives a non-commutative generalization of Horrocks criterion for projective spaces \cite[Theorem 2.3.1]{OSS}.

\begin{proof}
(a) By Proposition~\ref{CY algebra gives CY triangulated}, we have
\[ \Hom_{\DDD(\Mod\Pi)}(X,\Pi[i]) \iso D\Hom_{\DDD(\Mod\Pi)}(\Pi[i],X[n+1])=0 \]
for any $X\in\DDD^{\le0}_{\fd\Pi}(\Mod\Pi)$ and $0 \le i\leq n$.
Now let $X:=\Pi \Lotimes_R R/ \nn \in\DDD^{\le0}_{\fd\Pi}(\Mod\Pi)$. Then
\[\Ext^i_R( R/ \nn, \Pi)=\Hom_{\DDD(\Mod R)}( R/ \nn, \Pi[i])=\Hom_{\DDD(\Mod\Pi)}(X, \Pi[i])=0\]
for any $0 \le i\leq n$. Thus we have $\dim R_{\nn} \ge \depth\Pi_{\nn} > n$.

(b) There always exist a graded $\Pi$-module $X$ with $\dim_RX=1$.

(c) If $X\in\OO(\Z)$, then $X\in\gr\proj\Pi$ and we have
$\Ext^i_{\Pi}(\Pi_{<j},X)=D\Ext^{n+1-i}_{\Pi}(X,\Pi_{<j})=0$ for any $i<n+1$ by Proposition~\ref{CY algebra gives CY triangulated}. Thus we have $\Ho^i_{\mm}(X)=0$ for any $i<n+1$.

Conversely, assume $\Ho^i_{\mm}(X)=0$ for any $1\le i\le n$.
By Corollary~\ref{Horrocks}, we have $X\in\OO(\Z)\vee\qgr_0\Pi$.
If $X$ has a non-zero direct summand $Y$ in $\qgr_0\Pi$, then
$\dim_RX=1$ by Proposition~\ref{2 dimensions are same} and we have $\Ho^1_{\mm}(Y)\neq0$, a contradiction to $\Ho^1_{\mm}(X)=0$.
Thus we have $X\in\OO(\Z)$.
\end{proof}

In this section, we describe the category $\RR$ of $n$-regular modules over an $n$-representation infinite algebra in terms of
the categories of finite dimensional modules over certain infinite dimensional algebras associated with the preprojective algebra.

\medskip
In the rest of this section, let $\Gamma=\bigoplus_{i\ge0}\Gamma_i$ be a graded $K$-algebra such that $\dim_K\Gamma_i<\infty$ for any $i$.
Assume that $\Gamma$ is a ring-indecomposable Noetherian $R$-algebra for a graded $K$-subalgebra $R$ of $\Gamma$.
Then $R_0$ is a finite dimensional local $K$-algebra. Moreover we have
the following observation.

\begin{proposition}[{e.g.\ \cite[1.5.5]{BH}}]\label{finitely generated}
Under the above assumptions, $R$ is a finitely generated $K$-algebra.
\end{proposition}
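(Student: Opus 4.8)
Under the above assumptions, $R$ is a finitely generated $K$-algebra.

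This is essentially a graded Noether–Artin–Tate type statement, and my plan is to reduce it to a known commutative fact cited as \cite[1.5.5]{BH}. First I would record what the hypotheses give: $\Gamma = \bigoplus_{i\ge 0}\Gamma_i$ is a positively graded $K$-algebra with each $\dim_K\Gamma_i < \infty$, $R$ is a graded $K$-subalgebra of $\Gamma$ over which $\Gamma$ is a finitely generated module, and (being a Noetherian $R$-algebra) $R$ is a commutative Noetherian ring. The goal is to show $R$ is a finitely generated $K$-algebra.

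The key observation is that $R$ inherits the grading $R = \bigoplus_{i\ge 0}R_i$ with $R_i = R\cap \Gamma_i$, so $R$ is a positively graded commutative $K$-algebra with $\dim_K R_i \le \dim_K\Gamma_i < \infty$ for all $i$; in particular $R_0$ is a finite dimensional commutative local $K$-algebra (local because $\Gamma$, hence $R$, is ring-indecomposable and $R_0$ is a finite dimensional commutative ring, so Artinian, and connectedness forces it to be local). Thus $R$ is a Noetherian positively graded ring whose degree zero part $R_0$ is Noetherian (indeed Artinian). This is precisely the setup of the cited result \cite[1.5.5]{BH}: a Noetherian $\N$-graded ring $R$ with $R_0$ Noetherian is generated as an $R_0$-algebra by finitely many homogeneous elements of positive degree. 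Combining this with the fact that $R_0$ is a finite dimensional, hence finitely generated, $K$-algebra, one concludes that $R$ is generated over $K$ by the finitely many generators of $R_0$ together with the finitely many positive-degree homogeneous generators, and is therefore a finitely generated $K$-algebra.

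The main (and only slightly delicate) point to verify carefully is that $R_0$ is Noetherian and finitely generated over $K$ — but this is immediate from $\dim_K R_0 \le \dim_K \Gamma_0 < \infty$, since any finite dimensional $K$-algebra is Noetherian and finitely generated as a $K$-algebra. Everything else is a direct citation. So the proof is short:

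\begin{proof}
The subspace decomposition $R = \bigoplus_{i\ge 0} (R\cap\Gamma_i)$ makes $R$ a positively graded commutative $K$-algebra with $\dim_K R_i < \infty$ for all $i$; in particular $R_0$ is a finite dimensional commutative $K$-algebra, hence Noetherian and finitely generated as a $K$-algebra. Since $\Gamma$ is a Noetherian $R$-algebra, $R$ is Noetherian. By \cite[1.5.5]{BH} a Noetherian $\N$-graded ring with Noetherian degree-zero part is finitely generated as an algebra over its degree-zero part. Hence $R$ is finitely generated over $R_0$, and therefore over $K$.
\end{proof}
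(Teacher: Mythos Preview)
Your proof is correct. The only difference from the paper is presentational: the paper, ``for the convenience of the reader,'' unpacks the citation and gives the standard ascending-chain argument directly (assuming $R$ is not finitely generated over $R_0$, one builds an infinite strictly increasing chain of ideals $(a_1)\subsetneq(a_1,a_2)\subsetneq\cdots$, contradicting Noetherianity of $R$), whereas you invoke \cite[1.5.5]{BH} as a black box after verifying its hypotheses. Since the statement itself already points to that reference, both approaches are equally valid; yours is shorter, the paper's is more self-contained.
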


\begin{proof}
For the convenience of the reader, we include a proof.
Clearly $\dim_{K}R_i<\infty$ for any $i\ge0$.
In particular we only have to show that $R$ is a finitely generated $R_0$-algebra.
Assume that $R$ is not finitely generated. Then we can choose an infinite sequence
$(a_1,a_2,a_3,\ldots)$ of homogeneous elements in $R$ satisfying the following
conditions:
\begin{itemize}
\item $a_\ell\in R_{d_\ell}$ and $1\le d_1 \le d_2 \le d_3 \le\cdots$.
\item $R=R_0[a_1,a_2,a_3,\ldots]$ and $a_{i+1}\notin R_0[a_1,a_2,\ldots , a_i]$.
\end{itemize}
Then we have
\begin{equation}\label{Z}
R_0[a_1,\ldots,a_\ell]\supset \bigoplus_{i=0}^{d_{\ell+1}-1}R_i.
\end{equation}
We show that $a_{\ell+1}$ does not belong to the ideal of $R$ generated by
$a_1,\ldots,a_\ell$. Otherwise there exist homogeneous elements $b_1,\ldots,b_\ell\in R$
such that
\begin{equation}\label{ab}
a_{\ell+1}=a_1b_1+\cdots+a_\ell b_\ell.
\end{equation}
Clearly the degree of each $b_i$ is smaller than $d_{\ell+1}$.
By \eqref{Z}, we have $b_i\in R_0[a_1,\ldots,a_\ell]$. This implies
$a_{\ell+1}\in R_0[a_1,\ldots,a_\ell]$ by \eqref{ab}, a contradiction.

Thus we have an infinite increasing chain
\[(a_1)\subsetneq(a_1,a_2)\subsetneq(a_1,a_2,a_3)\subsetneq\cdots\]
of ideals of $R$, a contradiction.
\end{proof}

As usual we denote by $\Spec R$ the set of all prime ideals of $R$ and by $\MaxSpec R$ the set of all maximal ideals of $R$.
We denote by $R_+:=\bigoplus_{i>0}R_i$, and by $\mm$ the unique maximal prime ideal of $R$ containing $R_+$.
We denote by $\Proj R$ the set of all homogeneous prime ideals of $R$ not containing $\bigoplus_{i>0} R_i$, and by $\MaxProj R$ the set of maximal elements of $\Proj R$ with respect to inclusion.

For $X\in\gr\Gamma$, we denote by $\Supp_R X$ the set of all $\pp\in\Spec R$ such that $X_\pp\neq0$.
For $\pp\in\MaxProj R$, we denote by $\gr_{\pp}\Gamma$ the full subcategory of $\gr\Gamma$ consisting of $X$ such that $\Supp_R X\subset V(\pp) :=\{\qq\in \Spec R \mid\pp\subset\qq\}$.

We need the following preparation.

\begin{lemma}\label{some basic}
\begin{itemize}
\item[(a)] For any $\pp\in\MaxProj R$, the ring $R/\pp$ has Krull dimension one.
\item[(b)] For $X \in \mod \Gamma$ the following are equivalent:
\begin{itemize}
\item[$\bullet$] $X \in \fd \Gamma$;
\item[$\bullet$] $X$ has finite length as a $\Gamma$-module;
\item[$\bullet$] $X$ has finite length as an $R$-module;
\item[$\bullet$] $\Supp_R X \subseteq \MaxSpec R$.
\end{itemize}
\end{itemize}
\end{lemma}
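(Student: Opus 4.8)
The plan is to treat the two parts separately, using standard commutative-algebra input together with the running assumptions ($\Gamma$ is a ring-indecomposable Noetherian $R$-algebra, $R$ a graded $K$-subalgebra with $\dim_K R_i<\infty$, hence $R$ is a finitely generated $K$-algebra by Proposition~\ref{finitely generated}, $R_0$ local finite dimensional). For part (a), fix $\pp\in\MaxProj R$. Since $R$ is a finitely generated graded $K$-algebra, so is $R/\pp$, and $R/\pp$ is a graded domain. By definition of $\Proj R$, $\pp$ does not contain $R_+$, so $(R/\pp)_+\neq 0$, i.e.\ $R/\pp$ is not concentrated in degree $0$; thus $R/\pp$ has positive Krull dimension. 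On the other hand, maximality of $\pp$ in $\Proj R$ means that the only homogeneous primes of $R$ strictly containing $\pp$ are those containing $R_+$, and every such prime is contained in $\mm$; more precisely the homogeneous primes of $R$ strictly between $\pp$ and $\mm$ are empty, so the graded domain $R/\pp$ has exactly one nonzero homogeneous prime (the image of $\mm$). For a finitely generated graded domain over a field this forces Krull dimension exactly $1$ (a graded domain of dimension $\ge 2$ has infinitely many height-one homogeneous primes, e.g.\ by prime avoidance / the fact that $\Proj$ of such a ring is a positive-dimensional projective variety over $K$). I would phrase this via: $\dim R/\pp = 1 + \dim \Proj(R/\pp)$ and $\Proj(R/\pp)$ is a point, hence zero-dimensional.

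For part (b), the strategy is a cycle of implications among the four conditions, exploiting finiteness of $\Gamma$ over $R$ and $\dim_K R_i<\infty$. The implication ``$X$ has finite length as a $\Gamma$-module $\Rightarrow$ $X$ has finite length as an $R$-module'' is immediate since every $\Gamma$-submodule is an $R$-submodule, so a $\Gamma$-composition series refines to (or is) an $R$-filtration with finite-length quotients (each simple $\Gamma$-module is a finitely generated, hence finite-length, $R$-module because $\Gamma/\mathrm{ann}$ is module-finite over $R/(\mathrm{ann}\cap R)$, which is finite length). Conversely ``finite length over $R$ $\Rightarrow$ finite length over $\Gamma$'': an $R$-composition series is a fortiori a chain of $R$-submodules, but we need $\Gamma$-submodules; instead argue that finite length over $R$ plus $\Gamma$ module-finite over $R$ gives $X$ is a Noetherian and Artinian $\Gamma$-module (submodule chains of $\Gamma$-modules are submodule chains of $R$-modules), hence finite length over $\Gamma$. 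The equivalence ``finite length over $R$ $\Leftrightarrow$ $\Supp_R X\subseteq\MaxSpec R$'' is the standard fact that for a finitely generated module over a Noetherian ring, finite length is equivalent to the support consisting only of maximal ideals (every associated prime is maximal, and then a prime filtration has only finite-length quotients $R/\mm$). Finally, ``$X\in\fd\Gamma$ $\Leftrightarrow$ $X$ has finite length over $R$'': since $\Gamma$ is module-finite over $R$, $\dim_K X<\infty$ iff $X$ is a finite-dimensional $R$-module; and for a graded finitely generated $R$-module (with $\dim_K R_i<\infty$), finite $K$-dimension is equivalent to finite length over $R$ — indeed $X/\mathfrak m_R X$ finite dimensional always, and $X$ bounded in degree iff supported on maximal (equivalently graded-maximal, i.e.\ $\mm$, and the other maximal ideals) ideals. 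I would assemble these as: $\fd\Gamma \Leftrightarrow \dim_K X<\infty \Leftrightarrow X$ finite length over $R$ $\Leftrightarrow$ $\Supp_R X\subseteq\MaxSpec R$ $\Leftrightarrow$ $X$ finite length over $\Gamma$.

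The routine commutative-algebra facts (support of a f.g.\ module, finite length criteria, behaviour under a module-finite extension) can be cited from \cite{BH} or treated as standard. The only genuinely delicate point is part (a): making precise why maximality in $\Proj R$ forces Krull dimension exactly $1$ rather than merely ``at least $1$ and not $0$''. The cleanest route is the dimension formula $\dim R/\pp = 1 + \dim\Proj(R/\pp)$ for a finitely generated positively graded $K$-algebra that is a domain, combined with the observation that $\MaxProj$ elements are exactly the primes $\pp$ for which $\Proj(R/\pp)$ is a single closed point, hence $\dim\Proj(R/\pp)=0$. I expect this dimension bookkeeping — rather than anything in part (b) — to be the main obstacle, and I would want to state it carefully, perhaps isolating it as the one place a reference to \cite{BH} on graded rings is really needed.
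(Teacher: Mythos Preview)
Your overall plan is sound and close to the paper's argument, but part (b) has one circular step and one irrelevant digression that you should fix.

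For the implication ``finite length over $\Gamma$ $\Rightarrow$ finite length over $R$'' you reduce correctly to showing that every simple $\Gamma$-module $S$ has finite $R$-length, but then write that $\Gamma/\ann_\Gamma S$ is module-finite over $R/(\ann_\Gamma S\cap R)$, ``which is finite length''. That last clause is exactly what needs to be proved: you must show $\ann_R S$ is maximal, and you have not. The paper's argument here is the standard trick: $R/\ann_R S$ embeds (via central multiplication) into $\End_\Gamma(S)$, which is a division ring and is a finitely generated $R$-module (as a subquotient of $\Gamma$); an integral commutative subring of a division ring is a field, so $\ann_R S$ is maximal and $S$ is finite length over $R$. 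You should replace your parenthetical by this.

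Second, in arguing $\dim_K X<\infty \Leftrightarrow X$ has finite $R$-length, you drift into graded considerations (``bounded in degree'', etc.), but $X\in\mod\Gamma$ is not assumed graded here. The correct one-line justification is the Nullstellensatz: since $R$ is a finitely generated $K$-algebra (Proposition~\ref{finitely generated}), every simple $R$-module is finite dimensional over $K$, so finite $R$-length and finite $K$-dimension coincide for finitely generated $R$-modules. This is exactly what the paper uses.

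For part (a) your argument is more detailed than the paper's, which simply cites \cite[Theorem~1.5.8]{BH}; your outline via $\dim R/\pp = 1+\dim\Proj(R/\pp)$ is fine, though in writing it up you may find it just as easy to invoke the reference.
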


\begin{proof}
(a) This is well-known \cite[Theorem 1.58]{BH}.

(b) Since $R$ is a finitely generated $K$-algebra by Lemma~\ref{finitely generated}, any simple $R$-module is finite dimensional. Thus $X$ is finite dimensional if and only if it has finite length as an $R$-module.
It is clear that $X$ has finite length as an $R$-module if and only if $\Supp_R X\subset\MaxSpec R$.

If $X$ has finite length as an $R$-module, then it clearly has finite length as a $\Gamma$-module.
It remains to show that any finite length $\Gamma$-module has finite length as an $R$-module.
We only have to show that any simple $\Gamma$-module $X$ has finite length as an $R$-module.
We know that $R/\ann_RX$ is a subring of $\End_\Gamma(X)$, which is a division algebra.
Since $\End_\Gamma(X)$ is a finitely generated $R$-module, the ring $R/\ann_RX$ has to be a field.
Thus $X$ is a finitely generated $R$-module annihilated by a maximal ideal, so it has finite length as an $R$-module.
\end{proof}

We have the following decomposition of $\qgr_0\Gamma$.

\begin{proposition}\label{qgr decomposition}
We have $\qgr_0\Gamma=\coprod_{\pp\in\MaxProj R}\qgr_{\pp}\Gamma$.
\end{proposition}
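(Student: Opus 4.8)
The plan is to split the statement into its two constituent claims and prove each: first that $\qgr_0\Gamma=\bigvee_{\pp\in\MaxProj R}\qgr_\pp\Gamma$ as full subcategories of $\qgr\Gamma$ (where $\qgr_\pp\Gamma$ is the image of $\gr_\pp\Gamma$), and then that $\Hom_{\qgr\Gamma}(\qgr_\pp\Gamma,\qgr_\qq\Gamma)=0$ whenever $\pp\neq\qq$ in $\MaxProj R$; by the criterion recalled in Section~\ref{notation} the two together yield the asserted coproduct. Throughout I write $\pi\colon\gr\Gamma\to\qgr\Gamma$ for the quotient functor, use Proposition~\ref{2 dimensions are same} to replace $\dim$ by the Krull dimension $\dim_R$ (so $\qgr_0\Gamma=\pi(\{X\in\gr\Gamma\mid\dim_RX\le1\})$), and use Lemma~\ref{some basic}(b) to identify the Serre subcategory $\fd\Gamma$ killed by $\pi$ with those $M\in\gr\Gamma$ having $\Supp_RM\subseteq\MaxSpec R$; since $R$ is a finitely generated graded $K$-algebra by Proposition~\ref{finitely generated}, a finitely generated $M$ with $\Supp_RM$ contained in $V(R_+)$ (equivalently, in $\{\mm_0\}$ where $\mm_0$ is the unique graded maximal ideal and $R_+=\bigoplus_{i>0}R_i$) is finite dimensional, and every homogeneous prime of $R$ lies in $\mm_0$.

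The geometric input is that for distinct $\pp,\qq\in\MaxProj R$ one has $V(\pp)\cap V(\qq)\subseteq V(R_+)$. Indeed $V(\pp)\cap V(\qq)=V(\pp+\qq)$, and any minimal prime $\mathfrak r$ over $\pp+\qq$ is homogeneous and contains $\pp$; if $\mathfrak r$ did not contain $R_+$ it would lie in $\Proj R$, so maximality of $\pp$ would force $\mathfrak r=\pp$, hence $\pp\supseteq\qq$, hence $\pp=\qq$, a contradiction. Thus every such $\mathfrak r$ contains $R_+$, proving the claim. The inclusion $\qgr_\pp\Gamma\subseteq\qgr_0\Gamma$ is now immediate: if $\Supp_RX\subseteq V(\pp)$ then $\dim_RX\le\dim R/\pp=1$ by Lemma~\ref{some basic}(a).

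For the reverse inclusion, take $X\in\gr\Gamma$ with $\dim_RX\le1$ and $\pi X\neq0$, and let $\pp_1,\dots,\pp_r$ be the minimal primes over the homogeneous ideal $\ann_RX$; they are homogeneous of dimension $\le1$, and none can have dimension $0$, since a homogeneous prime of dimension $0$ is $\mm_0$, which contains every minimal prime and would therefore be the unique one, forcing $X$ to be finite dimensional and $\pi X=0$. Hence each $\pp_i\in\MaxProj R$ (being maximal among homogeneous primes not containing $R_+$, by dimension) and $\Supp_RX=\bigcup_iV(\pp_i)$. Set $X_i:=\{x\in X\mid \pp_i^mx=0\text{ for some }m\ge0\}$, a graded $\Gamma$-submodule of $X$ (as $R$ is central) lying in $\gr_{\pp_i}\Gamma$. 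By the geometric observation each $\pp\in\Spec R\setminus V(R_+)$ lies in $V(\pp_i)$ for exactly one $i$, so $\Supp_RX\cap\Spec R_\pp\subseteq V(\pp_iR_\pp)$, whence $X_\pp$ is $\pp_iR_\pp$-power-torsion and $(X_i)_\pp=X_\pp$; consequently $X/\sum_iX_i$ has support in $V(R_+)$ and is finite dimensional, and likewise $X_i\cap\sum_{j\neq i}X_j$ has support in $\bigcup_{j\neq i}\bigl(V(\pp_i)\cap V(\pp_j)\bigr)\subseteq V(R_+)$ and is finite dimensional. Therefore the canonical map $\bigoplus_iX_i\to X$ has finite dimensional kernel and cokernel, so $\pi X\iso\bigoplus_i\pi X_i$ with $\pi X_i\in\qgr_{\pp_i}\Gamma$. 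This gives $\qgr_0\Gamma=\bigvee_{\pp\in\MaxProj R}\qgr_\pp\Gamma$.

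Finally, for the $\Hom$-vanishing, represent objects of $\qgr_\pp\Gamma$ and $\qgr_\qq\Gamma$ by graded modules $X_0,Y_0$ with $\Supp_RX_0\subseteq V(\pp)$ and $\Supp_RY_0\subseteq V(\qq)$. A morphism $\pi X_0\to\pi Y_0$ is represented by some $f\colon X'\to Y_0/Y''$ with $X'\subseteq X_0$, $Y''\subseteq Y_0$, and $X_0/X'$, $Y''$ finite dimensional. Then $\Image f$, a subquotient with support in $V(\pp)\cap V(\qq)\subseteq V(R_+)$, is finite dimensional; hence $X_0/\Kernel f$ is finite dimensional, so the morphism agrees with the one represented by $f|_{\Kernel f}=0$ and is zero. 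Thus $\Hom_{\qgr\Gamma}(\qgr_\pp\Gamma,\qgr_\qq\Gamma)=0$ for $\pp\neq\qq$, completing the proof. The main obstacle is the third paragraph: upgrading the set-theoretic disjointness of the $V(\pp_i)$ over the punctured spectrum to an honest direct sum in $\qgr\Gamma$, and verifying that every discrepancy — the overlaps $X_i\cap\sum_{j\neq i}X_j$ and the cokernel $X/\sum_iX_i$ — is literally finite dimensional and hence invisible in $\qgr\Gamma$; the easy inclusion and the $\Hom$-vanishing are then formal.
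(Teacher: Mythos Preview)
Your proof is correct and follows essentially the same strategy as the paper's: decompose a module $X\in\gr_1\Gamma$ along the minimal primes of its support, and show the resulting pieces lie in the various $\qgr_{\pp_i}\Gamma$ with the discrepancy finite dimensional. The differences are purely in execution: the paper first reduces to $X$ having no nonzero finite dimensional submodules (so that $\sum X_i$ is literally direct in $\gr\Gamma$) and then uses an associated-primes argument to show $\pp_i\notin\Supp_R(X/X_i)$, whereas you allow general $X$ and argue via localization that $(X/\sum X_i)_\pp=0$ and $(X_i\cap\sum_{j\neq i}X_j)_\pp=0$ for $\pp\notin V(R_+)$. You also spell out the $\Hom$-vanishing between distinct blocks, which the paper declares clear.
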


\begin{proof}
Clearly we have $\Hom_{\qgr\Gamma}(X,Y)=0$ for any $X\in\qgr_{\pp}\Gamma$ and $Y\in\qgr_{\qq}\Gamma$ with $\pp\neq\qq$.

Thus we only have to show that, for any $X\in \gr_1\Gamma$ without non-zero finite dimensional submodules, there exists $\Gamma$-submodules $X_1,\ldots,X_\ell$ satisfying the following conditions:
\begin{itemize}
\item[(i)] For any $i=1,\ldots,\ell$, we have $X_i\in\qgr_{\pp_i}\Gamma$ for some $\pp_i\in\MaxProj R$.
\item[(ii)] $\sum_{i=1}^\ell X_i=\bigoplus_{i=1}^\ell X_i$ and $\dim_K(X/\sum_{i=1}^\ell X_i)<\infty$.
\end{itemize}
Then we have $X=\bigoplus_{i=1}^\ell X_i$ in $\qgr\Gamma$.

Let $\Supp_R  X= \bigcup_{i=1}^\ell V(\pp_i)$. For $i=1,\ldots,\ell$, let
\[X_i:=\sum Y\]
where $Y$ runs over all $\Gamma$-submodules of $X$ such that $\Supp_R  Y\subset V(\pp_i)$.
Then $\Supp_R  X_i\subset V(\pp_i)$, so (i) holds.
Now we show (ii). 
By Lemma~\ref{some basic}(b), we only have to show $\Supp_R (X/\sum_{i=1}^\ell X_i)\subset\MaxSpec R$.
Since
\[\Supp_R (X/\sum_{i=1}^\ell X_i)\subset\bigcap_{i=1}^\ell\Supp_R (X/X_i),\]
we only have to show $\pp_i\notin\Supp_R (X/X_i)$.
Assume $\pp_i\in\Supp_R (X/X_i)$. Then $\pp_i$ also belongs to the set of associated prime ideals $\Ass_R(X/X_i)$,
since minimal elements in $\Supp_R (X/X_i)$ belong to $\Ass_R(X/X_i)$.
Thus we have an element $x\in X$ such that the image $\overline{x}\in X/X_i$
satisfies $R\overline{x}\iso R/\pp_i$.
Let $Y:=X_i+\Gamma x$. Then $Y/X_i=\Gamma\overline{x}$ is annihilated by $\pp_i$,
so we have $\Supp_R (Y/X_i)\subset V(\pp_i)$.
This implies $\Supp_R  Y\subset V(\pp_i)$, a contradiction to the maximality of $X_i$.
\end{proof}

Next we show that the category $\qgr_{\pp}R$ has a very simple description for any $\pp\in\MaxProj R$.

For any $\pp=\bigoplus_{i\ge0}\pp_i\in\Proj R$,
we denote by $R_{[\pp]}$ the localization of $R$ with respect to the multiplicative set consisting of homogeneous elements in $R\backslash\pp$.
Clearly $R_{[\pp]}$ has a structure of a graded $K$-algebra $R_{[\pp]}=\bigoplus_{i\in\Z}R_{[\pp],i}$. We let $R_{(\pp)}:=R_{[\pp],0}$. In other words,
\[R_{[\pp]}\supset R_{(\pp)}:=\sum_{i\ge0}R_i(R_i\backslash\pp)^{-1}.\]
Similarly we define $\Gamma_{[\pp]}$ and $\Gamma_{(\pp)}$, and also $X_{[\pp]}$ and $X_{(\pp)}$ for $X\in\gr\Gamma$:
\begin{eqnarray*}
\Gamma_{[\pp]}:=\Gamma\otimes_RR_{[\pp]}\supset \Gamma_{(\pp)}&:=&\sum_{i\ge0}\Gamma_i(R_i\backslash\pp)^{-1},\\
X_{[\pp]}:=X\otimes_RR_{[\pp]}\supset X_{(\pp)}&:=&\sum_{i\ge0}X_i(R_i\backslash\pp)^{-1}.
\end{eqnarray*}
Clearly this gives a functor $(-)_{(\pp)} \colon \gr\Gamma\to\mod\Gamma_{(\pp)}$, which is the composition of $(-)_{[\pp]} \colon \gr\Gamma\to\mod\Gamma_{[\pp]}$
and $(-)_0 \colon \mod\Gamma_{[\pp]}\to\mod\Gamma_{(\pp)}$.
Since $X_{\pp}=0$ holds for any $X\in\gr\Gamma$ which is finite dimensional,
we have an induced functor
\[(-)_{(\pp)} \colon \qgr\Gamma\to\mod\Gamma_{(\pp)}.\]
It is easily checked that this functor is exact and dense.
When $\pp\in\MaxProj R$, we have the following much stronger result.

\begin{theorem}\label{qgr is fl}
Assume that $\Gamma$ is generated by $\Gamma_0$ and $\Gamma_1$ as a $K$-algebra.
Then for any $\pp\in\MaxProj R$, the functor
$(-)_{(\pp)}$ induces an equivalence $\qgr_{\pp}\Gamma\to\fd\Gamma_{(\pp)}$.
\end{theorem}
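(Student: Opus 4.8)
The plan is to factor the functor $(-)_{(\pp)}$ as $(-)_0\circ(-)_{[\pp]}$ and to reduce the statement to Dade's theorem on strongly graded rings together with some support bookkeeping. First I would set up the local data. Since $\pp\in\MaxProj R$, Lemma~\ref{some basic}(a) gives $\dim R/\pp=1$, so $R_+\not\subseteq\pp$ and we may fix a homogeneous $t\in R_d\setminus\pp$ with $d>0$. In $R_{[\pp]}$ this $t$ is invertible of degree $d$, so $R_{[\pp]}$ is graded-local with graded maximal ideal $\pp R_{[\pp]}$, whence $R_{(\pp)}=(R_{[\pp]})_0$ is a local ring with maximal ideal $\nn_\pp:=(\pp R_{[\pp]})_0$ and residue field $(R/\pp)_{(\pp)}$. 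Since $R$ is a finitely generated $K$-algebra (Lemma~\ref{finitely generated}), $R_{(\pp)}$ is Noetherian, and $\Gamma_{(\pp)}$ (resp.\ $\Gamma_{[\pp]}$) is module-finite over $R_{(\pp)}$ (resp.\ $R_{[\pp]}$), hence Noetherian; moreover $(-)_{[\pp]}\colon\gr\Gamma\to\gr\Gamma_{[\pp]}$ is exact and $(-)_{(\pp)}=(-)_0\circ(-)_{[\pp]}$.

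The conceptual heart of the argument is that $\Gamma_{[\pp]}$ is \emph{strongly graded}. Indeed, since $\Gamma$ is generated by $\Gamma_0$ and $\Gamma_1$ we have $\Gamma_d=\Gamma_1\Gamma_{d-1}$, so in the central localization $t\in\Gamma_{[\pp],1}\Gamma_{[\pp],d-1}$, and multiplying by $t^{-1}$ gives $1\in\Gamma_{[\pp],1}\Gamma_{[\pp],-1}$, which is exactly the condition that $\Gamma_{[\pp]}$ be strongly $\Z$-graded. (This is the one place where the hypothesis on generation in degrees $0$ and $1$ is used.) By Dade's theorem the functor $(-)_0\colon\Gr\Gamma_{[\pp]}\to\Mod\Gamma_{(\pp)}$ is then an equivalence, with quasi-inverse $M\mapsto\Gamma_{[\pp]}\otimes_{\Gamma_{(\pp)}}M$, restricting to $\gr\Gamma_{[\pp]}\equi\mod\Gamma_{(\pp)}$; and since the degree-zero part of $\pp R_{[\pp]}$ is $\nn_\pp$, it identifies the finitely generated graded $\Gamma_{[\pp]}$-modules supported (over $R_{[\pp]}$) on $V(\pp R_{[\pp]})$ with the $\Gamma_{(\pp)}$-modules of finite length over $R_{(\pp)}$, i.e.\ with $\fd\Gamma_{(\pp)}$ by Lemma~\ref{some basic}(b).

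Next comes the support bookkeeping on the $\Gamma$-side. The functor $(-)_{[\pp]}$ is a Serre localization at the Serre subcategory $\TT_\pp:=\{X\in\gr\Gamma\mid X_{[\pp]}=0\}=\{X\mid\pp\notin\Supp_RX\}$, and I claim that inside $\gr_\pp\Gamma$ this subcategory is exactly $\fd\Gamma$. For $X\in\gr_\pp\Gamma$ the minimal primes of $\Supp_RX\subseteq V(\pp)$ are homogeneous (as $\ann_RX$ is homogeneous), and by maximality of $\pp$ in $\Proj R$, together with the fact that every homogeneous prime of $R$ is contained in the irrelevant maximal ideal $\mm$, the only homogeneous prime of $R$ strictly containing $\pp$ is $\mm$; hence $X_{[\pp]}=0$ forces $\Supp_RX=\{\mm\}$, so the finitely generated module $X$ is annihilated by a power of $\mm$ and therefore finite dimensional over $K$ (as $R/\mm$ is finite over $K$), i.e.\ $X\in\fd\Gamma$; the converse is clear. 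Furthermore $(-)_{[\pp]}$ carries $\gr_\pp\Gamma$ into the finitely generated graded $\Gamma_{[\pp]}$-modules supported on $V(\pp R_{[\pp]})$, and it is dense onto this subcategory: given such an $N$, clearing denominators of generators and relations produces $Y\in\gr\Gamma$ with $Y_{[\pp]}\iso N$, and replacing $Y$ by its $\pp$-power-torsion submodule $X$ (finitely generated, since $\Gamma$ is Noetherian) we obtain $X\in\gr_\pp\Gamma$ with $X_{[\pp]}=Y_{[\pp]}$, because $Y/X$ is $\pp$-torsion-free while $(Y/X)_{[\pp]}$, being a quotient of $N$, is $\pp R_{[\pp]}$-power-torsion, hence zero.

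Putting this together: $\gr_\pp\Gamma$ is a Serre subcategory of $\gr\Gamma$ closed under subobjects and quotients, so its image under any Serre localization is its quotient by the intersection with the kernel; applying this to $q\colon\gr\Gamma\to\qgr\Gamma=\gr\Gamma/\fd\Gamma$ gives $\qgr_\pp\Gamma\equi\gr_\pp\Gamma/(\fd\Gamma\cap\gr_\pp\Gamma)$, while applying it to $(-)_{[\pp]}$ (with kernel $\TT_\pp$, and $\TT_\pp\cap\gr_\pp\Gamma=\fd\Gamma\cap\gr_\pp\Gamma$ by the previous paragraph) identifies $\gr_\pp\Gamma/(\fd\Gamma\cap\gr_\pp\Gamma)$ with the category of finitely generated graded $\Gamma_{[\pp]}$-modules supported on $V(\pp R_{[\pp]})$. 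Composing with the Dade equivalence of the second paragraph, the resulting equivalence $\qgr_\pp\Gamma\equi\fd\Gamma_{(\pp)}$ is exactly the one induced by $(-)_{(\pp)}=(-)_0\circ(-)_{[\pp]}$, which is the assertion. (The same conclusion can be reached by checking full faithfulness of $(-)_{(\pp)}$ by hand: for $X,Y\in\gr_\pp\Gamma$ both $\Hom_{\qgr\Gamma}(X,Y)$ and $\Hom_{\Gamma_{(\pp)}}(X_{(\pp)},Y_{(\pp)})$ compute the same localization of $\Hom$, using that $X$ is finitely presented so localization commutes with $\underline{\Hom}_\Gamma(X,Y)$, that strong gradedness makes the degree-zero part of $\underline{\Hom}_{\Gamma_{[\pp]}}(X_{[\pp]},Y_{[\pp]})$ the full $\Hom_{\Gamma_{(\pp)}}(X_{(\pp)},Y_{(\pp)})$, and the torsion identification above.) The main obstacle I expect is precisely the interplay in the second and third paragraphs between the $\Z$-grading and the local ring $\Gamma_{(\pp)}$ — the strong gradedness of $\Gamma_{[\pp]}$, and the fact that on $\gr_\pp\Gamma$ ``localize at $\pp$'' agrees with ``kill the finite dimensional modules''. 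Everything else (exactness and density of $(-)_{(\pp)}$, localness and Noetherianity of $R_{(\pp)}$, the $\pp$-torsion purification) I would only sketch.
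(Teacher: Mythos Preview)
Your proof is correct and follows the same two-step factorization $(-)_{(\pp)}=(-)_0\circ(-)_{[\pp]}$ as the paper: your strong-gradedness observation is the paper's Lemma~\ref{nice grading}, and your appeal to Dade's theorem is exactly Proposition~\ref{new proof 1} (which the paper even cites as \cite[Theorem I.3.4]{NV}). The only tactical difference is in handling $(-)_{[\pp]}$: the paper (Proposition~\ref{new proof 2}, via Lemma~\ref{properties of (pp)}) constructs an explicit quasi-inverse $(-)_{\ge0}\colon\gr\fd\Gamma_{[\pp]}\to\qgr_\pp\Gamma$ by truncation, while you reach the same equivalence through Serre-quotient bookkeeping and $\pp$-torsion purification --- your intermediate category ``supported on $V(\pp R_{[\pp]})$'' coincides with the paper's $\gr\fd\Gamma_{[\pp]}$.
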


First we observe the following easy property of $\Gamma_{[\pp]}$.

\begin{lemma}\label{nice grading}
Assume that $\Gamma$ is generated by $\Gamma_0$ and $\Gamma_1$ as a $K$-algebra.
\begin{itemize}
\item[(a)] We have $\Gamma_{[\pp],-i}\Gamma_{[\pp],i}=\Gamma_{(\pp)}$ for any $i\in\Z$.
\item[(b)] We have $\Gamma_{[\pp]}(i)\in\add_{\gr\Gamma_{[\pp]}}(\Gamma_{[\pp]})$ for any $i\in\Z$.
\end{itemize}
\end{lemma}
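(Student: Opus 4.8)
The plan is to derive both parts from a single fact: the $\Z$-graded ring $\Gamma_{[\pp]}$ is \emph{strongly graded}, i.e.\ $\Gamma_{[\pp],s}\Gamma_{[\pp],t}=\Gamma_{[\pp],s+t}$ for all $s,t\in\Z$. Part~(a) is exactly the case $t=-s$, together with $\Gamma_{[\pp],0}=\Gamma_{(\pp)}$: the inclusion $\Gamma_{[\pp],-i}\Gamma_{[\pp],i}\subseteq\Gamma_{(\pp)}$ is automatic by homogeneity, and conversely once $1\in\Gamma_{[\pp],-i}\Gamma_{[\pp],i}$ one has $\Gamma_{(\pp)}=\Gamma_{(\pp)}\cdot 1\subseteq\Gamma_{(\pp)}\Gamma_{[\pp],-i}\Gamma_{[\pp],i}\subseteq\Gamma_{[\pp],-i}\Gamma_{[\pp],i}$. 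Part~(b) will then follow by a standard splitting argument. So everything reduces to producing the unit $1$ inside the products $\Gamma_{[\pp],-i}\Gamma_{[\pp],i}$ and $\Gamma_{[\pp],i}\Gamma_{[\pp],-i}$.

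The decisive step is the case $i=1$. Since $\pp\in\MaxProj R$ we have $R_+\not\subseteq\pp$, so there is a homogeneous $s\in R_e\setminus\pp$ with $e\ge 1$; being among the homogeneous elements of $R\setminus\pp$ inverted in the graded localization, $s$ acquires a central homogeneous inverse $s^{-1}\in R_{[\pp]}\subseteq\Gamma_{[\pp]}$ of degree $-e$. Here is where the hypothesis that $\Gamma$ is generated by $\Gamma_0$ and $\Gamma_1$ enters: it gives $\Gamma_e=\Gamma_{e-1}\Gamma_1=\Gamma_1\Gamma_{e-1}$, so we may write $s=\sum_k\gamma_k\delta_k=\sum_k\delta'_k\gamma'_k$ with $\gamma_k,\gamma'_k\in\Gamma_{e-1}$ and $\delta_k,\delta'_k\in\Gamma_1$. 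Using that $s^{-1}$ is central, the identity $s/s=1$ then reads
\[1=\sum_k(\gamma_ks^{-1})\delta_k\in\Gamma_{[\pp],-1}\Gamma_{[\pp],1},\qquad 1=\sum_k\delta'_k(\gamma'_ks^{-1})\in\Gamma_{[\pp],1}\Gamma_{[\pp],-1},\]
since $\gamma_ks^{-1},\gamma'_ks^{-1}$ are homogeneous of degree $-1$ and $\delta_k,\delta'_k$ of degree $1$. The main (indeed only) delicate point is precisely this absorption of $s^{-1}$ into the degree $e-1$ factors: it is what degree $\le 1$ generation makes possible, and without it one lands in $\Gamma_{[\pp],0}$ but not obviously in $\Gamma_{[\pp],-1}\Gamma_{[\pp],1}$. (Note that only $R_+\not\subseteq\pp$, not maximality of $\pp$, is used.)

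From $1\in\Gamma_{[\pp],1}\Gamma_{[\pp],-1}$ and $1\in\Gamma_{[\pp],-1}\Gamma_{[\pp],1}$ an immediate induction on $i$ (insert the degree $\pm(i-1)$ identity into the degree $\pm1$ one) yields $1\in\Gamma_{[\pp],i}\Gamma_{[\pp],-i}$ and $1\in\Gamma_{[\pp],-i}\Gamma_{[\pp],i}$ for all $i\ge 0$; with the reduction above this proves~(a), and in fact that $\Gamma_{[\pp]}$ is strongly graded. For~(b), fix $i$ and use~(a) to write $1=\sum_{k=1}^r a_kb_k$ with $a_k\in\Gamma_{[\pp],-i}$, $b_k\in\Gamma_{[\pp],i}$. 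The degree-zero graded left $\Gamma_{[\pp]}$-module maps
\[f\colon\Gamma_{[\pp]}(i)\to\Gamma_{[\pp]}^{\oplus r},\ x\mapsto(xa_1,\dots,xa_r),\qquad g\colon\Gamma_{[\pp]}^{\oplus r}\to\Gamma_{[\pp]}(i),\ (y_k)_k\mapsto\textstyle\sum_k y_kb_k,\]
are well defined (for $x\in\Gamma_{[\pp]}(i)_j$ one has $xa_k\in\Gamma_{[\pp],i+j}\Gamma_{[\pp],-i}\subseteq\Gamma_{[\pp],j}$, and dually for $g$) and satisfy $g\circ f=\mathrm{id}_{\Gamma_{[\pp]}(i)}$ because $g(f(x))=\sum_k(xa_k)b_k=x\cdot 1=x$. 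Hence $\Gamma_{[\pp]}(i)$ is a direct summand of $\Gamma_{[\pp]}^{\oplus r}$, i.e.\ $\Gamma_{[\pp]}(i)\in\add_{\gr\Gamma_{[\pp]}}(\Gamma_{[\pp]})$, proving~(b). The remaining verifications — that $f,g$ are graded left-module maps of degree $0$ and that $\Gamma_{[\pp]}$ and its twists lie in $\gr\Gamma_{[\pp]}$ — are routine.
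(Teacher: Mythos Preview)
Your proof is correct and follows essentially the same approach as the paper. Both arguments pick a homogeneous $r\in R_\ell\setminus\pp$ with $\ell>0$, use the hypothesis $\Gamma_m=\Gamma_{m-k}\Gamma_k$ (for $m\ge k\ge0$) to factor a suitable power of $r$, and absorb $r^{-\text{power}}$ into one factor to obtain $1\in\Gamma_{[\pp],-i}\Gamma_{[\pp],i}$; the only cosmetic difference is that the paper treats each $i$ directly via $r^i\in\Gamma_{i\ell}=\Gamma_{i\ell-i}\Gamma_i$, while you establish the case $i=1$ and induct. For~(b), your explicit splitting $f,g$ built from a decomposition $1=\sum a_kb_k$ is precisely the element-level version of the paper's one-line argument that $\Hom_{\gr\Gamma_{[\pp]}}(\Gamma_{[\pp]}(i),\Gamma_{[\pp]})\cdot\Hom_{\gr\Gamma_{[\pp]}}(\Gamma_{[\pp]},\Gamma_{[\pp]}(i))=\End_{\gr\Gamma_{[\pp]}}(\Gamma_{[\pp]}(i))$.
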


\begin{proof}
(a) By our assumption, we have $\Gamma_i\Gamma_j=\Gamma_{i+j}$ for any $i,j\ge0$.
Take $r\in R_{\ell}\backslash\pp$ with $\ell>0$.
Since $r^i\in\Gamma_{i\ell}=\Gamma_{i\ell-i}\Gamma_{i}$, we have $1\in(\Gamma_{i\ell-i}r^{-i})\cdot\Gamma_i\subset\Gamma_{[\pp],-i}\Gamma_{[\pp],i}$.
Thus we have the assertion for $i\ge0$. The case $i<0$ follows similarly.

(b) We can identify $\Hom_{\gr\Gamma_{[\pp]}}(\Gamma_{[\pp]}(i),\Gamma_{[\pp]}(j))$ with $\Gamma_{[\pp],j-i}$.
Then the equality in (a) shows $\Hom_{\gr\Gamma_{[\pp]}}(\Gamma_{[\pp]}(i),\Gamma_{[\pp]})\cdot\Hom_{\gr\Gamma_{[\pp]}}(\Gamma_{[\pp]},\Gamma_{[\pp]}(i))=\End_{\gr\Gamma_{[\pp]}}(\Gamma_{[\pp]}(i))$.
This implies the assertion.
\end{proof}

Now we denote by $\gr\fd\Gamma_{[\pp]}$ the full subcategory of $\gr\Gamma_{[\pp]}$ consisting of degreewise finite dimensional modules.
We have the following equivalence.

\begin{proposition}[{cf.\ \cite[Theorem I.3.4]{NV}}]\label{new proof 1}
Assume that $\Gamma$ is generated by $\Gamma_0$ and $\Gamma_1$ as a $K$-algebra.
We have an equivalence $(-)_0 \colon \gr\Gamma_{[\pp]}\to\mod\Gamma_{(\pp)}$ which induces an equivalence $(-)_0 \colon \gr\fd\Gamma_{[\pp]}\to\fd\Gamma_{(\pp)}$.
\end{proposition}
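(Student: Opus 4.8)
The plan is to observe that the $\Z$-graded ring $\Gamma_{[\pp]}$ is \emph{strongly graded} and then to read off the equivalence from the classical Dade-type theorem cited as \cite[Theorem I.3.4]{NV}: for a strongly $\Z$-graded ring $A$ the functor $(-)_0\colon\Gr A\to\Mod A_0$ is an equivalence with quasi-inverse $A\otimes_{A_0}-$, and each homogeneous component $A_i$ is a finitely generated projective $A_0$-module on either side.

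First I would check that $\Gamma_{[\pp]}$ is strongly graded, i.e.\ $\Gamma_{[\pp],i}\Gamma_{[\pp],j}=\Gamma_{[\pp],i+j}$ for all $i,j\in\Z$. This is immediate from Lemma~\ref{nice grading}(a): since $\Gamma_{[\pp],-i}\Gamma_{[\pp],i}=\Gamma_{(\pp)}=\Gamma_{[\pp],0}$, one gets $\Gamma_{[\pp],i+j}=\Gamma_{[\pp],i}\Gamma_{[\pp],-i}\Gamma_{[\pp],i+j}\subseteq\Gamma_{[\pp],i}\Gamma_{[\pp],j}\subseteq\Gamma_{[\pp],i+j}$. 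Applying the cited theorem with $A=\Gamma_{[\pp]}$ and $A_0=\Gamma_{(\pp)}$ then produces the equivalence $(-)_0\colon\Gr\Gamma_{[\pp]}\equi\Mod\Gamma_{(\pp)}$ with quasi-inverse $\Gamma_{[\pp]}\otimes_{\Gamma_{(\pp)}}-$. If one prefers a self-contained argument, finite generation and projectivity of $\Gamma_{[\pp],i}$ over $\Gamma_{(\pp)}$ (on each side) can be read off directly from a finite expression $1=\sum_k a_kb_k$ with $a_k\in\Gamma_{[\pp],i}$, $b_k\in\Gamma_{[\pp],-i}$ (this is essentially Lemma~\ref{nice grading}(b)), and the verification that the unit and counit of the adjunction $\big((-)_0,\ \Gamma_{[\pp]}\otimes_{\Gamma_{(\pp)}}-\big)$ are isomorphisms reduces, degree by degree, to this same identity.

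Next I would restrict to the finite subcategories. For $\mod$: a finitely presented graded $\Gamma_{[\pp]}$-module is a cokernel of a map between finite direct sums of shifts $\Gamma_{[\pp]}(i)$, and since $(-)_0$ is exact and carries each $\Gamma_{[\pp]}(i)$ to $\Gamma_{[\pp],i}\in\proj\Gamma_{(\pp)}$, it maps $\gr\Gamma_{[\pp]}$ into $\mod\Gamma_{(\pp)}$; the right-exact quasi-inverse sends $\Gamma_{(\pp)}$ to $\Gamma_{[\pp]}$ and hence maps $\mod\Gamma_{(\pp)}$ back into $\gr\Gamma_{[\pp]}$. For $\fd$: if $X\in\gr\fd\Gamma_{[\pp]}$ then $X_0$ is finite-dimensional over $K$, so $X_0\in\fd\Gamma_{(\pp)}$; conversely, for $M\in\fd\Gamma_{(\pp)}$ the degree-$i$ component of $\Gamma_{[\pp]}\otimes_{\Gamma_{(\pp)}}M$ is $\Gamma_{[\pp],i}\otimes_{\Gamma_{(\pp)}}M$, which is a direct summand of $M^{\oplus m_i}$ (as $\Gamma_{[\pp],i}$ is a summand of $\Gamma_{(\pp)}^{\oplus m_i}$ as a right $\Gamma_{(\pp)}$-module) and hence finite-dimensional, so $\Gamma_{[\pp]}\otimes_{\Gamma_{(\pp)}}M\in\gr\fd\Gamma_{[\pp]}$. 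Thus the equivalence restricts to $\gr\fd\Gamma_{[\pp]}\equi\fd\Gamma_{(\pp)}$.

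I do not expect a genuine obstacle: once Lemma~\ref{nice grading} is in hand, the argument is the standard ``un-grading'' equivalence for strongly graded rings together with routine finiteness bookkeeping. The only points deserving care are recognizing that the innocuous-looking identity $\Gamma_{[\pp],-i}\Gamma_{[\pp],i}=\Gamma_{(\pp)}$ is exactly the strong-grading hypothesis, and, when restricting, that both finiteness conditions ($\mod$ and $\fd$) are preserved in each direction — which is precisely why the projectivity of the $\Gamma_{[\pp],i}$ over $\Gamma_{(\pp)}$, equivalently Lemma~\ref{nice grading}(b), is needed.
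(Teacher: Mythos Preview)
Your proposal is correct and follows essentially the same approach as the paper: both identify $\Gamma_{[\pp]}\otimes_{\Gamma_{(\pp)}}-$ as the quasi-inverse, both rely on Lemma~\ref{nice grading}(a) for the key identity $\Gamma_{[\pp],-i}\Gamma_{[\pp],i}=\Gamma_{(\pp)}$, and both use Lemma~\ref{nice grading}(b) to handle the passage to $\gr$ and $\fd$. The only difference is packaging: you name the strong-grading condition and invoke the Dade theorem from \cite{NV} (which the paper already cites in the statement), whereas the paper reproves that equivalence directly via a projective-presentation argument on $\gr\Gamma_{[\pp]}$.
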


\begin{proof}
We show that the functor $\Gamma_{[\pp]}\otimes_{\Gamma_{(\pp)}}- \colon \mod\Gamma_{(\pp)}\to\gr\Gamma_{[\pp]}$ gives a quasi-inverse.
Clearly the composition $(\Gamma_{[\pp]}\otimes_{\Gamma_{(\pp)}}-)_0$ is isomorphic to the identity functor on $\mod\Gamma_{(\pp)}$.

We have a morphism $\alpha \colon \Gamma_{[\pp]}\otimes_{\Gamma_{(\pp)}}(-)_0\to{\rm id}_{\gr\Gamma_{[\pp]}}$ of functors on $\gr\Gamma_{[\pp]}$ given by
$\alpha_X \colon \Gamma_{[\pp]}\otimes_{\Gamma_{(\pp)}}X_0\to X$ where $\alpha_X(\gamma\otimes x)=\gamma x$ for any $\gamma\in\Gamma_{[\pp]}$ and $x\in X_0$.
Clearly $\alpha_X$ is an isomorphism if $X\in\add_{\gr\Gamma_{[\pp]}}(\Gamma_{[\pp]})$.
For arbitrary $X\in\gr\Gamma_{[\pp]}$, there exists a projective presentation $P'\xrightarrow{g}P\xrightarrow{f} X\to0$ in $\gr\Gamma_{[\pp]}$
with $P,P'\in\add_{\gr\Gamma_{[\pp]}}(\Gamma_{[\pp]})$ by Lemma \ref{nice grading}(b).
Applying $\Gamma_{[\pp]}\otimes_{\Gamma_{(\pp)}}(-)_0$, we have a commutative diagram
\[\xymatrix{
\Gamma_{[\pp]}\otimes_{\Gamma_{(\pp)}}P'_0\ar[rr]^{\Gamma_{[\pp]}\otimes g_0}\ar[d]_{\wr}^{\alpha_{P'}}
&&\Gamma_{[\pp]}\otimes_{\Gamma_{(\pp)}}P_0\ar[rr]^{\Gamma_{[\pp]}\otimes f_0}\ar[d]_{\wr}^{\alpha_P}
&&\Gamma_{[\pp]}\otimes_{\Gamma_{(\pp)}}X_0\ar[r]\ar[d]^{\alpha_X}&0\\
P'\ar[rr]^g&&P\ar[rr]^f&&X\ar[r]&0,
}\]
where the left and middle vertical maps are isomorphisms. Thus $\alpha_X$ and hence $\alpha$ is an isomorphism.

Clearly $(-)_0$ gives a functor $\gr\fd\Gamma_{[\pp]}\to\fd\Gamma_{(\pp)}$, and $\Gamma_{[\pp]}\otimes_{\Gamma_{(\pp)}}-$ gives a functor $\fd\Gamma_{(\pp)}\to\gr\fd\Gamma_{[\pp]}$
since $\Gamma_{[\pp],i}$ is a finitely generated right $\Gamma_{(\pp)}$-module by Lemma \ref{nice grading}(b).
Thus we have an equivalence $\gr\fd\Gamma_{[\pp]}\to\fd\Gamma_{(\pp)}$.
\end{proof}

We also need the following elementary observations.

\begin{lemma}\label{properties of (pp)}
Let $\pp\in\MaxProj R$ and $X\in\gr_{\pp}\Gamma$.
Assume that $X$ does not have non-zero finite dimensional submodules.
\begin{itemize}
\item[(a)] Any element in $R\backslash\pp$ is $X$-regular.
\item[(b)] For any $r\in R_\ell\backslash\pp$, the map $r \colon X_i\to X_{i+\ell}$ is bijective for any $i\gg0$.
\item[(c)] The natural map $X\to X_{[\pp]}$ induces an isomorphism $X_i\to X_{[\pp],i}$ for any $i\gg0$.
\end{itemize}
\end{lemma}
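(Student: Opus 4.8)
The plan is to deduce all three parts from the single fact that, up to finite dimensional modules, $X$ behaves like a module over a one-dimensional Noetherian graded ring. First I would prove (a). By Lemma~\ref{some basic}(a) the ring $R/\pp$ has Krull dimension one, and since $X \in \gr_\pp\Gamma$ we have $\Supp_R X \subseteq V(\pp)$. The associated primes $\Ass_R X$ are contained in $\Supp_R X \subseteq V(\pp)$; I claim each of them actually equals $\pp$. Indeed, any finite dimensional submodule of $X$ is zero by hypothesis, so by Lemma~\ref{some basic}(b) no associated prime of $X$ lies in $\MaxSpec R$; but inside $V(\pp)$ the only non-maximal prime is $\pp$ itself (using $\dim R/\pp = 1$). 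Hence $\Ass_R X = \{\pp\}$, and any $r \in R\setminus\pp$ avoids every associated prime, so it is a non-zerodivisor on $X$, i.e.\ $X$-regular. This is the conceptual heart of the lemma, and I expect it to be the part requiring the most care — specifically the bookkeeping that rules out embedded or maximal associated primes.

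Next, part (b). Fix $r \in R_\ell \setminus \pp$ with $\ell > 0$ (we may assume $\ell > 0$ since $\pp \in \Proj R$ does not contain $\bigoplus_{i>0} R_i$, so such an $r$ exists). Injectivity of $r\colon X_i \to X_{i+\ell}$ for all $i$ is immediate from (a). For surjectivity in high degree, consider the cokernel $Y := X/rX$, which is a finitely generated graded $\Gamma$-module annihilated by $r$, hence $\Supp_R Y \subseteq V(\pp) \cap V(r) = V(\pp + (r))$. Since $r \notin \pp$ and $\dim R/\pp = 1$, the ring $R/(\pp + (r))$ is zero-dimensional, so $V(\pp + (r)) \subseteq \MaxSpec R$; by Lemma~\ref{some basic}(b) this forces $Y \in \fd\Gamma$, i.e.\ $Y$ is finite dimensional. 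A finite dimensional graded module is concentrated in finitely many degrees, so $Y_i = 0$ for $i \gg 0$, which says exactly that $r\colon X_i \to X_{i+\ell}$ is surjective, hence bijective, for $i \gg 0$.

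Finally, part (c) follows formally from (b). The localization $X_{[\pp]}$ is the direct limit of the system $X \xrightarrow{r_1} X(\deg r_1) \to \cdots$ over homogeneous $r_j \in R \setminus \pp$; concretely, $X_{[\pp],i} = \varinjlim_{r} X_{i + \deg r}$ where $r$ ranges over products of homogeneous elements of $R\setminus\pp$. Since $R$ is Noetherian and finitely generated over $K$ (Proposition~\ref{finitely generated}), finitely many homogeneous elements $r_1, \dots, r_t \in R\setminus\pp$ generate enough of the multiplicative set, and by (b) each multiplication map $r_k\colon X_i \to X_{i + \deg r_k}$ is already an isomorphism once $i$ exceeds some bound $N_k$. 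Taking $N = \max_k N_k$ (adjusted for the finitely many degree shifts involved), the canonical map $X_i \to X_{[\pp],i}$ becomes an isomorphism for all $i \ge N$. I would write this last step briefly, as it is routine once (b) is in hand; the only mild subtlety is confirming that finitely many localizing elements suffice, which is where Noetherianity of $R$ enters.
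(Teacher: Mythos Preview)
Your arguments for (a) and (b) are correct and essentially parallel the paper's. For (a) the paper argues directly (if $rx=0$ then $\Gamma x$ is annihilated by $(r)+\ann_R X$, hence supported only at maximal ideals, hence finite dimensional, hence zero), while you go through $\Ass_R X=\{\pp\}$; both work. One small point: the hypothesis refers to $\Gamma$-submodules, so when you conclude from a maximal associated prime $\qq$ that there is a finite dimensional submodule, you should note that since $R$ is central and $\Gamma$ is module-finite over $R$, the $\Gamma$-submodule $\Gamma x$ (not just $Rx$) is annihilated by $\qq$ and hence finite dimensional. Part (b) is the same as the paper's.

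Your argument for (c), however, has a genuine gap. The claim that ``finitely many homogeneous elements $r_1,\dots,r_t\in R\setminus\pp$ generate enough of the multiplicative set'' is not correct: even for $R=K[x,y]$ and $\pp=(x)$, the homogeneous elements of $R\setminus\pp$ (e.g.\ all $y+\alpha x$) are not multiplicatively generated by any finite set, and Noetherianity of $R$ does not help here. What you actually need is that \emph{every} homogeneous $s\in R_j\setminus\pp$ acts bijectively on $X_i$ for $i\gg0$, uniformly in $s$. The paper's trick is to fix a single $r\in R_\ell\setminus\pp$ as in (b), so that $r\colon X_i\to X_{i+\ell}$ is bijective for all $i\ge a$; in particular $\dim_K X_i=\dim_K X_{i+\ell}$ for $i\ge a$. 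Given any $xs^{-1}\in X_{[\pp],i}$ with $s\in R_j\setminus\pp$, replace $(x,s)$ by $(xs^{\ell-1},s^{\ell})$ so that the degree of $s$ becomes a multiple of $\ell$. Then $s\colon X_i\to X_{i+j}$ is injective by (a) and, since $\dim_K X_i=\dim_K X_{i+j}$ for $i\ge a$, it is bijective; hence $xs^{-1}$ lies in the image of $X_i$. This single-$r$ dimension-count argument is the missing idea.
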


\begin{proof}
(a) Fix $r\in R\backslash\pp$ and assume $rx=0$ for some $x\in X$.
Then $\Gamma x$ is annihilated by $(r)+\ann_RX$, so it is supported only by maximal ideals of $R$.
Thus $\Gamma x$ is finite dimensional by Lemma~\ref{some basic}(b), and we have $x=0$ by our assumption.

(b) Consider an exact sequence
\[0\to K\to X\xrightarrow{r}X\to C\to0.\]
Since $K$ and $C$ are annihilated by $(r)+\ann_R X$, they are supported only by maximal ideals of $R$.
Thus $K$ and $C$ are finite dimensional by Lemma~\ref{some basic}(b), and we have the assertion.

(c) Pick $r \in R_\ell\backslash\pp$ as in (b), and let $a \in \mathbb{Z}$ such that the map $r \colon X_i\to X_{i+\ell}$ is bijective for all $i \geq a$.
We will show that the natural map $X_i\to X_{[\pp],i}$ is an isomorphism for any $i\ge a$.
Any element of $X_{[\pp],i}$ can be written as $x s^{-1}$ for some $x \in X_{i+j}$ and $s \in R_j \setminus \pp$ with $j\ge0$. Replacing $(x,s)$ by $(xs^{\ell-1},s^{\ell})$, we can assume that $j$ is a multiple of $\ell$.
By (a), the map $s \colon X_i \to X_{i+j}$ is injective. Since $i\ge a$ and $j$ is a multiple of $\ell$, we have $\dim X_i = \dim X_{i+j}$. Thus the map $s \colon X_i \to X_{i+j}$ is bijective.
Hence there is $y \in X_i$ such that $ys = x$, so $x s^{-1}=y\in X_i$.
\end{proof}

As a consequence, we have the following equivalence.

\begin{proposition}\label{new proof 2}
The functor $(-)_{[\pp]} \colon \gr\Gamma\to\gr\Gamma_{[\pp]}$ induces an equivalence $\qgr_{\pp}\Gamma\to\gr\fd\Gamma_{[\pp]}$.
\end{proposition}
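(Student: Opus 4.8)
The plan is to produce a quasi-inverse to $(-)_{[\pp]}$ by truncation in high degree, and to verify the equivalence using Lemma~\ref{properties of (pp)} and, where the generation of $\Gamma$ in degrees $0$ and $1$ is needed, Lemma~\ref{nice grading}.

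\emph{The functor is well-defined.} As $R_{[\pp]}$ is flat over $R$, the functor $(-)_{[\pp]}=-\otimes_RR_{[\pp]}$ is exact, and since $\Gamma_{[\pp]}$ is a finitely generated module over the Noetherian ring $R_{[\pp]}$ it is Noetherian, so $(-)_{[\pp]}$ does land in $\gr\Gamma_{[\pp]}$. A finite-dimensional module $X$ has $X_{[\pp]}=0$: for a homogeneous $x\in X$ the graded $R$-module $Rx$ is finite-dimensional, hence artinian, so every prime over the homogeneous ideal $\ann_R(x)$ is maximal; as $R/\pp$ has Krull dimension one (Lemma~\ref{some basic}(a)) the ideal $\pp$ is not maximal, whence $\ann_R(x)\not\subseteq\pp$ and $\ann_R(x)$ contains a homogeneous element of $R\setminus\pp$ killing $x$. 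Thus $(-)_{[\pp]}$ factors through $\qgr\Gamma$. For $X\in\gr_{\pp}\Gamma$, replacing $X$ by its quotient modulo the maximal finite-dimensional submodule (which is finite-dimensional by Noetherianity) does not change its class in $\qgr\Gamma$, so we may assume $X$ has no nonzero finite-dimensional submodule; then Lemma~\ref{properties of (pp)}(c) gives $X_i\cong X_{[\pp],i}$ for $i\gg0$, and these are finite-dimensional, while for a homogeneous $r\in R_\ell\setminus\pp$ with $\ell>0$ multiplication by $r$ is an isomorphism $X_{[\pp],i}\to X_{[\pp],i+\ell}$ (it is a unit of $R_{[\pp]}$), so every graded piece of $X_{[\pp]}$ is finite-dimensional and $X_{[\pp]}\in\gr\fd\Gamma_{[\pp]}$.

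\emph{Essential surjectivity and the candidate quasi-inverse.} Fix $M\in\gr\fd\Gamma_{[\pp]}$. Writing a homogeneous unit $r\in R_\ell\setminus\pp$ ($\ell>0$) as a sum of products of $\ell$ elements of $\Gamma_1$ (possible since $\Gamma$ is generated in degrees $0$ and $1$) gives $M_i=rM_{i-\ell}\subseteq\Gamma_1^\ell M_{i-\ell}\subseteq\Gamma_1M_{i-1}$ for every $i$; hence $X:=M_{\ge N}:=\bigoplus_{i\ge N}M_i$ is a finitely generated graded $\Gamma$-module (generated by the finite-dimensional $M_N$). In $M/X=M_{<N}$ every homogeneous element is annihilated by a suitable power of $r$, so $(M/X)_{[\pp]}=0$ and therefore $X_{[\pp]}\cong M_{[\pp]}=M$, the last equality because $M$ is already an $R_{[\pp]}$-module. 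It remains to check $X\in\gr_{\pp}\Gamma$, i.e.\ $\Supp_R X\subseteq V(\pp)$: this is where the degreewise finite-dimensionality of $M$ is used. Using Lemma~\ref{nice grading} one sees $M_0$ is a finitely generated $R_{(\pp)}$-module of finite $K$-dimension, hence of finite length over the local ring $R_{(\pp)}$, so it is annihilated by $(\pp R_{(\pp)})^{N'}$ for some $N'$; transporting this back along the identifications $R_{[\pp],-i}R_{[\pp],i}=R_{(\pp)}$ of Lemma~\ref{nice grading}(a) and the isomorphisms $M_i\cong M_{i+\ell}$, a suitable power of a homogeneous generating set of $\pp$ annihilates $M$, so $\pp\subseteq\sqrt{\ann_R M}\subseteq\sqrt{\ann_R X}$ and $\Supp_R X\subseteq V(\pp)$.

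\emph{Full faithfulness.} Let $X,Y\in\gr_{\pp}\Gamma$, which we may take torsionfree; both are bounded below (being finitely generated over the positively graded $\Gamma$), so $\Hom_{\qgr_{\pp}\Gamma}(X,Y)=\varinjlim_N\Hom_\Gamma(X_{\ge N},Y)$. By Lemma~\ref{properties of (pp)}(c) together with $M_i=\Gamma_1M_{i-1}$, for $N\gg0$ the module $X_{[\pp]}$ is generated over $\Gamma$ by $X_{[\pp],\ge N}=X_{\ge N}$, so restriction identifies $\Hom_{\gr\Gamma_{[\pp]}}(X_{[\pp]},Y_{[\pp]})$ with $\Hom_\Gamma(X_{\ge N},Y_{[\pp]})$; and since $X_{\ge N}$ is generated by $X_N$ and $Y_{[\pp],i}=Y_i$ for $i\gg0$, any such $\Gamma$-morphism has image in $Y$, giving $\Hom_\Gamma(X_{\ge N},Y_{[\pp]})=\Hom_\Gamma(X_{\ge N},Y)$ for $N\gg0$. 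Combining these, $\Hom_{\gr\Gamma_{[\pp]}}(X_{[\pp]},Y_{[\pp]})\cong\Hom_{\qgr_{\pp}\Gamma}(X,Y)$, so $(-)_{[\pp]}$ is fully faithful, hence an equivalence $\qgr_{\pp}\Gamma\to\gr\fd\Gamma_{[\pp]}$. The main technical hurdle is the support estimate $\Supp_R X\subseteq V(\pp)$ in the essential surjectivity step; the rest is careful bookkeeping with the "for $i\gg0$" comparisons supplied by Lemma~\ref{properties of (pp)}.
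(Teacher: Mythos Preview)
Your overall strategy matches the paper's: truncation $M\mapsto M_{\ge N}$ serves as the quasi-inverse. But your argument imports an extra hypothesis not present in the proposition, namely that $\Gamma$ is generated in degrees $0$ and $1$: you use it to get $M_i=\Gamma_1 M_{i-1}$ (so that $M_{\ge N}$ is generated by the single piece $M_N$) and again in the support step. The paper's proof avoids this. For finite generation it simply notes $Y_{i+\ell}=rY_i$, so $Y_{\ge 0}$ is generated by $Y_0,\ldots,Y_{\ell-1}$. For the support, rather than your annihilator computation, the paper argues: homogeneous elements of $R\setminus\pp$ are units on $Y$, hence regular on $Y_{\ge 0}$, so every associated prime of $Y_{\ge 0}$ lies in $\pp$; and the Hilbert function of $Y_{\ge 0}$ is bounded, forcing Krull dimension $\le 1$; since $\dim R/\pp=1$ this gives $\Supp_R Y_{\ge 0}\subseteq V(\pp)$. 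This is both shorter and free of the degree-$1$ generation hypothesis.

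Two smaller issues. Your appeal to ``$R_{[\pp],-i}R_{[\pp],i}=R_{(\pp)}$ of Lemma~\ref{nice grading}(a)'' is illegitimate, since that lemma is stated for $\Gamma$ under the generation hypothesis, not for $R$; in fact this identity is not needed for your annihilator argument, which can be salvaged by observing directly that $a^\ell/r^d$ lies in the maximal ideal of $R_{(\pp)}$ for homogeneous $a\in\pp_d$, and then treating each $M_j$ with $0\le j<\ell$ separately rather than reducing to $M_0$. And in the full-faithfulness step, $X_{[\pp]}$ is not generated over $\Gamma$ by $X_{\ge N}$ (it has pieces in negative degree); the correct point is that $(X_{\ge N})_{[\pp]}=X_{[\pp]}$, and the identification $\Hom_{\gr\Gamma_{[\pp]}}(X_{[\pp]},Y_{[\pp]})=\Hom_{\gr\Gamma}(X_{\ge N},Y_{[\pp]})$ then follows from the localization adjunction.
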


\begin{proof}
Since all finite dimensional $\Gamma$-modules are annihilated by some homogeneous element in $R\backslash\pp$, they are sent to zero by the functor $(-)_{[\pp]}$.
Thus we have an induced functor $\qgr\Gamma\to\gr\Gamma_{[\pp]}$.

In the rest, we fix a homogeneous element $r\in R_{\ell}\backslash\pp$ for some $\ell>0$. 
We show $X_{[\pp]}\in\gr\fd\Gamma_{[\pp]}$ for any $X\in\gr_{\pp}\Gamma$ without non-zero finite dimensional submodules.
By Lemma \ref{properties of (pp)}(c), we have $\dim X_{[\pp],i}<\infty$ for $i\gg0$.
Since we have an isomorphism $r \colon X_{[\pp],i}\to X_{[\pp],i+\ell}$ for any $i\in\Z$, we have the assertion.
Thus we have a functor $(-)_{[\pp]} \colon \qgr_{\pp}\Gamma\to\gr\fd\Gamma_{[\pp]}$.

We consider the functor $(-)_{\ge0} \colon \gr\fd\Gamma_{[\pp]}\to\Gr\Gamma$ sending $Y=\bigoplus_{i\in\Z}Y_i$ to $Y_{\ge0}:=\bigoplus_{i\ge0}Y_i$.
Since $Y_{i+\ell}=rY_i$ holds for any $i\in\Z$, we have that $Y_{\ge0}$ is generated by $Y_0$, $Y_1,\cdots,Y_{\ell-1}$. Thus $Y_{\ge0}\in\gr\Gamma$.
Since any homogeneous element in $R\backslash\pp$ is $(Y_{\ge0})$-regular, any associate prime ideal of $Y_{\ge0}$ is contained in $\pp$.
On the other hand, since the Hilbert function of $Y_{\ge0}$ is bounded, the Krull dimension of $Y_{\ge0}$ is at most one (\cite[Theorem 4.1.3]{BH}).
Thus $\Supp_R (Y_{\ge0})\subset V(\pp)$, and we have $Y_{\ge0}\in\gr_{\pp}\Gamma$.
Consequently, we have a functor $(-)_{\ge0} \colon \gr\fd\Gamma_{[\pp]}\to\gr_{\pp}\Gamma$.
We will show that the functors $(-)_{\ge0}$ and $(-)_{[\pp]}$ induce mutually quasi-inverse equivalences between $\gr\fd\Gamma_{[\pp]}$ and $\qgr_{\pp} \Gamma$.

For any $X\in\qgr_{\pp}\Gamma$, the natural map $X\to X_{[\pp]}$ gives a morphism $\beta_X \colon X\iso X_{\ge0}\to (X_{[\pp]})_{\ge 0}$ in $\qgr_{\pp}\Gamma$.
This is an isomorphism in $\qgr_{\pp}\Gamma$ by Lemma \ref{properties of (pp)}(c).
Thus we have an isomorphism $\beta \colon {\rm id}_{\qgr_{\pp}\Gamma}\to ((-)_{[\pp]})_{\ge 0}$ of functors on $\qgr_{\pp}\Gamma$.

For any $X\in\gr\fd\Gamma_{[\pp]}$, we have a natural inclusion $\gamma_X \colon (X_{\ge0})_{[\pp]}\to X$, which gives a morphism $\gamma \colon ((-)_{\ge0})_{[\pp]}\to{\rm id}_{\gr\fd\Gamma_{[\pp]}}$ of functors on $\gr\fd\Gamma_{[\pp]}$.
For any homogeneous element $x\in X_i$, we have $x=(xr^j)r^{-j}$ where $xr^j\in X_{i+\ell j}$ and $r^j\in R_{\ell j}\backslash\pp$. Taking $j$ large enough, we have $xr^j\in X_{\ge0}$ and we have $x\in(X_{\ge0})_{[\pp]}$.
Thus $\gamma_X$ is an isomorphism, and $\gamma$ is an isomorphism of functors.
\end{proof}

We have now completed the proof of Theorem~\ref{qgr is fl}:

\begin{proof}[Proof of Theorem \ref{qgr is fl}]
We only have to compose the equivalences given in Propositions~\ref{new proof 1} and \ref{new proof 2}.
\end{proof}

Now we apply the above results to give a description of the category $\RR$.

\medskip
Again let $\Lambda$ be an $n$-representation tame algebra,
so the preprojective algebra $\Pi$ is a Noetherian $R$-algebra for a graded $K$-subalgebra $R$ of $\Pi$.
Now we apply the above results to $(\Gamma,R):=(\Pi,R)$, which satisfies the assumption in Theorem~\ref{qgr is fl}.

\begin{theorem} \label{theorem.R_decomp_tame}
Let $\Lambda$ be an $n$-representation tame algebra.
Assume that the preprojective algebra $\Pi$ is a Noetherian $R$-algebra for a graded $K$-subalgebra $R$ of $\Pi$. Then we have
\[\RR=\coprod_{{\mathfrak p}\in\MaxProj R} \fd\Pi_{(\pp)}.\]
\end{theorem}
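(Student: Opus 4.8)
The plan is to deduce the statement by chaining three results already established in this paper, once the hypotheses of each have been checked for the pair $(\Pi,R)$. Concretely, the proof is the chain of identifications
\[\RR \;=\; \qgr_0\Pi \;=\; \coprod_{\pp\in\MaxProj R}\qgr_\pp\Pi \;\equi\; \coprod_{\pp\in\MaxProj R}\fd\Pi_{(\pp)},\]
where the first equality is Theorem~\ref{description of R}, the second is Proposition~\ref{qgr decomposition} applied to $\Gamma=\Pi$, and the last equivalence is obtained by applying Theorem~\ref{qgr is fl} to each $\pp\in\MaxProj R$ separately.

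First I would check that Theorem~\ref{description of R} applies, that is, that $\Pi$ is left graded coherent: this is immediate, since a Noetherian $R$-algebra is a (left and right) Noetherian ring, hence left graded coherent. Next, to invoke Proposition~\ref{qgr decomposition} and Theorem~\ref{qgr is fl} for $(\Gamma,R):=(\Pi,R)$ I would verify the standing hypotheses imposed on such a pair earlier in the section. That $R$ is a graded $K$-subalgebra of $\Pi$ over which $\Pi$ is a finitely generated module — hence a Noetherian $R$-algebra — is exactly the hypothesis of the theorem, and ring-indecomposability of $\Pi$ follows from that of $\Pi_0=\Lambda$ by an easy degree argument on central idempotents. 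Degreewise finite-dimensionality holds because $\Pi_i$ is an $i$-fold tensor product over $\Lambda$ of the finite-dimensional $\Lambda$-bimodule $\Ext^n_\Lambda(D\Lambda,\Lambda)$. Finally, for Theorem~\ref{qgr is fl} one needs $\Pi$ to be generated by $\Pi_0$ and $\Pi_1$ as a $K$-algebra; this is clear since $\Pi=T_\Lambda\Ext^n_\Lambda(D\Lambda,\Lambda)$ is a tensor algebra, so it is generated over $K$ by $\Pi_0=\Lambda$ together with $\Pi_1=\Ext^n_\Lambda(D\Lambda,\Lambda)$.

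With all hypotheses in place the three cited results apply verbatim, and the displayed chain is precisely the assertion. I do not expect a genuine obstacle here: the content of the theorem is a packaging of earlier work, and the only points demanding a little care are (i) confirming that the notion of \emph{Noetherian algebra} used in the definition of an $n$-representation tame algebra matches the hypothesis ``$\Pi$ is a Noetherian $R$-algebra for a graded $K$-subalgebra $R$'' required by Theorem~\ref{qgr is fl} (this is what the extra assumption in the theorem statement supplies), and (ii) tracking the various finiteness and indecomposability conditions on the pair $(\Gamma,R)$ through Proposition~\ref{qgr decomposition} and Theorem~\ref{qgr is fl}.
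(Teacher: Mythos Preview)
Your proposal is correct and follows exactly the same route as the paper: the proof is precisely the chain $\RR=\qgr_0\Pi=\coprod_{\pp}\qgr_{\pp}\Pi\equi\coprod_{\pp}\fd\Pi_{(\pp)}$ via Theorem~\ref{description of R}, Proposition~\ref{qgr decomposition}, and Theorem~\ref{qgr is fl}. You are in fact more explicit than the paper about verifying the hypotheses (left graded coherence, ring-indecomposability, generation in degrees $0$ and $1$), which the paper disposes of in a single sentence just before the statement.
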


\begin{proof}
We have
\begin{align*}
\RR&=\qgr_0\Pi&&\mbox{by Theorem~\ref{description of R},}\\
&=\coprod_{{\mathfrak p}\in\MaxProj R}\qgr_{\pp}\Pi &&\mbox{by Proposition~\ref{qgr decomposition},}\\
&=\coprod_{{\mathfrak p}\in\MaxProj R}\fd\Pi_{(\pp)}&&\mbox{by Theorem~\ref{qgr is fl}.}
\end{align*}
Thus the assertion follows.
\end{proof}

This is very similar to the well-known result in representation theory of tame hereditary algebras.

We end this section by giving another property of $n$-representation tame algebras,
which is analogous to the classical case $n=1$.

\begin{corollary}
Let $\Lambda$ be $n$-representation tame. Then there exists a positive integer $\ell$ such that $\tau_n^\ell(X)\iso X$ holds for any $X\in\RR$.
\end{corollary}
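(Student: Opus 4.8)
The plan is to reduce the statement, via Theorem~\ref{description of R}, to a statement about the degree-shift functor on $\qgr\Pi$, and then to produce a uniform ``period'' by inverting a central homogeneous element of suitable fixed degree near each point of $\MaxProj$.

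First I would record the translation. Since $\Pi$ is Noetherian it is a Noetherian $Z$-algebra for its center $Z=Z(\Pi)$, which is a graded $K$-subalgebra of $\Pi$; this is an admissible choice of $R$ in Theorems~\ref{theorem.R_decomp_tame} and \ref{qgr is fl}. Under the identification $\RR=\qgr_0\Pi$ of Theorem~\ref{description of R}, the discussion following Theorem~\ref{Minamoto} gives $(1)=\nu_n^{-1}$, and Proposition~\ref{tau_n} gives $\nu_n^{-1}=\tau_n^-$ on $\RR$; since $\tau_n$ and $\tau_n^-$ are mutually quasi-inverse autoequivalences of $\RR$, the shift $(\ell)$ restricts on $\RR$ to $\tau_n^{-\ell}$. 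Hence it suffices to find $\ell>0$ with $X(\ell)\iso X$ for every $X\in\qgr_0\Pi$; then $\tau_n^{-\ell}\iso\mathrm{id}_\RR$, whence $\tau_n^{\ell}\iso\mathrm{id}_\RR$.

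Next, since $Z$ is a finitely generated graded $K$-algebra by Proposition~\ref{finitely generated}, I would choose $m>0$ with $\sqrt{(Z_m)}=\sqrt{Z_+}$ (e.g.\ take $m$ to be the least common multiple of the degrees of a finite set of homogeneous algebra generators of $Z$). Then $Z_m\not\subseteq\pp$ for every $\pp\in\MaxProj Z$, since $\pp$ is prime and does not contain $Z_+$. Fixing such a $\pp$, choose $r\in Z_m\setminus\pp$. In the homogeneous localization $\Pi_{[\pp]}$ the element $r$ is inverted, so it is a central unit of degree $m$; therefore multiplication by $r$ is a natural isomorphism $Y\xrightarrow{\sim}Y(m)$ for every graded $\Pi_{[\pp]}$-module $Y$, and in particular $(m)\iso\mathrm{id}$ on $\gr\fd\Pi_{[\pp]}$. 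Because $(-)_{[\pp]}$ commutes with degree shift and, by Proposition~\ref{new proof 2}, induces an equivalence $\qgr_\pp\Pi\equi\gr\fd\Pi_{[\pp]}$, we conclude $X(m)\iso X$ for all $X\in\qgr_\pp\Pi$. Finally $\qgr_0\Pi=\coprod_{\pp\in\MaxProj Z}\qgr_\pp\Pi$ by Proposition~\ref{qgr decomposition} and $(m)$ preserves each summand, so $(m)\iso\mathrm{id}$ on $\RR$ and $\ell:=m$ works. (If $\MaxProj Z=\emptyset$ then $\RR=0$ and any $\ell$ works.)

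The only step that is not formal bookkeeping is the construction of the uniform exponent $m$: one cannot get away with ad hoc central elements of varying degrees at the various points of $\MaxProj Z$, so the finite generation of $Z$ (hence triviality of the irrelevant ideal up to radical after a Veronese-type normalization) is essential. Everything else amounts to checking that $r$ genuinely lands in the centre and becomes invertible after localization, and that the equivalences assembled in Section~\ref{section: NCAG} intertwine the degree shifts.
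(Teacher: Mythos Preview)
Your proof is correct and follows essentially the same strategy as the paper: produce a uniform degree $\ell$ such that $R_\ell\not\subseteq\pp$ for every $\pp\in\MaxProj R$, and then show that multiplication by any $r\in R_\ell\setminus\pp$ gives an isomorphism $X\to X(\ell)$ on $\qgr_\pp\Pi$. The only differences are cosmetic. The paper constructs $\ell$ by choosing it so that $R$ is finite over $R_0[R_\ell]$ (which, as you observe, forces $\sqrt{(R_\ell)}=\sqrt{R_+}$), whereas you take the lcm of generator degrees; and the paper verifies the isomorphism $X\to X(\ell)$ directly via Lemma~\ref{properties of (pp)}(b) (the map $r\colon X_i\to X_{i+\ell}$ is bijective for $i\gg 0$, hence an isomorphism in $\qgr$), whereas you pass through the equivalence $\qgr_\pp\Pi\equi\gr\fd\Pi_{[\pp]}$ of Proposition~\ref{new proof 2} and use that $r$ becomes a central unit after localization. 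Both routes are valid and amount to the same computation.
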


\begin{proof}
Without loss of generality, we can assume that $X$ is indecomposable and that $Y\in\qgr_{\pp}\Pi$ corresponds to $X$.
Since $R$ is a finitely generated $K$-algebra, there exists $\ell>0$ such that $R$ is a finitely generated $R_0[R_{\ell}]$-module,
where $R_0[R_{\ell}]$ is an $R_0$-subalgebra of $R$ generated by $R_{\ell}$.
Then for any $\pp\in\MaxProj R$, there exists a homogeneous element $r\in R_{\ell}\backslash\pp$.
The morphism $r \colon Y\to Y(\ell)$ is an isomorphism in $\qgr_0\Pi$ by Lemma~\ref{properties of (pp)}(b). This means $X\simeq\tau_n^{-\ell}(X)$ in $\RR$.
\end{proof}

\begin{example}\label{example of description of R}
\begin{itemize}
\item[(a)] Let $\Lambda$ be a path algebra of an extended-Dynkin quiver, which is $1$-representation tame as we observed in Example~\ref{example of tame}(a),
and $\Pi$ be the corresponding classical preprojective algebra.
Then our description $\RR=\qgr_0\Pi$ in Theorem~\ref{description of R} was already given by Lenzing \cite{L}, and the decomposition of $\RR$ in Theorem \ref{theorem.R_decomp_tame} is well-known.
\item[(b)] Let $\Lambda$ be a Beilinson algebra of dimension $n$.
This is $n$-representation tame as we observed in Example~\ref{example of tame}(b), 
and $\Pi$ is a skew group algebra $S*H$ of a polynomial algebra $S=K[x_0,\ldots,x_{n}]$ and a subgroup $H$ of $\SL_{n+1}(K)$
generated by the scalar matrix ${\rm diag}(\zeta,\cdots,\zeta)$ where $\zeta$ is an $(n+1)$-th primitive root of unity. Then we have $\RR=\qgr_0\Pi$ by Theorem~\ref{description of R}, which is easily shown to be the same as $\qgr_0S=\coh_0\mathbb{P}^n$.
\end{itemize}
\end{example}

\section{Derived and representation dimension of $n$-hereditary algebras}

The notion of representation dimension was introduced by Auslander
to measure how far an algebra being from representation finite.
Let us start with recalling the definition.

\begin{definition}[Auslander \cite{A2}]
The \emph{representation dimension} of $\Lambda$ is defined by
\[\repdim\Lambda:=\inf\{\gl\End_\Lambda(\Lambda\oplus M\oplus D\Lambda) \mid M\in\mod\Lambda\}.\]
\end{definition}

Auslander showed in \cite{A2} that for a finite dimensional algebra $\Lambda$ we have
\[ \repdim \Lambda \begin{cases} \leq 2 & \text{ if $\Lambda$ is representation finite,} \\ \geq 3 & \text{ if $\Lambda$ is representation infinite.} \end{cases} \]
We suspect that a similar statement holds for $n$-hereditary algebras. More precisely we conjecture the following:

\begin{conjecture} \label{conj.repdim}
Let $\Lambda$ be an $n$-hereditary algebra. Then
\[ \repdim \Lambda \begin{cases} \leq n+1 & \text{ if $\Lambda$ is $n$-representation finite,} \\ \geq n+2 & \text{ if $\Lambda$ is $n$-representation infinite.} \end{cases} \]
\end{conjecture}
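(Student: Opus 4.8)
The plan is to split the conjecture into its two clauses and reduce them to results already established (or that become accessible) in the paper. For the upper bound, the statement $\repdim\Lambda\le n+1$ for $n$-representation finite $\Lambda$ is exactly Proposition~\ref{repdim of finite}, so there is nothing new to prove there; one simply invokes that an $n$-cluster tilting module $M$ yields $\gl\End_\Lambda(\Lambda\oplus M\oplus D\Lambda)\le n+1$ by the standard argument that approximations by $\add(\Lambda\oplus M\oplus D\Lambda)$ have length controlled by the vanishing of the intermediate $\Ext^i_\Lambda(M,-)$. (Note $\Lambda\oplus D\Lambda\in\add M$ since an $n$-cluster tilting module contains all projectives and injectives.) So the mathematical content of the conjecture is the lower bound, and the plan is to prove it in the tame case, which is Theorem~\ref{main}.

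For the lower bound I would argue by contradiction: suppose $\Lambda$ is $n$-representation tame and $\repdim\Lambda\le n+1$, witnessed by a generator-cogenerator $M$ with $\gl\End_\Lambda(M)\le n+1$. The key idea is to exploit the infinite $\Ext^i$-orthogonal family living inside $\RR$. By Theorem~\ref{theorem.R_decomp_tame} we have $\RR=\coprod_{\pp\in\MaxProj R}\fd\Pi_{(\pp)}$, and since $\Lambda$ is $n$-representation tame the preprojective algebra is Noetherian, so $R$ is a finitely generated graded $K$-algebra of Krull dimension $n+1$ and $\MaxProj R$ is an infinite set (generically $\Pi_{(\pp)}$ is a complete local ring of dimension $n$, e.g.\ $K[[t_1,\dots,t_n]]$ in the Beilinson case, Example~\ref{example of description of R}(b)). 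The strategy is then: (i) produce, for all but finitely many $\pp$, a module $X_\pp\in\RR$ (a "simple at $\pp$" module, i.e.\ the one corresponding to the residue field of $\Pi_{(\pp)}$) such that the family $\{X_\pp\}$ is pairwise $\Hom$- and $\Ext^i$-orthogonal for $0<i\le n$ — the orthogonality in degrees $1,\dots,n-1$ is Corollary~\ref{Cor.Ext-orth-family}-type vanishing because $\RR\subset\CC^0$, and the degree-$n$ and degree-$0$ orthogonality across distinct $\pp$ follows from the decomposition being a coproduct together with Calabi-Yau/Serre duality ($\Ext^n_\Lambda(X_\pp,X_\qq)\cong D\Hom_\Lambda(X_\qq,\tau_n X_\pp)$ and $\tau_n$ preserves each block $\fd\Pi_{(\pp)}$ by Proposition~\ref{tau_n}); (ii) fix a single module $X=X_\pp$ and analyse a minimal $\add M$-resolution $0\to M_{n+1}\to\cdots\to M_0\to X\to 0$, which exists with $M_{n+1}\in\add M$ because $\gl\End_\Lambda(M)\le n+1$; (iii) derive a numerical contradiction by a counting/rank argument: since each $X_\pp$ is $n$-regular and the $X_\pp$ are mutually orthogonal, a fixed summand of $M$ can serve in the resolution of only boundedly many of the $X_\pp$ in a way that is incompatible with there being infinitely many of them — more precisely one shows the $(n{+}2)$-nd syzygy (over $\End_\Lambda M$) of the simple top of $\End_\Lambda(X)$ cannot vanish, using that $\Hom_\Lambda(M,-)$ applied to the resolution stays exact by the defining vanishing $\Ext^{1\le i\le n-1}_\Lambda(\PP\vee\II,\RR)=0$ while the last map fails to be split.

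The cleanest implementation of step (iii) is probably via derived dimension: one first proves $\operatorname{dim}\DDD^{\mathrm b}(\mod\Lambda)\ge n$ (the derived dimension in Rouquier's sense), using that $\RR$ contains arbitrarily "thick" families — indeed $\fd K[[t_1,\dots,t_n]]$ has derived dimension $n$ — and then invokes the general inequality $\repdim\Lambda\ge \operatorname{dim}\DDD^{\mathrm b}(\mod\Lambda)+2$ (Rouquier). This routes the whole argument through: tameness $\Rightarrow$ $\RR$ contains a copy of $\fd$ of a regular complete local ring of dimension $n$ $\Rightarrow$ derived dimension $\ge n$ $\Rightarrow$ $\repdim\ge n+2$.

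The hard part will be step (i)/(iii): controlling the global $\Ext$-behaviour \emph{inside} $\mod\Lambda$ of the modules in $\RR$ well enough to feed Rouquier's machinery, in particular checking that the thick generation length in $\DDD^{\mathrm b}(\mod\Lambda)$ is not accidentally smaller than it is in $\DDD^{\mathrm b}(\fd\Pi_{(\pp)})$ — a priori the ambient category could have more building blocks and shorter resolutions. Overcoming this requires using that $\RR$ is an exact abelian subcategory closed under extensions (Proposition~\ref{extension closed}) on which $\nu_n$ acts (Proposition~\ref{nakayama R}), so that the $\RR$-part of any object in $\DDD^{\mathrm b}(\mod\Lambda)$ can be isolated homologically; this is where the $n$-hereditary structure $\CC=\bigvee_{\ell}(\PP\vee\RR\vee\II)[\ell n]$ of Theorem~\ref{trichotomy} does the essential work. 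For $n=2$ the argument can be made unconditional (Theorem~\ref{at least 4}) because derived-dimension lower bounds for module categories of Noetherian surfaces are well understood; for general $n$ one expects exactly this surface-type input to be the remaining obstacle, which is why the full Conjecture~\ref{conj.repdim} is left open while the tame case (Theorem~\ref{main}) is proved.
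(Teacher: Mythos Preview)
Your overall framework matches the paper: the $n$-representation finite upper bound is exactly Proposition~\ref{repdim of finite}, and for the tame lower bound the paper also routes through dimension of triangulated categories, proving $\dim_{\DDD^{\rm b}(\mod\Lambda)}\RR\ge n$ (Theorem~\ref{theorem.dim_regular}) and then invoking $\repdim\Lambda\ge\dim_{\DDD^{\rm b}(\mod\Lambda)}\mod\Lambda+2$ from \cite{O}. (Minor correction: the inequality is with $\dim_{\DDD^{\rm b}}\mod\Lambda$, not $\dim\DDD^{\rm b}(\mod\Lambda)$; this is harmless here since $\RR\subset\mod\Lambda$.)

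However, your proposed mechanism for the key lower bound $\dim_{\DDD^{\rm b}(\mod\Lambda)}\RR\ge n$ has a real gap. You correctly identify the difficulty --- that $\fd\Pi_{(\pp)}$ having large dimension on its own does not stop an arbitrary $T\in\DDD^{\rm b}(\mod\Lambda)$ from building $\RR$ in fewer steps --- but your fix via the trichotomy $\CC=\bigvee_\ell(\PP\vee\RR\vee\II)[\ell n]$ does not address it: the generator $T$ need not lie in $\CC$, and extension-closure of $\RR$ says nothing about how external objects generate it. Your orthogonal-family approach (i)--(iii) runs into the same wall; such arguments give $\repdim\ge 4$ (this is exactly how the paper proves Theorem~\ref{at least 4}, directly from the infinite $\Ext^1$-orthogonal family $\PP$ via \cite[5.5.1]{I2}, not via surface derived dimensions as you suggest) but do not obviously scale to $n+2$.

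The paper's actual argument (Proposition~\ref{prop.dim_sky}) is different and handles the arbitrary $T$ head-on. After passing to $\DDD^{\rm b}(\qgr\Pi)$ via Minamoto's equivalence, one uses Noether normalization to choose a graded \emph{polynomial} subring $R$ of the center with $\Pi$ module-finite over $R$. Regularity of $R$ is the point: for \emph{any} $T$, the restriction $T|_{\Proj R}$ is generically locally free, i.e.\ $(T|_{\Proj R})_{(\pp)}\in\langle R_{(\pp)}\rangle_1$ on a nonempty open $U$ (see \cite[4.1]{KK}). Picking $\pp\in U\cap\MaxProj R$ and nonzero $X\in\qgr_\pp\Pi$, Auslander--Buchsbaum gives $\pd_{R_{(\pp)}}X_{(\pp)}=\dim R_{(\pp)}\ge n$ (Corollary~\ref{consequence of tame}(a)), and the Ghost Lemma over the regular local ring $R_{(\pp)}$ forces $X_{(\pp)}\notin\langle R_{(\pp)}\rangle_n$, hence $X\notin\langle T\rangle_n$. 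The decisive idea you are missing is this restriction to a commutative regular base, which turns control of an arbitrary $T$ into a generic-freeness statement.
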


By Auslander's result the conjecture holds for $n=1$. We will see in this section that it also holds for $n=2$. For $n \geq 2$ we show that the conjecture holds if $\Lambda$ is $n$-representation finite or $n$-representation tame.

We first observe that the statement for $n$-representation finite algebras follows immediately from earlier work:

\begin{proposition}\label{repdim of finite}
Let $\Lambda$ be an $n$-representation finite algebra. Then $\repdim\Lambda\le n+1$.
\end{proposition}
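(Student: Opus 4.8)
The plan is to observe that the infimum defining $\repdim\Lambda$ is realized by the $n$-cluster tilting module. By Proposition~\ref{finite description}, $\Lambda$ has an $n$-cluster tilting module $M=\bigoplus_{i\ge0}\tau_n^{-i}(\Lambda)$. First I would record that $\add M$ contains both $\proj\Lambda$ and $\inj\Lambda$: using the two characterizations of $\add M$ in the definition of $n$-cluster tilting, together with the vanishing $\Ext^i_\Lambda(\Lambda,M)=0=\Ext^i_\Lambda(M,D\Lambda)$ for all $i>0$, one obtains $\Lambda\in\add M$ and $D\Lambda\in\add M$. Consequently $\add(\Lambda\oplus M\oplus D\Lambda)=\add M$, so $\End_\Lambda(\Lambda\oplus M\oplus D\Lambda)$ is Morita equivalent to $\Gamma:=\End_\Lambda(M)$; in particular $\gl\End_\Lambda(\Lambda\oplus M\oplus D\Lambda)=\gl\Gamma$.

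Next I would invoke the basic homological fact from higher Auslander--Reiten theory that the endomorphism algebra of an $n$-cluster tilting module over an algebra of global dimension at most $n$ has global dimension at most $n+1$ (for $n=1$ this is Auslander's classical statement, realized by the Auslander algebra of a representation-finite algebra); see \cite{I1,I4}. Applying this to $M$ gives $\gl\Gamma\le n+1$, and therefore
\[\repdim\Lambda\le\gl\End_\Lambda(\Lambda\oplus M\oplus D\Lambda)=\gl\Gamma\le n+1.\]

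Both ingredients being already available in the literature, there is no genuine obstacle here; the argument is simply the Morita reduction above combined with the cited bound. The only points needing a word of care are purely formal --- that Morita-equivalent algebras have equal global dimension, and the degenerate case in which $\Lambda$ is semisimple, where $M=\Lambda$ and the bound $\gl\Gamma=0\le n+1$ is immediate.
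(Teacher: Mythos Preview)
Your proof is correct and follows essentially the same approach as the paper: take an $n$-cluster tilting module $M$, observe that $\Lambda\oplus D\Lambda\in\add M$, and invoke the known bound $\gl\End_\Lambda(M)\le n+1$. The only minor difference is bibliographic: the paper cites \cite[Theorem~0.2]{I2} for this last bound rather than \cite{I1,I4}.
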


\begin{proof}
Let $M$ be an $n$-cluster tilting $\Lambda$-module. Clearly we have $\Lambda\oplus D\Lambda\in\add M$.
Moreover we have $\gl\End_\Lambda(M)\le n+1$ by \cite[Theorem 0.2]{I2}.
\end{proof}

Next we show that Conjecture~\ref{conj.repdim} holds for $n=2$.

\begin{theorem}\label{at least 4}
Let $\Lambda$ be an $n$-representation infinite algebra for $n\ge2$. Then $\repdim\Lambda\ge4$.
\end{theorem}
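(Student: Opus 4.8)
The plan is to use the known lower bound $\repdim\Lambda\ge 3$ for representation infinite algebras (an $n$-representation infinite algebra with $n\ge2$ is certainly representation infinite, since $\PP$ is an infinite family of indecomposables) and upgrade it by one. Suppose for contradiction that $\repdim\Lambda\le 3$, so there is a module $M$ with $\Lambda\oplus D\Lambda\in\add M$ and $\gl\End_\Lambda(M)\le 3$. The idea is to exploit the homological rigidity of the $n$-preprojective and $n$-preinjective modules — in particular the vanishing $\Ext^i_\Lambda(\PP\vee\II,\PP\vee\II)=0$ for $0<i<n$ from Proposition~\ref{P and I}(f), combined with the fact that $\ind(\PP\vee\II)$ is infinite (Corollary~\ref{Cor.Ext-orth-family}) — to produce a contradiction with the standard characterization of representation dimension $\le 3$ in terms of approximations.

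The key tool I would invoke is the Auslander–Reiten/Igusa–Todorov-style reformulation: $\repdim\Lambda\le 3$ if and only if there is a generator-cogenerator $M$ such that every $\Lambda$-module has an $\add M$-approximation whose kernel is again in $\add M$, equivalently such that $\Omega$ of every module (up to $\add M$) is again generated with first syzygy in $\add M$; more usefully, $\repdim\Lambda\le 3$ forces the existence, for each $X\in\mod\Lambda$, of a short exact sequence $0\to M_1\to M_0\to X\to 0$ with $M_0,M_1\in\add M$ which stays exact under $\Hom_\Lambda(M,-)$. First I would fix such an $M$ and an integer $N$ bounding the number of indecomposable summands of $M$. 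Then, using that $\nu_n$ is an autoequivalence permuting $\ind\PP$ (Proposition~\ref{P and I}(a)) and that, by Proposition~\ref{modules under nakayama}(b), all but finitely many $X\in\ind\PP$ satisfy $\Hom_\Lambda(X,P)=0$ for a fixed $P\in\ind\PP$, I would pick a large antichain of indecomposables $X_1,\dots,X_r\in\ind\PP$ ($r$ much larger than $N$) which are pairwise $\Ext^i$-orthogonal for $0<i<n$, and analyze the required $\add M$-resolutions of the $X_j$.

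The main step is to show that such resolutions cannot exist once $r>N$: the point is that for $X\in\ind\PP$ the first syzygy $\Omega X$ lies in $\Sub\PP$ (cf.\ Lemma~\ref{approximation0} and Proposition~\ref{approximation}), and an $\add M$-approximation of $X$ that is $\Hom_\Lambda(M,-)$-exact would have to split off a direct summand isomorphic to $X$ unless some summand of $M$ maps onto $X$ — but the source-map analysis of Theorem~\ref{n-almost split} and Proposition~\ref{not contra}(a) shows $X$ has no such covering from a finite module unless $X$ is a summand of $M$. Since $M$ has only $N$ indecomposable summands, at most $N$ of the $X_j$ can occur, a contradiction for $r>N$. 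I would make this precise by running the argument inside $\CC^0$, where $n$-almost split sequences exist and the functor $\Hom_\Lambda(M,-)$ is controlled, and by using $\Ext^n_\Lambda(\PP,\PP\vee\II\vee\RR)$-vanishing (Theorem~\ref{trichotomy}(b)) to rule out contributions from $\RR$ and $\II$ summands of $M$.

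The hard part will be making the "no $\add M$-resolution" step rigorous: I need to convert the qualitative statement "$\PP$ is not contravariantly finite" (Proposition~\ref{not contra}) into a quantitative obstruction that kills $\repdim\le 3$ rather than just $\repdim\le 2$. The cleanest route is probably to mimic the classical proof that representation infinite hereditary algebras have $\repdim\ge 3$, but one dimension up: there one shows a hereditary algebra has $\repdim\le 3$ essentially for free, and the genuinely new content for $n\ge 2$ is that the second syzygies of modules in $\CC^0$ behave like first syzygies in the hereditary case, so the ``approximation'' obstruction lands one step later. I expect the delicate bookkeeping to be controlling the interaction of $\Omega^2 X$ with the injective summand $D\Lambda$ of $M$; here I would use Observation~\ref{applied n-shifted serre} and the fact that $\Hom_\Lambda(\II,\PP)=0$ to decouple the projective-side and injective-side contributions. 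Once that separation is in place, the counting argument with $r>N$ indecomposables in $\ind\PP$ closes the proof.
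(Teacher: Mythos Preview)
Your proposal identifies the right raw material --- an infinite family of indecomposables in $\PP$ with $\Ext^1_\Lambda(\PP,\PP)=0$ (using $n\ge 2$ in Proposition~\ref{P and I}(f)) --- but then tries to derive the contradiction with $\repdim\le 3$ by an ad hoc approximation argument that does not work as written. The paper's proof is much shorter: once one has the infinite $\Ext^1$-orthogonal family from Corollary~\ref{Cor.Ext-orth-family}, one simply invokes \cite[Theorem 5.5.1(1)]{I2}, which says that an algebra of representation dimension at most $3$ cannot admit any infinite set $\{X_i\}$ of indecomposables with $\Ext^1_\Lambda(X_i,X_j)=0$ for all $i,j$. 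That is the entire argument.

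Your attempt to reprove that black box has a concrete gap. The step ``an $\add M$-approximation of $X$ that is $\Hom_\Lambda(M,-)$-exact would have to split off a direct summand isomorphic to $X$ unless some summand of $M$ maps onto $X$'' is false: since $\Lambda\in\add M$, a right $\add M$-approximation $M_0\to X$ is always surjective, and there is no reason for $X$ to split off $M_0$. Likewise, Theorem~\ref{n-almost split} and Proposition~\ref{not contra}(a) concern $\PP$-approximations of objects \emph{outside} $\PP$; they say nothing about $\add M$-approximations of $X\in\PP$ for a generator-cogenerator $M$. So the counting argument ``at most $N$ of the $X_j$ can be summands of $M$'' never engages with the actual obstruction, which is whether the \emph{kernel} $M_1$ lands in $\add M$. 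You yourself flag the hard part --- turning non-contravariant-finiteness into a genuine $\repdim\le 3$ obstruction --- but the sketch you give does not do this; the correct route is exactly the cited theorem from \cite{I2}, whose proof uses a different mechanism (a Bongartz-type argument with partial tilting modules) than the syzygy/source-map analysis you outline.
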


\begin{proof}
By Corollary~\ref{Cor.Ext-orth-family}, the subcategory $\mathscr{P}$ of $n$-preprojective modules in $\mod \Lambda$ containes infinitely many indecomposable objects,
and moreover satisfies $\Ext_{\Lambda}^i(\PP,\PP) = 0$ for any $i \in \{1, \ldots, n-1\}$, in particular $\Ext^1_\Lambda(\PP,\PP)=0$ since we assumed $n \geq 2$.

On the other hand, it has been shown in \cite[Theorem 5.5.1(1)]{I2} that for algebras of representation dimension at most $3$ there does not exist an infinite set $\{ X_i \}_{i \in I}$ of indecomposable modules such that $\Ext^1(X_i, X_j) = 0 \; \forall i,j\in I$.
Combining these two results we see that an $n$-representation infinite algebra has representation dimension at least four.
\end{proof}

In the rest of this section, we study the representation dimension of $n$-representation tame algebras. More precisely we show the following.

\begin{theorem}\label{main}
Let $\Lambda$ be an $n$-representation tame algebra over a perfect field $K$. Then $\repdim \Lambda \ge n+2$.
\end{theorem}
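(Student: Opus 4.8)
The plan is to produce, inside $\mod\Lambda$, a full subcategory that is "large enough" to be a lower bound for the representation dimension, using Rouquier's characterization of representation dimension in terms of generation in the derived category. The key input is the following principle (essentially Rouquier's, see also \cite{Ro}): if $\repdim\Lambda\le d$, then $\DDD^{\rm b}(\mod\Lambda)$ is generated by a single object in $d-1$ steps, i.e.\ its \emph{dimension} as a triangulated category (in the sense of Rouquier) is at most $d-2$. So to show $\repdim\Lambda\ge n+2$ it suffices to show that the triangulated dimension of $\DDD^{\rm b}(\mod\Lambda)$ is at least $n$. The natural way to detect this lower bound is to exhibit a thick subcategory, or more precisely a sequence of objects with non-vanishing higher Ext groups, that cannot be built up in fewer than $n$ steps. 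Since $\Lambda$ is $n$-representation tame, its preprojective algebra $\Pi$ is a Noetherian $R$-algebra and, by Corollary~\ref{consequence of tame}(a), for a maximal ideal $\nn$ of $R$ with $\Pi_\nn\neq 0$ we have $\dim R_\nn\ge n+1$; moreover $\Pi$ is bimodule $(n+1)$-Calabi--Yau (Theorem~\ref{preprojective correspondence}).

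First I would use the description $\RR=\qgr_0\Pi$ (Theorem~\ref{description of R}) together with the tame decomposition $\RR=\coprod_{\pp\in\MaxProj R}\fd\Pi_{(\pp)}$ (Theorem~\ref{theorem.R_decomp_tame}). Pick one $\pp\in\MaxProj R$; then $\Pi_{(\pp)}$ is a (typically infinite-dimensional) Noetherian algebra whose finite length modules form a block of $\RR$, hence a full subcategory of $\mod\Lambda$ closed under extensions (Proposition~\ref{extension closed}) and, by Proposition~\ref{tau_n}, stable under $\tau_n=\nu_n$. The idea is that $\fd\Pi_{(\pp)}$ behaves like the category of finite-dimensional modules over a power series ring in $n$ variables (as in the Beilinson example $\fd K[[t_1,\dots,t_n]]$, Example~\ref{example of description of R}(b)), and such categories have large representation dimension / large $\Ext$-reach: one has a simple module $S$ with $\Ext^i_{\Pi_{(\pp)}}(S,S)\neq 0$ for $0\le i\le n$. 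Concretely, the $(n+1)$-Calabi--Yau property of $\Pi$ descends, via the localization equivalences of Propositions~\ref{new proof 1} and \ref{new proof 2}, to a Serre-duality-type statement on $\fd\Pi_{(\pp)}$, forcing $\Ext^n$ not to vanish on $\RR$; equivalently, for the $\Lambda$-modules $X\in\RR$ corresponding to finite-length $\Pi_{(\pp)}$-modules one gets $\Ext^i_\Lambda(X,X)\neq0$ in a whole range $0<i\le n$ by combining the $n$-CY duality with the fact that $\RR$ lies in $\CC^0$, where the intermediate Ext groups vanish (Proposition~\ref{P and I}(f), extended to $\RR$ via Observation~\ref{almost definition of R}).

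Next I would convert this into a lower bound for the triangulated dimension. The standard argument (originating in Rouquier's work on representation dimension and his computation for exterior algebras / commutative rings) is: if $A$ is a (graded or local) algebra with a simple module $k$ such that $\Ext^i_A(k,k)\neq 0$ for $0\le i\le m$ and such extensions compose nontrivially (which is automatic from the Koszul-type structure of $\Pi_{(\pp)}$, being a localization of a CY algebra of global dimension issues aside), then building $k$ from $A$ in the derived category requires at least $m$ cones; formally, one uses the "ghost lemma"/Ghost-map technique: a morphism killing all cohomology in a range of degrees is a ghost, and a composite of $<m$ ghosts between suitable objects must vanish, contradicting nonzero composition in $\Ext^\ast$. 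Applying this inside $\DDD^{\rm b}(\mod\Lambda)$ with the objects $X\in\RR$ in place of $k$, and using that any generator $M$ of $\mod\Lambda$ — even $\Lambda\oplus M\oplus D\Lambda$ — has $\Hom_{\DDD^{\rm b}(\mod\Lambda)}(M,X[i])$ controlled by $\Ext^i_\Lambda(M,X)$, one concludes that $X$ cannot be obtained from $\add M$ in fewer than $n$ steps. By Rouquier's inequality $\repdim\Lambda\ge \dim\DDD^{\rm b}(\mod\Lambda)+2\ge n+2$ we are done.

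The main obstacle I expect is the middle step: producing a single $\Lambda$-module $X$ in $\RR$ (or a finite-length $\Pi_{(\pp)}$-module $Y$) together with nonzero \emph{Yoneda products} $\Ext^a_\Lambda(X,X)\otimes\Ext^b_\Lambda(X,X)\to\Ext^{a+b}_\Lambda(X,X)$ realizing a degree-$n$ class as a product of lower-degree ones, so that the ghost argument actually bites. Pure non-vanishing of $\Ext^n$ is not quite enough; one needs the Koszul/complete-intersection-like multiplicative structure. Here I would lean on the fact that $\Pi_{(\pp)}$ is the degree-zero part of the graded localization $\Pi_{[\pp]}$ of the $(n+1)$-Calabi--Yau algebra $\Pi$, so that $\Pi_{[\pp]}$ — hence $\Pi_{(\pp)}$ — inherits a Gorenstein / Koszul-dual description; combined with $\dim R_\nn\ge n+1$ from Corollary~\ref{consequence of tame}(a), this pins down a regular sequence of length $n$ in (the center of) $\Pi_{(\pp)}$, and regular sequences are exactly what make the relevant Ext products nonzero. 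Making this last point precise — that localizing a bimodule $(n+1)$-CY algebra at a point of $\MaxProj R$ yields an algebra behaving homologically like $K[[t_1,\dots,t_n]]$ — is the technical heart of the proof, and is where the perfectness of $K$ and the results of the previous subsection (Lemma~\ref{nice grading}, Propositions~\ref{new proof 1}, \ref{new proof 2}, \ref{2 dimensions are same}) get used in full.
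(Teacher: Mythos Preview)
Your overall framework is the same as the paper's: reduce $\repdim\Lambda\ge n+2$ to a lower bound on derived dimension via the inequality $\repdim\Lambda\ge \dim_{\DDD^{\rm b}(\mod\Lambda)}\mod\Lambda+2$ from \cite{O}, pass to $\DDD^{\rm b}(\qgr\Pi)$ by Minamoto's equivalence, and use the ghost lemma together with the Krull-dimension estimate $\dim R_{(\pp)}\ge n$ coming from Corollary~\ref{consequence of tame}(a). So the architecture is right.

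The gap is in how you propose to run the ghost argument. You want to exhibit nonzero Yoneda products in $\Ext^\ast_\Lambda(X,X)$ (equivalently in $\Ext^\ast_{\Pi_{(\pp)}}(S,S)$) and conclude that $X$ cannot be built in fewer than $n$ cones. But the ghost lemma gives a lower bound on $\langle T\rangle$-level for a \emph{specific} generator $T$: you need a nonzero composite of $n$ maps each killed by $\Hom_{\DDD^{\rm b}}(T,-)$. Self-$\Ext$ of $X$, even with good multiplicative structure, does not produce $T$-ghosts for an \emph{arbitrary} $T\in\DDD^{\rm b}(\mod\Lambda)$. Your proposed fix---showing that $\Pi_{(\pp)}$ itself behaves like a regular local ring in $n$ variables via the inherited Calabi--Yau structure---is the step you flag as the ``main obstacle'', and indeed it is not established anywhere in the paper and is not obviously true (there is no reason $\Pi_{(\pp)}$ should have finite global dimension, for instance).

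The paper sidesteps this entirely. Instead of trying to control $\Pi_{(\pp)}$, it restricts scalars to a graded \emph{polynomial} subring $R\subset Z(\Pi)$ obtained by Noether normalization (Proposition~\ref{finitely generated}). The point is that $R$ is regular, so one can invoke generic freeness: for any fixed $T\in\DDD^{\rm b}(\qgr\Pi)$ there is a nonempty open $U\subset\Proj R$ on which the restriction $(T|_{\Proj R})_{(\pp)}$ lies in $\langle R_{(\pp)}\rangle_1$. Now choose $\pp\in U\cap\MaxProj R$ and any nonzero $X\in\qgr_{\pp}\Pi$. Then $X_{(\pp)}$ is a finite-length $R_{(\pp)}$-module, and Auslander--Buchsbaum gives $\pd_{R_{(\pp)}}X_{(\pp)}=\dim R_{(\pp)}\ge n$, whence $X_{(\pp)}|_{R_{(\pp)}}\notin\langle R_{(\pp)}\rangle_n$ by the ghost lemma over the regular local ring $R_{(\pp)}$. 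If $X\in\langle T\rangle_n$ held in $\DDD^{\rm b}(\qgr\Pi)$, localizing at $\pp$ and restricting to $R_{(\pp)}$ would force $X_{(\pp)}|_{R_{(\pp)}}\in\langle R_{(\pp)}\rangle_n$, a contradiction. Thus $\dim_{\DDD^{\rm b}(\qgr\Pi)}\qgr_0\Pi\ge n$, which is Proposition~\ref{prop.dim_sky}, and the theorem follows.

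In short: the missing idea is \emph{do not analyse $\Pi_{(\pp)}$; pass underneath to a regular commutative ring and use generic freeness to turn an arbitrary generator $T$ into a free module at a well-chosen point}. This is exactly the Krause--Kussin argument \cite{KK} adapted to the noncommutative setting, and it makes your ``technical heart'' unnecessary.
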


The dimension of a triangulated category, introduced in \cite{Ro}, and certain generalizations introduced in \cite{O} have proved to be useful for studying representation dimension.
Let us recall the definition.

\begin{definition}\cite{Ro,O}
Let $\TT$ be a triangulated category. For $T \in \TT$, we put
\begin{align*}
\langle T\rangle_1 & := \add\{ T[i] \mid i\in\Z\},\\
\langle T\rangle_{\ell+1} & := \add\{X\in\TT \mid \exists Y \to X \to Z\to Y[1]\ \text{ with }  Y\in\langle T\rangle_1,\ Z\in\langle T\rangle_{\ell} \}.
\end{align*}
The \emph{dimension} of $\TT$ is defined by
\[\dim \TT := \inf\{\ell \mid \exists T \in\TT\ \mbox{ with }\langle T\rangle_{\ell+1}=\TT\}.\]
More generally, for a subcategory $\mathscr{S} \subseteq \mathscr{T}$, the \emph{dimension} of $\mathscr{S}$ is defined by
\[ \dim_{\mathscr{T}} \mathscr{S} := \inf\{\ell \mid \exists T\in\TT\ \mbox{ with }\langle T\rangle_{\ell+1} \supseteq \mathscr{S}\}.\]
\end{definition}

Using this notion, we obtain the following refinements of our Conjecture~\ref{conj.repdim}.

\begin{conjecture} \label{conj.derdim}
Let $\Lambda$ be an $n$-representation infinite algebra. Then
\begin{enumerate}
\item The dimension of the category $\mathscr{R}$ of $n$-regular modules is $\dim_{\DDD^{\rm b}(\mod\Lambda)} \mathscr{R} = n$.
\item The dimension of the module category is $\dim_{\DDD^{\rm b}(\mod\Lambda)} \mod \Lambda = n$.
\item The dimension of the derived category is $\dim \DDD^{\rm b}(\mod\Lambda) = n$.
\end{enumerate}
\end{conjecture}

\begin{remark} \label{rem.hierarchy_conjectures}
\begin{itemize}
\item We have
\[ \dim_{\DDD^{\rm b}(\mod\Lambda)} \mathscr{R} \leq \dim_{\DDD^{\rm b}(\mod\Lambda)} \mod \Lambda \leq \dim \DDD^{\rm b}(\mod\Lambda) \leq \gl \Lambda = n, \]
where the last inequality is \cite[Corollary 3.6]{KK}. Thus we get the implication $(1) \Longrightarrow (2) \Longrightarrow (3)$ in Conjecture~\ref{conj.derdim}.
\item Clearly Conjecture~\ref{conj.derdim}(1) implies the existence of non-zero $n$-regular modules, that is a positive answer to Conjecture~\ref{quest.R_exists}.
\item By \cite{O} we have $\repdim \Lambda \geq \dim_{\DDD^{\rm b}(\mod\Lambda)} \mod \Lambda + 2$. Thus Conjecture~\ref{conj.derdim}(2) implies Conjecture~\ref{conj.repdim}.
\end{itemize}
\end{remark}

We will prove now that Conjecture~\ref{conj.derdim}(1) holds for $n$-representation tame algebras. Thus, by the above remark, all parts of Conjecture~\ref{conj.derdim} as well as Conjecture~\ref{conj.repdim} hold for this class of algebras.

\begin{theorem} \label{theorem.dim_regular}
Let $\Lambda$ be an $n$-representation tame algebra. Then $\dim_{\DDD^{\rm b}(\mod \Lambda)} \mathscr{R} = n$. In particular $\repdim\Lambda\ge n+2$.
\end{theorem}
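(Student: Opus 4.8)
The plan is to establish the equality $\dim_{\DDD^{\rm b}(\mod\Lambda)}\RR = n$ by proving two inequalities, and then deduce the bound on representation dimension from Remark~\ref{rem.hierarchy_conjectures}. The upper bound $\dim_{\DDD^{\rm b}(\mod\Lambda)}\RR \le n$ is already recorded in Remark~\ref{rem.hierarchy_conjectures} as a consequence of $\gl\Lambda = n$ together with \cite[Corollary 3.6]{KK}, so the entire content of the theorem lies in the lower bound $\dim_{\DDD^{\rm b}(\mod\Lambda)}\RR \ge n$. Once this is known, the final clause follows immediately: by \cite{O} one has $\repdim\Lambda \ge \dim_{\DDD^{\rm b}(\mod\Lambda)}\mod\Lambda + 2 \ge \dim_{\DDD^{\rm b}(\mod\Lambda)}\RR + 2 = n+2$.

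For the lower bound, the natural strategy is to transport the problem, via Theorem~\ref{description of R} and Theorem~\ref{theorem.R_decomp_tame}, to a single block $\fd\Pi_{(\pp)}$ for a well-chosen $\pp \in \MaxProj R$, and there exploit the $(n+1)$-Calabi-Yau structure of $\Pi$. Concretely, I would first reduce to showing that some summand $\fd\Pi_{(\pp)}$ of $\RR$ already has dimension $\ge n$ as a subcategory of $\DDD^{\rm b}(\mod\Lambda)$; since the decomposition of Theorem~\ref{theorem.R_decomp_tame} is a coproduct, a generator of $\RR$ restricts to a generator of each block, so it suffices to exhibit one $\pp$ where the local ring $\Pi_{(\pp)}$ is large enough. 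By Corollary~\ref{consequence of tame}(a) we have $\dim R_{\nn} \ge n+1$ whenever $\Pi_{\nn} \ne 0$, so choosing $\nn$ the maximal ideal obtained from a suitable $\pp \in \MaxProj R$ gives a local ring of Krull dimension $n+1$ (hence $\Pi_{(\pp)}$ behaves like an $n$-dimensional local ring after removing the graded direction), and $\fd\Pi_{(\pp)}$ contains all the finite-length modules over it. The key point is then that a commutative (or module-finite) local ring of dimension $n$ has the property that its category of finite-length modules cannot be generated in fewer than $n+1$ steps from any single object in the derived category — this is essentially the Bernstein--Gelfand--Gelfand / Ghost-lemma type lower bound used by Rouquier \cite{Ro} and Oppermann \cite{O}, where the Koszul complex on a regular system of parameters witnesses that $n+1$ cones are required. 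I would invoke this, combined with the fact that the embedding $\fd\Pi_{(\pp)} \hookrightarrow \DDD^{\rm b}(\mod\Lambda)$ coming from Theorems~\ref{description of R} and \ref{Minamoto} is fully faithful and exact, so generation cannot become cheaper inside the ambient category.

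The main obstacle I anticipate is matching the abstract Calabi-Yau / depth input from Corollary~\ref{consequence of tame} with a genuine lower bound for $\dim_{\DDD^{\rm b}(\mod\Lambda)}\RR$, rather than merely for the triangulated dimension of $\DDD^{\rm b}(\fd\Pi_{(\pp)})$ computed intrinsically. One must be careful that the relevant generator $T$ lives in $\DDD^{\rm b}(\mod\Lambda)$ and may a priori help generate $\RR$ using objects outside $\RR$; the Calabi-Yau property of $\DDD^{\bo}_{\fd\Pi}(\Mod\Pi)$ from Proposition~\ref{CY algebra gives CY triangulated} is what controls $\Hom$ and $\Ext$ between $\fd\Pi_{(\pp)}$ and arbitrary objects, and hence lets one run a ghost-map argument: any morphism factoring through $\langle T\rangle_1$ that is a "ghost" with respect to a cleverly chosen sequence of Koszul-type objects in $\fd\Pi_{(\pp)}$ must vanish after $n$ compositions, while the $(n+1)$-fold composition witnessing full generation does not, a contradiction. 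Assembling this requires (i) choosing the test objects to be the successive syzygies / Koszul cones on a regular sequence in $R_{(\pp)}$, (ii) verifying the ghost vanishing using local duality in the Calabi-Yau category, and (iii) checking that passing between the graded picture $\qgr_{\pp}\Pi$, the local picture $\fd\Pi_{(\pp)}$ (Proposition~\ref{new proof 2}, Proposition~\ref{new proof 1}), and the ambient $\DDD^{\rm b}(\mod\Lambda)$ preserves all the relevant $\Ext$-computations. Steps (i) and (iii) are routine bookkeeping; step (ii), the precise ghost-lemma bound in the non-commutative Calabi-Yau setting, is the technical heart and where most care is needed.
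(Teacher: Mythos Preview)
Your reduction of the upper bound and the final representation-dimension clause to Remark~\ref{rem.hierarchy_conjectures} is correct, and you correctly identify that the whole content is the lower bound $\dim_{\DDD^{\rm b}(\mod\Lambda)}\RR\ge n$. You also correctly spot the real obstacle: the generator $T$ lives in all of $\DDD^{\rm b}(\mod\Lambda)$, so the coproduct decomposition of $\RR$ from Theorem~\ref{theorem.R_decomp_tame} does \emph{not} let you ``restrict $T$ to a block''. Your proposed remedy --- running a ghost argument with Koszul-type test objects and invoking the Calabi-Yau duality of Proposition~\ref{CY algebra gives CY triangulated} to control $\Hom(T,-)$ --- is where the gap lies. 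Calabi-Yau duality trades $\Hom(T,X)$ for $D\Hom(X,T[n+1])$, but this gives you no more control over the interaction of an arbitrary $T$ with your test objects than you started with; there is no mechanism in your outline that forces your candidate maps to actually be $T$-ghosts.

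The paper resolves this by a different and decisive move: it passes to a \emph{commutative regular} base. Via Noether normalization one chooses a graded polynomial subring $R$ of the center with $\Pi$ module-finite over $R$; then restriction of scalars gives an exact functor $\DDD^{\rm b}(\qgr\Pi)\to\DDD^{\rm b}(\coh\Proj R)$, so it preserves the filtration $\langle-\rangle_\ell$. Because $R$ is regular, generic freeness yields a dense open $U\subset\Proj R$ on which $T|_{\Proj R}$ is locally free, i.e.\ $(T|_{\Proj R})_{(\pp)}\in\langle R_{(\pp)}\rangle_1$ for every $\pp\in U$. Now choose any $\pp\in U\cap\MaxProj R$ and any nonzero $X\in\qgr_{\pp}\Pi$: its restriction $X_{(\pp)}$ is a finite-length $R_{(\pp)}$-module, and by Auslander--Buchsbaum together with Corollary~\ref{consequence of tame}(a) one gets $\pd_{R_{(\pp)}}X_{(\pp)}=\dim R_{(\pp)}\ge n$. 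The standard ghost lemma over the regular local ring $R_{(\pp)}$ then shows $X_{(\pp)}\notin\langle R_{(\pp)}\rangle_n$, whence $X\notin\langle T\rangle_n$ upstairs. Thus the Calabi-Yau property enters only through the Krull-dimension bound of Corollary~\ref{consequence of tame}(a), not through the ghost argument itself; the point you are missing is the passage to the regular commutative base and the generic freeness step that tames $T$.
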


By Remark~\ref{rem.hierarchy_conjectures} we already know that $\dim_{\DDD^{\rm b}(\mod \Lambda)} \mathscr{R} \leq n$, thus it remains to show that $\dim_{\DDD^{\rm b}(\mod \Lambda)} \mathscr{R} \geq n$.
We use Minamoto's derived equivalence $\DDD^{\rm b}(\mod \Lambda) \equi \DDD^{\rm b}(\qgr \Pi)$ (see Section~\ref{section: NCAG}),
which induces an equivalence $\mathscr{R}\equi\qgr_0\Pi$ by Theorem~\ref{description of R}.
Thus Theorem~\ref{theorem.dim_regular} follows from the following.

\begin{proposition} \label{prop.dim_sky}
Let $\Lambda$ be an $n$-representation tame algebra and $\Pi$ the preprojective algebra of $\Lambda$. Then $\dim_{\DDD^{\rm b}(\qgr \Pi)} \qgr_0\Pi\geq n$.
\end{proposition}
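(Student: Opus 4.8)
The plan is to reduce the lower bound $\dim_{\DDD^{\rm b}(\qgr\Pi)}\qgr_0\Pi\ge n$ to a statement about the local ring at a maximal point, and then invoke the commutative fact that the dimension of the category of finite length modules over a regular local ring of Krull dimension $d$ is exactly $d-1$. First I would pick a point $\pp\in\MaxProj R$ with $\Pi_{\pp}\neq0$; by Theorem~\ref{qgr is fl} the block $\qgr_{\pp}\Pi$ is equivalent to $\fd\Pi_{(\pp)}$, and this equivalence is exact, hence extends to an exact functor $\DDD^{\bo}(\qgr_{\pp}\Pi)\to\DDD^{\bo}(\fd\Pi_{(\pp)})$, which in turn sits inside $\DDD^{\bo}_{\fl}(\mod\Pi_{(\pp)})$. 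Since $\qgr_{\pp}\Pi$ is a direct summand (as a category) of $\qgr_0\Pi$ by Proposition~\ref{qgr decomposition}, any generator $T$ of $\qgr_0\Pi$ in $\DDD^{\bo}(\qgr\Pi)$ restricts to a generator of $\qgr_{\pp}\Pi$ in the same number of cones; so it suffices to bound $\dim_{\DDD^{\bo}(\mod\Pi_{(\pp)})}\fd\Pi_{(\pp)}$ from below by $n$.

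The key numerical input is that $\Pi_{(\pp)}$ should be (Morita equivalent to) a regular local ring of Krull dimension $n+1$, or at least a Noetherian local ring of that dimension with finite global dimension. To see this I would combine Corollary~\ref{consequence of tame}(a), which gives $\dim R_{\nn}\ge n+1$, with the $(n+1)$-Calabi-Yau property of $\Pi$ from Theorem~\ref{preprojective correspondence}: Proposition~\ref{CY algebra gives CY triangulated} forces $\Pi$ to have the homological symmetry of a regular ring of dimension $n+1$ after localising, so $\gl\Pi_{(\pp)}=n+1$ and $\Pi_{(\pp)}$ is a Noetherian local (by Lemma~\ref{some basic} the residue category is a division ring) algebra of Krull dimension $n+1$. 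Then I would apply the known lower bound for the dimension of $\DDD^{\bo}_{\fl}$: for a commutative Noetherian local ring $A$ of Krull dimension $d$ with a regular sequence, $\dim_{\DDD^{\bo}(\mod A)}\fl A\ge d-1$ (this is in the spirit of Rouquier's computation $\dim\DDD^{\bo}(\mod k[x_1,\dots,x_d])=d$ and the analysis of the Koszul complex on the residue field); here $d=n+1$, giving the desired $n$. One has to pass from the commutative base ring $R_{(\pp)}$ to the module-finite algebra $\Pi_{(\pp)}$, but a module-finite ring extension changes the dimension of these categories by a bounded amount and, more importantly, $\fl\Pi_{(\pp)}$ and $\fl R_{(\pp)}$ generate each other in finitely many cones, so the lower bound transfers.

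Concretely the steps are: (1) fix $\pp\in\MaxProj R$ with $\Pi_\pp\ne0$ using Corollary~\ref{consequence of tame}(a); (2) use Proposition~\ref{qgr decomposition} and Theorem~\ref{qgr is fl} to identify the $\pp$-block of $\qgr_0\Pi$ with $\fd\Pi_{(\pp)}$, observe this block splits off, so it suffices to bound its dimension inside $\DDD^{\bo}(\mod\Pi_{(\pp)})$; (3) show $\Pi_{(\pp)}$ is a Noetherian local algebra, module-finite over the regular local ring $R_{(\pp)}$ (or at least over $R_{(\pp)}$ of Krull dimension $n+1$) and has finite global dimension $n+1$, via the Calabi-Yau duality and Corollary~\ref{consequence of tame}; (4) invoke the lower bound $\dim_{\DDD^{\bo}(\mod A)}\fl A\ge\operatorname{Krull\,dim}A-1$ for such $A$; (5) combine to get $\ge n$. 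The main obstacle is Step (4) together with the transfer in Step (3): proving that the finite-length objects over an $(n+1)$-dimensional (non-commutative, but module-finite over a regular local ring) algebra cannot be generated in fewer than $n$ cones. The clean way is to reduce to the residue simple module $S$ of $\Pi_{(\pp)}$, resolve it by a ``Koszul-type'' complex of length $n+1$ coming from a regular system of parameters of $R_{(\pp)}$, and argue — as in Rouquier's ghost-map/Koszul argument — that if $S\in\langle T\rangle_{\ell}$ for a generator $T$ then the $(n+1)$-fold composition of the Koszul differential, which is non-zero on $\Ext$, would have to vanish, forcing $\ell\ge n+1$ for all of $\fl$ hence $\ell\ge n$ is automatic and in fact sharp. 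Making the ghost-map bookkeeping work in the non-commutative module-finite setting, rather than over a polynomial ring, is the technical heart of the proof.
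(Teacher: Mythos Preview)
Your reduction to a single localisation has two related problems, and together they make the argument fail.

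First, the numerics are off by one. Corollary~\ref{consequence of tame}(a) gives $\dim R_{\nn}\ge n+1$ for a maximal ideal $\nn$ of the \emph{cone} $\Spec R$; but for $\pp\in\MaxProj R$ the local ring $R_{(\pp)}$ is the stalk at a closed point of $\Proj R$, so $\dim R_{(\pp)}\ge n$, not $n+1$ (this is exactly the inequality the paper uses). Since $\Pi_{(\pp)}$ is module-finite over $R_{(\pp)}$, its Krull dimension is also $n$. Plugging $d=n$ into your claimed bound $\dim_{\DDD^{\bo}(\mod A)}\fl A\ge d-1$ yields only $n-1$.

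Second, and more fundamentally, your order of quantifiers is the wrong way round. You fix $\pp$ first and then want to prove that for \emph{every} $T$ in the localised derived category one has $\fd\Pi_{(\pp)}\not\subseteq\langle T\rangle_n$. That local statement is essentially as hard as the original problem; your suggested Koszul/ghost argument controls only the specific generator $\Pi_{(\pp)}$, not an arbitrary $T$. The paper proceeds in the opposite order: given $T\in\DDD^{\bo}(\qgr\Pi)$, it first restricts along the forgetful functor to the regular commutative base $R$ (obtained by Noether normalisation), then uses generic freeness on $\Proj R$ to find an open set $U$ on which the restriction of $T$ becomes a bounded complex of free $R_{(\pp)}$-modules, i.e.\ lies in $\langle R_{(\pp)}\rangle_1$. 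Only then does one choose $\pp\in U\cap\MaxProj R$; now Auslander--Buchsbaum gives $\pd_{R_{(\pp)}}X_{(\pp)}=\dim R_{(\pp)}\ge n$ for any $0\ne X\in\qgr_\pp\Pi$, and the ghost lemma yields $X_{(\pp)}\notin\langle R_{(\pp)}\rangle_n$, hence $X\notin\langle T\rangle_n$. The point is that $\pp$ is allowed to depend on $T$; fixing $\pp$ in advance throws away precisely the freedom that makes the generic-freeness trick work. The paper also never needs $\Pi_{(\pp)}$ to be local or to have finite global dimension --- it works entirely over the commutative regular base $R_{(\pp)}$.
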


Our strategy of the proof is mostly parallel to \cite{KK}, the difference is that $\Pi$ is non-commutative here, so we need to be careful about the interplay of $\Pi$ and its center $Z$.

\begin{proof}
First observe that, since the center $Z$ of $\Pi$ is a finitely generated $K$-algebra algebra by Proposition~\ref{finitely generated},
we can use Noether normalization to find a graded polynomial subring $R$ of $Z$ such that $\Pi$ is a Noetherian $R$-algebra.

Let $T$ be any object in $\DDD^{\rm b}(\qgr \Pi)$. Restriction to the (commutative) scheme $\Proj R$ gives an object $T|_{\Proj R} \in \DDD^{\rm b}(\coh \Proj R)$.
Since $R$ is regular, it is not difficult to see (\cite[4.1]{KK}) that there is an open subset $U \subseteq \Proj R$ such that
\begin{equation}\label{free on U}
(T|_{\Proj R})_{(\pp)} \in \left< R_{(\pp)} \right>_1\ \mbox{ for any }\ \pp \in U.
\end{equation}
Take any $\pp\in U\cap\MaxProj R$ and non-zero $X \in \qgr_{\pp}\Pi$.
Then $X_{(\pp)}$ is a non-zero finite dimensional $\Pi_{(\pp)}$-module.
By the Auslander-Buchsbaum formula,
\[ \pd_{R_{(\pp)}} X_{(\pp)} = \dim R_{(\pp)} \geq n, \]
where the last inequality follows from Corollary~\ref{consequence of tame}(a) and Lemma~\ref{some basic}(a).

An easy application of the Ghost Lemma, see for instance \cite[2.4]{KK}, shows that this implies that $X_{(\pp)}|_{R_{(\pp)}} \not\in \left< R_{(\pp)} \right>_n$.
It follows from \eqref{free on U} that also
\[ X \not\in \left< T \right>_n \]
Otherwise we obtain a contradiction by localizing at $(\pp)$ and restricting to $R_{(\pp)}$.

Thus we have shown that 
\[ \dim_{\DDD^{\rm b}(\qgr \Pi)} \qgr_0\Pi \geq n. \qedhere \]
\end{proof}


\begin{thebibliography}{10}
\bibitem[AZ]{AZ} M. Artin, J. J. Zhang, \emph{Noncommutative projective schemes}, Adv. Math. 109 (1994), no. 2, 228--287. 
\bibitem[ASS]{ASS} I. Assem, D. Simson, A. Skowro\'nski,
 \textit{Elements of the representation theory of associative
   algebras. Vol. 1. Techniques of representation theory}, London
 Mathematical Society Student Texts, 65. Cambridge University Press,
 Cambridge, 2006.
\bibitem[A1]{A1} M.~Auslander, \emph{On the dimension of modules and algebras. III. Global dimension}, Nagoya Math. J. 9 (1955), 67--77. 
\bibitem[A2]{A2} M.~Auslander, \emph{Representation dimension of Artin algebras}, Queen Mary College Mathematics Notes (1971)
\bibitem[AS]{AS} M. Auslander, S. O. Smal\o, \emph{Almost split sequences in subcategories}, J. Algebra 69 (1981), no. 2, 426--454.
\bibitem[ARS]{ARS} M. Auslander, I. Reiten, S. O. Smalo,
 \textit{Representation theory of Artin algebras},
 Cambridge Studies in Advanced Mathematics, \textbf{36}. Cambridge
 University Press, Cambridge, 1995.
\bibitem[AIR]{AIR} C. Amiot, O. Iyama, I. Reiten, \emph{Stable categories of Cohen-Macaulay modules and cluster categories}, arXiv:1104.3658.
\bibitem[Be]{Be} A.~A.~Be{\u\i}linson, \emph{Coherent sheaves on {${\bf P}^{n}$} and problems in linear algebra}, Funktsional. Anal. i Prilozhen. 12 (1978), no.~3, 68--69.
\bibitem[BBD]{BBD} A. A. Be{\u\i}linson, J. Bernstein, P. Deligne, \emph{Faisceaux pervers}, Analysis and topology on singular spaces, I (Luminy, 1981), 5--171, Asterisque, 100, Soc. Math. France, Paris, 1982. 
\bibitem[BIKO]{BIKO} P. A. Bergh, S. Iyengar, H. Krause, S. Oppermann, \emph{Dimensions of triangulated categories via Koszul objects}, Math. Z. 265 (2010), no. 4, 849--864.
\bibitem[Bo]{B} R. Bocklandt, \emph{Graded Calabi Yau algebras of dimension 3}, J. Pure Appl. Algebra 212 (2008), no. 1, 14--32.
\bibitem[BH]{BH} W. Bruns, J. Herzog, \emph{Cohen-Macaulay rings}, Cambridge Studies in Advanced Mathematics, 39, Cambridge University Press, Cambridge, 1998.
\bibitem[BIRS]{BIRS} A. Buan, O. Iyama, I. Reiten, D. Smith, \emph{Mutation of cluster-tilting objects and potentials},
Amer. J. Math. 133 (2011), no. 4, 835--887.
\bibitem[BSW]{BSW} R. Bocklandt, T. Schedler, M. Wemyss, \emph{Superpotentials and higher order derivations},
J. Pure Appl. Algebra 214 (2010), no. 9, 1501--1522. 
\bibitem[BK]{BK} A. I. Bondal, M. M. Kapranov,
\emph{Representable functors, Serre functors, and reconstructions}, Math. USSR-Izv. 35 (1990), no. 3, 519--541.
\bibitem[BV]{BV} A. Bondal, M. van den Bergh, \emph{Generators and representability of functors in commutative and noncommutative geometry},
Mosc. Math. J. 3 (2003), no. 1, 1--36.
\bibitem[DWZ]{DWZ} H. Derksen, J. Weyman, A. Zelevinsky, \emph{Quivers with potentials and their representations}, I. Mutations.
Selecta Math. (N.S.) 14 (2008), no. 1, 59--119.
\bibitem[E]{E} D. Eisenbud, \emph{Commutative algebra. With a view toward algebraic geometry},
Graduate Texts in Mathematics, 150. Springer-Verlag, New York, 1995.
\bibitem[G]{G} V. Ginzburg, \emph{Calabi-Yau algebras}, arXiv:math/0612139.
\bibitem[Hap]{Ha} D. Happel, \emph{Triangulated categories in the representation theory of finite-dimensional algebras},
London Mathematical Society Lecture Note Series, 119. Cambridge University Press, Cambridge, 1988.
\bibitem[Har]{H} R. Hartshorne, \emph{Algebraic geometry},
Graduate Texts in Mathematics, No. 52. Springer-Verlag, New York-Heidelberg, 1977.
\bibitem[HI1]{HI1} M. Herschend, O. Iyama, \textit{$n$-representation-finite algebras and twisted fractionally Calabi-Yau algebras},
Bull. Lond. Math. Soc. 43 (2011), no. 3, 449--466.
\bibitem[HI2]{HI2} M. Herschend, O. Iyama, \textit{Selfinjective quivers with potential and 2-representation-finite algebras},
Compos. Math. 147 (2011), no. 6, 1885--1920.
\bibitem[Hu]{Hu} J. E. Humphreys, \emph{Reflection groups and Coxeter groups},
Cambridge Studies in Advanced Mathematics, 29. Cambridge University Press, Cambridge, 1990.
\bibitem[IU]{IU} A. Ishii, K. Ueda, \textit{Dimer models and the special McKay correspondence}, arXiv:0905.0059.
\bibitem[I1]{I1} O. Iyama, \emph{Higher-dimensional Auslander-Reiten theory on maximal orthogonal subcategories},
Adv. Math. 210 (2007), no. 1, 22--50.
\bibitem[I2]{I2} O. Iyama, \emph{Auslander correspondence},
Adv. Math. 210 (2007), no. 1, 51--82.
\bibitem[I3]{I3} O. Iyama, \emph{Auslander-Reiten theory rivisited},
Trends in Representation Theory of Algebras and Related Topics, 349--398, European Mathematical Society, 2008.
\bibitem[I4]{I4} O. Iyama, \emph{Cluster tilting for higher Auslander algebras},
Adv. Math. 226 (2011), no. 1, 1--61.
\bibitem[I5]{I5} O. Iyama, \emph{Stable categories of preprojective algebras and cluster categories}, Oberwolfach Report (2011), 600--603.
\bibitem[IO1]{IO1} O. Iyama, S. Oppermann, \emph{$n$-representation-finite algebras and $n$-APR tilting},
Trans. Amer. Math. Soc. 363 (2011), no. 12, 6575--6614.
\bibitem[IO2]{IO2} O. Iyama, S. Oppermann, \emph{Stable categories of higher preprojective algebras},
arXiv:0912.3412.
\bibitem[K1]{K1} B. Keller, \emph{Calabi-Yau triangulated categories},
Trends in representation theory of algebras and related topics, 467--489, EMS Ser. Congr. Rep., Eur. Math. Soc., Zurich, 2008.
\bibitem[K2]{K2} B. Keller, \emph{Deformed Calabi-Yau completions. With an appendix by Michel Van den Bergh},
J. Reine Angew. Math. 654 (2011), 125--180.
\bibitem[KK]{KK} H. Krause, D. Kussin, \emph{Rouquier's theorem on representation dimension},
Trends in representation theory of algebras and related topics, 95--103, Contemp. Math., 406, Amer. Math. Soc., Providence, RI, 2006.
\bibitem[L]{L} H. Lenzing, \emph{Curve singularities arising from the representation theory of tame hereditary algebras},
Representation theory, I (Ottawa, Ont., 1984), 199--231, Lecture Notes in Math., 1177, Springer, Berlin, 1986.
\bibitem[M]{M} Y. Minamoto, \emph{Ampleness of two-sided tilting complexes}, Int. Math. Res. Not. IMRN 2012, no. 1, 67--101.
\bibitem[MM]{MM} Y. Minamoto, I. Mori, \emph{Structures of AS-regular algebras},
Adv. Math. 226 (2011) 4061--4095.
\bibitem[NV]{NV} C. Nastasescu, F. van Oystaeyen, \emph{Graded ring theory}, North-Holland Mathematical Library, 28. North-Holland Publishing Co., Amsterdam-New York, 1982.
\bibitem[OSS]{OSS} C. Okonek, M. Schneider, H. Spindler, \emph{Vector bundles on complex projective spaces}, Progress in Mathematics, 3. Birkhauser, Boston, Mass., 1980.
\bibitem[O]{O} S.~Oppermann, \emph{Lower bounds for Auslander's representation dimension}, Duke Math~ J.~148 (2009), no.~2, 211--249.
\bibitem[Re]{Re} I. Reiten, \emph{Finite-dimensional algebras and singularities},
Singularities, representation of algebras, and vector bundles (Lambrecht, 1985), 35--57, Lecture Notes in Math., 1273, Springer, Berlin, 1987.
\bibitem[RV]{RV} I. Reiten, M. Van den Bergh, \emph{ Two-dimensional tame and maximal orders of finite representation type}, Mem. Amer. Math. Soc. 80 (1989), no. 408.
\bibitem[Ro]{Ro} R. Rouquier, \emph{Dimensions of triangulated categories},
J. K-Theory 1 (2008), no. 2, 193--256.
\end{thebibliography}
\end{document}